\newlength{\defbaselineskip}
\newcommand{\setlinespacing}[1]%
           {\setlength{\baselineskip}{#1 \defbaselineskip}}
\newcommand{\singlespacing}{\setlength{\baselineskip}{\defbaselineskip}}
\def\Re{\mathop\mathrm{Re}\nolimits}			
\def\Im{\mathop\mathrm{Im}\nolimits}			
\newcommand{\Real}{\mathbb{R}}							
\newcommand{\Complex}{\mathbb{C}}							
\newcommand{\Wholes}{\mathbb{Z}}							
\newcommand{\abs}[1]{\left\vert#1\right\vert}			
\newcommand{\norm}[1]{\left\Vert#1\right\Vert}		
\newcommand{\sref}[1]{(\ref{#1})}                       
\newcommand{\R}{\mathbb{R}}
\newcommand{\Z}{\mathbb{Z}}
\newcommand{\C}{\mathbb{C}}							
\newcommand{\<}{\left\langle}
\renewcommand{\>}{\right\rangle}
\newcommand{\eps}{\varepsilon}
\newcommand{\be}{\begin{equation}}
\newcommand{\ee}{\end{equation}}
\newcommand{\bea}{\begin{eqnarray}}
\newcommand{\eea}{\end{eqnarray}}
\newcommand{\ba}{\begin{array}}
\newcommand{\ea}{\end{array}}
\newtheorem{thm}{Theorem}[section]
\newtheorem{cor}[thm]{Corollary}
\newtheorem{lem}[thm]{Lemma}
\newtheorem{prop}[thm]{Proposition}
\begin{document}

\begin{frontmatter}
\title{Multi-Dimensional Stability of Waves \\ Travelling through Rectangular Lattices \\ in Rational Directions}
\journal{...}
\author[OL]{A. Hoffman},
\author[LD]{H. J. Hupkes\corauthref{coraut}},
\corauth[coraut]{Corresponding author. }
\author[KU]{E. S. Van Vleck}
\address[OL]{
  Franklin W. Olin College of Engineering   \\
  1000 Olin Way; Needham, MA 02492; USA \\ Email:  {\normalfont{\texttt{aaron.hoffman@olin.edu}}}
}
\address[LD]{
  Mathematisch Instituut - Universiteit Leiden \\
  P.O. Box 9512; 2300 RA Leiden; The Netherlands \\ Email:  {\normalfont{\texttt{hhupkes@math.leidenuniv.nl}}}
}
\address[KU]{
  Department of Mathematics - University of Kansas \\
  1460 Jayhawk Blvd; Lawrence, KS 66045; USA \\
  Email: {\normalfont{\texttt{erikvv@ku.edu}}}
}
\date{\today}

\begin{abstract}
\singlespacing
We consider general reaction diffusion systems
posed on rectangular lattices in two or more spatial dimensions.
We show that travelling wave solutions to such systems
that propagate in rational directions are nonlinearly stable under small perturbations.
We employ recently developed techniques
involving point-wise Green's functions estimates for functional differential
equations of mixed type (MFDEs), allowing our results to be applied
even in situations where comparison principles are not available.
\end{abstract}

\begin{subjclass}
\singlespacing
34K31 \sep 37L15.
\end{subjclass}

\begin{keyword}
\singlespacing
travelling waves \sep
multi dimensional lattice differential equations \sep
Green's functions \sep
nonlinear stability \sep
Fourier synthesis.
\end{keyword}

\end{frontmatter}


\numberwithin{equation}{section}
\renewcommand{\theequation}{\thesection.\arabic{equation}}

\section{Introduction}
\label{sec:int}

A classical line of inquiry in applied mathematics concerns the mechanisms by which energy or activity is transported across a spatial domain.
In this paper we study the dynamics in higher-dimensional
spatially discrete models such as the Nagumo
lattice differential equation (LDE)
\begin{equation}
 \label{eq:int:nagumo:lde}
 \dot{u}_{ij} = u_{i+1,j}+u_{i-1,j}+u_{i,j+1}+u_{i,j-1}-4u_{ij} + g(u_{ij}), \qquad (i,j) \in \Z^2,
 \end{equation}
with prototypical cubic nonlinearity $g(u) = u(u-1)(\rho-u)$ for some $\rho \in (0,1)$.
We are interested in initial data that are close to a
planar travelling wave solution
\begin{equation}
\label{eq:TWansatz}
 u_{ij}(t) = \Phi( i \sigma_1  + j \sigma_2 + ct); \quad \Phi(-\infty) = 0, \qquad \Phi(\infty) = 1.
\end{equation}

The Nagumo equation is a phenomenological model in which two stable equilibria compete for dominance in a spatial domain.
In modelling contexts one often thinks of these equilibria as representing material phases or chemical or biological species.
The LDE \eqref{eq:int:nagumo:lde} occurs both as a model in its
own right as well as a standard semi-discretization of the Nagumo PDE.
Planar travelling waves \sref{eq:TWansatz}
can be thought of as a mechanism of transport in which the
fitter species or more energetically favourable phase invades the spatial domain, albeit for very specialized initial data.
Establishing the stability of these waves,
as we do here in this paper,
shows that this transport mechanism
is available
for an open set of initial
conditions in phase space.

We note that much of the prior work on stability of travelling waves in (one-dimensional) lattice reaction-diffusion
equations relies heavily on comparison principles.
Indeed, one can often leverage the monotonicity inherent in the problem
by using Lyapunov functions or sub and super-solutions
that sandwich the travelling wave; see e.g. \cite{ChenFuGuo,SHEN1999a}.
It is only recently that existence and stability
results have become available for LDEs that do not admit a comparison principle
\cite{BatesInfRange,HSdFHN1,HJHSTBFHN}.
In order to incorporate such LDEs into our analysis,
we avoid the use of comparison principles
in our stability results. Indeed, in the bulk of the paper we study
a general class of models that includes systems of
LDEs such as the discrete Fitzhugh-Nagumo equation, possibly with nonlinear and non-symmetric coupling.
For concreteness however, we focus exclusively on the Nagumo equation
for the remainder of this introduction,
but restrict our technical discussion to spectral analysis.

\subsection*{Existence of Planar Fronts}

To motivate \eqref{eq:int:nagumo:lde} and discuss the mathematical difficulties which arise in its study,
we recall the classical Nagumo PDE
\begin{equation}
 u_t = u_{xx}+ u_{yy} + g(u) \label{eq:nagumoPDE}
\end{equation}
in two space dimensions. This PDE features a thresholding nonlinearity that promotes high frequencies
and has to compete with the diffusion operator, which by contrast attenuates high frequencies.
The resolution of this competition is the formation of travelling waves,
i.e. solutions of the form
\begin{equation}
 u(t,x,y) = \Phi(x \cos \zeta + y \sin \zeta +ct); \qquad \Phi(-\infty) = 0, \; \Phi(\infty) = 1
\end{equation}
which propagate with speed $-c$  in the direction $(\cos \zeta, \sin \zeta)$
with a fixed monotone wave profile $\Phi$.
The existence of such waves can be established via phase-plane analysis,
since the wave profile
necessarily satisfies the second order ODE
\begin{equation}
  \label{eq:int:eq:waveProfile:pde}
 c\Phi' = \Phi'' + g(\Phi).
\end{equation}

We remark that in moving from the continuum to the lattice, the first
main difference already appears
when formulating the analogue of the travelling wave ODE \sref{eq:int:eq:waveProfile:pde}.
Indeed, the wave profile $\Phi : \R \to \R$ featuring in \eqref{eq:int:nagumo:lde}-\eqref{eq:TWansatz} necessarily satisfies
the system
\begin{equation}
   \label{eq:MFDE}
  \begin{array}{lcl}
  c\Phi'(\xi) &= & \Phi(\xi + \sigma_1) + \Phi(\xi - \sigma_1) + \Phi(\xi + \sigma_2) + \Phi(\xi - \sigma_2) - 4\Phi(\xi)
     + g(\Phi(\xi)); \\[0.2cm]
  & & \qquad \Phi(-\infty) = 0, \qquad \Phi(\infty) = 1.
  \end{array}
\end{equation}
Inspection of this system immediately raises two important considerations
that must be addressed.

The first issue is that due to the shifted arguments, the system \sref{eq:MFDE} is a
mixed-type functional differential equation (MFDE).
The theory for MFDEs is both more subtle and less well known than the theory for ODEs.
Indeed, the equation \eqref{eq:MFDE} is not well-posed when regarded as an evolution equation on its natural state space
$C^0([-\sigma_{\mathrm{max}},\sigma_{\mathrm{max}}],\R)$, with $\sigma_{\max} = \max\{ \abs{\sigma_1}, \abs{\sigma_2} \}$.
Nevertheless, the theory for MFDEs is
sufficiently mature \cite{MPA,MPVL,HJHLIN,Harterich} for our purposes in this paper.
For example, it is known that for each bistable
$g$ and for each $\sigma = (\sigma_1, \sigma_2) \in \R^2$ there is a unique $c$ for which \eqref{eq:MFDE} admits a
monotone solution \cite{MPB}. We emphasize
that we do not regard the current paper as making any significant contribution towards handling MFDEs.

The second issue is that the direction of propagation $\sigma$ features
prominently in \sref{eq:MFDE},
in contrast to the ODE \sref{eq:int:eq:waveProfile:pde} which is completely isotropic.
%
In fact, this $\sigma$ dependence can be quite rough, particularly for choices of $g$ at which the two
spatially constant equilibria are nearly but not exactly energetically equivalent.
For such choices of $g$ the waves move slowly enough to feel the lattice discreteness as a kind of friction which
further slows down the fronts, sometimes to the point of pinning them \cite{Cahn,Bell1984,Elmer2006}.
Moreover, both numerical and analytical work \cite{CMPVV,MPCP,HJHVL2005,HoffMPCrys} points to this additional friction
being felt anisotropically, more strongly in directions such as $\sigma = (1,0)$ and $\sigma = (1,1)$ for which there is stronger
resonance with the lattice and weakening rapidly as the resonance weakens.
We regard the main contribution of this work as the extension of techniques
in nonlinear stability theory to handle the peculiarities
that arise in anisotropic systems.

\subsection*{Stability of Travelling Fronts}

A natural question, and a chief motivator of the present study, regards the extent to which the slowing of
planar fronts facing lattice directions governs the evolution of initial data which are not themselves exact travelling waves.
Preliminary numerical experiments show compactly supported initial data expanding to occupy a square whose
sides are aligned with the lattice directions and which propagate outwards at the (slow) speed of the planar fronts.
This suggests that the anisotropy
in the wave speeds of planar fronts governs the dynamics for a large set of initial data.
Indeed, one may conjecture that
fronts facing the lattice directions $(1,0)$ and $(0,1)$ play a
distinguished role in governing the dynamics of a large class of initial data.

In this study we take a small step towards understanding this behaviour
by establishing the multidimensional stability of planar fronts moving through the lattice $\Z^2$.
In particular, we show that the wave speeds of the planar fronts govern the dynamics of initial data which are themselves
close to planar fronts.  Furthermore, our results suggest that the waves facing directions in strongest resonance with the lattice,
i.e. $\sigma = (1,0)$ or $(0,1)$, are more stable in the following sense.
Although we obtain the same slow $t^{-1/4}$ rate
for the relaxation of
transversal phaseshift deformations
regardless of the direction that the wave is facing,
we do obtain the relatively fast $t^{-3/2}$ rate for the rest of the perturbation to fronts facing a lattice direction
as compared to the relatively slow $t^{-3/4}$ that we are able to obtain for perturbations to fronts facing an oblique direction.

These observations seem to support the conjecture mentioned above.
Of course, much work remains to be done
and a rigorous examination of initial data that is far from a planar front is beyond the scope of this study.
Such an analysis may nevertheless be regarded as a natural next step,
especially in view of the fact
that non-planar travelling wave solutions to PDEs have been actively investigated of late;
see e.g. \cite{HaragusScheel,Taniguchi}.

%

\paragraph{One-Dimensional Stability}
In order to illustrate the extra complications that the anisotropy
of \sref{eq:int:nagumo:lde} imposes on our nonlinear stability analysis,
we start by discussing the stability of waves in one spatial dimension.
%
The linearization of the one-dimensional version of
the PDE \eqref{eq:nagumoPDE} about its travelling wave
has a simple eigenvalue at zero coming from the translational symmetry in the problem.
The remainder of the spectrum is contained away from
the imaginary axis in the left half-plane.  That the only neutral mode arises from translation suggests the Ansatz
$u(t,x) = \Phi(x+ct + \theta(t)) + v(x,t)$.  The degree of freedom generated by allowing $\theta$ to vary
can be used to demand that the rapidly decaying part of the perturbation $v$ is orthogonal to the (adjoint)
neutral mode, implying that the linear part of the evolution for $v$ generates an exponentially stable semigroup.
One can then set up and close a bootstrapping argument with $v$ decaying exponentially fast and $\theta$ remaining small.

We remark that the argument sketched above can be transferred from the one-dimensional continuum $\R$ to the
one-dimensional lattice $\Z$.
The chief hurdle to be overcome is that unlike in the continuum, on the lattice travelling waves are no longer equilibria in a moving
frame but rather periodic modulo the shift.  This observation is not new; techniques to overcome it have been developed, for example in \cite{VL9,FP3,BGV2003}.  Here we proceed along the lines of the program pioneered by Benzoni-Gavage
and coworkers \cite{BGV2003} in which Green's functions for linear LDEs
are constructed by `filling in' the space between lattice points,
restoring the relative equilibrium point of view.

\paragraph{Two-Dimensional Stability}
In moving from one spatial dimension to two, an additional mathematical difficulty is that the interface
is no longer compact.  Instead of occupying a point or interval, it now occupies a line or strip.
One must now be concerned not only with rigid translations of the front but with long wave
deformations of the interface which decay slowly at the linear level.
The slow decay of these long wave deformations manifests as a curve of essential spectrum
which touches the origin.  The presence of essential spectrum is a complicating factor in the
nonlinear stability analysis of travelling waves because the semigroups associated with the linear
part of the evolution now exhibit only algebraic decay and thus a bootstrapping argument is no longer
guaranteed to close if (even bounded) quadratic nonlinear terms are present. 
Nonlinear stability in the
presence of essential spectrum which touches the imaginary axis is presently an active area of research;
see e.g. \cite{BECKNGUSANZUM2012}.

Essential spectrum is also present in
\sref{eq:nagumoPDE}, but several authors have
managed to either skillfully manage
this complication \cite{KAP1997}
or avoid it altogether \cite{XIN1992,LEVXIN1992}.
Nevertheless, in the current paper we need
to reconsider this hurdle and develop techniques to
deal with the essential spectrum.
%
The chief difficulty in our setting is that although the spectral curves are no worse
than in the continuum case, the spectral projections are complicated by the anisotropy of the lattice.
To better see this, we now give a brief description of how the proof of stability in \cite{KAP1997} can be adapted to
the case where the front faces a lattice direction $\sigma = (1,0)$ and how this proof must be extended to cope with
the anisotropy of the lattice when the front faces an oblique direction.

\paragraph{Lattice Directions}
In the case when the wave is facing the lattice direction $\sigma = (1,0)$,
we search for solutions to \sref{eq:int:nagumo:lde} of the form
\begin{equation}
\label{eq:int:hor:waves:ansatz}
u_{ij}(t) = \Phi(i+ct + \theta_j(t)) + v_{ij}(t),
\end{equation}
in which the travelling wave $(\Phi,c)$ satisfies the MFDE
\begin{equation}
  \begin{array}{lcl}
  c\Phi'(\xi) &= & \Phi(\xi + 1) + \Phi(\xi - 1) - 2 \Phi(\xi)
     + g(\Phi(\xi)); \\[0.2cm]
  & & \qquad \Phi(-\infty) = 0, \qquad \Phi(\infty) = 1.
  \end{array}
\end{equation}
In particular, the functions $\theta_j$ allow the phase of the wave
to vary in the transverse direction,
meaning the direction perpendicular to the motion of the wave and parallel to the interface which separates regions
dominated by the respective equilibria. The functions $v_{ij}$ capture
any remaining perturbations that are orthogonal to these phase shifts in an appropriate sense.

Substituting the Ansatz \sref{eq:int:hor:waves:ansatz} into the evolution
\sref{eq:int:nagumo:lde}, one obtains
\begin{equation}
 \label{eq:int:evolutionForVLatticeDir}
 \dot{v}_{ij} = [L_{ct} v]_{ij} + \Phi'(\xi)(\theta_{j+1} + \theta_{j-1} - 2\theta_j) - \dot{\theta}_j \Phi'(\xi+\theta_j)
 + N_{ij}(v,\theta)
\end{equation}
with $\xi = i + ct$, together with the identity
\begin{equation}
\label{eq:int:defL}
 [L_{ct} v]_{ij} = v_{i+1,j} + v_{i-1,j} + v_{i,j+1} + v_{i,j-1} - 4v_{ij} + g'(\Phi(i + ct))v_{ij}
\end{equation}
and the estimate
\begin{equation}
\begin{array}{lcl}
N_{ij}(v,\theta) & = & O\big((\theta_{j+1}-\theta_j)^2 + (\theta_{j-1} - \theta_j)^2 \big)
  + O \big( \theta_j(\theta_{j+1} + \theta_{j-1} - 2 \theta_j) \big) \\[0.2cm]
& & \qquad + O\big( \theta_jv_{ij} \big)
 + O \big( v_{ij}^2\big).
\end{array}
\end{equation}

Our choice of notation emphasizes the fact that the operator $L_{ct}$ is non-autonomous.
As a consequence, it is not immediately
clear how to leverage information about the spectrum of $L_{ct}$ to obtain decay rates on the semiflow it generates.
The key is to notice that the operator $L_{ct}$ is constant-coefficient with respect to $j$.
Performing a Fourier transform in this transverse direction decouples modes
with different frequencies $\omega$ and reduces \sref{eq:int:defL} to
\begin{equation}
\label{eq:int:defLOmega}
 [L_{\omega, ct} \widehat{v}(\omega)]_{i} = \widehat{v}_{i+1}(\omega) + \widehat{v}_{i-1}(\omega)
  + e^{i \omega } \widehat{v}_{i}(\omega) + e^{- i \omega} \widehat{v}_{i}(\omega) - 4\widehat{v}_{i}(\omega) + g'(\Phi(i +ct))\widehat{v}_{i}(\omega).
\end{equation}
The work of \cite{BGV2003,HJHNLS} essentially says that the spectrum of $\mathcal{L}_{\omega}:= - c\partial + L_{\omega, 0}$,
regarded as an autonomous operator from $W^{1,\infty}(\R,\R) \to L^\infty(\R,\R)$, governs the evolution semiflow of
$\frac{d}{dt}\widehat{v}(\omega) = L_{\omega, ct} \widehat{v}(\omega)$
in a fashion similar to the usual spectral mapping theorem.
To obtain information regarding the operator $\mathcal{L}_{\omega}$, it suffices
to drop the hats in \sref{eq:int:defLOmega}
and write
\begin{equation}
 [\mathcal{L}_\omega v](\xi) = - cv'(\xi) + v(\xi+1) + v(\xi-1) - 2(2-\cos \omega)v(\xi) + g'(\Phi(\xi))v(\xi).
\end{equation}

At this point, one notes that $\mathcal{L}_0$ is the linearization of the one-dimensional problem about
$\Phi$ which has received ample attention from investigators.  In particular, it is known that $\mathcal{L}_0$ is Fredholm with
index 0, kernel spanned by $\Phi'$ and range orthogonal to an adjoint eigenfunction $\psi$, which can be normalized
to $\<\Phi',\psi\> = 1$ \cite{MPA,MPB}.  Moreover, $\mathcal{L}_0$ has no spectrum in the closed right half plane other than its simple eigenvalue at zero \cite{VL9,MPCP}.  Furthermore, since $\mathcal{L}_\omega$ depends upon $\omega$ only by a spectral shift,
the spectral projection onto the principal eigenvector is $\omega$-independent.  This point is
absolutely crucial.

Indeed, we can now resolve
the ambiguity inherent in splitting
perturbations from the planar wave between $\theta_j$ and $v_{ij}$
by demanding
\begin{equation}
\label{eq:int:normalization}
\sum_i v_{ij}(t) \psi(i+ct) \equiv 0
\end{equation}
uniformly in $j$. 
This requirement can be interpreted as
the equivalent of a spectral condition
associated to
the non-autonomous operator $L_{ct}$, demanding that $v$
belongs to a spectral region contained in
$\{\Re \lambda < -\beta\} \subset \C$. In particular,
we can expect $L_{ct}$ to generate an exponentially stable semiflow on
$\{v \; | \; \sum_i v_{ij} \psi(i+ct) \equiv 0\}$.
Under this restriction, we will see in {\S}\ref{sec:nl}
that solutions to \sref{eq:int:evolutionForVLatticeDir}
must satisfy the fixed point condition
\begin{equation}
\label{eq:int:fixpoint}
\begin{array}{lcl}
\left(\begin{array}{l}v(t) \\ \theta(t)\end{array}\right)
& = &
\left(\begin{array}{cc } \mathcal{G}_{vv}(t, 0) & \mathcal{G}_{v \theta}(t, 0) \\
                        \mathcal{G}_{\theta v}(t, 0) & \mathcal{G}_{\theta \theta}(t,0)
\end{array}\right)
\left(\begin{array}{l}v(0) \\ \theta(0)\end{array}\right)  \\[0.4cm]
& & \qquad +  \int_{0}^t
\left(\begin{array}{cc } \mathcal{G}_{vv}(t, t_0) & \mathcal{G}_{v \theta}(t, t_0) \\
                        \mathcal{G}_{\theta v}(t, t_0) & \mathcal{G}_{\theta \theta}(t,t_0)
\end{array}\right)
\left(\begin{array}{l} \mathcal{N}_{v}\big(t_0;  v(t_0), \theta(t_0) \big) \\  \mathcal{N}_\theta\big( t_0; v(t_0), \theta(t_0) \big)\end{array}\right) \, d t_0.
\end{array}
\end{equation}
In view of the discussion above, we expect to have the exponential bounds
$\mathcal{G}_{v*}(t, t_0) =  O (e^{ - \beta (t - t_0)} )$, with $* = v, \theta$.
In addition, we expect $\mathcal{G}_{\theta \theta}(t, t_0)$
to behave similarly to a heat kernel, on account of the
leading order terms of the expansion
\begin{equation}
\dot{\theta}_j = \theta_{j+1} +  \theta_{j-1} - 2 \theta_j  +O( \abs{v} +  \abs{ \theta }^2)
\end{equation}
one obtains after imposing \sref{eq:int:normalization} upon \sref{eq:int:evolutionForVLatticeDir}.
The nonlinear terms can be roughly\footnote{
We are deliberately suppressing all mention of the precise
norms and sequence spaces we are referring to.
}
bounded as
\begin{equation}
\mathcal{N}_*(t; v , \theta) = O ( \abs{v}^2) + O( \abs{v} \abs{ \theta }) + O\big( \abs{\theta^\diamond}^2 \big)
 + O \big( \abs{ \theta } \abs{ \theta^{\diamond \diamond} } \big),
\end{equation}
again with $*  = v, \theta$. Here we have $\theta^{\diamond}_j = (\theta_{j+1} - \theta_j, \theta_{j -1} - \theta_j )$
and $\theta^{\diamond \diamond}_j = \theta_{j+1} + \theta_{j-1} - 2 \theta_j $.

%


In view of the exponential bounds on $\mathcal{G}_{v*}$
and the integral inequality
\begin{equation}
\int_0^t e^{-\alpha t} (1 + t_0)^{-\beta}dt_0 \le C(1 + t)^{-\beta},
\label{eq:liuduhamelexp}
\end{equation}
a rough estimate of the top component of the integral in
\sref{eq:int:fixpoint} suggests that $v$ will decay at the slowest algebraic rate appearing among the terms $\mathcal{N}_v$ and $\mathcal{N}_\theta$.
Using the heuristic that
$\theta$ decays like a one-dimensional heat kernel, which means its $k^{th}$ difference decays
like $t^{-\frac{2k+1}{4}}$, we see that the slowest
decay in $\mathcal{N}_*$ comes from the terms $\abs{\theta^\diamond}^2$ and
$\abs{\theta} \abs{ \theta^{\diamond \diamond} }$,
which behave as $t^{-3/2}$.

Assuming now that indeed $v \sim t^{-3/2}$, one can easily
recover the decay $\theta \sim t^{-1/4}$
from the bottom component of the integral in \sref{eq:int:fixpoint}
by
substituting $\alpha = 1/4$ and $\beta = 3/2$ into the integral inequality
\begin{equation}
\int_0^t (1 + t-t_0)^{-\alpha}(1 + t_0)^{-\beta}dt_0 \le C(1 +t)^{-\min\{\alpha + \beta - 1,\alpha,\beta\}};
  \quad \mathrm{ when }\; \alpha,\beta \ne 1,
\label{eq:liuduhamelalg}
\end{equation}
which can be found in \cite[Lem. 3.2]{ChernLiu1987}.
Of course, these computations merely show that the postulated
decay rates are \textit{consistent} with \sref{eq:int:fixpoint}.
Fortunately, a straightforward bootstrapping argument can be used
to rigorously recover these decay rates without too much extra work.

%
%

This rough sketch of the $\sigma = (1,0)$ case follows \cite{KAP1997} very closely and matches exactly the
results obtained there in the case of two dimensions.  In fact, in this special case the LDE
stability analysis is easier than the PDE stability analysis simply because all of the operators are
bounded and higher-order Sobolev norms are not needed.  We remark that this analysis is also
reminiscent of the work in \cite{BatesChen2002} in which fronts facing the horizontal direction
$\sigma = (1,0,\ldots,0)$ are shown to be stable in $\R^n$ with $n \ge 4$ for the nonlocal equation
$u_t = J\ast u + g(u)$ with $J$ appropriately elliptic.

\paragraph{Oblique Directions}
The chief difficulty in our work arises when we consider waves that are not facing the lattice directions.
In order to gain some insight, let us consider a wave facing a general direction
$(\sigma_1,\sigma_2) \in \Wholes^2$, with $\sigma^2_1 \neq \sigma^2_2$ and $\sigma_1 \sigma_2 \neq 0$.
 %
%
We seek an Ansatz which decomposes the general perturbation
into a slowly decaying perturbation associated with
deformations of the interface and a rapidly decaying perturbation which accounts for everything else.  That is,
we write
\begin{equation}
 u_{ij}(t) = \Phi(n+ct + \theta_l(t)) + v_{nl}(t); \qquad n = i\sigma_1 + j\sigma_2, \qquad l = j\sigma_1-i\sigma_2.
\end{equation}
Upon substituting this Ansatz into the evolution \sref{eq:int:nagumo:lde} and
using the wave profile equation \sref{eq:MFDE},
we obtain
\begin{equation}
 \dot{v}_{nl} = [L_{ct} v]_{nl} + [M_{ct}\theta]_{nl} + N_{nl}(v,\theta) + \dot{\theta}_l\Phi'(n+ct+\theta_l),
\end{equation}
where
\begin{equation}
 [L_{ct} v]_{nl} = v_{n+\sigma_1,l+\sigma_2} + v_{n+\sigma_2,l-\sigma_1} +  v_{n-\sigma_1,l-\sigma_2} + v_{n-\sigma_2,l+\sigma_1} - 4v_{nl}
  + g'(\Phi(n+ct))v_{nl}
\end{equation}
and
\begin{equation}
\begin{array}{lcl}
 [M_{ct} \theta]_{nl} & = & \Phi'(n+\sigma_1+ct)(\theta_{l+\sigma_2}-\theta_l)  + \Phi'(n+\sigma_2+ct)(\theta_{l-\sigma_1}-\theta_l) \\[0.2cm]
 & & \qquad   + \Phi'(n-\sigma_1+ct)(\theta_{l-\sigma_2}-\theta_l)  + \Phi'(n-\sigma_2+ct)(\theta_{l+\sigma_1}-\theta_l),
\end{array}
\end{equation}
as well as
\begin{equation}
  \begin{array}{lcl}
 N_{nl}(v,\theta) & = & \sum_{k=1}^2 O\big((\theta_{l+\sigma_k}-\theta_l)^2 + (\theta_{l-\sigma_k}-\theta_l)^2 \big)
  \\[0.2cm]
& & \qquad + \sum_{k = 1}^2 O\big( \theta_l (\theta_{l + \sigma_k} - \theta_l) \big)
           + \sum_{k=1}^2   O\big( \theta_l (\theta_{l - \sigma_k} - \theta_l) \big) \\[0.2cm]
& & \qquad +  O \big( \theta_lv_{nl} \big) + O\big( v_{nl}^2 \big).
  \end{array}
\end{equation}

The anisotropy of the lattice manifests itself in several places.
Firstly, the term $M\theta$ is linear in the first differences of $\theta$, rather than the second differences.
In particular, we expect the nonlinear estimates
\begin{equation}
\label{eq:int:estNStarGenDir}
\mathcal{N}_*(t; v , \theta) = O ( \abs{ v }^2) + O( \abs{ v } \abs{ \theta })
+ O \big(  \abs{ \theta^\diamond}^2 \big) + O \big( \abs{\theta} \abs{ \theta^{\diamond} } \big),
\end{equation}
now with
\begin{equation}
\theta^{\diamond}_l = \big( \theta_{l + \sigma_2} - \theta_l, \theta_{l - \sigma_1} - \theta_l,
\theta_{l - \sigma_2} - \theta_l, \theta_{l + \sigma_1} - \theta_l \big).
\end{equation}
The term $\abs{\theta} \abs{ \theta^{\diamond} }$ is particularly dangerous,
since we can expect it to behave as $t^{-1}$,
which puts us in
the $\beta = 1$ case for which \sref{eq:liuduhamelalg} fails to hold.

This particular difficulty is often encountered when studying the stability of
travelling waves for PDEs with a conservation
law structure, e.g. viscous shocks \cite{ZUM2009}.  We make use of the simple, yet crucial, identity
\begin{equation}
\theta_l (\theta_{l+1} - \theta_l ) = \frac{1}{2} [ \theta_{l+1}^2 - \theta_l^2 - (\theta_{l+1} - \theta_l)^2 ],
\label{eq:uux}
\end{equation}
which can be regarded as a discrete form of $uu_x = \big(\frac{1}{2}u^2\big)_x$.
The square of the first difference decays like $t^{-3/2}$ and hence is safe.  The difference of the squares
though only decays like $t^{-1}$. However, we can sum by parts, transferring the difference to the heat kernel
and in so doing move from the bad estimate
\begin{equation}
\label{eq:int:badEstimate}
\int_0^t (1 + t-s)^{-1/4} (1 +s)^{-1} ds \sim \log(1 + t)(1 +t)^{-1/4}
\end{equation}
to the good estimate
\begin{equation}
 \int_0^t (1 + t-s)^{-3/4} (1 + s)^{-1/2} ds \sim (1 + t)^{-1/4}.
\end{equation}
In particular, we must carefully keep track of the nonlinear terms of order
$O( \abs{ \theta } \abs{ \theta^\diamond })$
to make sure that they have the correct form.

Before we can proceed further, we need to decide on a normalization condition
to use for the phases $\theta$. The linear operators $\mathcal{L}_{\omega}$
are now given by
\begin{equation}
\label{eq:int:Lomega}
\begin{array}{lcl}
 [\mathcal{L}_\omega v](\xi) &=& -cv'(\xi) + e^{i\omega\sigma_2}v(\xi+\sigma_1) +
  e^{-i\omega \sigma_1}v(\xi+\sigma_2) + e^{-i\omega \sigma_2}v(\xi-\sigma_1)
  + e^{i\omega \sigma_1}v(\xi-\sigma_2) \\[0.2cm]
& & \qquad - 4v(\xi)  + g'\big(\Phi(\xi)\big)v(\xi).
\end{array}
\end{equation}
In particular, the dependence on the frequency $\omega$ is now non-trivial.
It is true that $\mathcal{L}_0$ still has a simple
eigenvalue at zero with the rest of the spectrum in the left half-plane and thus regular perturbation
yields a curve of eigenvalues $\lambda_\omega$ and eigenvectors $\phi_\omega$.  However,
because the
leading eigenvectors $\phi_\omega$ depend upon $\omega$, the natural orthogonality condition now
lives in the Fourier domain. Indeed, it is natural to require that for each frequency
$\omega$,
the spectral projection onto $\phi_\omega$ of the Fourier coefficient
$\widehat{v}(\omega)$ must vanish.
On the range of such a projection,
one would expect the operator $L_{ct}$ to generate an exponentially stable semiflow.

However, we feel that the frequency mixing inherent in such a projection will
lead to prohibitively complicated expressions for the nonlinearities
that will most likely prevent us from
verifying the conservation-law structure of the $O(\abs{\theta} \abs{ \theta^{\diamond}})$
terms. This would thwart any attempts to
circumvent the $\beta = 1$ problem in \sref{eq:liuduhamelalg} through the use of \sref{eq:uux}.
For this reason, we continue to use the frequency independent
normalization
\begin{equation}
 \sum_n v_{nl}\psi(n+ct) \equiv 0
\end{equation}
that worked so well in the $\sigma = (1,0)$ case.
The price that needs to be paid is that the semiflow $\mathcal{G}_{v \theta}(t, t_0)$
now decays algebraically at the rather slow rate of $O( (1 + t - t_0)^{-3/4} )$.
The frequency dependence of the eigenfunctions $\phi_{\omega}$ is directly to blame for this.

At first glance, this means that the $O( \abs{\theta} \abs{ v })$ term in \sref{eq:int:estNStarGenDir}
behaves as $t^{-1}$, which of course is problematic. By carefully
inspecting the part of $\mathcal{G}_{v \theta}$ that decays at the slowest rate,
we are able to show that the troublesome portion of $v$ is directly
proportional to $\theta^{\diamond}$. Together with a detailed analysis of
the $O( \abs{\theta} \abs{ v })$ terms in the nonlinearities, this allows the use of \sref{eq:uux}
to prevent the logarithm from appearing in \sref{eq:int:badEstimate}.

\subsection*{Overview}

In the following sections we flesh out in great detail the sketch presented above.  In {\S}\ref{sec:mr} we state our hypotheses and main theorems.
Our theorems are of the form ``spectral stability implies nonlinear stability.''
Our hypotheses relate largely to the existence of a travelling wave and spectral assumptions on the analog of
$\mathcal{L}_0$ and $\mathcal{L}_\omega$.  Section \ref{sec:cds} is devoted to the development of coordinate systems,
both the $(n,l)$ coordinates for the lattice and the $(v,\theta)$ coordinates for $u$ as well as to the derivation
of a system of evolution equations in these coordinates and the estimation of the nonlinear terms.
Section \ref{sec:green} is devoted to linear analysis, namely pointwise estimates on the Green's functions associated with
the $(v,\theta)$ evolution at the linear level.  Section \ref{sec:nl} is devoted to nonlinear stability,
showing that so long as all of the terms on the right hand side decay as they ought for $t \in [0,T]$,
then they actually decay as they ought for $t \in [0,\infty)$.
Finally in {\S}\ref{sec:nag} we show that our theorem is not vacuous.
More concretely, we verify our abstract hypotheses for the Nagumo equation \eqref{eq:int:nagumo:lde}.
All of the hypotheses with analogs in the continuum case \cite{KAP1997} we can verify analytically.
However we need to make an additional hypothesis $(HS1)_\omega$ to control terms which arise due to the anisotropy of the lattice.
With the exception of directions close to the $\sigma = (1,0)$ or $\sigma = (1,1)$ cases,
in which these terms are absent or benign,
we cannot analytically verify this hypothesis and instead use numerics.
We conclude in {\S}\ref{sec:dis} by discussing the
decisions we made that limit the scope of our work
and suggesting potential areas for further research.



\paragraph{Acknowledgments}
Hoffman acknowledges support from the NSF (DMS-1108788).
Hupkes acknowledges support from the Netherlands Organization for Scientific Research (NWO).
Van Vleck acknowledges support from the NSF (DMS-0812800 and DMS-1115408).

\section{Main Results}
\label{sec:mr}
Consider the nearest-neighbour lattice differential equation
\begin{equation}
\label{eq:mr:mainLDE}
\dot{u}_{ij}(t) = f\big(  u_{i+1, j}(t), u_{i, j+1}(t), u_{i-1, j}(t), u_{i, j-1}(t) , u_{ij}(t) \big),
\end{equation}
for which $u_{ij}(t) \in \Real^d$ for all $(i,j) \in \Wholes^2$.
For convenience, we introduce the symbol $\pi^+_{ij}: \ell^\infty(\Wholes^2; \Real^d) \to (\Real^d)^5$
for any pair $(i,j) \in \Wholes^2$,
which acts as
\begin{equation}
\pi^+_{ij} u = \big( u_{i+1, j}, u_{i, j+1}, u_{i-1, j}, u_{i, j-1} , u_{ij} \big) \in (\Real^d)^5.
\end{equation}
In particular, these operators produce a vector of neighbouring grid points
that are arranged according to the ''+''-shaped stencil.
The system \sref{eq:mr:mainLDE} can now be written in the condensed form
\begin{equation}
\label{eq:mr:pimainLDE}
\dot{u}_{ij}(t) = f\big( \pi^+_{ij} u(t) \big).
\end{equation}
The conditions we need on the nonlinearity $f$ are summarized
in the following assumption.
\begin{itemize}
\item[(Hf)]{
  The nonlinearity $f: (\Real^d)^5 \to \Real^d$ is $C^2$-smooth and there exist two points $u_\pm \in \Real^d$ with
  \begin{equation}
    f( u_\pm, u_\pm, u_\pm, u_\pm, u_\pm) = 0.
  \end{equation}
}
\end{itemize}
Let us now pick an arbitrary pair $(\sigma_1, \sigma_2) \in \Wholes^2$ with $\mathrm{gcd} ( \sigma_1, \sigma_2) = 1$.
Our next assumption states that \sref{eq:mr:mainLDE} admits a wave solution that travels
in the direction $(\sigma_1, \sigma_2)$ through the lattice.
\begin{itemize}
\item[(H$\Phi$)]{
  There exists a wave speed $c \neq 0$ and a wave profile $\Phi \in C^3(\Real, \Real^d)$ that satisfies the limits
  \begin{equation}
     \lim_{\xi \to \pm \infty} \Phi(\xi) = u_\pm \label{eq:mr:wavelim}
  \end{equation}
  and yields a solution to \sref{eq:mr:mainLDE} upon writing
  \begin{equation}
    u_{ij}(t) = \Phi\big( i \sigma_1 + j \sigma_2 + c t \big). \label{eq:mr:TWansatz}
  \end{equation}
}
\end{itemize}
We note that the pair $(c, \Phi)$ solves the functional differential equation of mixed type (MFDE)
\begin{equation}
\label{eq:mr:waveprofileMFDE}
c \Phi'(\xi) = f \big( \Phi(\xi + \sigma_1), \Phi(\xi + \sigma_2), \Phi(\xi - \sigma_1), \Phi(\xi - \sigma_2), \Phi(\xi) \big).
\end{equation}
In particular, the $C^3$-continuity mentioned in (H$\Phi$) is automatic upon assuming that $\Phi$ is merely continuous.
For conciseness, we rewrite \sref{eq:mr:waveprofileMFDE}
in the form
\begin{equation}
c\Phi'(\xi) = f\big( [\tau\Phi](\xi) \big),
\end{equation}
in which the operator $\tau: C(\Real, \Real^d) \to C(\Real, (\Real^d)^5)$ acts as
\begin{equation}
\label{eq:mr:defTauWithoutOmega}
[\tau p](\xi) = \big(p(\xi+\sigma_1),p(\xi+\sigma_2),p(\xi-\sigma_1),p(\xi-\sigma_2),p(\xi) \big) \in (\Real^d)^5.
\end{equation}

A standard approach towards establishing the stability of the wave $(c, \Phi)$
under the nonlinear dynamics of the LDE \eqref{eq:mr:mainLDE} is to consider the linear variational problem
\begin{equation}
\dot{v}_{ij}(t) = Df( [\tau \Phi](i \sigma_1 + j \sigma_2 + ct) \big) \pi^+_{ij} v(t).
\end{equation}
As can be seen, the linear operator on the right hand side of this system does
not have constant coefficients and hence cannot be diagonalized via Fourier transform.
It can however be partially diagonalized if one takes the Fourier transform
in the direction that is perpendicular to the propagation of the wave, i.e.,
upon taking  $i\sigma_1 + j\sigma_2 + ct =  \mathrm{constant}$.
As explained in \S\ref{sec:green},
one arrives for each transverse spatial frequency $\omega \in [-\pi, \pi]$ at an LDE posed on a
one dimensional lattice that is parallel to the direction of propagation. This LDE is given by
\begin{equation}
\label{eq:mr:1dLDEParallel}
\dot{v}_n(t) =  Df( [\tau \Phi](n + ct) \big) \Big( e^{i \sigma_2 \omega } v_{n+ \sigma_1}(t), e^{- i \sigma_1 \omega }  v_{n + \sigma_2}(t),
  e^{-i \sigma_2 \omega} v_{n - \sigma_1}(t), e^{i \sigma_1 \omega } v_{n - \sigma_2}(t), v_n(t)    \Big),
\end{equation}
in which we have $n =  i \sigma_1 +  j \sigma_2$.
As explained in detail in \cite[{\S}2]{HJHSTBFHN}, there is a close
relationship between the Green's function for the LDE \sref{eq:mr:1dLDEParallel}
and the linear operators
\begin{equation}
\mathcal{L}_{\omega}: W^{1, \infty}(\Real, \Complex^d) \to L^{\infty}(\Real, \Complex^d), \qquad \omega \in [-\pi, \pi],
\end{equation}
that act as
\begin{equation}
[\mathcal{L}_{\omega} p](\xi) = -c p'(\xi) + Df\big( [ \tau  \Phi](\xi) \big) [\tau_{\omega} p](\xi),
\end{equation}
in which the operator $\tau_{\omega}: C(\Real, \Complex^d) \to C(\Real, (\Complex^d)^5)$ is defined by
\begin{equation}
\label{eq:mr:defTauOmega}
[\tau_{\omega} p](\xi) = \big( e^{i\sigma_2\omega} p(\xi+\sigma_1), e^{-i\sigma_1 \omega} p(\xi+\sigma_2),
          e^{-i\sigma_2\omega} p(\xi-\sigma_1),  e^{i\sigma_1 \omega}p(\xi-\sigma_2),p(\xi) \big) \in (\Complex^d)^5.
\end{equation}
Written out in the traditional form used in the landmark paper \cite{MPA}, we have
\begin{equation}
[\mathcal{L}_{\omega} p](\xi) = - c p'(\xi) + \sum_{j=1}^5 A_{\omega,j}(\xi) p(\xi + r_j),
\end{equation}
in which the shifts $r_j$ and operators $A_{\omega,j}$ are given by
\begin{equation}
\begin{array}{lclclcl}
r_1 & = & \sigma_1, & &
  A_{\omega,1}(\xi) & = & e^{i \sigma_2 \omega} D_1 f\big( \Phi(\xi +\sigma_1), \Phi(\xi + \sigma_2), \Phi(\xi - \sigma_1), \Phi(\xi - \sigma_2),\Phi(\xi) \big), \\[0.2cm]
r_2 & = & \sigma_2, & &
  A_{\omega,2}(\xi) & = & e^{ - i \sigma_1 \omega} D_2 f\big( \Phi(\xi +\sigma_1), \Phi(\xi + \sigma_2), \Phi(\xi - \sigma_1), \Phi(\xi - \sigma_2),\Phi(\xi) \big), \\[0.2cm]
r_3 & = & -\sigma_1, & &
  A_{\omega,3}(\xi) & = & e^{ - i \sigma_2 \omega} D_3 f\big( \Phi(\xi +\sigma_1), \Phi(\xi + \sigma_2), \Phi(\xi - \sigma_1), \Phi(\xi - \sigma_2),\Phi(\xi) \big), \\[0.2cm]
r_4 & = & -\sigma_2, & &
  A_{\omega,4}(\xi) & = & e^{ + i \sigma_1 \omega} D_4 f\big( \Phi(\xi +\sigma_1), \Phi(\xi + \sigma_2), \Phi(\xi - \sigma_1), \Phi(\xi - \sigma_2),\Phi(\xi) \big), \\[0.2cm]
r_5 & = & 0, & &
  A_{\omega,5}(\xi) & = &  D_5 f\big( \Phi(\xi +\sigma_1), \Phi(\xi + \sigma_2), \Phi(\xi - \sigma_1), \Phi(\xi - \sigma_2),\Phi(\xi) \big).
\end{array}
\end{equation}
The formal adjoints of these operators are written as
\begin{equation}
\mathcal{L}^*_\omega: W^{1, \infty}(\Real, \Complex^d) \to L^{\infty}(\Real, \Complex^d), \qquad \omega \in [-\pi, \pi],
\end{equation}
and act as
\begin{equation}
[\mathcal{L}^*_\omega q](\xi) =  c q'(\xi) + \sum_{j=1}^5
A^*_{\omega,j}(\xi - r_j) q(\xi - r_j).
\end{equation}
Indeed, the designation of formal adjoint is justified by an easy computation which shows that
\begin{equation}
\int_{-\infty}^{\infty} \langle q(\xi),  [\mathcal{L}_{\omega} p ](\xi) \rangle \, d \xi
= \int_{-\infty}^{\infty} \langle [\mathcal{L}^*_\omega q](\xi), p(\xi) \rangle \, d \xi
\end{equation}
holds for all pairs $p,q \in W^{1, \infty}(\Real, \Complex^d)$, where $\langle \cdot, \cdot \rangle$
denotes the standard inner product on $\Complex^d$.  We emphasize that for the rest of the paper we will reserve the symbol $\<\cdot,\cdot\>$ to denote the standard inner product on $\C^d$ or $\R^d$ with $\ell^2$ ($L^2$) inner products always written out as sums (integrals).

In view of the fact that $r_j \in \Wholes$ holds for all the shifts, an easy computation shows that
\begin{equation}
\label{eq:mr:spCompact}
e_{-2 \pi i  \ell } ( \mathcal{L}_\omega -\lambda) e_{2 \pi i \ell} = \mathcal{L}_\omega - 2 \pi i \ell c - \lambda
\end{equation}
for all $\lambda \in \Complex$ and $\ell \in \Wholes$, in which the exponential shift operator $e_{\nu}$ is defined by
\begin{equation}
[e_{\nu} v](\xi) = e^{\nu \xi} v(\xi).
\end{equation}
In particular, for all $\omega \in [-\pi, \pi]$, the spectrum of $\mathcal{L}_\omega$ is invariant under the operation
$\lambda \mapsto \lambda + 2 \pi i c$. We now proceed to describe the precise assumptions
we need to impose on the spectra of these operators.

Since $\Phi(\xi)$ approaches $u_\pm$ as $\xi \to \pm \infty$,
we can define the characteristic $\Complex^{d \times d}$-valued functions
\begin{equation}
\Delta^\pm_{\omega}(z) = - c  z I + \sum_{j=1}^5 [\lim_{\xi \to \pm \infty} A_{\omega, j}(\xi)] e^{ z r_j}
\end{equation}
for each $\omega \in [-\pi, \pi]$. Our first spectral assumption states that
the characteristic functions associated to $\mathcal{L}_{\omega} - \lambda$ cannot have roots on the imaginary axis if $\Re \lambda \ge 0$.
\begin{itemize}
\item[$\mathrm{(HS)}_{\mathrm{ess}}$]{
  For all $\omega \in [-\pi, \pi]$ and $\lambda \in \Complex$ that have $\Re \lambda \ge 0$,
  we have
  \begin{equation}
    \label{eq:mr:hs1:charEq}
    \det [ \Delta^\pm_{\omega}(i \nu) - \lambda I] \neq 0
  \end{equation}
  for all $\nu \in \Real$.
}
\end{itemize}
Similarly to \sref{eq:mr:spCompact}, we have
\begin{equation}
\Delta^\pm_{\omega}\big(i (\nu + 2 \pi ) \big) =
\Delta^\pm_{\omega}\big( i \nu \big) -   2 \pi  i c I.
\end{equation}
In particular, when verifying \sref{eq:mr:hs1:charEq}
one can restrict $\Im \lambda$ to a compact interval.
The {\it a priori} bound on solutions $\nu \in \Real$ to
the identity $\det [ \Delta^\pm_{\omega}(i \nu) - \lambda I] = 0$
obtained in \cite[Lem. 3.1]{HJHCM}
now shows that \sref{eq:mr:hs1:charEq} in fact holds whenever $\Re \lambda \ge -\beta$ for some $\beta > 0$.
In particular, in view of \cite[Thm. A]{MPA}, $\mathrm{(HS)}_{\mathrm{ess}}$  implies that the operators $\mathcal{L}_{\omega} - \lambda$
are Fredholm whenever $\Re \lambda \ge -\beta$.

%
%


Recall from \sref{eq:mr:defTauWithoutOmega}
and \sref{eq:mr:defTauOmega}
that $\tau_0 = \tau$. It is natural to expect
that the operator $\mathcal{L}_0$ encodes
stability properties of the wave $(c, \Phi)$ under
perturbations that are constant in the direction transverse to propagation.
Indeed, our next three spectral assumptions on $\mathcal{L}_0$
are sufficient to guarantee the nonlinear stability of the wave under such perturbations
\cite{SHEN1999a}.
These assumptions basically state that $\mathcal{L}_0$ has isolated simple point spectrum at $\lambda \in \{2\pi i c \Wholes\}$
and no other spectrum in the half-plane $\Re \lambda \ge -\beta$.
%
\begin{itemize}
\item[$\mathrm{(HS1)}_p$]{
  There exists $\beta > 0$ such that $\mathcal{L}_0 - \lambda$ is invertible as a map from $W^{1, \infty}(\Real, \Complex^d)$
  into $L^{\infty}(\Real, \Complex^d)$ for all $\lambda \setminus 2 \pi i c \Wholes$ that have $\Re \lambda \ge - \beta$.
}
\item[$\mathrm{(HS2)}_p$]{
  The only nontrivial solutions $p \in W^{1, \infty}(\Real, \Complex^d)$ of $\mathcal{L}_0 p = 0$
  are $p = \Phi'$ and scalar multiples thereof.
}
\item[$\mathrm{(HS3)}_p$]{
  The equation $\mathcal{L}_0 p = \Phi'$ does not admit a solution $p \in W^{1, \infty}(\Real, \Complex^d)$.
}
\end{itemize}
%
Throughout the remainder of this paper, we will use the shorthand $\phi:=\Phi'$.

As we have discussed above, the operator $\mathcal{L}_0$ is Fredholm. The continuity of the Fredholm index
together with $\mathrm{(HS1)}_p$ imply that the Fredholm index of $\mathcal{L}_0$ must be zero.
Using \cite[Thm. A]{MPA} together with $\mathrm{(HS2)}_p$, one sees that the kernel of $\mathcal{L}^*_0$ must be one-dimensional.
Another application of \cite[Thm. A]{MPA} now in combination with $\mathrm{(HS3)}_p$ shows that
the kernel of $\mathcal{L}^*_0$ is spanned by a function $\psi \in W^{1, \infty}(\Real,\Real^d)$
that can be uniquely fixed by the identity
\begin{equation}
\label{eq:mr:normalizatonConditionOnPsi}
\int_{-\infty}^{\infty} \langle \psi(\xi),  \phi(\xi) \rangle d \xi = 1,
\end{equation}
since $\phi$ does not lie in the range of $\mathcal{L}_0$.
In addition, both $\phi(\xi)$ and $\psi(\xi)$
decay exponentially as $\xi \to \pm \infty$.
We now provide a useful identity that allows
the normalization \sref{eq:mr:normalizatonConditionOnPsi} to be extended
to the discrete setting.
\begin{lem}
For every $\vartheta \in \Real$ we have
\begin{equation}
\label{eq:mr:innProduct:alwaysEq}
\sum_{n \in \Wholes } \langle  \psi(n + \vartheta),  \phi(n + \vartheta )  \rangle =
 \int_{-\infty}^\infty \langle \psi(\xi) ,  \phi(\xi)  \rangle d \xi = 1.
\end{equation}
\end{lem}
\begin{proof}
Inspired by the Hale inner product, we define
\begin{equation}
\begin{array}{lcl}
\mathcal{J}(\xi) & = &
c \langle \psi(\xi),  \phi(\xi) \rangle
- \sum_{j=1}^5 \int_0^{r_j} \langle \psi(\xi + \zeta - r_j), A_{0,j}(\xi + \zeta - r_j) \phi(\xi + \zeta) \rangle \, d \zeta
\\[0.2cm]
& = &
c \langle \psi(\xi),  \phi(\xi) \rangle
- \sum_{j=1}^5 \int_{\xi}^{\xi+ r_j} \langle \psi( \zeta - r_j), A_{0,j}( \zeta - r_j)
\phi( \zeta) \rangle \, d \zeta
\end{array}
\end{equation}
and compute
\begin{equation}
\begin{array}{lcl}
\mathcal{J}'(\xi) & = &
c \langle \psi'(\xi), \phi(\xi) \rangle + c \langle \psi(\xi), \phi'(\xi) \rangle
\\[0.2cm]
& & \qquad - \sum_{j=1}^5 \langle \psi( \xi) , A_{0,j}( \xi) \phi(\xi + r _j) \rangle
 + \sum_{j=1}^5 \langle \psi(\xi - r_j), A_{0,j}(\xi - r_j) \phi(\xi) \rangle
\\[0.2cm]
& = & - \langle \psi(\xi), [\mathcal{L}_0 \phi](\xi) \rangle
+ \langle [\mathcal{L}_0^* \psi](\xi), \phi(\xi) \rangle
\\[0.2cm]
& = & 0.
\end{array}
\end{equation}
Since $\lim_{\xi \to \pm \infty} \mathcal{J}(\xi) = 0$, we conclude that
in fact $\mathcal{J}(\xi) = 0$ for all $\xi \in \Real$.
Exploiting the fact that $r_j \in \Wholes$, we hence have
\begin{equation}
\begin{array}{lcl}
c \sum_{n \in \Wholes } \langle  \psi(n + \vartheta),  \phi(n + \vartheta )  \rangle
& = &
\sum_{j=1}^5 \sum_{n \in \Wholes }
\int_{n + \vartheta}^{n + \vartheta + r_j} \langle \psi( \zeta - r_j), A_{0,j}( \zeta - r_j) \phi(\zeta) \rangle \, d \zeta
\\[0.2cm]
& = & \sum_{j=1}^5 r_j
\int_{-\infty}^{\infty} \langle \psi( \zeta - r_j), A_{0,j}( \zeta - r_j) \phi(\zeta) \rangle \, d \zeta.
\end{array}
\end{equation}
Similarly, we have
\begin{equation}
\begin{array}{lcl}
c \int_{-\infty}^{\infty} \langle  \psi(\xi),  \phi(\xi )  \rangle \, d \xi
& = &
\sum_{j=1}^5 \int_{-\infty}^{\infty}
\int_{\xi}^{\xi + r_j} \langle \psi( \zeta - r_j), A_{0,j}( \zeta - r_j) \phi(\zeta) \rangle \, d \zeta \, d \xi
\\[0.2cm]
& = &
\sum_{j=1}^5 \int_{-\infty}^{\infty}
\int_{\zeta - r_j}^{\zeta} \langle \psi( \zeta - r_j), A_{0,j}( \zeta - r_j) \phi(\zeta) \rangle \, d \xi \, d \zeta
\\[0.2cm]
& = & \sum_{j=1}^5 r_j
\int_{-\infty}^{\infty} \langle \psi( \zeta - r_j), A_{0,j}( \zeta - r_j) \phi(\zeta) \rangle \, d \zeta,
\end{array}
\end{equation}
which completes the proof.
\end{proof}




Before we proceed, we introduce for any $\zeta \in \Real$ the exponentially weighted spaces
\begin{equation}
\begin{array}{lcl}
BC_{ \zeta } ( \Real, \Complex^d )  & = & \{ p \in C( \Real, \Complex^d) \mid
\sup_{\xi \in \Real } e^{ - \zeta \abs{\xi } } \abs{p(\xi) } < \infty \}, \\[0.2cm]
BC^1_{ \zeta } ( \Real, \Complex^d )  & = & \{ p \in C^1( \Real, \Complex^d) \mid
\sup_{\xi \in \Real } e^{ - \zeta \abs{\xi } } [\abs{p(\xi) } + \abs{p'(\xi)} ] < \infty \}.
\end{array}
\end{equation}
The following perturbation result is a direct
consequence of the assumptions $\mathrm{(HS1)}_p$-$\mathrm{(HS3)}_p$.
It basically states that
there is a branch of simple eigenvalues $\lambda_\omega$,
for $\mathcal{L}_\omega$ with corresponding eigenfunctionss
$\phi_\omega$ and adjoint eigenfunctions $\psi_\omega$.
These eigenvectors decay exponentially at a rate that can be
bounded away from zero uniformly in $\omega$. In addition,
with the exception of simple eigenvalues at $\lambda_\omega + 2\pi i c \Z$, the spectrum of $\mathcal{L}_\omega$
lies to the left of the line $\Re \lambda = -\beta$.

\begin{prop}
\label{prp:mr:melnikov}
Assume that (Hf), (H$\Phi$), $\mathit{(HS)}_{\mathrm{ess}}$ and  $\mathit{(HS1)}_p$-$\mathit{(HS3)}_p$ all hold. Then
there exists a constant $0 < \omega_0 \ll 1$
together with triplets
\begin{equation}
  (\lambda_{\omega}, \phi_{\omega}, \psi_{\omega} ) \in \Complex \times W^{1, \infty}(\Real, \Complex^d) \times W^{1, \infty}(\Real, \Complex^d),
\end{equation}
defined for each  $\omega \in (-\omega_0, \omega_0)$, such that the following hold true.
\begin{itemize}
\item[(i)]{
  There exists $\beta > 0$ such that
  for all $\omega \in (-\omega_0, \omega_0)$,
  the operator $\mathcal{L}(\omega) - \lambda$
  is invertible as a map from $W^{1, \infty}(\Real, \Complex^d)$ into $L^\infty(\Real, \Complex^d)$,
  for all
  $\lambda \in \Complex$ that have $\Re \lambda \ge -\beta$
  and for which $\lambda - \lambda_{\omega} \notin  2 \pi i c \Wholes$.
}
\item[(ii)]{
  The only nontrivial solutions $p \in W^{1, \infty}(\Real, \Complex^d)$ of $(\mathcal{L}_\omega - \lambda_{\omega})p = 0$
  are $p = \phi_{\omega}$ and scalar multiples thereof.
}
\item[(iii)]{
  The only nontrivial solutions $q \in W^{1, \infty}(\Real, \Complex^d)$ of $(\mathcal{L}^*_\omega - \lambda^*_{\omega})q = 0$
  are $q = \psi_{\omega}$ and scalar multiples thereof.
}
\item[(iv)]{
  For all $\omega \in (-\omega_0, \omega_0)$ we have
  the normalization
  \begin{equation}
  \int_{-\infty}^{\infty} \langle \psi_{\omega}(\xi) , \phi_{\omega}(\xi) \rangle \, d \xi = 1.
  \end{equation}
  In particular, the equation $(\mathcal{L}_\omega - \lambda_{\omega} )v = \phi_{\omega}$
  does not admit a solution $v \in W^{1, \infty}(\Real, \Complex^d)$.
}
\item[(v)]{
  There exists $\eta > 0 $ such that the maps
  \begin{equation}
    \omega \mapsto \left\{ \begin{array}{ll}
      \lambda_{\omega} & \in \Complex, \\
      \phi_{\omega} & \in BC_{ - \eta }(\R, \Complex^d ) \\
      \psi_{\omega} & \in BC_{ - \eta }(\R, \Complex^d ) \\
    \end{array}\right.
  \end{equation}
  are all well-defined and $C^2$-smooth.
}
\item[(vi)]{
  We have $\lambda_{0} = 0$, $\phi_0 = \phi$ and $\psi_0 = \psi$.
}
\end{itemize}
\end{prop}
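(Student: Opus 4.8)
The plan is to regard $\omega \mapsto \mathcal{L}_\omega$ as an analytic family of bounded operators from $W^{1,\infty}(\Real,\Complex^d)$ into $L^\infty(\Real,\Complex^d)$ — the dependence on $\omega$ enters only through the entire functions $e^{\pm i\sigma_k\omega}$ appearing in \eqref{eq:mr:defTauOmega} — and to apply the standard analytic perturbation theory for an isolated eigenvalue of finite multiplicity, working first in the $L^\infty$-topology and then upgrading to the exponentially weighted spaces. The input at $\omega = 0$ is the consequence of $\mathrm{(HS)}_{\mathrm{ess}}$, $\mathrm{(HS1)}_p$--$\mathrm{(HS3)}_p$ and \cite[Thm.~A]{MPA} already recorded above: $\mathcal{L}_0$ is Fredholm of index zero on the half-plane $\Re\lambda \ge -\beta$, its kernel is $\mathrm{span}\{\phi\}$, and $\phi \notin \mathrm{Range}(\mathcal{L}_0)$, so that $\lambda = 0$ is an algebraically simple eigenvalue of $\mathcal{L}_0$ which, by $\mathrm{(HS1)}_p$ and the periodicity \eqref{eq:mr:spCompact}, is the only point of $\mathrm{spec}(\mathcal{L}_0)$ in the strip $\{\Re\lambda \ge -\beta,\ \abs{\Im\lambda}\le\pi\abs{c}\}$. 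I would fix a small circle $\Gamma$ around $0$ enclosing no other spectrum of $\mathcal{L}_0$ and consider the rank-one Riesz projection $P_0 = (2\pi i)^{-1}\oint_\Gamma(\lambda - \mathcal{L}_0)^{-1}\,d\lambda$, whose range is $\mathrm{span}\{\phi\}$.

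The first step is a uniform resolvent bound. For $\omega = 0$ the resolvent $(\lambda - \mathcal{L}_0)^{-1}$ exists on $S := \{\Re\lambda \ge -\beta,\ \abs{\Im\lambda}\le\pi\abs{c}\}\setminus\mathrm{int}\,\Gamma$ by $\mathrm{(HS1)}_p$; since $\mathcal{L}_0 - \lambda$ is boundedly invertible once $\Re\lambda$ is large (the term $-c\partial - \lambda$ then dominates via a Neumann series), while on the remaining \emph{compact} part of $S$ the resolvent norm is bounded by continuity, one obtains $\sup_{\lambda\in S}\norm{(\lambda - \mathcal{L}_0)^{-1}} < \infty$. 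Because $\norm{\mathcal{L}_\omega - \mathcal{L}_0}_{W^{1,\infty}\to L^\infty} = O(\abs\omega)$, a Neumann series shows that for $\abs\omega$ small the resolvent $(\lambda - \mathcal{L}_\omega)^{-1}$ exists on $S$ with a uniform bound and is analytic in $(\lambda,\omega)$; hence $P_\omega := (2\pi i)^{-1}\oint_\Gamma(\lambda - \mathcal{L}_\omega)^{-1}\,d\lambda$ is well-defined, analytic in $\omega$, and has constant rank one for $\abs\omega < \delta_\omega$ after shrinking $\delta_\omega$. The restriction of $\mathcal{L}_\omega$ to the one-dimensional space $\mathrm{Range}(P_\omega)$ is multiplication by a scalar $\lambda_\omega$, which is therefore analytic in $\omega$ with $\lambda_0 = 0$; since $\mathrm{spec}(\mathcal{L}_\omega)\cap\mathrm{int}\,\Gamma = \{\lambda_\omega\}$ while $\mathcal{L}_\omega - \lambda$ is invertible on $S$, assertion (i) follows once $\delta_\omega$ is small enough that $\lambda_\omega\in\mathrm{int}\,\Gamma$ and \eqref{eq:mr:spCompact} is invoked once more. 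For (ii)--(iv): because $P_\omega$ has rank one, $\lambda_\omega$ is algebraically simple, so $\ker(\mathcal{L}_\omega-\lambda_\omega) = \mathrm{Range}(P_\omega)$ is one-dimensional and $\phi_\omega := P_\omega\phi$ spans it, with $\phi_0 = \phi$; the one-dimensional cokernel of $\mathcal{L}_\omega - \lambda_\omega$ is spanned by some $\psi_\omega\in\ker(\mathcal{L}^*_\omega - \lambda_\omega)$, which depends analytically on $\omega$ by a Lyapunov--Schmidt reduction. Algebraic simplicity also gives $\phi_\omega\notin\mathrm{Range}(\mathcal{L}_\omega-\lambda_\omega)$ — which is exactly the unsolvability of $(\mathcal{L}_\omega-\lambda_\omega)v=\phi_\omega$ — so by the Fredholm alternative $\int\langle\psi_\omega,\phi_\omega\rangle\,d\xi\neq0$; normalizing $\psi_\omega$ to make this integral equal $1$ yields (iv), and at $\omega=0$ forces $\psi_0 = \psi$, so that together with $\lambda_0=0$ and $\phi_0=\phi$ we also obtain (vi).

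What remains is the exponential decay and the smoothness in weighted spaces claimed in (v). Here I would exploit that $\mathrm{(HS)}_{\mathrm{ess}}$ holds on the \emph{compact} parameter set $\omega\in[-\pi,\pi]$ and that, by the a priori bound of \cite[Lem.~3.1]{HJHCM}, the imaginary-axis roots of $\det[\Delta^\pm_\omega(i\nu)-\lambda]$ stay uniformly bounded away from it while $\Re\lambda\ge-\beta$. By the exponential dichotomy theory for MFDEs in \cite{MPA,MPVL}, this yields a single rate $\eta > 0$ such that for all $\abs\omega<\delta_\omega$ every element of $\ker(\mathcal{L}_\omega - \lambda_\omega)$ and of $\ker(\mathcal{L}^*_\omega - \lambda_\omega)$ decays like $e^{-\eta\abs\xi}$; hence $\phi_\omega,\psi_\omega\in BC_{-\eta}(\Real,\Complex^d)$. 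To obtain the $C^2$- (indeed analytic) dependence of $\omega\mapsto\phi_\omega$ and $\omega\mapsto\psi_\omega$ into $BC_{-\eta}$, I would rerun the Riesz-projection construction with $\mathcal{L}_\omega$ viewed as a bounded operator $BC^1_{-\eta}(\Real,\Complex^d)\to BC_{-\eta}(\Real,\Complex^d)$: in these weighted spaces $\mathcal{L}_\omega - \lambda$ is still Fredholm of index zero for $\Re\lambda\ge-\beta$ with the same one-dimensional generalized eigenspace — conjugating by the exponential weight turns it into a compact perturbation of a constant-coefficient operator whose characteristic equation has been moved off the imaginary axis, cf.\ \cite{MPA,HJHSTBFHN} — so the resolvent estimate and the analyticity of $P_\omega$ carry over verbatim in the $BC_{-\eta}$-norm, and differentiating then gives the $C^2$-dependence of $\lambda_\omega$, $\phi_\omega$ and $\psi_\omega$. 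I expect the one genuinely delicate point to be the uniformity over $\omega$ of all the Fredholm, dichotomy and resolvent constants, and hence of the single weight $\eta$ and the neighbourhood radius $\delta_\omega$; this is exactly where the MFDE machinery of \cite{MPA,MPVL,HJHCM} is needed rather than elementary functional analysis, although it is by now routine and is applied in precisely this way in \cite[{\S}2]{HJHSTBFHN}.
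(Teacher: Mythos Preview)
Your proposal is correct and follows essentially the same strategy as the paper: analytic perturbation of the simple eigenvalue $\lambda=0$ of $\mathcal{L}_0$, carried out in the exponentially weighted spaces $BC_{-\eta}$ so that the resulting eigenfunctions depend smoothly on $\omega$ with a uniform decay rate. The only difference is presentational: the paper sets up the perturbation via the implicit function theorem applied to $\mathcal{F}(p,\lambda,\omega)=(\mathcal{L}_\omega-\lambda)(\phi+p)$ with $p$ ranging over $\mathcal{R}=\mathcal{L}_0\big(BC^1_{-\eta}\big)$, after first checking (via the conjugation $p\mapsto\cosh(\eta\,\cdot\,)p$ and \cite{MPA}) that $\mathcal{L}_0$ is Fredholm of index zero on $BC_{-\eta}$; you instead use Riesz projections and a Neumann-series resolvent bound. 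Both routes are standard and interchangeable here, and both rely on the same MFDE input from \cite{MPA} to justify working in the weighted spaces.
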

\begin{proof}
The key observation is that for sufficiently small $\eta > 0$,
the operator $\mathcal{L}_0$ is also a Fredholm operator with index zero
when viewed as a map from $BC^1_{- \eta}(\Real, \Complex^d)$ into $BC_{-\eta}(\Real, \Complex^d)$.
This can be seen as in \cite[Lem. 4.7]{HJHNLS} by observing that the operator
\begin{equation}
  \widetilde{\mathcal{L}}_0: p \mapsto \mathcal{L}_0 [\cosh ( \eta \cdot )]^{-1} p( \cdot)
\end{equation}
is an operator of the form covered by the theory in \cite{MPA} that in addition
is asymptotically hyperbolic. In particular, it is a Fredholm operator
mapping $BC^1_0(\Real, \Complex^d)$ into $BC_0(\Real, \Complex^d)$
with a one-dimensional kernel spanned by the function
$\xi \mapsto \cosh(\eta \xi) \phi(\xi)$, which is bounded
provided that $\eta > 0$ is chosen to be smaller than the exponential
decay rates of $\phi$.
In addition, the index of $\widetilde{\mathcal{L}}_0$ viewed as a map between these two spaces is zero,
which can be concluded from the spectral flow formula in \cite[Thm. C]{MPA}.

All the properties stated above now follow from regular perturbation.
Indeed, the map $\omega \mapsto \mathcal{L}_\omega$ is analytic and the perturbation
$\mathcal{L}_0 - \mathcal{L}_\omega$ is bounded.  More explicitly,
consider the range
\begin{equation}
\mathcal{R} = \mathcal{L}_0 \big( BC^1_{-\eta}(\Real, \Complex^d) \big) \subset BC_{-\eta}(\Real, \Complex^d)
\end{equation}
and define $\mathcal{F} : \mathcal{R} \times \C \times \R \to BC_{-\eta}(\Real, \Complex^d)$
by
\begin{equation}
\mathcal{F}(p,\lambda,\omega) = (\mathcal{L}_\omega - \lambda)(\phi+p).
\end{equation}
Observe that $\mathcal{F}(0,0,0) = 0$ and compute the derivative
\begin{equation}
D_{p,\lambda}\mathcal{F}(0,0,0): (\tilde{p},\tilde{\lambda}) \mapsto \mathcal{L}_0 \tilde{p} - \tilde{\lambda}\phi,
\end{equation}
which is injective as a consequence of $\phi \notin \mathcal{R}$ and $\mathrm{(HS2)}_p$
and surjective because $BC_{-\eta}(\Real, \Complex^d) = \mathcal{R} \oplus \mathrm{span} \{ \phi \}$.
The desired curves $\omega \mapsto (\lambda_{\omega}, \phi_\omega)$
are now a direct consequence of the implicit function theorem, upon writing $\phi_{\omega} = \phi + p_{\omega}$.
The remaining curve $\omega \mapsto \psi_{\omega}$ can be constructed in a similar fashion.
\end{proof}

The first of our two final assumptions ensures
that the curve $\omega \to \lambda_{\omega}$ touches the origin in a quadratic tangency
that opens up on the left side of the imaginary axis.
The second assumption guarantees that the only part of the spectrum of the operators $\mathcal{L}_{\omega}$
with $\omega \in [- \pi, \pi]$
that can potentially lie in the half-plane $\Re \lambda \ge -\beta$
are the simple isolated eigenvalues $\lambda_{\omega} + 2 \pi i c \Wholes$
defined for $\omega \approx 0$.

\begin{itemize}
\item[$\mathrm{(HS1)}_\omega$]{
Recalling the curves $\omega \mapsto (\lambda_{\omega}, \phi_{\omega})$
defined in Proposition \ref{prp:mr:melnikov}, we have the Melnikov identity
\begin{equation}
\label{eq:mr:melnikov}
\begin{array}{lcl}
\int_{-\infty}^\infty \langle \psi(\xi),  \big[ [\partial_{\omega}^2 \mathcal{L}_\omega]_{\omega = 0} \phi ](\xi) \rangle \, d \xi & & \\[0.2cm]
 \qquad + 2\int_{-\infty}^\infty \langle \psi(\xi), \Big[ \big( [\partial_\omega \mathcal{L}_\omega]_{\omega = 0} -
[\partial_\omega \lambda_\omega]_{\omega = 0} \big) [\partial_\omega\phi_\omega]_{\omega = 0}\Big](\xi) \rangle \, d \xi & \ne & 0.
\end{array}
\end{equation}
}
\item[$\mathrm{(HS2)}_\omega$]{
  For all $\omega \in [-\pi, \pi] \setminus \{ 0 \}$
  and $\lambda \in \Complex$ that have $\Re \lambda \ge 0$,
  the operator $\mathcal{L}_\omega - \lambda$
  is invertible as a map from $W^{1, \infty}(\Real, \Complex^d)$ into $L^\infty(\Real, \Complex^d)$.
}
\end{itemize}

\begin{prop}
\label{prp:mr:locRoots}
Consider the setting of Proposition \ref{prp:mr:melnikov}.
We have $[\partial_{\omega}\lambda_\omega]_{\omega = 0} \in i\R$
and $[\partial_{\omega} \phi_{\omega}]_{\omega = 0}$ takes values in $i \Real^d$.
In addition, if $\mathrm{(HS1)}_\omega$ also holds, then there exists a  constant $\kappa > 0 $ such that
\begin{equation}
  \label{eq:prp:mr:locRoots:Quadratic}
 \Re \lambda_\omega < -\kappa \omega^2
\end{equation}
for all $\omega \in (-\omega_0, \omega_0)$.
\end{prop}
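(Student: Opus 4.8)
The plan is to read all three statements off from first- and second-order perturbation theory for the simple eigenvalue branch $\omega \mapsto \lambda_\omega$ of Proposition~\ref{prp:mr:melnikov}, combined with the observation that the only $\omega$-dependence of $\mathcal{L}_\omega$ enters through the scalar phases $e^{\pm i\sigma_k\omega}$ in $\tau_\omega$ (see \sref{eq:mr:defTauOmega}), while $-c\,\partial_\xi$ and the coefficients $D_jf\big([\tau\Phi](\xi)\big)$ are real. This yields the conjugation symmetry $\overline{\mathcal{L}_\omega p} = \mathcal{L}_{-\omega}\overline{p}$. The normalization fixed in the proof of Proposition~\ref{prp:mr:melnikov} is conjugation-covariant, since it places $\phi_\omega - \phi$ inside $\mathcal{R} = \mathcal{L}_0\big(BC^1_{-\eta}(\R,\C^d)\big)$, a conjugation-invariant subspace (as $\mathcal{L}_0$ has real coefficients) not containing the real function $\phi$; and $\lambda_\omega$ is the only eigenvalue of $\mathcal{L}_\omega$ near the origin by Proposition~\ref{prp:mr:melnikov}(i)--(ii). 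Hence $\lambda_{-\omega} = \overline{\lambda_\omega}$, $\phi_{-\omega} = \overline{\phi_\omega}$ and $\psi_{-\omega} = \overline{\psi_\omega}$ for small $\omega$. Differentiating once at $\omega = 0$ gives $[\partial_\omega\lambda_\omega]_{\omega=0} = -\overline{[\partial_\omega\lambda_\omega]_{\omega=0}} \in i\R$ and $[\partial_\omega\phi_\omega]_{\omega=0} = -\overline{[\partial_\omega\phi_\omega]_{\omega=0}}$, i.e. $[\partial_\omega\phi_\omega]_{\omega=0}$ is $i\R^d$-valued --- these are the first two assertions --- while differentiating twice gives $[\partial_\omega^2\lambda_\omega]_{\omega=0} \in \R$, which I will use below.

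\textbf{Identifying $[\partial_\omega^2\lambda_\omega]_{\omega=0}$.} For the quadratic bound I would first pin down $[\partial_\omega^2\lambda_\omega]_{\omega=0}$ by differentiating the eigenvalue relation $(\mathcal{L}_\omega - \lambda_\omega)\phi_\omega = 0$ twice at $\omega = 0$, pairing against $\psi$, and using $\mathcal{L}_0^*\psi = 0$ (which kills the $[\partial_\omega^2\phi_\omega]_{\omega=0}$ contribution) together with the normalization $\int_{-\infty}^\infty\langle\psi,\phi\rangle\,d\xi = 1$. The outcome is that $[\partial_\omega^2\lambda_\omega]_{\omega=0}$ equals \emph{exactly} the left-hand side of the Melnikov identity \sref{eq:mr:melnikov}; thus $\mathrm{(HS1)}_\omega$ is precisely the assertion $[\partial_\omega^2\lambda_\omega]_{\omega=0} \neq 0$. (The same computation at first order gives, as a consistency check, $[\partial_\omega\lambda_\omega]_{\omega=0} = \int_{-\infty}^\infty\langle\psi, [\partial_\omega\mathcal{L}_\omega]_{\omega=0}\phi\rangle\,d\xi$, again in $i\R$ since differentiating the phases in $\tau_\omega$ introduces the imaginary factors $\pm i\sigma_k$, so $[\partial_\omega\mathcal{L}_\omega]_{\omega=0}$ carries real-valued functions to purely imaginary ones.) All pairings converge because $\psi$ decays exponentially at both ends, while $\phi$, $[\partial_\omega\phi_\omega]_{\omega=0}$ and $[\partial_\omega^2\phi_\omega]_{\omega=0}$ grow at most at the small rate $\eta$ of Proposition~\ref{prp:mr:melnikov}(v); the formal-adjoint identity of \S\ref{sec:mr} extends to such weighted functions by one integration by parts.

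\textbf{Sign and conclusion.} Since $\omega \mapsto \lambda_\omega$ is $C^2$ with $\lambda_0 = 0$ and $[\partial_\omega\lambda_\omega]_{\omega=0}$ purely imaginary, Taylor's theorem gives $\Re\lambda_\omega = \tfrac12[\partial_\omega^2\lambda_\omega]_{\omega=0}\,\omega^2 + o(\omega^2)$ as $\omega \to 0$. If $[\partial_\omega^2\lambda_\omega]_{\omega=0}$ were positive, then $\Re\lambda_\omega > 0$ for all sufficiently small $\omega \neq 0$; but $\lambda_\omega$ lies in the spectrum of $\mathcal{L}_\omega$ by Proposition~\ref{prp:mr:melnikov}(ii), so, after shrinking $\delta_\omega$ to be at most $\pi$, this contradicts $\mathrm{(HS2)}_\omega$. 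Hence $[\partial_\omega^2\lambda_\omega]_{\omega=0} < 0$. Taking $\kappa := -\tfrac14[\partial_\omega^2\lambda_\omega]_{\omega=0} > 0$ and shrinking $\delta_\omega$ once more so that the remainder $o(\omega^2)$ is bounded by $\tfrac14\big|[\partial_\omega^2\lambda_\omega]_{\omega=0}\big|\,\omega^2$ on $(-\delta_\omega,\delta_\omega)$ then yields $\Re\lambda_\omega \le -\kappa\omega^2$ there, with strict inequality for $\omega \neq 0$; this is \sref{eq:prp:mr:locRoots:Quadratic}.

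\textbf{Main obstacle.} I expect the second-order perturbation computation to be the real work: one must organise the repeated application of the product rule to $(\mathcal{L}_\omega - \lambda_\omega)\phi_\omega$ carefully enough to recognise the cross term $2\int_{-\infty}^\infty\langle\psi, \big([\partial_\omega\mathcal{L}_\omega]_{\omega=0} - [\partial_\omega\lambda_\omega]_{\omega=0}\big)[\partial_\omega\phi_\omega]_{\omega=0}\rangle\,d\xi$ with exactly the coefficient $2$ that appears in \sref{eq:mr:melnikov}, and to control the decay of the $\omega$-derivatives of $\phi_\omega$ well enough to legitimise the pairings against $\psi$. Once that formula is in hand, the reality of $[\partial_\omega^2\lambda_\omega]_{\omega=0}$ (from the conjugation symmetry) and its negativity (from $\mathrm{(HS2)}_\omega$) are both short.
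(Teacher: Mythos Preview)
Your proposal is correct; the route for the ``purely imaginary'' claims differs from the paper's. The paper Taylor-expands $(\mathcal{L}_\omega - \lambda_\omega)\phi_\omega = 0$, pairs the first-order term against $\psi$, and observes directly that $i[\partial_\omega\mathcal{L}_\omega]_{\omega=0}$ carries real-valued functions to real-valued functions, giving $[\partial_\omega\lambda_\omega]_{\omega=0} \in i\R$; the claim that $[\partial_\omega\phi_\omega]_{\omega=0}$ is $i\R^d$-valued is then left implicit. Your conjugation-symmetry argument ($\overline{\mathcal{L}_\omega p} = \mathcal{L}_{-\omega}\overline{p}$, hence $\lambda_{-\omega} = \overline{\lambda_\omega}$ and $\phi_{-\omega} = \overline{\phi_\omega}$ by uniqueness of the normalised branch) is more structural: one differentiation handles both imaginary claims at once, and a second gives $[\partial_\omega^2\lambda_\omega]_{\omega=0} \in \R$ for free. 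For the identification of $[\partial_\omega^2\lambda_\omega]_{\omega=0}$ with the Melnikov integral in \sref{eq:mr:melnikov}, your second-order computation coincides exactly with the paper's.

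One remark on the sign of $[\partial_\omega^2\lambda_\omega]_{\omega=0}$: you invoke $\mathrm{(HS2)}_\omega$ to rule out the positive case, but Proposition~\ref{prp:mr:locRoots} as stated assumes only $\mathrm{(HS1)}_\omega$, which gives nonzero rather than negative. The paper's own proof is in fact silent on this point --- it stops after deriving \sref{eq:mel2} --- so you have spotted a genuine lacuna in the printed argument and supplied the natural repair. It costs nothing downstream, since every subsequent use of the proposition in the paper assumes $\mathrm{(HS2)}_\omega$ anyway.
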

\begin{proof}
To address the inclusion claims, we Taylor expand the quantities
$\lambda_\omega$, $\phi_\omega$ and $\psi_\omega$ in $\omega$ about $\omega = 0$ in the expression
$(\mathcal{L}_\omega - \lambda_\omega)\phi_\omega = 0$ to obtain
\begin{equation}
\label{eq:mel1}
\begin{array}{lcl}
 O(\omega^3) & = &
\mathcal{L}_0\phi + \omega\Big(  \mathcal{L}_0 [\partial_{\omega}\phi_\omega]_{\omega = 0}
+ [\partial_{\omega}\mathcal{L}_\omega- \partial_{\omega} \lambda_\omega]_{\omega = 0} \phi \Big)
\\[0.2cm]
& & \qquad
+ \frac{\omega^2}{2}\Big( \mathcal{L}_0 [\partial_\omega^2 \phi_\omega]_{\omega = 0}
+ 2[\partial_\omega \mathcal{L}_\omega - \partial_\omega \lambda_\omega]_{\omega = 0} [\partial_\omega \phi_\omega]_{\omega = 0}
+ [\partial_\omega^2\mathcal{L}_\omega - \partial_\omega^2 \lambda_\omega]_{\omega = 0} \phi \Big).
\end{array}
\end{equation}
Taking the inner product against $\psi$ and recalling that $\mathcal{L}_0^* \psi= 0$ leaves
\begin{equation}
 [\partial_{\omega}\lambda_\omega]_{\omega = 0} = \int_{-\infty}^\infty \langle \psi(\xi),
   \big[[\partial_{\omega}\mathcal{L}_\omega]_{\omega = 0} \phi\big](\xi) \rangle \, d \xi.
\end{equation}
Inspecting the definition of $\mathcal{L}_\omega$
shows that $i[\partial_{\omega} \mathcal{L}_{\omega}]_{\omega = 0}$
maps real-valued functions to real-valued functions.
This together with the fact that $\phi$ and $\psi$ are real-valued implies that $i [\partial_\omega \lambda_\omega]_{\omega = 0}$ is real.

In order to obtain \sref{eq:prp:mr:locRoots:Quadratic},
it suffices to integrate the $O(\omega^2)$ term in \sref{eq:mel1} against $\psi$, yielding
\begin{equation}
\begin{array}{lcl}
[\partial^2_{\omega} \lambda_{\omega} ]_{\omega = 0} & = &
 \int_{-\infty}^\infty \langle \psi(\xi),  \big[ [\partial_{\omega}^2 \mathcal{L}_\omega]_{\omega = 0} \phi \big](\xi) \rangle \, d \xi \\[0.2cm]
& & \qquad + 2\int_{-\infty}^\infty \langle \psi(\xi), \Big[ [\partial_\omega \mathcal{L}_\omega -
\partial_\omega \lambda_\omega]_{\omega = 0} [\partial_\omega\phi_\omega]_{\omega = 0} \Big](\xi) \rangle \, d \xi.
\label{eq:mel2}
\end{array}
\end{equation}
\end{proof}


\begin{prop}
\label{prp:mr:omegaLarge}
Assume that (Hf), (H$\Phi$), $\mathit{(HS)}_{\mathrm{ess}}$,
$\mathit{(HS1)}_p$-$\mathit{(HS3)}_p$ and $\mathit{(HS2)}_{\omega}$  all hold
and recall the constant $0 < \omega_0 \ll 1$ from Proposition \ref{prp:mr:melnikov}.
Then there exists a constant $\beta > 0$
such that for all $\lambda \in \Complex$ that have  $\Re \lambda \ge - \beta$
and all $\omega \in [-\pi, \pi]$ that have $\abs{ \omega } \ge \omega_0$,
the operator $\mathcal{L}_\omega - \lambda$
is invertible as a map from $W^{1, \infty}(\Real, \Complex^d)$ into $L^\infty(\Real, \Complex^d)$.
\end{prop}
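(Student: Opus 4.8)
The statement asserts uniform invertibility of $\mathcal{L}_\omega - \lambda$ for $|\omega|$ bounded away from zero and $\Re\lambda \geq -\beta$. The natural strategy is a compactness argument combining three ingredients already available: (i) Fredholmness of $\mathcal{L}_\omega - \lambda$ for $\Re\lambda \geq -\beta$, which follows from $(\mathrm{HS})_{\mathrm{ess}}$ together with the a priori bound from \cite[Lem. 3.1]{HJHCM} and \cite[Thm. A]{MPA}, exactly as noted in the text after $(\mathrm{HS})_{\mathrm{ess}}$; (ii) invertibility of $\mathcal{L}_\omega - \lambda$ for $\omega \in [-\pi,\pi]\setminus\{0\}$ and $\Re\lambda \geq 0$, which is precisely $(\mathrm{HS2})_\omega$; and (iii) continuity of $\omega \mapsto \mathcal{L}_\omega$ (analytic, with bounded difference $\mathcal{L}_0 - \mathcal{L}_\omega$) and of $\lambda \mapsto \mathcal{L}_\omega - \lambda$. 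First I would restrict attention to a compact parameter region: by \eqref{eq:mr:spCompact} the spectrum of $\mathcal{L}_\omega$ is $2\pi i c\Wholes$-periodic in $\Im\lambda$, so it suffices to prove invertibility for $\lambda$ in a compact set $K_\beta = \{\Re\lambda \in [-\beta, 0], \ |\Im\lambda| \leq \pi|c|\}$ (say), and $\omega$ ranging over the compact set $\{\delta_\omega \leq |\omega| \leq \pi\}$.

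Next I would run the compactness/continuity argument. Define the set $S$ of pairs $(\omega,\lambda)$ in the compact parameter region $\{\delta_\omega \leq |\omega|\leq\pi\} \times K_\beta$ for which $\mathcal{L}_\omega - \lambda$ fails to be invertible. Since each $\mathcal{L}_\omega - \lambda$ is Fredholm of index zero on this region (the index is locally constant and equals $0$ at, e.g., points with $\Re\lambda$ large and positive, or one can invoke continuity of the Fredholm index together with $(\mathrm{HS2})_\omega$), non-invertibility is equivalent to the kernel being nontrivial, i.e. $\lambda$ being an eigenvalue of $\mathcal{L}_\omega$. The map $(\omega,\lambda) \mapsto \mathcal{L}_\omega - \lambda \in \mathcal{B}(W^{1,\infty}, L^\infty)$ is continuous, and the set of invertible operators is open, so $S$ is closed, hence compact. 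By $(\mathrm{HS2})_\omega$, $S$ is disjoint from $\{\Re\lambda \geq 0\}$; in particular $S \subset \{\Re\lambda < 0\}$. If $S$ were nonempty, it would be a nonempty compact subset of the open slab $\Re\lambda < 0$, and $\min\{-\Re\lambda : (\omega,\lambda) \in S\}$ would be attained and strictly positive — call it $\beta_0 > 0$. Then choosing $\beta := \min\{\beta_0/2, \beta_{\mathrm{Fred}}\}$, where $\beta_{\mathrm{Fred}} > 0$ is the constant from the Fredholm statement following $(\mathrm{HS})_{\mathrm{ess}}$, gives the claim. If $S = \emptyset$, take $\beta := \beta_{\mathrm{Fred}}$.

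The main obstacle is making rigorous the claim that the eigenvalue set is nicely behaved in $\omega$ uniformly — specifically, ruling out eigenvalues accumulating at $\Re\lambda = 0$ as $|\omega| \to \delta_\omega^+$ or as $|\omega| \to \pi$, and ensuring the Fredholm/openness machinery genuinely applies on the full strip rather than just pointwise. The compactness reduction via \eqref{eq:mr:spCompact} handles the unboundedness in $\Im\lambda$, but one must confirm that $(\mathrm{HS})_{\mathrm{ess}}$ indeed yields a \emph{single} $\beta_{\mathrm{Fred}} > 0$ working uniformly for all $\omega \in [-\pi,\pi]$ — this uses that the roots $\nu$ of \eqref{eq:mr:hs1:charEq} depend continuously on $(\omega,\lambda)$ over a compact set and the a priori bound of \cite[Lem. 3.1]{HJHCM} is uniform, so the argument just below $(\mathrm{HS})_{\mathrm{ess}}$ in the text already supplies this. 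With that in hand, the only delicate point is the continuity of $\omega \mapsto \mathcal{L}_\omega$ in operator norm uniformly down to the endpoints of the $\omega$-interval, which is immediate from the explicit formula \eqref{eq:mr:defTauOmega} since the coefficients $e^{\pm i\sigma_k\omega}$ are smooth in $\omega$ and the $A_{\omega,j}(\xi)$ are bounded. Thus the proof is essentially a packaging of $(\mathrm{HS2})_\omega$, the Fredholm consequence of $(\mathrm{HS})_{\mathrm{ess}}$, and a standard compactness argument; I do not expect any genuinely hard estimate.
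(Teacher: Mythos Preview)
Your proposal is correct and follows essentially the same approach as the paper: the paper's proof invokes the continuous dependence of $\mathcal{L}_\omega$ on $\omega$, the openness of the resolvent set, and the periodicity identity \eqref{eq:mr:spCompact} to reduce to a compact set of spectral parameters, which is exactly the compactness argument you have spelled out in detail. Your additional discussion of uniform Fredholmness via $(\mathrm{HS})_{\mathrm{ess}}$ and the explicit construction of the set $S$ are reasonable fleshings-out of what the paper leaves implicit.
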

\begin{proof}
This follows from the continuous dependence of $\mathcal{L}_\omega$ on $\omega$,
the fact that resolvent sets are open and the
identity \sref{eq:mr:spCompact}
which shows we only have to consider spectral parameters $\lambda$ in compact sets.
\end{proof}

We are now almost ready to state our main results.
First, we introduce for any $\vartheta \in \Real$ the function $\chi(\vartheta) \in \ell^{\infty}(\Wholes; \Real^d)$
defined by
\begin{equation}
\chi_n(\vartheta)  = -i [\partial_{\omega} \phi_{\omega}]_{ \omega = 0} (n + \vartheta)
 + i \phi(n + \vartheta)
  \sum_{\ell \in \Wholes} \langle \psi(\ell + \vartheta) , [\partial_{\omega} \phi_{\omega}]_{\omega = 0} (\ell + \vartheta) \rangle.
\end{equation}
This can be seen as the residual of the $\Real^d$-valued function $-i [\partial_{\omega} \phi_{\omega}]_{ \omega = 0}$ after projecting out
the part along $\phi(\cdot + \vartheta)$.

\begin{thm}
\label{thm:mr:stbAnyDirection}
Consider the LDE \sref{eq:mr:mainLDE} and suppose that
(Hf), (H$\Phi$),  $\mathit{(HS)}_{\mathrm{ess}}$,
$\mathit{(HS1)}_p$-$\mathit{(HS3)}_p$ and $\mathit{(HS1)}_\omega$-$\mathit{(HS2)}_{\omega}$ are all satisfied. Pick any $ \alpha < \frac{5}{4}$ and $1 \le p \le \infty$.
Then there exist constants $\delta > 0$ and $C > 0$
such that for any $v^0 \in \ell^{\infty}(\Wholes^2 ; \Real^d)$
that can be bounded\footnote{
 The $\ell^p$ norm is taken with respect to the first index and the $\ell^1$ norm with respect to the second index.
}
by
\begin{equation}
\norm{v^0}_{\ell^p\big(\Wholes; \ell^1(\Wholes; \Real^d) \big) } \le \delta,
\end{equation}
there exist $C^1$-smooth functions $\theta: [0, \infty) \to \ell^\infty(\Wholes ; \Real)$
and $w: [0, \infty) \to \ell^{\infty}(\Wholes^2; \Real^d)$
that satisfy the following properties.
\begin{itemize}
\item[(i)]{
  The function
  \begin{equation}
    \begin{array}{lcl}
    u_{ij}(t) & = & \Phi(i\sigma_1  + j\sigma_2  + ct + \theta_{i\sigma_2  - j\sigma_1  }(t) \big) \\[0.2cm]
     & & \qquad + \chi_{i \sigma_1 + j \sigma_2} (ct)
       \big( \theta_{i\sigma_2  - j \sigma_1  + 1 }(t) - \theta_{ i\sigma_2  - j \sigma_1  }(t) \big) \\[0.2cm]
     & & \qquad + w_{ i\sigma_1  + j \sigma_2  , i\sigma_2  - j \sigma_1  }(t) \\[0.2cm]
     \end{array}
  \end{equation}
  satisfies \sref{eq:mr:mainLDE} for all $t \ge 0$ with the initial condition
  \begin{equation}
    u_{ij}(0) = \Phi(i\sigma_1  +  j\sigma_2\big)
     + v^0_{ i\sigma_1  + j\sigma_2  , i \sigma_2 - j\sigma_1  }.
  \end{equation}
}
\item[(ii)]{
  For all $t \ge 0$ and all $l \in \Wholes$ we have
  \begin{equation}
    \sum_{n \in \Wholes} \langle \psi( n + ct ) , w_{n l}(t) \rangle = 0.
  \end{equation}
}
\item[(iii)]{
  The function $\theta$ satisfies the estimates
  \begin{equation}
  \begin{array}{lcl}
    \norm{\theta(t)}_{\ell^2(\Wholes; \Real)} & \le & C (1 + t)^{-1/4}
       \norm{v^0}_{\ell^p\big(\Wholes; \ell^1(\Wholes; \Real^d) \big) }, \\[0.2cm]
    \norm{\theta(t)}_{\ell^\infty(\Wholes; \Real)} & \le & C (1 + t)^{-1/2}
       \norm{v^0}_{\ell^p\big(\Wholes; \ell^1(\Wholes; \Real^d) \big) }. \\[0.2cm]
  \end{array}
  \end{equation}
}
\item[(iv)]{
  The function $\theta_{\mathrm{diff}}: [0, \infty) \to \ell^\infty(\Wholes; \Real)$
  defined by
  \begin{equation}
    [\theta_{\mathrm{diff}}]_l(t) = \theta_{l+1}(t) - \theta_l(t)
  \end{equation}
  satisfies the estimates
  \begin{equation}
  \begin{array}{lcl}
    \norm{\theta_{\mathrm{diff}}(t)}_{\ell^2(\Wholes; \Real)} & \le & C (1 + t)^{-3/4}
       \norm{v^0}_{\ell^p\big(\Wholes; \ell^1(\Wholes; \Real^d) \big) }, \\[0.2cm]
    \norm{\theta_{\mathrm{diff}}(t)}_{\ell^\infty(\Wholes; \Real)} & \le & C (1 + t)^{-1}
       \norm{v^0}_{\ell^p\big(\Wholes; \ell^1(\Wholes; \Real^d) \big) }. \\[0.2cm]
  \end{array}
  \end{equation}
}
\item[(v)]{
  The function $w$ satisfies the estimates
  \begin{equation}
  \begin{array}{lcl}
    \norm{w(t)}_{\ell^p\big(\Wholes; \ell^2(\Wholes; \Real^d) \big) } & \le & C (1 + t)^{-\alpha}
      \norm{v^0}_{\ell^p\big(\Wholes; \ell^1(\Wholes; \Real^d) \big) }, \\[0.2cm]
    \norm{w(t)}_{\ell^p\big(\Wholes; \ell^\infty(\Wholes; \Real^d) \big) } & \le & C (1 + t)^{- 5/4 }
      \norm{v^0}_{\ell^p\big(\Wholes; \ell^1(\Wholes; \Real^d) \big) }. \\[0.2cm]
  \end{array}
  \end{equation}
}
\end{itemize}
\end{thm}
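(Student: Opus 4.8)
The plan is to set up the $(v,\theta)$ coordinates described in the introduction, derive the coupled evolution equations, build the pointwise Green's function estimates for the linearized flow, and then close a nonlinear bootstrap argument. First I would work in the rotated lattice coordinates $n = i\sigma_1 + j\sigma_2$, $l = i\sigma_2 - j\sigma_1$, and impose the decomposition $u_{ij}(t) = \Phi(n + ct + \theta_l(t)) + \chi_n(ct)(\theta_{l+1}(t) - \theta_l(t)) + w_{nl}(t)$, where the correction term $\chi$ is chosen precisely so that the slowest-decaying part of $\mathcal{G}_{v\theta}$ — which by Proposition~\ref{prp:mr:melnikov} and Proposition~\ref{prp:mr:locRoots} is governed by $[\partial_\omega\phi_\omega]_{\omega=0}$ — is absorbed into an explicit term proportional to $\theta^\diamond$. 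Substituting into \sref{eq:mr:mainLDE} and using the wave profile MFDE \sref{eq:mr:waveprofileMFDE}, I would obtain an evolution of the form $\dot w = L_{ct} w + (\text{linear in }\theta^\diamond) + \dot\theta\,(\text{profile terms}) + \mathcal{N}(w,\theta)$, then impose the normalization $\sum_n \psi(n+ct)\cdot w_{nl}(t) \equiv 0$ to solve for $\dot\theta$ in terms of $w$ and $\theta$. This yields $\dot\theta_l = \theta_{l+1} + \theta_{l-1} - 2\theta_l + (\text{lower order})$, confirming the heat-kernel heuristic, and produces the fixed-point system \sref{eq:int:fixpoint}.

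Next I would establish the Green's function estimates. Taking the transverse Fourier transform decouples the problem into the one-parameter family $\mathcal{L}_\omega$; using Proposition~\ref{prp:mr:melnikov} for $|\omega| < \delta_\omega$, Proposition~\ref{prp:mr:omegaLarge} for $|\omega| \ge \delta_\omega$, and $(\mathrm{HS})_{\mathrm{ess}}$ to locate essential spectrum, I get: the $w$-component of the linear flow decays like $O(e^{-\beta(t-t_0)})$ once the $\phi_\omega$-direction is projected out, $\mathcal{G}_{\theta\theta}$ behaves like a one-dimensional discrete heat kernel (so its $k$-th difference decays like $(1+t)^{-(2k+1)/4}$ in $\ell^\infty$ and like $(1+t)^{-(2k+1)/4 + 1/4}$ after the $\ell^p\to\ell^2$ summation gain), $\mathcal{G}_{\theta v}$ picks up an extra difference and hence decays faster, and crucially $\mathcal{G}_{v\theta}$ decays only at $O((1+t-t_0)^{-3/4})$ because of the $\omega$-dependence of $\phi_\omega$ — but its slowest piece is exactly what the $\chi$ correction term removes, so the effective $\mathcal{G}_{v\theta}$ acting on $w$ is better. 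This is where I would invoke \cite{HJHSTBFHN} and \cite{BGV2003} for the reduction from the non-autonomous $L_{ct}$ to the autonomous $\mathcal{L}_\omega$ spectral picture, and the quadratic tangency $\Re\lambda_\omega < -\kappa\omega^2$ from Proposition~\ref{prp:mr:locRoots} to get the algebraic rates.

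Then I would close the nonlinear argument. Define the template function
\begin{equation}
\eta(T) = \sup_{0 \le t \le T} \Big[ (1+t)^{1/4}\norm{\theta(t)}_{\ell^2} + (1+t)^{3/4}\norm{\theta_{\mathrm{diff}}(t)}_{\ell^2} + (1+t)^{\alpha}\norm{w(t)}_{\ell^p(\ell^2)} + \cdots \Big]
\end{equation}
with all the norms of statement (iii)--(v), and show $\eta(T) \le C\norm{v^0} + C\eta(T)^2$ uniformly in $T$, which gives the result by continuity since $\eta(0)$ is small. Feeding the nonlinear estimates \sref{eq:int:estNStarGenDir} through the Duhamel formula with \sref{eq:liuduhamelexp}, \sref{eq:liuduhamelalg}, the $v^2$, $v\theta$ and $(\theta^\diamond)^2$ terms are handled directly; the dangerous $\theta\theta^\diamond$ term I would rewrite using the discrete identity \sref{eq:uux}, splitting it into a safe $(\theta^\diamond)^2$ piece decaying like $t^{-3/2}$ and a difference-of-squares piece on which I summate by parts, transferring the difference onto the heat kernel to convert the divergent \sref{eq:int:badEstimate} into a convergent integral. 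I would also need the fact, extracted from the Green's function analysis, that the slowly-decaying part of $w$ itself is proportional to $\theta^\diamond$, so that the $O(\theta v)$ contribution can likewise be cast in conservation-law form.

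The main obstacle I expect is the $O(\theta\theta^\diamond)$ and $O(\theta v)$ bookkeeping: one must verify that every quadratic term of this type, after all the coordinate changes and after imposing the normalization, genuinely appears in the total-difference form required for summation by parts — a single stray term not of this form would reintroduce the $\beta = 1$ logarithmic loss and break the bootstrap. A secondary difficulty is controlling the non-autonomous, $\Phi$-dependent coefficients in $M_{ct}$ and in the profile terms multiplying $\dot\theta$; since $\phi$ and $\psi$ decay exponentially, these are ultimately manageable, but keeping the pointwise-in-$n$ structure of the Green's functions through the Fourier synthesis (rather than just $L^2$ bounds) is delicate and is the technical heart of \S\ref{sec:green}.
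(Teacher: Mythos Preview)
Your outline tracks the paper's architecture closely: the $(v,\theta)$ coordinates of \S\ref{sec:cds}, the $\chi$-correction defining $w$, the transverse Fourier decomposition into the family $\mathcal{L}_\omega$, the template function $M(t)$ and the bootstrap inequality $M(T)\le CE_0+CM(T)^2+CM(T)^3$ of Proposition~\ref{prp:nl:bndOnFxP}, and the summation-by-parts handling of $\theta\theta^\diamond$ in Lemma~\ref{lem:nl:firsttermest} are all present. There is, however, one genuine error.

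Your claim that ``the $w$-component of the linear flow decays like $O(e^{-\beta(t-t_0)})$ once the $\phi_\omega$-direction is projected out'' is false for the normalization you have actually imposed. It is true that, \emph{at each fixed frequency} $\omega$, projecting onto the complement of $\phi_\omega$ leaves an exponentially decaying remainder (Lemma~\ref{lem:green:estForOverlineG:I}). But the condition $\sum_n\langle\psi(n+ct),w_{nl}\rangle=0$ is the \emph{frequency-independent} projection, and the $\chi$-correction removes only the first-order-in-$\omega$ discrepancy between $\phi_\omega$ and $\phi$. The $O(\omega^2)$ residual $\mathcal{Q}_\omega$ in the Taylor expansion~\sref{eq:green:taylorExpansionProjection} survives, and after Fourier synthesis contributes via $\omega^2 e^{-\kappa\omega^2(t-t_0)}$, which is algebraic in $t-t_0$. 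The upshot is Corollary~\ref{cor:green:bndsOnGreenForW}: $\mathcal{G}_{wv}$ and $\mathcal{G}_{w\theta}$ decay only at rates between $(1+t-t_0)^{-1}$ and $(1+t-t_0)^{-3/2}$. So the $w$-equation cannot be closed by the exponential integral~\sref{eq:liuduhamelexp}; it requires the same $[0,t/2]$--$[t/2,t]$ split of Lemma~\ref{lem:nl:basicIntID} used for $\theta$, and this is why the paper's template exponents for $w$ are $9/8$ and $5/4$ rather than anything matching the nonlinear rates directly. A secondary cosmetic point: for oblique $\sigma$ the linear part of $\dot\theta$ is \emph{not} $\theta_{l+1}+\theta_{l-1}-2\theta_l$ but involves only first differences through $Q_{ct}L_{ct}M_{ct}\theta^\diamond$ (see~\sref{eq:cds:sys:glb:4}); the heat-kernel rates come from the Fourier-side estimate $\Re\lambda_\omega<-\kappa\omega^2$ and the comparison of $\mathcal{G}_{\theta\theta}$ with the multiplier $\mathcal{A}(t,t_0)$, not from any second-difference structure in physical space.
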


Our final two results cover situations where the wave travels in
a horizontal, vertical or diagonal direction,
in which case the decay rates on $v(t)$ can be improved.
Notice that we only recover the bounds obtained by Kapitula \cite{KAP1997}
when analyzing waves that travel horizontally or vertically
and for which the perpendicular discrete diffusion coefficients
are multiples of the identity matrix.
In fact, in this case we even get slightly faster decay
rates, because we can exploit the embedding
$\ell^1(\Wholes; \Real) \subset \ell^2(\Wholes; \Real) \subset \ell^\infty(\Wholes; \Real)$,
which does not generalize to the function spaces $L^{q}(\Real; \Real)$.

\begin{thm}
\label{thm:mr:stbDiagonal}
Consider the LDE \sref{eq:mr:mainLDE} and suppose that
(Hf), (H$\Phi$),  $\mathit{(HS)}_{\mathrm{ess}}$,
$\mathit{(HS1)}_p$-$\mathit{(HS3)}_p$ and $\mathit{(HS1)}_\omega$-$\mathit{(HS2)}_{\omega}$ are all satisfied.
Pick any matrix $A \in \Real^{d \times d}$.
Suppose that
either $(\sigma_1, \sigma_2) = (1, 0)$ with
\begin{equation}
f\big(  u_{i+1, j}, u_{i, j+1}, u_{i-1, j}, u_{i, j-1} , u_{ij} \big)
= A [ u_{i, j + 1} + u_{i, j-1} ] + g\big(u_{i+1, j} , u_{i - 1, j} , u_{ij} \big),
\end{equation}
or that $(\sigma_1, \sigma_2) = ( 0, 1)$ with
\begin{equation}
f\big(  u_{i+1, j}, u_{i, j+1}, u_{i-1, j}, u_{i, j-1} , u_{ij} \big)
= A [ u_{i +1, j} + u_{i-1, j} ] + g\big(u_{i, j+1} , u_{i , j -1} , u_{ij} \big),
\end{equation}
or that
$(\sigma_1, \sigma_2) = ( 1, 1)$ with
\begin{equation}
f\big(  u_{i+1, j}, u_{i, j+1}, u_{i-1, j}, u_{i, j-1} , u_{ij} \big)
= A [   u_{i+1, j} + u_{i, j+1} + u_{i-1, j} + u_{i, j-1}    ] + g\big( u_{ij} \big).
\end{equation}
Then we have $[\partial_{\omega} \phi_{\omega}]_{\omega = 0} = 0$.
In addition, the statements in Theorem \ref{thm:mr:stbAnyDirection} all hold,
after replacing (v) by
\begin{itemize}
\item[(v)']{
  The function $v = w$ satisfies the estimates
  \begin{equation}
  \begin{array}{lcl}
    \norm{w(t)}_{\ell^p\big(\Wholes; \ell^2(\Wholes; \Real^d) \big) } & \le & C (1 + t)^{-5/4}
      \norm{v^0}_{\ell^p\big(\Wholes; \ell^1(\Wholes; \Real^d) \big) }, \\[0.2cm]
    \norm{w(t)}_{\ell^p\big(\Wholes; \ell^\infty(\Wholes; \Real^d) \big) } & \le & C (1 + t)^{- 3/2 }
      \norm{v^0}_{\ell^p\big(\Wholes; \ell^1(\Wholes; \Real^d) \big) }. \\[0.2cm]
  \end{array}
  \end{equation}
}
\end{itemize}
\end{thm}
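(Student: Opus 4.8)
The plan is to reduce Theorem \ref{thm:mr:stbDiagonal} to Theorem \ref{thm:mr:stbAnyDirection} by two observations: first, a symmetry argument showing that the special structure of $f$ forces $[\partial_\omega \phi_\omega]_{\omega=0} = 0$; and second, tracing through the proof of Theorem \ref{thm:mr:stbAnyDirection} to see exactly where $[\partial_\omega \phi_\omega]_{\omega=0}$ (equivalently, the correction term $\chi$) enters and degrades the $w$-decay rate, then checking that setting it to zero removes the obstruction and restores the $t^{-5/4}$ (resp. $t^{-3/2}$) rates.

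First I would establish the vanishing of $[\partial_\omega \phi_\omega]_{\omega=0}$. For each of the three listed cases, the key is that the diffusion in the direction transverse to propagation enters $\tau_\omega$ only through the combination $e^{i\omega\sigma_2} + e^{-i\omega\sigma_2}$ (for $\sigma=(1,0)$ this is $e^{i\omega}+e^{-i\omega}=2\cos\omega$, acting via the matrix $A$; similarly for the other cases after accounting for which shifts are "transverse"). Inspecting \sref{eq:mr:defTauOmega} and the hypotheses on $f$, one sees that $\mathcal{L}_\omega$ depends on $\omega$ only through the even, real-analytic function $2\cos\omega$ (or $2(2-\cos\omega)$ up to constants) — precisely as in the $\sigma=(1,0)$ discussion in the introduction where "the dependence on $\omega$ is by a spectral shift." Consequently $\mathcal{L}_\omega = \mathcal{L}_0 + h(\omega) B$ for a fixed bounded operator $B$ and a scalar function $h$ with $h'(0)=0$. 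Differentiating $(\mathcal{L}_\omega - \lambda_\omega)\phi_\omega = 0$ once at $\omega=0$ and using $h'(0)=0$ gives $\mathcal{L}_0 [\partial_\omega\phi_\omega]_{\omega=0} = [\partial_\omega\lambda_\omega]_{\omega=0}\,\phi$; pairing with $\psi$ forces $[\partial_\omega\lambda_\omega]_{\omega=0}=0$, and then $[\partial_\omega\phi_\omega]_{\omega=0}$ lies in the kernel of $\mathcal{L}_0$, i.e. is a multiple of $\phi$; the normalization in Proposition \ref{prp:mr:melnikov}(iv), differentiated, pins that multiple to zero. Hence $[\partial_\omega\phi_\omega]_{\omega=0}=0$, and therefore $\chi \equiv 0$, so the middle correction term in the Ansatz of Theorem \ref{thm:mr:stbAnyDirection}(i) disappears and $v=w$.

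Second I would revisit the nonlinear iteration underlying Theorem \ref{thm:mr:stbAnyDirection}. In the general (oblique) case the slow $O((1+t-t_0)^{-3/4})$ decay of $\mathcal{G}_{v\theta}$ and the $O(\theta v)$ nonlinearity conspire to limit $w$ to the rate $(1+t)^{-\alpha}$ with $\alpha<5/4$; as the introduction explains, the troublesome slowest piece of $\mathcal{G}_{v\theta}$ is exactly the one proportional to $\theta^\diamond$ and governed by $[\partial_\omega\phi_\omega]_{\omega=0}$. With $[\partial_\omega\phi_\omega]_{\omega=0}=0$ this slow component of $\mathcal{G}_{v\theta}$ is absent, so $\mathcal{G}_{v\theta}$ now decays at the full exponential-modulated-by-heat-kernel rate, and the relevant Duhamel integrals close at $\beta=3/2$ in \sref{eq:liuduhamelalg} rather than the borderline $\beta=1$ case. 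One then re-runs the fixed-point argument of \S\ref{sec:nl} verbatim with the improved $\mathcal{G}_{v\theta}$ bound and the same nonlinear estimates \sref{eq:int:estNStarGenDir}; the slowest surviving contributions to $w$ now come from $O((\theta^\diamond)^2)$ and $O(\theta\theta^\diamond)$ (after the summation-by-parts trick \sref{eq:uux}), which feed through $\mathcal{G}_{vv}=O(e^{-\beta(t-t_0)})$ to produce decay at the $t^{-3/2}$ rate for the first difference of $\theta$, yielding $\|w(t)\|_{\ell^p(\ell^\infty)} \le C(1+t)^{-3/2}$ and, via the embedding $\ell^1 \subset \ell^2 \subset \ell^\infty$, the stated $\ell^p(\ell^2)$ bound at rate $t^{-5/4}$. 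The $\theta$ and $\theta_{\mathrm{diff}}$ estimates (iii)–(iv) are unchanged since they come from the heat-kernel part $\mathcal{G}_{\theta\theta}$ and the nonlinearity fed through it, which do not see the improvement in $\mathcal{G}_{v\theta}$ beyond what was already used.

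The main obstacle I anticipate is the second step — verifying cleanly that $[\partial_\omega\phi_\omega]_{\omega=0}=0$ is genuinely sufficient to kill the slow part of $\mathcal{G}_{v\theta}$, rather than merely one summand of it. This requires knowing precisely how the Green's function $\mathcal{G}_{v\theta}$ is assembled in \S\ref{sec:green} from the spectral data $(\lambda_\omega, \phi_\omega, \psi_\omega)$: one must confirm that every term decaying slower than the heat-kernel-times-exponential rate carries a factor of $[\partial_\omega\phi_\omega]_{\omega=0}$ (or its conjugate), so that it vanishes identically here. The $\sigma=(1,1)$ case may need slightly more care than $(1,0)$ and $(0,1)$: there the "transverse" direction $l = j\sigma_1 - i\sigma_2 = j-i$ is not a coordinate axis, and one must check that the fully symmetric coupling $A[u_{i+1,j}+u_{i,j+1}+u_{i-1,j}+u_{i,j-1}]$ still produces a $\tau_\omega$ whose $\omega$-dependence is through a single even scalar function — which it does, since under $n=i+j$, $l=j-i$ the four shifts pair up as $e^{i\omega}+e^{-i\omega}$ twice. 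Everything else is bookkeeping: re-indexing the Duhamel estimates with the better exponent and invoking the already-proved machinery of \S\ref{sec:nl}.
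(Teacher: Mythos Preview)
Your first step is fine and matches the paper: in each of the three cases the structural hypotheses on $f$ make $\mathcal{L}_\omega$ an even function of $\omega$ (indeed $\mathcal{L}_\omega=\mathcal{L}_0+h(\omega)B$ with $h'(0)=0$), so $[\partial_\omega\lambda_\omega]_{\omega=0}=0$, $[\partial_\omega\phi_\omega]_{\omega=0}=0$, and hence $\chi\equiv 0$, $v=w$.

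The second step has a genuine gap. You assert that once $\chi=0$ the slow part of $\mathcal{G}_{v\theta}$ disappears and the Duhamel argument closes at the $(5/4,3/2)$ rates; you also write $\mathcal{G}_{vv}=O(e^{-\beta(t-t_0)})$. Neither holds here. With a general matrix $A$ the eigenfunctions $\phi_\omega$ still depend nontrivially on $\omega$ at second order, so the Green's function piece $\overline{\mathcal{H}}_{2,\omega}$ in Corollary~\ref{cor:green:expressionGOmega} contributes an $\omega^2 e^{-\kappa\omega^2(t-t_0)}$ term and $\mathcal{G}_{vv}$ remains \emph{algebraically} decaying with exactly the rates in Proposition~\ref{prp:green:bndsOnGreen}; exponential decay of $\mathcal{G}_{vv}$ is the content of Theorem~\ref{thm:mr:stbParallel} (scalar $\kappa$), not of Theorem~\ref{thm:mr:stbDiagonal}. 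More importantly, the obstruction to the $(5/4,3/2)$ rates in the general proof is not $\mathcal{G}_{v\theta}$ but the nonlinearity $\mathcal{R}_{4,ct_0}$: it contains $O(\theta\theta^\diamond)$ contributions which, by Proposition~\ref{prp:cds:bnds:Nonlins}, give $\|\mathcal{R}_4\|_{\mathcal{X}_{p,1}}\sim\|\theta\|_{\ell^2}\|\theta^\diamond\|_{\ell^2}\sim(1+t_0)^{-1}$ regardless of $\chi$. Feeding this through $\mathcal{G}_{wv}=\mathcal{G}_{vv}$ with $\|\cdot\|_{1\to 2}\le C(1+t-t_0)^{-5/4}$ and $\|\cdot\|_{2\to 2}\le C(1+t-t_0)^{-1}$ lands you in the critical case $\gamma=1$, $\delta=1$ of Lemma~\ref{lem:nl:basicIntID} and produces a logarithmic loss, i.e.\ only $(1+t)^{-5/4+\nu}$. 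The summation-by-parts trick \sref{eq:uux} is applied in the paper only to the $\theta H_4$ term paired with the explicit heat-kernel part $\mathcal{A}$ of $\mathcal{G}_{\theta\theta}$; it is not set up to act on $\mathcal{R}_4$ through $\mathcal{G}_{vv}$, and you have not shown how it would.

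What the paper actually uses, and what your sketch omits, is a second structural consequence of the same symmetry that killed $[\partial_\omega\phi_\omega]_{\omega=0}$: in each of the three listed cases the transverse first differences in $M_{ct}\theta^\diamond$ pair up into second differences. For example, when $\sigma=(1,0)$ one finds $L_{ct}M_{ct}\theta^\diamond=A(\theta_{l+1}+\theta_{l-1}-2\theta_l)\phi(n+ct)$, and similarly for $(0,1)$ and $(1,1)$. Thus every occurrence of $\|\theta\|\,\|\theta^\diamond\|$ in the bounds on $\mathcal{R}_{4}$, $\mathcal{S}_4$ and $\theta H_4$ can be replaced by $\|\theta\|\,\|\theta^{\diamond\diamond}\|$, so that all nonlinearities now decay like $(1+t_0)^{-3/2}$ in $\ell^1$ and $(1+t_0)^{-7/4}$ in $\ell^2$. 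With these improved rates the Duhamel integrals close directly at $(5/4,3/2)$ without any borderline case. You need to add this observation; the $\chi=0$ conclusion alone is not sufficient.
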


In the special case that $\sigma$ is one of the lattice directions (1,0) or (0,1) and moreover the coupling in the transverse direction is linear with weights that are equal both across components in the vector $u$ and from one neighbour to the other, we obtain stronger decay.
While this case is certainly restrictive, it also holds in our motivating example \eqref{eq:int:nagumo:lde} and so we state it here as a theorem.

\begin{thm}
\label{thm:mr:stbParallel}
Consider the LDE \sref{eq:mr:mainLDE} and suppose that
(Hf), (H$\Phi$),  $\mathit{(HS)}_{\mathrm{ess}}$,
and $\mathit{(HS1)}_p$-$\mathit{(HS3)}_p$
are all satisfied.
Pick any constant $\kappa > 0$.
Suppose that either $(\sigma_1, \sigma_2) = (1, 0)$ with
\begin{equation}
f\big(  u_{i+1, j}, u_{i, j+1}, u_{i-1, j}, u_{i, j-1} , u_{ij} \big)
= \kappa [ u_{i, j + 1} + u_{i, j-1} ] + g\big(u_{i+1, j} , u_{i - 1, j} , u_{ij} \big),
\end{equation}
or that $(\sigma_1, \sigma_2) = ( 0, 1)$ with
\begin{equation}
f\big(  u_{i+1, j}, u_{i, j+1}, u_{i-1, j}, u_{i, j-1} , u_{ij} \big)
= \kappa [ u_{i +1, j} + u_{i-1, j} ] + g\big(u_{i, j+1} , u_{i , j -1} , u_{ij} \big).
\end{equation}
Then the triplets $(\lambda_{\omega} , \phi_{\omega}, \psi_{\omega})$
are defined for all $\omega \in [-\pi, \pi]$ and we have $\phi_{\omega}  = \phi$ for all $\omega \in [-\pi, \pi]$.
In addition, the statements in Theorem \ref{thm:mr:stbAnyDirection} all hold,
after replacing (v) by
\begin{itemize}
\item[(v)'']{
  The function $v = w$ satisfies the estimates
  \begin{equation}
  \begin{array}{lcl}
    \norm{w(t)}_{\ell^p\big(\Wholes; \ell^2(\Wholes; \Real^d) \big) } & \le & C
      (1 + t)^{-7/4}
      \norm{v^0}_{\ell^p\big(\Wholes; \ell^1(\Wholes; \Real^d) \big) }, \\[0.2cm]
    \norm{w(t)}_{\ell^p\big(\Wholes; \ell^\infty(\Wholes; \Real^d) \big) } & \le & C (1 + t)^{- 7/4 }
      \norm{v^0}_{\ell^p\big(\Wholes; \ell^1(\Wholes; \Real^d) \big) }. \\[0.2cm]
  \end{array}
  \end{equation}
}
\end{itemize}
\end{thm}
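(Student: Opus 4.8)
The plan is to reduce the statement to Theorem~\ref{thm:mr:stbAnyDirection} by exploiting the single structural identity
\begin{equation}
\mathcal{L}_{\omega} = \mathcal{L}_0 - 2\kappa(1 - \cos\omega)\, I ,
\end{equation}
which holds in both the $(\sigma_1,\sigma_2) = (1,0)$ and the $(\sigma_1,\sigma_2) = (0,1)$ case: reading off $Df([\tau\Phi](\xi))[\tau_{\omega}p](\xi)$ from \sref{eq:mr:defTauOmega} for the prescribed $f$, the two transverse couplings $\kappa[\,p(\xi + \sigma_{\perp}) + p(\xi - \sigma_{\perp})\,]$ collapse after the transverse Fourier transform of \sref{eq:mr:1dLDEParallel} to the scalar multiplier $\kappa(e^{i\omega} + e^{-i\omega}) = 2\kappa\cos\omega$, while every other coefficient is $\omega$-independent. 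Hence, setting $\lambda_{\omega} := -2\kappa(1 - \cos\omega)$, we have $\mathcal{L}_{\omega} - \lambda_{\omega} = \mathcal{L}_0$ exactly for every $\omega \in [-\pi,\pi]$ (and, $\lambda_{\omega}$ being real, $\mathcal{L}_{\omega}^{*} - \lambda_{\omega} = \mathcal{L}_0^{*}$), so by $\mathrm{(HS2)}_p$ the kernel of $\mathcal{L}_{\omega} - \lambda_{\omega}$ is spanned by $\phi$ and that of $\mathcal{L}_{\omega}^{*} - \lambda_{\omega}$ by $\psi$. The triplets $(\lambda_{\omega},\phi_{\omega},\psi_{\omega}) = (-2\kappa(1-\cos\omega),\phi,\psi)$ are thus defined for all $\omega \in [-\pi,\pi]$ -- one may enlarge $\delta_{\omega}$ in Proposition~\ref{prp:mr:melnikov} to any value $\ge \pi$ -- and conclusions (i)--(vi) of that proposition carry over directly from the $\omega = 0$ case: for (i) use $\mathcal{L}_{\omega} - \lambda = \mathcal{L}_0 - (\lambda - \lambda_{\omega})$ with $\Re(\lambda - \lambda_{\omega}) \ge \Re\lambda$ since $\Re\lambda_{\omega} \le 0$, and for (v) the curves $\omega \mapsto \phi_{\omega}$ and $\omega \mapsto \psi_{\omega}$ are constant while $\omega \mapsto \lambda_{\omega}$ is entire. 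In particular $[\partial_{\omega}\phi_{\omega}]_{\omega = 0} = 0$, so $\chi \equiv 0$.

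Next I would check the two spectral hypotheses of Theorem~\ref{thm:mr:stbAnyDirection} that are not postulated here. For $\mathrm{(HS1)}_{\omega}$: differentiating $\lambda_{\omega}$ gives $[\partial_{\omega}^{2}\lambda_{\omega}]_{\omega = 0} = -2\kappa \ne 0$ -- the sign $\kappa > 0$ being precisely what makes the transverse diffusion stabilising -- and the computation \sref{eq:mel2} in the proof of Proposition~\ref{prp:mr:locRoots}, which is valid in the present setting, identifies this number with the left-hand side of the Melnikov identity \sref{eq:mr:melnikov}; the quadratic tangency \sref{eq:prp:mr:locRoots:Quadratic} is then immediate from $1 - \cos\omega \ge \tfrac{2}{\pi^{2}}\omega^{2}$ on $[-\pi,\pi]$. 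For $\mathrm{(HS2)}_{\omega}$: for $\omega \ne 0$ and $\Re\lambda \ge 0$ we have $\mathcal{L}_{\omega} - \lambda = \mathcal{L}_0 - \mu$ with $\mu = \lambda + 2\kappa(1-\cos\omega)$ and $\Re\mu > 0$, hence $\mu \notin 2\pi i c\Wholes$ and $\mathcal{L}_0 - \mu$ is invertible by $\mathrm{(HS1)}_p$. Theorem~\ref{thm:mr:stbAnyDirection} therefore applies and yields statements (i)--(iv) together with (v) for every $\alpha < \tfrac{5}{4}$; as $\chi \equiv 0$, the Ansatz in (i) collapses to $u_{ij}(t) = \Phi\big(i\sigma_1 + j\sigma_2 + ct + \theta_{i\sigma_2 - j\sigma_1}(t)\big) + w_{i\sigma_1 + j\sigma_2,\, i\sigma_2 - j\sigma_1}(t)$ with $v = w$.

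It remains to push the decay of $w$ up to (v)$''$. The decisive point is that the spectral projection $p \mapsto \phi\int_{-\infty}^{\infty}\langle\psi,p\rangle$ onto the principal eigenvector is now $\omega$-independent, exactly as in the $\sigma = (1,0)$ analysis of Kapitula~\cite{KAP1997}. Re-examining the pointwise Green's-function construction of \S\ref{sec:green} with this in hand, every block of the $(v,\theta)$-Green's function except the heat-like $\mathcal{G}_{\theta\theta}$ decays exponentially in $t - t_0$; in particular the slow $O\big((1 + t - t_0)^{-3/4}\big)$ behaviour of $\mathcal{G}_{v\theta}$ that in the oblique case forced both the $\chi$-correction and the conservation-law manoeuvre \sref{eq:uux} is gone. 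Feeding these improved linear estimates into the nonlinear scheme of \S\ref{sec:nl} and invoking \sref{eq:liuduhamelexp}, the decay of $w$ becomes dictated by the slowest quadratic forcing in the system: because $\Phi' = \phi$ is exactly the eigenfunction, the linear coupling $\kappa\Phi'(\xi+\theta)\,\theta^{\diamond\diamond}$ lies entirely along $\phi$ and is cancelled by the $\dot\theta\,\Phi'(\xi+\theta)$ term after $\psi$-normalisation, so the surviving slow forcing in the $w$-equation is, up to an exponentially localised coefficient in the parallel variable, of order $\theta_{\mathrm{diff}}^{2}$, while the $O(\theta w)$ and $O(w^{2})$ contributions decay faster against the bootstrap ansatz. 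Since (iv) gives $\norm{\theta_{\mathrm{diff}}(t)}_{\ell^{2}(\Wholes;\Real)}\,\norm{\theta_{\mathrm{diff}}(t)}_{\ell^{\infty}(\Wholes;\Real)} \lesssim (1+t)^{-7/4}\norm{v^{0}}^{2}$, one closes the fixed-point map at $\norm{w(t)}_{\ell^p(\Wholes;\ell^{2}(\Wholes;\Real^d))} \le C(1+t)^{-7/4}\norm{v^{0}}$, the quadratic structure furnishing the extra factor of $\norm{v^{0}}$ needed for the contraction once $\delta$ is small; the matching $\ell^p(\Wholes;\ell^{\infty}(\Wholes;\Real^d))$ estimate then follows at once from the embedding $\ell^{2}(\Wholes;\Real^d) \hookrightarrow \ell^{\infty}(\Wholes;\Real^d)$.

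I expect the main obstacle to be the linear step -- verifying, inside the Green's-function framework of \S\ref{sec:green}, that the $\omega$-independence of the projection genuinely kills the $O\big((1 + t - t_0)^{-3/4}\big)$ tail of $\mathcal{G}_{v\theta}$ and restores exponential decay of the whole block structure apart from $\mathcal{G}_{\theta\theta}$. The nonlinear part is then a routine rerun of \S\ref{sec:nl} with sharper exponents; the delicate bookkeeping there is to verify that the cancellation between $\kappa\Phi'(\xi+\theta)\,\theta^{\diamond\diamond}$ and $\dot\theta\,\Phi'(\xi+\theta)$ in the $w$-equation really does remove the would-be $O(\theta\theta^{\diamond\diamond})$ forcing, and to confirm that the cheap $\ell^{2}\hookrightarrow\ell^{\infty}$ passage at the end loses nothing that is needed elsewhere in the iteration.
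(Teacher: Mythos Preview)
Your approach is essentially the paper's: the identity $\mathcal{L}_\omega = \mathcal{L}_0 + 2\kappa(\cos\omega - 1)I$ gives $(\lambda_\omega,\phi_\omega,\psi_\omega) = (2\kappa(\cos\omega-1),\phi,\psi)$ globally in $\omega$, whence $\mathrm{(HS1)}_\omega$--$\mathrm{(HS2)}_\omega$ hold automatically, $\chi\equiv 0$, and the $\omega$-independence of the projection forces $\mathcal{G}_{vv}$ and $\mathcal{G}_{v\theta}$ to decay exponentially rather than algebraically, so the bootstrap for $w=v$ closes at the nonlinear rate $(1+t)^{-7/4}$. One minor imprecision in your nonlinear paragraph: the cancellation you describe for $\kappa\Phi'(\xi+\theta)\theta^{\diamond\diamond}$ is a linear-level fact (it is exactly what makes $\mathcal{G}_{v\theta}$ exponential via $[I-P_{ct}]T_{ct}\phi = 0$), and genuine $O(\theta\,\theta^{\diamond\diamond})$ contributions do survive in $\mathcal{R}_{4,ct}$ --- but they decay at the same $(1+t)^{-7/4}$ rate as $(\theta^\diamond)^2$, so the conclusion is unaffected; the paper simply shortcuts this by invoking the nonlinear estimates already established in the proof of Theorem~\ref{thm:mr:stbDiagonal}.
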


\section{Coordinate System}
\label{sec:cds}

In this section we develop the coordinate system that we
use to analyze the planar nonlinear LDE \sref{eq:mr:mainLDE}.
We start by introducing the new coordinates
\begin{equation}
\label{eq:cds:directions}
\begin{array}{lcl}
n & = & i \sigma_1 + j \sigma_2, \\[0.2cm]
l & = & i \sigma_2 - j \sigma_1.
\end{array}
\end{equation}
The first of these coordinates represents the direction parallel to the propagation of the wave,
while the second coordinate represents the direction perpendicular to wave motion.  In the sequel we refer to $n$ as the wave coordinate and $l$ as the transverse coordinate.

We emphasize that we always have $(n,l) \in \Wholes^2$
due to our assumption that $(\sigma_1, \sigma_2) \in \Wholes^2$. However,
the inverse of the transformation \sref{eq:cds:directions}
is given by
\begin{equation}
\begin{array}{lcl}
i & = & [\sigma_1^2 + \sigma_2^2]^{-1}\big( n \sigma_1 + l \sigma_2 \big), \\[0.2cm]
j & = & [\sigma_1^2 + \sigma_2^2]^{-1}\big( n \sigma_2 - l \sigma_1 \big), \\[0.2cm]
\end{array}
\end{equation}
which means that the pairs $(n,l)$ in the range of the transformation \sref{eq:cds:directions}
represent only a sublattice of $\Wholes^2$. Indeed, rewriting the LDE \sref{eq:mr:mainLDE}
in terms of our new coordinates, we obtain the system
\begin{equation}
\label{eq:cds:ldeNewCoordsLoc}
\dot{u}_{nl}(t) = f\big( \pi^\times_{nl} \, u(t) \big),
\end{equation}
in which the linear operator $\pi^\times_{nl} :\ell^{\infty}(\Wholes^2; \Real^d) \to (\R^d)^5$
is given by
\begin{equation}
\pi^\times_{nl} u = \big( u_{n + \sigma_1, l + \sigma_2} , u_{n + \sigma_2, l - \sigma_1} , u_{n - \sigma_1, l - \sigma_2} ,  u_{n - \sigma_2, l + \sigma_1} ,u_{nl}  \big).
\end{equation}
As an example, the new coordinates transform the discrete Nagumo LDE \sref{eq:int:nagumo:lde}
into the system
\begin{equation}
\dot{u}_{nl} = [\Delta^\times u]_{nl} + g ( u_{nl}(t) ),
\label{eq:lde:newCoords}
\end{equation}
in which
\begin{equation}
[\Delta^\times u ]_{nl} = u_{n + \sigma_1, l + \sigma_2} + u_{n + \sigma_2, l - \sigma_1} + u_{n - \sigma_1, l - \sigma_2} + u_{n - \sigma_2, l + \sigma_1} - 4 u_{nl}.
\end{equation}

To avoid clutter, we also introduce the operator
\begin{equation}
\pi^\times: \ell^{\infty}(\Wholes^2; \Real^d) \to \ell^\infty(\Z^2,(\R^d)^5)
\end{equation}
that acts as
\begin{equation}
[\pi^\times u]_{nl} = \pi^\times_{nl} u.
\end{equation}
The operator $\pi^\times$ should be regarded as a shorthand for generating a stencil of grid points.
Note that when $(\sigma_1, \sigma_2)$ is equal to $(0,1)$ or $(1,0)$,
we have $\pi^\times = \pi^+$ as introduced in \S\ref{sec:mr}.
Lifting the nonlinearity $f$ in the obvious fashion
to act on $\ell^{\infty}(\Wholes^2; (\Real^d)^5)$ instead of $(\Real^d)^5$,
we can now rephrase \sref{eq:cds:ldeNewCoordsLoc}
in the compact form
\begin{equation}
\label{eq:cds:ldeNewCoordsGlb}
\dot{u}(t) = f\big( \pi^\times  u(t) \big).
\end{equation}
In the sequel, we often refer to
\sref{eq:cds:ldeNewCoordsLoc} as an equation in local form,
while \sref{eq:cds:ldeNewCoordsGlb} is called an equation in global form.

Let us now consider any bounded function $\zeta: \Real \to \Real^s$,
where we will use both $s = d$ and $s = 5d$. For any shift $\vartheta \in \Real$,
we introduce the sequence
$T_{\vartheta} \zeta \in \ell^{\infty}(\Wholes; \Real^s)$
that is defined by
\begin{equation}
[T_{\vartheta} \zeta]_n = \zeta( n + \vartheta ).
\end{equation}
In particular, $T_{\vartheta}$ converts a function defined on $\Real$ into a sequence
by evaluating the function at the points $\Wholes + \vartheta$.
Furthermore, for any $\theta \in \ell^{\infty}(\Wholes; \Real)$,
we write $T_{\theta} \zeta \in \ell^{\infty}(\Wholes^2; \Real^s)$
for the planar sequence that is defined by
\begin{equation}
[T_{\theta} \zeta]_{nl} = \zeta( n + \theta_l ).
\end{equation}
By interpreting a shift $\vartheta \in \Real$ as a constant sequence in $\ell^{\infty}(\Wholes; \Real)$,
we note that we can also write
$T_{\vartheta} \zeta \in \ell^{\infty}(\Wholes^2; \Real^s)$ to
represent the planar sequence
\begin{equation}
[T_{\vartheta} \zeta]_{nl} = \zeta( n + \vartheta ),
\end{equation}
which is constant in the second coordinate.

Using this notation, we observe that the travelling wave solution
$u_{nl}(t) = \Phi(n + ct)$ to \sref{eq:cds:ldeNewCoordsGlb} can be written as
\begin{equation}
u(t) = T_{ct} \Phi.
\end{equation}
In order to understand the stability of this travelling wave, we
set out to seek solutions to \sref{eq:cds:ldeNewCoordsGlb} that
can be written as
\begin{equation}
\label{eq:cds:ansatzGlb}
u(t) = T_{ct + \theta(t) } \Phi + v(t),
\end{equation}
which in local form is equivalent to
\begin{equation}
\label{eq:cds:ansatzLoc}
u_{nl}(t) = \Phi\big(n + ct + \theta_l(t) \big) + v_{nl}(t).
\end{equation}
Here the function
\begin{equation}
\theta: [0, \infty) \to \ell^\infty(\Wholes; \Real)
\end{equation}
represents a deformation of the travelling wave profile in the transverse direction,
while the remainder function
\begin{equation}
v: [0, \infty) \to \ell^{\infty}(\Wholes^2; \Real^d)
\end{equation}
contains any perturbations that are 'transverse' to this deformation in an appropriate sense.
Indeed, allowing $\theta$ to vary in $\ell^\infty(\Wholes; \Real)$ buys us $\Z$-many degrees of freedom which we can use to restrict
$v$ to live in a favourable subspace.

In order to describe the different norms we employ for the function $v$,
we introduce the sequence spaces
\begin{equation}
\mathcal{X}_{p, q} = \ell^{p}(\Wholes; \ell^q(\Wholes; \Real^d)),
\end{equation}
parametrized by pairs of exponents $1 \le p \le \infty$ and $1 \le q \le \infty$.
A sequence $v \in \mathcal{X}_{p, q}$ can be interpreted as a planar sequence
$v \in \ell^{\infty}(\Wholes^2; \Real^d)$ by the identification
\begin{equation}
v_{nl} =  (v_{n})_{l}.
\end{equation}
In particular, the $\ell^p$ norm is taken along the direction of the wave
and the $\ell^q$ norm is taken in the transverse direction.

Before we can describe the relation between $\theta$ and $v$, we need
to introduce a number of operators. First of all, for any $\vartheta \in \Real$
and any $1 \le p \le \infty$,
we write
\begin{equation}
Q_{\vartheta}: \ell^p(\Wholes; \Real^d) \to \Real
\end{equation}
for the bounded linear operator that acts as
\begin{equation}
Q_{\vartheta} v = \sum_{n \in \Wholes} \langle \psi(n + \vartheta) ,  v_n \rangle,
\end{equation}
in which we have recalled the function $\psi(\xi)$
that spans the kernel of the adjoint operator
$\mathcal{L}^*_0$, normalized by \sref{eq:mr:normalizatonConditionOnPsi}.
The operator $Q_{\vartheta}$ is well-defined as a consequence of the fact
that $\psi(\xi)$
decays at an exponential rate as $\xi \to \pm \infty$.
This decay also allows us to view $Q_{\vartheta}$
as a bounded linear map
\begin{equation}
Q_{\vartheta}: \mathcal{X}_{p, q} \to \ell^q(\Wholes; \Real)
\end{equation}
by writing
\begin{equation}
[Q_{\vartheta} v ]_l = \sum_{n \in \Wholes} \langle \psi(n + \vartheta) ,  v_{nl} \rangle.
\end{equation}
In particular, $Q_{\vartheta}$ acts only on the direction of propagation of the wave.
Associated to $Q_{\vartheta}$, we can introduce the operator
\begin{equation}
P_{\vartheta} = [T_{\vartheta} \phi] Q_{\vartheta},
\end{equation}
which can be seen either as a  bounded linear map
\begin{equation}
P_{\vartheta}: \ell^p(\Real, \Real^d) \to \ell^p(\Real, \Real^d)
\end{equation}
or as a bounded linear map
\begin{equation}
P_{\vartheta}: \mathcal{X}_{p, q} \to \mathcal{X}_{p, q}
\end{equation}
by writing
\begin{equation}
[P_{\vartheta} v]_{nl} = [T_{\vartheta} \phi]_n [Q_{\vartheta} v]_l
   = \phi(n + \vartheta) \sum_{n' \in \Wholes} \langle \psi(n' + \vartheta) ,  v_{n'l} \rangle.
\end{equation}
For convenience, we also introduce the operators
\begin{equation}
Q'_{\vartheta}: \mathcal{X}_{p, q} \to \ell^{q}(\Wholes; \Real)
\end{equation}
which arise in the course of computing time derivatives of $Q_{ct}$
and act as
\begin{equation}
[Q'_{\vartheta} v]_l = \sum_{n \in \Wholes} \langle \psi'(n + \vartheta), v_{nl} \rangle.
\end{equation}
Note that $\psi'(\xi)$ also decays exponentially as $\xi \to \pm \infty$, which can be deduced
from the MFDE satisfied by $\psi$.
Finally, we write
\begin{equation}
P'_{\vartheta}: \mathcal{X}_{p, q} \to \mathcal{X}_{p, q}
\end{equation}
for the associated operators $P'_\vartheta = [T_\vartheta \phi] Q'_\vartheta$ that act as
\begin{equation}
[P'_{\vartheta} v]_{nl} = [T_{\vartheta} \phi]_n [Q'_{\vartheta} v]_l
   = \phi(n + \vartheta) \sum_{n' \in \Wholes} \langle \psi'(n' + \vartheta) ,  v_{n'l} \rangle.
\end{equation}


Notice that \sref{eq:mr:innProduct:alwaysEq} implies that
\begin{equation}
Q_{\vartheta} T_{\vartheta} \phi = 1
\end{equation}
for all $\vartheta \in \Real$.
In particular, we have the identity $P_{\vartheta}^2 = P_{\vartheta}$,
both in $\mathcal{L}\big( \ell^p(\Wholes; \Real^d) \big)$
and $\mathcal{L}\big( \mathcal{X}_{p, q} \big)$.
In fact, the operator $P_\vartheta$ can be seen as the spectral projection onto the neutral mode
for the linearization of \sref{eq:mr:mainLDE} about $T_\vartheta \Phi$ when posed on
the subspace of
$\ell^\infty(\Z^2; \Real^d)$ that contains only sequences that are constant in the transverse direction.
Here we are interested in the case when arbitrary perturbations are considered,
in which case zero is no longer an isolated eigenvalue and the spectral projection does not exist in a classical sense.
Rather the projection $P_\vartheta$ can be thought of as a crude approximation of what the spectral projection onto the origin wants to be.
See \cite{BECKWAYNE2012,ZUMHOW1998} for more examples of extending spectral projections through the essential spectrum.
Let us emphasize here that $P'_\vartheta$ is not a projection.

\subsection{Equations for $v$ and $\theta$}

We are now ready to describe the relation between $v(t)$ and $\theta(t)$.
The goal is to eliminate the neutral mode in the best way that we can that does
not complicate the eventual Fourier analysis.
To this end, we introduce
for any $1 \le p \le \infty$ and $q \in \{2 , \infty \}$
the subspace
\begin{equation}
\label{eq:cds:XperpDef}
  \mathcal{X}^\perp_{p, q}(t) = \{ v \in \mathcal{X}_{p, q} \mid  Q_{ct} v = 0 \in \ell^q(\Wholes; \Real) \}
\end{equation}
and demand $v(t) \in \mathcal{X}^\perp_{p, q}(t)$.
As we shall see below, this condition is sufficient to fix the dynamics of both $\theta$ and $v$
as long as these functions are both sufficiently small.  In particular, the pair
$(v,\theta) \in \mathcal{X}^\perp_{p,q} \times \ell^q(\Wholes; \Real)$
form coordinates for the solution $u$ in a tubular neighbourhood of the wave profile $\Phi$.

Differentiating the Ansatz \sref{eq:cds:ansatzLoc}, we find that
\begin{equation}
\begin{array}{lcl}
\dot{u}_{nl}(t) & = & \big(c + \dot{\theta}_l(t) \big) \Phi'\big(n + ct + \theta_l(t) \big) + \dot{v}_{nl}(t) \\[0.2cm]
& = & f\big( [\tau \Phi]\big(n + ct + \theta_l(t) \big)  \big) + \dot{\theta}_l(t) \phi\big(n + ct +\theta_l(t) \big) + \dot{v}_{nl}(t),
\end{array}
\end{equation}
which we abbreviate as
\begin{equation}
\dot{u}(t)  =
 f\big( T_{ct + \theta(t)} \tau \Phi  \big) + \dot{\theta}(t) T_{ct + \theta(t)}\phi + \dot{v}(t),
\end{equation}
using the convention that the product of a sequence $\theta \in \ell^\infty(\Wholes; \Real)$ with a
planar sequence $v \in \mathcal{X}_{p, \infty}$ is given by $[\theta v]_{nl} = \theta_l v_{nl}$.
Imposing the condition that the Ansatz \sref{eq:cds:ansatzLoc} is a true solution to the LDE \sref{eq:cds:ldeNewCoordsGlb},
we obtain the following evolution equation for $v$:
\begin{equation}
\label{eq:cds:lde:glb:1}
\dot{v}(t) = f\big(\pi^\times T_{ct + \theta(t)} \Phi + \pi^\times v(t) \big) -
  f\big( T_{ct + \theta(t)} \tau \Phi \big) - \dot{\theta}(t) T_{ct + \theta(t) } \phi.
\end{equation}
%

It is desirable to write the term $f(y) - f(x)$ as $f'(x)(y-x) + O((y-x)^2)$.
To this end we analyze the difference $T_{ct}(\pi^\times T_{\theta} - T_\theta \tau)\Phi$ which when added to
$\pi^\times v$ gives us the relevant $(x-y)$ in our Taylor expansion.  Note that $T_\theta$ does not commute with $\pi^\times$ or $\tau$ because $\theta$ is in general $l$-dependent.  This difference is a vector of differences of the wave profile, the first term of which is $\Phi(\xi + \sigma_1 + \theta_{l+\sigma_2}) - \Phi(\xi + \sigma_1 + \theta_l)$, the other terms being similar except with different indices on and signs in front of the $\sigma_j$.  To lowest order this term factors as the product $\Phi'(\xi + \sigma_1)(\theta_{l+\sigma_2}-\theta_l)$.  This is the first time that the anisotropy of the lattice complicates our analysis.  Firstly, the coefficient on $(\theta_{l-\sigma_2}-\theta_l)$ is $\Phi'(\xi-\sigma_1) \not \equiv \Phi'(\xi + \sigma_1)$ and hence the two slowly decaying first differences $(\theta_{l+\sigma_2}-\theta_l)$ and $(\theta_{l-\sigma_2}-\theta_l)$ do not combine to form
the relatively rapidly decaying second difference $\theta_{l+2} + \theta_{l-2}-2\theta_l$ (unless of course $\sigma_1 = 0$).  Secondly, because the coefficients on the first differences do not have identical behaviour in the wave direction, this term cannot be projected away by imposing an orthogonality condition on $v$ that is uniform in $l$.

We now introduce notation that allows us to decompose the vector of differences $T_{ct}(\pi^\times T_{\theta} - T_\theta \tau)\Phi$
and more generally the right hand side of $\eqref{eq:cds:lde:glb:1}$ into linear terms that generate
the evolution semigroup, nonlinear terms whose estimation requires care, and nonlinear terms which are estimated relatively easily.

We use the notation $\theta^\diamond \in \ell^\infty(\Wholes; \Real^5)$
to refer to the sequence
\begin{equation}
\theta^\diamond_l =
\big( \theta_{l + \sigma_2} - \theta_l, \theta_{l - \sigma_1} - \theta_l,
\theta_{l - \sigma_2} - \theta_l, \theta_{l + \sigma_1} - \theta_l,0 \big) \in \Real^5.
\end{equation}
In all of our applications the 5th coordinate is zero, but we include the 5th coordinate
so that dimensions match.
A short computation shows that
\begin{equation}
\begin{array}{lcl}
[\pi^\times T_{ct + \theta}\Phi -  T_{ct + \theta} \tau \Phi]_{nl} & = &
[\mathcal{T}\big(  \theta^\diamond_l \big)  \Phi( \cdot + ct + \theta_l ) ]_n,
\end{array}
\end{equation}
in which we have introduced the symbol
\begin{equation}
\mathcal{T}( \alpha) =
\big(
 T_{   \sigma_1 + \alpha_1  } -  T_{   \sigma_1},
 T_{   \sigma_2 + \alpha_2  } -  T_{   \sigma_2},
 T_{ - \sigma_1 + \alpha_3  } -  T_{ - \sigma_1},
 T_{ - \sigma_2 + \alpha_4  } -  T_{ - \sigma_2},
 0
\big),
\end{equation}
for any $\alpha = (\alpha_1, \ldots, \alpha_5) \in \Real^5$.
For any $n \in \Wholes$ and $\vartheta \in \Real$, we now introduce
the linear operator 
$M(n, \vartheta): \Real^5 \to (\Real^d)^5 $ that acts as
\begin{equation}
\begin{array}{lcl}
M( n, \vartheta) \alpha & = &
\big[ \big( \alpha_1 T_{  \sigma_1},
      \alpha_2 T_{  \sigma_2},
      \alpha_3 T_{ -\sigma_1},
      \alpha_4 T_{ -\sigma_2},
      0
\big) \phi(\cdot + \vartheta) \big]_n \\[0.2cm]
& = &
\big( \alpha_1 \phi(\vartheta + n + \sigma_1 ),
      \alpha_2 \phi(\vartheta + n + \sigma_2 ),
      \alpha_3 \phi(\vartheta + n - \sigma_1),
      \alpha_4 \phi(\vartheta + n - \sigma_2),
      0
\big).
\end{array}
\end{equation}
Upon introducing the nonlinear function $\mathcal{M}: \Wholes \times \Real \times \Real^5 \to (\Real^d)^5$  
given by
\begin{equation}
\mathcal{M}(n, \vartheta; \alpha) = 
\big[ \mathcal{T}\big(\alpha \big)  \Phi(\cdot + \vartheta) \big]_n  - M(n, \vartheta) \alpha,
\end{equation}
the fact that $\Phi''$ is bounded 
yields the estimate
\begin{equation}
\mathcal{M}(n,  \vartheta ; \alpha \big) = O(\abs{\alpha}^2), \qquad \alpha \to 0,
\end{equation}
which holds uniformly for $n \in \Wholes$ and $\vartheta \in \Real$.
We have hence obtained
\begin{equation}
\label{eq:cds:expOfCommut}
[\pi^\times T_{ct + \theta}\Phi - T_{ct + \theta} \tau \Phi]_{nl}  =
M(n, ct +  \theta_l)  \theta^\diamond_l + \mathcal{M}(n, ct +  \theta_l; \theta^\diamond_l ),
\end{equation}
which we write in global form as
\begin{equation}
\pi^\times T_{ct + \theta}\Phi - T_{ct + \theta} \tau \Phi  =
M(ct +  \theta)  \theta^\diamond + \mathcal{M}(ct +  \theta; \theta^\diamond ).
\end{equation}

Thus \eqref{eq:cds:lde:glb:1} may be rewritten informally as
\begin{equation}
\dot{v} = L[\pi^\times v + M\theta^\diamond + \mathcal{M}] + \mathcal{N}(\pi^\times v + M\theta^\diamond + \mathcal{M})
\label{eq:cds:lde:glb:2}
\end{equation}
where we have introduced some new notation.  The linear operator $L$ is given in local form by
\begin{equation}
L(n, \vartheta) v =   D f\big( [\tau \Phi ](n + \vartheta) \big) v,
\end{equation}
while the nonlinear $\mathcal{N}$, given by
\begin{equation}
\mathcal{N}\big(n , \vartheta ;  v \big) =f( [ \tau \Phi](n + \vartheta)  + v)
- f\big( [\tau \Phi](n + \vartheta) \big)  - D f\big( [\tau \Phi](n + \vartheta) \big)v,
\end{equation}
enjoys the estimate
\begin{equation}
\mathcal{N}(n , \vartheta;  v \big) = O \big( \abs{v }^2 \big), \qquad v \to 0,
\end{equation}
which is uniform for $n \in \Wholes$ and $\vartheta \in \Real$ and follows from the $C^2$-smoothness of $f$ together with the uniform boundedness of $\Phi$.  We now begin the process of separating the terms which are easy to estimate from those that require more care by writing

%
\begin{equation}
\label{eq:cds:sysForV:glb:1}
\begin{array}{lcl}
\dot{v}(t) & = &  L(ct +  \theta(t) ) [ \pi^\times v(t) + M\big( ct + \theta(t) \big)  \theta^\diamond(t) \big]
 - \dot{\theta}(t) T_{ct + \theta(t) } \phi \\[0.2cm]
& & \qquad  + \mathcal{R}_{1, ct}\big( v(t), \theta(t) \big),
\end{array}
\end{equation}
in which the nonlinearity
$\mathcal{R}_{1 ,ct}:  \ell^\infty(\Wholes^2; \Real^d)  \times \ell^\infty(\Wholes; \Real)  \to \ell^\infty(\Wholes^2; \Real^d)$
acts as
\begin{equation}
\begin{array}{lcl}
[\mathcal{R}_{1, ct}\big(  v, \theta \big)]_{nl}
& = & L(n, ct + \theta_l) \mathcal{M}(n, ct + \theta_l; \theta^\diamond_l) \\[0.2cm]
& & \qquad + \mathcal{N}\big(n, ct + \theta_l; \pi^\times_{nl} v + M(n, ct + \theta_l) \theta^\diamond_l
   + \mathcal{M}(n,  ct + \theta_l; \theta^\diamond_l)\big).
\end{array}
\end{equation}

%

We now set out to derive an expression for $\dot{\theta}_l$.
Differentiating the condition $Q_{ct} v(t) = 0$ shows that
\begin{equation}
Q_{ct} \dot{v}(t) + c Q'_{ct} v(t) = 0 \in \ell^\infty(\Wholes; \Real).
\end{equation}
Substitution of \sref{eq:cds:sysForV:glb:1} yields
the system
\begin{equation}
\label{eq:cds:eqThetaDot:glb:1}
\begin{array}{lcl}
\dot{\theta}(t) Q_{ct} T_{ct + \theta(t) } \phi & = &
  Q_{ct} L(ct +  \theta(t) ) [ \pi^\times v(t) + M\big( ct + \theta(t) \big) \theta^\diamond \big]
 +  c Q'_{ct} v  \\[0.2cm]
& & \qquad  +  Q_{ct} \mathcal{R}_{1, ct}\big( v(t) , \theta(t) \big).
\end{array}
\end{equation}
Observe that for each $l$ and at each time $t$, the quantity
$Q_{ct} T_{ct + \theta_l(t)}\phi$ is merely a number that can be divided away to isolate $\dot{\theta}_l$.
In view of the identity $Q_{ct} T_{ct} \phi = 1$,
the uniform bound on $\Phi''' = \phi''$ implied by H$\Phi$, Hf, \eqref{eq:mr:waveprofileMFDE} and \eqref{eq:mr:wavelim}
and the expansion $\frac{1}{1+\eps} = 1-\eps + O(\eps^2)$,
we can pick a small $\delta_\theta > 0$
and introduce functions $\mathcal{V}_{\vartheta}: ( - \delta_{\theta}, \delta_{\theta} ) \to \Real$
that are fixed by the relation
\begin{equation}
\label{eq:cds:defMathcalV}
[Q_{ct} T_{ct + \theta_l}\phi]^{-1} - 1 = - \theta_l Q_{ct} T_{ct} \phi' +    \mathcal{V}_{ct} ( \theta_l).
\end{equation}
These functions satisfy the bounds
\begin{equation}
\mathcal{V}_\vartheta( \theta_l) = O( \abs{\theta_l}^2 ), \qquad \theta_l \to 0,
\end{equation}
uniformly for $\vartheta \in \Real$ and can be extended to map $\ell^\infty(\Wholes; (- \delta_{\theta}, \delta_{\theta}) )$
into $\ell^\infty(\Wholes; \Real)$.
Using these functions to rewrite \sref{eq:cds:eqThetaDot:glb:1}, we obtain
\begin{equation}
\label{eq:cds:eqThetaDot:glb:2}
\begin{array}{lcl}
\dot{\theta}(t)  & = &
   Q_{ct} L( ct + \theta(t) ) [ \pi^\times v(t) + M\big(ct + \theta(t) \big)  \theta^\diamond(t) \big]
 +  c Q'_{ct} v(t)  \\[0.2cm]
& &  - \theta Q_{ct} T_{ct} \phi' Q_{ct} L(ct + \theta(t) ) [ \pi^\times v(t) + M\big( ct+ \theta(t) \big) \theta^\diamond(t) \big]
            - c \theta Q_{ct} T_{ct} \phi'  Q'_{ct} v(t)  \\[0.2cm]
& & \qquad  + \mathcal{S}_{1, ct}\big(  v(t), \theta(t) \big),
\end{array}
\end{equation}
in which the nonlinearity
$\mathcal{S}_{1, ct}:  \ell^\infty(\Wholes^2; \Real^d)
                      \times \ell^\infty\big(\Wholes; (-\delta_\theta, \delta_\theta) \big)
                      \to    \ell^\infty(\Wholes; \Real)$
is defined by
\begin{equation}
\label{eq:cds:defnCalS1CT}
\begin{array}{lcl}
\mathcal{S}_{1, ct} (  v, \theta ) & = &
 \big(1  - \theta Q_{ct} T_{ct} \phi' + \mathcal{V}_{ct}(\theta) \big)Q_{ct} \mathcal{R}_{1,ct}\big( v, \theta \big)
    + c \mathcal{V}_{ct}(\theta)Q'_{ct} v
   \\[0.2cm]
& & + \mathcal{V}_{ct}(\theta) Q_{ct} L( ct+ \theta ) \big[ \pi^\times v + M\big( ct+ \theta \big) \theta^\diamond \big]. \\[0.2cm]
\end{array}
\end{equation}
%
%
Notice that we are explicitly excluding terms of order $O(\theta v)$ and $O(\theta \theta^\diamond)$
from the nonlinearity $\mathcal{S}_{1, ct}$ because these terms require special treatment in our nonlinear stability analysis;
see \S\ref{sec:nl}.

Plugging \sref{eq:cds:eqThetaDot:glb:2} back into \sref{eq:cds:sysForV:glb:1}, we arrive at
\begin{equation}
\begin{array}{lcl}
\dot{v}(t) & = &  [I - P_{ct}] L(ct + \theta(t) ) [ \pi^\times v(t) + M\big( ct+ \theta(t) \big)  \theta^\diamond(t) \big]
-c P'_{ct} v(t) \\[0.2cm]
& & \qquad  + \mathcal{R}_{2,ct}\big(  v(t), \theta(t) \big),
\end{array}
\end{equation}
in which the nonlinearity
$\mathcal{R}_{2, ct}:   \ell^\infty(\Wholes^2; \Real^d)
                      \times  \ell^\infty(\Wholes; (-\delta_\theta, \delta_\theta))
                      \to \ell^\infty(\Wholes^2; \Real^d)$
acts as
\begin{equation}
\label{eq:cds:defCalR2}
\begin{array}{lcl}
\mathcal{R}_{2, ct}\big(v, \theta \big)
& = & \mathcal{R}_{1,ct}\big( v, \theta \big) \\[0.2cm]
& & \qquad + \theta T_{ct + \theta(t)}\phi Q_{ct} T_{ct} \phi' Q_{ct} L(ct + \theta(t) )
  \big[ \pi^\times v(t) + M\big( ct + \theta(t) \big) \theta^\diamond(t) \big]  \\[0.2cm]
& &  \qquad +  c \theta T_{ct + \theta(t)}\phi Q_{ct} T_{ct} \phi'  Q'_{ct} v(t)                \\[0.2cm]
& & \qquad - T_{ct + \theta(t)}\phi \mathcal{S}_{1, ct}\big( v(t), \theta(t) \big)  \\[0.2cm]
& & \qquad - [T_{ct + \theta(t)} - T_{ct} ] \phi Q_{ct} L( ct + \theta(t))
    \big[\pi^\times v + M(ct + \theta(t)) \theta^\diamond(t)\big] \\[0.2cm]
& & \qquad - c[T_{ct + \theta(t)} - T_{ct}] \phi Q'_{ct} v(t).
\end{array}
\end{equation}
Notice that we are absorbing the $O( \theta \theta^\diamond )$ and $O( \theta v)$ terms into
the nonlinearity $\mathcal{R}_{2, ct}$,
in contrast to our expression \sref{eq:cds:eqThetaDot:glb:2} for $\dot{\theta}$.

The next step is to expand the linear operators $L(\vartheta)$ and $M(\vartheta)$ around $\vartheta = ct$.
In particular, we write
\begin{equation}
L_{ct} = L(ct), \qquad M_{ct} = M(ct)
\end{equation}
and obtain the system
\begin{equation}
\begin{array}{lcl}
\dot{v}(t) & = &
[I - P_{ct}] L_{ ct} [ \pi^\times v(t) + M_{ct}  \theta^\diamond(t) \big]
-c P'_{ct} v(t) \\[0.2cm]
& & \qquad  + \mathcal{R}_{3, ct}\big( v(t), \theta(t) \big), \\[0.2cm]
\dot{\theta}(t) & = &
Q_{ct} L_{ct} [ \pi^\times v(t) + M_{ ct} \theta^\diamond(t) \big]
 +  c Q'_{ct} v(t)  \\[0.2cm]
& & \qquad + \theta H_{3,ct}[ v(t), \theta^\diamond(t) ]  \\[0.2cm]
& & \qquad  + \mathcal{S}_{3,ct}\big( v(t), \theta(t) \big). \\[0.2cm]
\end{array}
\end{equation}
Here we have introduced the linear operators
$H_{3, ct}: \ell^\infty(\Wholes^2 , \Real^d ) \times \ell^\infty(\Wholes; \Real^5) \to \ell^\infty(\Wholes; \Real)$
that act as
\begin{equation}
\begin{array}{lcl}
H_{3, ct} [ v, \theta^\diamond ] & = &
  Q_{ct} [\partial_\vartheta L(\vartheta)]_{\vartheta = ct} [ \pi^\times v + M_{ct}  \theta^\diamond \big] \\[0.2cm]
& & +   Q_{ct} L_{ ct} [\partial_\vartheta M(\vartheta)]_{\vartheta = ct} \theta^\diamond \\[0.2cm]
& &  -  Q_{ct} T_{ct} \phi' Q_{ct} L_{ct} [ \pi^\times v + M_{ct} \theta^\diamond \big]
            - c  Q_{ct} T_{ct} \phi'  Q'_{ct} v,  \\[0.2cm]
\end{array}
\end{equation}
together with
the nonlinearities
$\mathcal{R}_{3, ct}: \ell^\infty(\Wholes^2; \Real^d)
                      \times \ell^\infty(\Wholes; (-\delta_\theta, \delta_\theta) )
                      \to \ell^\infty(\Wholes^2; \Real^d)$
given by
\begin{equation}
\begin{array}{lcl}
\mathcal{R}_{3,ct}\big( v, \theta)
& = & \mathcal{R}_{2, ct}\big( v , \theta\big) \\[0.2cm]
& & \qquad +  [I - P_{ct}][L(ct + \theta_l) - L(ct)][\pi^\times v + M(ct + \theta)  \theta^\diamond]  \\[0.2cm]
& & \qquad +  [I - P_{ct}] L_{ct} [ M(ct + \theta) - M(ct) ] \theta^\diamond  \\[0.2cm]
\end{array}
\end{equation}
and the nonlinearities
$\mathcal{S}_{3, ct}: \ell^\infty(\Wholes^2; \Real^d)
                      \times \ell^\infty(\Wholes; (-\delta_\theta, \delta_\theta) )
                      \to \ell^\infty(\Wholes; \Real)$
defined by
\begin{equation}
\begin{array}{lcl}
\mathcal{S}_{3, ct} \big( v, \theta \big) & = &
  \mathcal{S}_{1, ct} (v, \theta ) \\[0.2cm]
& & \qquad + Q_{ct}\big[L(ct+ \theta) - L(ct) - \theta [\partial_\vartheta L(\vartheta)]_{\vartheta = ct} \big]
  [\pi^\times v + M(ct + \theta) \theta^\diamond ]   \\[0.2cm]
& & \qquad + Q_{ct}L_{ct} \big[ M(ct + \theta) - M(ct) - \theta [\partial_\vartheta M( \vartheta)]_{\vartheta = ct} \big]  \theta^\diamond  \\[0.2cm]
& & \qquad - \theta Q_{ct} T_{ct} \phi' Q_{ct} [L(ct + \theta) - L(ct)][\pi^\times v + M(ct + \theta) \theta^\diamond] \\[0.2cm]
& & \qquad - \theta Q_{ct} T_{ct} \phi' Q_{ct} L_{ct} [ M(ct + \theta) - M(ct)  ] \theta^\diamond.  \\[0.2cm]
\end{array}
\end{equation}

Our final step consists of rewriting the linear operator $H_{3,ct}$
in such a way that the nonlinear analysis in \S\ref{sec:nl} is simplified.
In particular, we reuse the symbol $\tau$ to write
\begin{equation}
[\tau v]_{nl} = \big( v_{n + \sigma_1, l} , v_{n + \sigma_2, l} , v_{n - \sigma_1, l}, v_{n - \sigma_2, l} , v_{nl} \big) \in \Real^5
\end{equation}
for any $v \in \ell^\infty(\Wholes^2; \Real^d)$. Notice that unlike $\pi^\times$, the operator
$\tau$ does not act on the transverse direction $\ell \in \Wholes$. Indeed,
for any $v \in \ell^\infty(\Wholes^2; \Real^d)$ we have the identity
\begin{equation}
\pi^\times v = \tau v + \tau \cdot v^\diamond,
\end{equation}
in which
\begin{equation}
[v^\diamond]_{nl} = \big( v_{n, l + \sigma_2} - v_{nl},
                          v_{n, l - \sigma_1} - v_{nl},
                          v_{n, l - \sigma_2} - v_{nl},
                          v_{n, l + \sigma_1} - v_{nl},
                          0
                    \big),
\end{equation}
while the dot product is defined in the obvious manner.
A short computation shows that
\begin{equation}
\begin{array}{lcl}
Q_{ct} L_{ct} \tau v & = & \sum_{n \in \Wholes} \langle \psi(n + ct) , Df( [\tau \Phi](n + ct) ) [\tau v]_{nl} \rangle \\[0.2cm]
& = & \sum_{n \in \Wholes} \langle Df( [\tau \Phi](n + ct) ) [\tau^* \psi](n + ct), v_{nl} \rangle \\[0.2cm]
& = & \sum_{n \in \Wholes} \langle -c \psi'(n + ct), v_{nl} \rangle \\[0.2cm]
& = & -c Q'_{ct} v.
\end{array}
\end{equation}
In particular, we see that
\begin{equation}
\begin{array}{lcl}
H_{3, ct} [ v, 0 ] & = &
  Q_{ct} [\partial_\vartheta L(\vartheta)]_{\vartheta = ct}  \pi^\times v
 -  Q_{ct} T_{ct} \phi' Q_{ct} L_{ct}  \tau \cdot v^\diamond.
\end{array}
\end{equation}

Putting everything together, we finally arrive at the system
\begin{equation}
\label{eq:cds:sys:glb:4}
\begin{array}{lcl}
\dot{v}(t) & = &
[I - P_{ct}] L_{ ct} [ \pi^\times v(t) + M_{ct}  \theta^\diamond(t) \big]
-c P'_{ct} v(t) \\[0.2cm]
& & \qquad  + \mathcal{R}_{4, ct}\big( v(t), \theta(t) \big), \\[0.2cm]
\dot{\theta}(t) & = &
Q_{ct} L_{ct} [ \pi^\times v(t) + M_{ ct} \theta^\diamond(t) \big]
 +  c Q'_{ct} v(t)  \\[0.2cm]
& & \qquad + \theta H_{4,ct}[ \tau v(t), \theta^\diamond(t) ]
+ \mathcal{S}_{4,ct}\big( v(t), \theta(t) \big). \\[0.2cm]
\end{array}
\end{equation}
Here we have introduced the  linear operators
$H_{4, ct}: \ell^\infty(\Wholes^2 , (\Real^d)^5 ) \times \ell^\infty(\Wholes; \Real^5) \to \ell^\infty(\Wholes; \Real)$
that act as
\begin{equation}
\begin{array}{lcl}
H_{4, ct} [ \tau v, \theta^\diamond ] & = &
  Q_{ct} [\partial_\vartheta L(\vartheta)]_{\vartheta = ct} [ \tau v + M_{ct}  \theta^\diamond \big] \\[0.2cm]
& & \qquad +   Q_{ct} L_{ ct} [\partial_\vartheta M(\vartheta)]_{\vartheta = ct} \theta^\diamond
 -  Q_{ct} T_{ct} \phi' Q_{ct} L_{ct} M_{ct} \theta^\diamond \\[0.2cm]
\end{array}
\end{equation}
and are bounded uniformly for $t \in \Real$.
In addition, after picking a smooth cut-off function $\chi^c: [0, \infty) \to [0, 1]$ that has
\begin{equation}
\chi^c( [0, 1] ) = 1, \qquad \chi^c([ 2, \infty)) = 0,
\end{equation}
we have introduced
the nonlinearities
$\mathcal{R}_{4, ct}: \ell^\infty(\Wholes^2; \Real^d)
                      \times \ell^\infty(\Wholes; \Real )
                      \to \ell^\infty(\Wholes^2; \Real^d)$
given by
\begin{equation}
\begin{array}{lcl}
\mathcal{R}_{4,ct}\big( v, \theta)
& = & \chi^c\big( \norm{v}_{\ell^\infty(\Wholes^2; \Real^d)} + 2 \delta_{\theta}^{-1} \norm{\theta}_{\ell^\infty(\Wholes; \Real)} \big)  \mathcal{R}_{3, ct}\big( v , \theta\big) \\[0.2cm]
\end{array}
\end{equation}
and the nonlinearities
$\mathcal{S}_{4, ct}: \ell^\infty(\Wholes^2; \Real^d)
                      \times \ell^\infty(\Wholes; \Real )
                      \to \ell^\infty(\Wholes; \Real)$
defined by
\begin{equation}
\begin{array}{lcl}
\mathcal{S}_{4, ct} \big( v, \theta \big) & = &
  \chi^c\big( \norm{v}_{\ell^\infty(\Wholes^2; \Real^d)} + 2 \delta_{\theta}^{-1} \norm{\theta}_{\ell^\infty(\Wholes; \Real)} \big)
    \mathcal{S}_{3, ct} (v, \theta ) \\[0.2cm]
& & + \theta Q_{ct} [\partial_\vartheta L(\vartheta)]_{\vartheta = ct}[  \tau \cdot v^\diamond ]
 - \theta Q_{ct} T_{ct} \phi' Q_{ct} L_{ ct} \tau \cdot v^\diamond.
\end{array}
\end{equation}
We emphasize that the introduction of the cut-offs serves as a notational convenience only. In particular,
they allow us to formulate the following global bounds on the nonlinearities.

\begin{prop}
\label{prp:cds:bnds:Nonlins}
Suppose that (Hf), (H$\Phi$),
$\mathit{(HS)}_{\mathrm{ess}}$ and $\mathit{(HS1)}_p$-$\mathit{(HS3)}_p$
are satisfied and pick $1 \le p \le \infty$.
There exists a constant $C > 0$ such that for any $t \in \Real$, $v \in \mathcal{X}_{p, 2}$
and $\theta \in \ell^2(\Wholes; \Real )$ we have the bounds
\begin{equation}
\label{eq:cds:bnds:R}
\begin{array}{lcl}
\norm{ \mathcal{R}_{4, ct}\big( v, \theta \big)  }_{\mathcal{X}_{p, 1} } & \le & C
\big( \norm{v}_{\mathcal{X}_{p, 2}} + \norm{\theta^\diamond}_{\ell^2(\Wholes; \Real^5)} + \norm{\theta}_{\ell^2(\Wholes; \Real) } \big)
   \big( \norm{v}_{\mathcal{X}_{p, 2} } + \norm{\theta^\diamond}_{\ell^2(\Wholes; \Real^5)} \big),
\\[0.2cm]
\norm{ \mathcal{R}_{4, ct}\big( v, \theta \big)  }_{\mathcal{X}_{p, 2} } & \le & C
\big( \norm{v}_{\mathcal{X}_{p, 2}} + \norm{\theta^\diamond}_{\ell^2(\Wholes; \Real^5)} + \norm{\theta}_{\ell^2(\Wholes; \Real) } \big)
     \big( \norm{v}_{\mathcal{X}_{p, \infty} } + \norm{\theta^\diamond}_{\ell^\infty(\Wholes; \Real^5) } \big),
\\[0.2cm]
\end{array}
\end{equation}
together with
\begin{equation}
\label{eq:cds:bnds:S}
\begin{array}{lcl}
\norm{ \mathcal{S}_{4, ct}\big( v, \theta \big)  }_{\ell^1(\Wholes; \Real) } & \le & C
 \norm{\theta}_{\ell^2(\Wholes; \Real) } \norm{v^\diamond}_{\mathcal{X}^5_{p, 2}}
+  C \big( \norm{v}_{ \mathcal{X}_{p, \infty} } + \norm{\theta^\diamond}_{\ell^\infty(\Wholes; \Real^5) } \big)
     \norm{\theta}^2_{\ell^2(\Wholes; \Real) } \\[0.2cm]
& & \qquad    + C  \big( \norm{v}_{\mathcal{X}_{p, 2}} + \norm{ \theta^\diamond}_{\ell^2(\Wholes; \Real^5) } \big)
     \big( \norm{v}_{\mathcal{X}_{p, 2}} + \norm{ \theta^\diamond}_{\ell^2(\Wholes; \Real^5) } \big), \\[0.2cm]
\norm{ \mathcal{S}_{4, ct}\big( v, \theta \big)  }_{\ell^2(\Wholes; \Real) } & \le & C
 \norm{\theta}_{\ell^2(\Wholes; \Real) } \norm{v^\diamond}_{\mathcal{X}^5_{p, \infty}}
 +  C \big( \norm{v}_{ \mathcal{X}_{p, \infty} } + \norm{\theta^\diamond}_{\ell^\infty(\Wholes; \Real^5) } \big)
  \norm{\theta}_{\ell^2(\Wholes; \Real) } \norm{\theta}_{\ell^\infty(\Wholes; \Real) } \\[0.2cm]
& & \qquad  + C  \big( \norm{v}_{\mathcal{X}_{p, 2}} + \norm{ \theta^\diamond}_{\ell^2(\Wholes; \Real^5) } \big)
     \big( \norm{v}_{\mathcal{X}_{p, \infty}} + \norm{ \theta^\diamond}_{\ell^\infty(\Wholes; \Real^5) } \big). \\[0.2cm]
\end{array}
\end{equation}
\end{prop}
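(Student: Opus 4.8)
\emph{Proof strategy.} The plan is to unfold the four layers of definitions behind $\mathcal{R}_{4,ct}$ and $\mathcal{S}_{4,ct}$ (tracing back through $\mathcal{R}_{3,ct},\mathcal{R}_{2,ct},\mathcal{R}_{1,ct}$ and $\mathcal{S}_{3,ct},\mathcal{S}_{1,ct}$, and through $H_{4,ct}$), and to check that every resulting term is a product of at most two factors, each controlled by the linear and quadratic building blocks assembled in this section. Three ingredients do all the work. First, \emph{uniform boundedness of the linear pieces}: using the exponential decay of $\phi$, $\psi$, $\phi'=\Phi''$ and $\psi'$ (which follows from (H$\Phi$), $\mathrm{(HS1)}_p$--$\mathrm{(HS3)}_p$ and the MFDEs these functions satisfy) together with the boundedness of $\Phi$ and the $C^2$-smoothness of $f$, the operators $L_{ct}$, $[\partial_\vartheta L(\vartheta)]_{\vartheta=ct}$, $M_{ct}$, $[\partial_\vartheta M(\vartheta)]_{\vartheta=ct}$, $\pi^\times$, $\tau$, $Q_{ct}$, $Q'_{ct}$, $P_{ct}$, $P'_{ct}$ and $H_{4,ct}$ are all bounded --- with operator norm uniform in $t\in\Real$ --- between the relevant spaces $\mathcal{X}_{p,q}$ and $\ell^q(\Wholes;\Real^s)$, for every $1\le p\le\infty$ and every $q\in\{1,2,\infty\}$; moreover the $\ell^p$-in-$n$ norm of $\xi\mapsto e^{-\eta\abs{\xi+ct}}$ is bounded uniformly in $t$, so every operator built from the kernels $\phi(\cdot+ct)$, $\psi(\cdot+ct)$, $\Phi''(\cdot+ct)$ carries a genuine gain of $n$-localisation. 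Second, the \emph{pointwise quadratic estimates} $\mathcal{M}(n,\vartheta;\alpha)=O(\abs{\alpha}^2)$, $\mathcal{N}(n,\vartheta;v)=O(\abs{v}^2)$, $\mathcal{V}_\vartheta(\theta_l)=O(\abs{\theta_l}^2)$, $L(ct+\theta)-L(ct)=O(\abs{\theta})$, $M(ct+\theta)-M(ct)=O(\abs{\theta})$, and the Taylor remainders $L(ct+\theta)-L(ct)-\theta[\partial_\vartheta L]_{ct}=O(\abs{\theta}^2)$, $M(ct+\theta)-M(ct)-\theta[\partial_\vartheta M]_{ct}=O(\abs{\theta}^2)$, all with constants that are uniform precisely because the cut-off $\chi^c$ confines the arguments to a fixed ball and keeps each $\theta_l$ inside the domain $(-\delta_\theta,\delta_\theta)$ of $\mathcal{V}$. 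Third, the \emph{elementary inequalities for the mixed norms}: discrete H\"older, $\norm{ab}_{\ell^1}\le\norm{a}_{\ell^2}\norm{b}_{\ell^2}$ and $\norm{ab}_{\ell^r}\le\norm{a}_{\ell^r}\norm{b}_{\ell^\infty}$ applied in the transverse or the wave direction; the embeddings $\ell^1(\Wholes)\hookrightarrow\ell^2(\Wholes)\hookrightarrow\ell^\infty(\Wholes)$ of norm $1$, which give $\norm{w}_{\mathcal{X}_{\infty,2}}\le\norm{w}_{\mathcal{X}_{p,2}}$, $\norm{w}_{\mathcal{X}_{p,\infty}}\le\norm{w}_{\mathcal{X}_{p,2}}$ and $\norm{\theta}_{\ell^\infty}\le\norm{\theta}_{\ell^2}$; and Young's inequality $\norm{k\ast w}_{\mathcal{X}_{p,q}}\le\norm{k}_{\ell^1}\norm{w}_{\mathcal{X}_{p,q}}$ for convolution in $n$ against an exponentially localised kernel $k$.

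With these in hand I would sort the terms of $\mathcal{R}_{4,ct}$ and $\mathcal{S}_{4,ct}$ into four schematic types and bound a representative of each. (i) \emph{Genuine quadratics} such as $(\text{bdd.\ op.})\big[(\pi^\times v+M_{ct}\theta^\diamond+\mathcal{M})^{2}\big]$, or $v\,\theta^\diamond$, or $\theta^\diamond\theta^\diamond$: for the $\mathcal{X}_{p,1}$-target write $\sum_l\abs{w_{nl}}^2=\norm{w_{n\cdot}}_{\ell^2}^2$ and use $\norm{\,\norm{w_{n\cdot}}_{\ell^2}^2\,}_{\ell^p_n}\le\norm{w}_{\mathcal{X}_{\infty,2}}\norm{w}_{\mathcal{X}_{p,2}}\le\norm{w}_{\mathcal{X}_{p,2}}^2$; for the $\mathcal{X}_{p,2}$-target peel off an $\ell^\infty$-in-$l$ factor, $\norm{w^2}_{\mathcal{X}_{p,2}}\le\norm{w}_{\mathcal{X}_{p,\infty}}\norm{w}_{\mathcal{X}_{p,2}}$. (ii) \emph{Phase times linear}, $\theta\,A[v,\theta^\diamond]$ with $A$ bounded into $\mathcal{X}_{p,2}$ resp.\ $\mathcal{X}_{p,\infty}$: then $\norm{\theta A}_{\mathcal{X}_{p,1}}\le\norm{\theta}_{\ell^2}\norm{A}_{\mathcal{X}_{p,2}}$ and $\norm{\theta A}_{\mathcal{X}_{p,2}}\le\norm{\theta}_{\ell^2}\norm{A}_{\mathcal{X}_{p,\infty}}$ --- this is exactly why $\norm{\theta}_{\ell^2}$ enters the first factor of \eqref{eq:cds:bnds:R} while the second factor needs only $v$ and $\theta^\diamond$. (iii) \emph{Phase-squared times linear}, $\theta^2\,A[v,\theta^\diamond]$, arising from $\mathcal{V}$ and the Taylor remainders: bounded by $\norm{\theta}_{\ell^2}^2\,(\norm{v}_{\mathcal{X}_{p,2}}+\norm{\theta^\diamond}_{\ell^2})$, or --- splitting off the bounded cut-off factor --- by $(\norm{v}_{\mathcal{X}_{p,\infty}}+\norm{\theta^\diamond}_{\ell^\infty})\,\norm{\theta}_{\ell^2}^2$ resp.\ by $(\norm{v}_{\mathcal{X}_{p,\infty}}+\norm{\theta^\diamond}_{\ell^\infty})\,\norm{\theta}_{\ell^2}\norm{\theta}_{\ell^\infty}$, which is the second group of terms in \eqref{eq:cds:bnds:S}. (iv) \emph{The distinguished $\theta v^\diamond$ terms} $\theta Q_{ct}[\partial_\vartheta L]_{ct}[\tau\cdot v^\diamond]$ and $-\theta Q_{ct}T_{ct}\phi'Q_{ct}L_{ct}\tau\cdot v^\diamond$ that are deliberately \emph{not} absorbed into $\mathcal{R}_{4,ct}$: these are bilinear, and $\norm{\theta\,(\text{bdd.\ op.})\,v^\diamond}_{\ell^1}\le\norm{\theta}_{\ell^2}\norm{v^\diamond}_{\mathcal{X}^5_{p,2}}$, with $\ell^2$ analogue $\norm{\theta}_{\ell^2}\norm{v^\diamond}_{\mathcal{X}^5_{p,\infty}}$, producing the leading terms on the right of \eqref{eq:cds:bnds:S}. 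Any term whose structure is strictly better than quadratic (e.g.\ $\mathcal{V}(\theta)Q_{ct}\mathcal{R}_{1,ct}$, which is $O(\abs{\theta}^2(\norm{v}+\norm{\theta^\diamond})^2)$) is dominated by these quadratic right-hand sides once the cut-off bound is used to replace the excess factors by a constant.

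The one genuinely delicate step --- and the only part that is not pure bookkeeping --- will be tracking \emph{where} the dangerous $O(\theta v)$ and $O(\theta\theta^\diamond)$ contributions land. By construction (cf.\ the remark after \eqref{eq:cds:eqThetaDot:glb:2}) such terms are routed into $\mathcal{R}_{2,ct}$, hence into $\mathcal{R}_{4,ct}$, and \emph{not} into $\mathcal{S}$; I would have to verify that, after the re-expansions producing $\mathcal{R}_{3,ct},\mathcal{R}_{4,ct},\mathcal{S}_{3,ct},\mathcal{S}_{4,ct}$ and the replacement of $H_{3,ct}$ by $H_{4,ct}$ via the identity $Q_{ct}L_{ct}\tau=-cQ'_{ct}$, every surviving $O(\theta v)$ or $O(\theta\theta^\diamond)$ piece in $\mathcal{R}_{4,ct}$ still carries an exponentially localised kernel in $n$ --- it always appears multiplied by $T_{ct+\theta}\phi$, by $[\partial_\vartheta L]_{ct}$ composed with $Q_{ct}$-type factors, by $\mathcal{M}$, or by $\psi$ --- so that the $\ell^p$-in-$n$ norms entering types (i)--(ii) are finite and uniform in $t$, while the only $O(\theta v^\diamond)$ contributions in $\mathcal{S}_{4,ct}$ are the two explicit type-(iv) terms and nothing worse. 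A subsidiary check is that the $\ell^\infty$-in-$l$ factors which appear along the way (for instance $\norm{\theta^\diamond}_{\ell^\infty}$ coming out of $\norm{\theta^\diamond}_{\ell^4}^2\le\norm{\theta^\diamond}_{\ell^2}\norm{\theta^\diamond}_{\ell^\infty}$) are precisely the ones allowed on the right of \eqref{eq:cds:bnds:R}--\eqref{eq:cds:bnds:S}. Once this routing is confirmed, collecting the finitely many resulting estimates and absorbing constants yields \eqref{eq:cds:bnds:R}--\eqref{eq:cds:bnds:S}.
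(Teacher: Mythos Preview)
Your proposal is correct and follows essentially the same approach as the paper: the paper's proof simply records the product inequalities $\norm{ab}_{\ell^1}\le\norm{a}_{\ell^2}\norm{b}_{\ell^2}$, $\norm{ab}_{\ell^2}\le\norm{a}_{\ell^2}\norm{b}_{\ell^\infty}$, $\norm{a^2b}_{\ell^1}\le\norm{a}_{\ell^2}^2\norm{b}_{\ell^\infty}$, $\norm{a^2b}_{\ell^2}\le\norm{a}_{\ell^2}\norm{a}_{\ell^\infty}\norm{b}_{\ell^\infty}$, notes that $T_{ct}\phi$, $T_{ct}\phi'\in\ell^p$ by exponential decay, that $\pi^\times$, $\tau$ are bounded on $\mathcal{X}_{p,2}$, and that $Q_{ct}$, $Q'_{ct}$ are bounded from $\mathcal{X}_{p,2}$ to $\ell^2$, and then concludes by ``inspection of the structure of $\mathcal{R}_{4,ct}$ and $\mathcal{S}_{4,ct}$''. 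Your types (i)--(iv) and the explicit routing of the $\theta v$, $\theta\theta^\diamond$, $\theta v^\diamond$ terms are exactly what that inspection amounts to, spelled out in far more detail than the paper provides.
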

\begin{proof}
Consider two arbitrary sequences $a,b \in \ell^2(\Wholes; \Real)$
and write $ab \in \ell^1(\Wholes; \Real)$ for the product sequence $[ab]_l = a_l b_l$.
Using Cauchy-Schwartz and an elementary estimate it is easy to see that
\begin{equation}
\label{eq:cds:prp:bnd:elm1}
\norm{ab}_{\ell^1(\Wholes; \Real) } \le \norm{a}_{\ell^2(\Wholes; \Real) } \norm{b}_{\ell^2(\Wholes; \Real) },
\qquad \norm{ab}_{\ell^2(\Wholes; \Real) } \le \norm{a}_{\ell^2(\Wholes; \Real)} \norm{b}_{\ell^\infty(\Wholes; \Real)}.
\end{equation}
Another elementary estimate shows that
\begin{equation}
\label{eq:cds:prp:bnd:elm2}
\norm{a^2 b}_{\ell^1(\Wholes; \Real)} \le \norm{a}^2_{\ell^2(\Wholes; \Real) } \norm{b}_{\ell^\infty(\Wholes; \Real)},
\qquad \norm{a^2 b}_{\ell^2(\Z,\R)} \le \norm{a}_{\ell^2(\Wholes; \Real)} \norm{a}_{\ell^\infty(\Wholes; \Real) } \norm{b}_{\ell^\infty(\Wholes; \Real)}.
\end{equation}
The exponential decay of $\phi(\xi)$ and $\phi'(\xi)$ as $\xi \to \pm \infty$
imply the inclusions $T_{ct} \phi \in \ell^p(\Wholes; \Real^d)$
and $T_{ct} \phi' \in \ell^p(\Wholes; \Real^d)$.
In addition, the short-range interaction operators $\pi^\times$ and $\tau$ are bounded as
maps from $\mathcal{X}_{p, 2}$ into $\mathcal{X}^5_{p, 2}$.
Finally, all long-range interactions featuring in $\mathcal{R}_{4, ct}$ and $\mathcal{S}_{4, ct}$
come from the linear operators $Q_{ct}$ and $Q'_{ct}$. These operators are bounded as maps from $\mathcal{X}_{p, 2}$ into $\ell^2(\Wholes; \Real)$.
In particular, the bounds \sref{eq:cds:bnds:R}-\sref{eq:cds:bnds:S}
follow directly from the estimates \sref{eq:cds:prp:bnd:elm1}-\sref{eq:cds:prp:bnd:elm2}
upon inspection of the structure of $\mathcal{R}_{4, ct}$ and $\mathcal{S}_{4, ct}$.
\end{proof}

We conclude this section by summarizing our discussion and explicitly
stating the relation between our modified  system \sref{eq:cds:sys:glb:4}
and the original LDE \sref{eq:mr:mainLDE}.

\begin{prop}
Suppose that (Hf), (H$\Phi$), $\mathit{(HS)}_{\mathrm{ess}}$ and $\mathit{(HS1)}_p$-$\mathit{(HS3)}_p$ are satisfied.
Pick any $t_0 \in \Real$
and consider any pair of functions $v: [t_0, \infty) \to \ell^\infty(\Wholes^2; \Real^d)$
and $\theta: [t_0, \infty) \to \ell^\infty(\Wholes; \Real)$ for which
$Q_{c t_0} v(t_0) = 0$ and for which the inequalities
\begin{equation}
\begin{array}{lcl}
\norm{ \theta(t) }_{\ell^\infty(\Wholes; \Real) } & \le &   \frac{1}{4} \delta_\theta,  \\[0.2cm]
\norm{ v(t) }_{\ell^\infty(\Wholes^2; \Real^d) } & \le &    \frac{1}{2}   \\[0.2cm]
\end{array}
\end{equation}
hold for all $t \ge t_0$. Then if the pair $(v,\theta)$
satisfies \sref{eq:cds:sys:glb:4}, we have $Q_{ct} v(t) = 0$ for all $t \ge t_0$ and the function
\begin{equation}
u_{ij}(t) = \Phi( i \sigma_1 + j \sigma_2 + ct + \theta_{ i \sigma_2 - j \sigma_1 }(t) \big) + v_{ i \sigma_1 + j \sigma_2, i \sigma_2 - j \sigma_1}(t)
\end{equation}
satisfies the LDE \sref{eq:mr:mainLDE} for all $t \ge t_0$.
\end{prop}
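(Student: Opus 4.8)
The plan is to reverse, step by step, the derivation of the digested system \sref{eq:cds:sys:glb:4} from the LDE \sref{eq:mr:mainLDE} under the Ansatz \sref{eq:cds:ansatzLoc}. Throughout, write
\begin{equation}
F_{ct}(v,\theta) = f\big(\pi^\times T_{ct+\theta}\Phi + \pi^\times v\big) - f\big(T_{ct+\theta}\tau\Phi\big)
\end{equation}
for the right-hand side of the undigested equation \sref{eq:cds:lde:glb:1}, and let $\Theta_{ct}(v,\theta)$ denote the full right-hand side of the $\dot\theta$-equation in \sref{eq:cds:sys:glb:4}. The first thing to record is that, under the smallness hypotheses, the cut-off $\chi^c$ appearing in $\mathcal{R}_{4,ct}$ and $\mathcal{S}_{4,ct}$ is identically $1$ along the solution: indeed $\norm{v(t)}_{\ell^\infty(\Wholes^2;\Real^d)} + 2\delta_\theta^{-1}\norm{\theta(t)}_{\ell^\infty(\Wholes;\Real)} \le \tfrac12 + \tfrac12 = 1$ and $\chi^c([0,1]) = 1$. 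Moreover $\abs{\theta_l(t)} < \delta_\theta$ for all $l$, so the scalars $[Q_{ct}T_{ct+\theta_l(t)}\phi]^{-1}$ and the auxiliary functions $\mathcal{V}_{ct}(\theta_l(t))$ of \sref{eq:cds:defMathcalV} are well-defined along the solution, and $\mathcal{R}_{4,ct}(v,\theta) = \mathcal{R}_{3,ct}(v,\theta)$ with the analogous simplification for $\mathcal{S}$.

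The heart of the matter is the pair of identities
\begin{equation}
\Theta_{ct}(v,\theta) = [Q_{ct}T_{ct+\theta}\phi]^{-1}\big(Q_{ct}F_{ct}(v,\theta) + cQ'_{ct}v\big),
\end{equation}
and: on the region where $\chi^c \equiv 1$, the right-hand side of the $\dot v$-equation in \sref{eq:cds:sys:glb:4} equals $F_{ct}(v,\theta) - \Theta_{ct}(v,\theta)\,T_{ct+\theta}\phi$. Both are obtained by running through the chain \sref{eq:cds:lde:glb:1} $\to$ \sref{eq:cds:sysForV:glb:1} $\to$ \sref{eq:cds:eqThetaDot:glb:1}--\sref{eq:cds:eqThetaDot:glb:2} $\to \cdots \to$ \sref{eq:cds:sys:glb:4} in reverse. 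Every passage in that chain is an exact algebraic manipulation --- a regrouping of terms, an addition-and-subtraction of an auxiliary term, a Taylor expansion of $f$, $L(\cdot)$ or $M(\cdot)$ with an exact (not merely $O(\cdot)$) remainder, or a substitution of the $\dot\theta$-equation into the $\dot v$-equation --- and the objects $\mathcal{R}_{1,ct},\dots,\mathcal{R}_{4,ct}$, $\mathcal{S}_{1,ct},\dots,\mathcal{S}_{4,ct}$, $H_{3,ct}$, $H_{4,ct}$ are defined precisely so that these passages are equalities. The single identity that actually needs to be checked by hand is that $F_{ct}(v,\theta) = L(ct+\theta)[\pi^\times v + M(ct+\theta)\theta^\diamond] + \mathcal{R}_{1,ct}(v,\theta)$, which follows by combining \sref{eq:cds:expOfCommut}, the definitions of $\mathcal{M}$ and $\mathcal{N}$, and the Taylor expansion of $f$ about $[\tau\Phi](n+ct+\theta_l)$. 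I expect this verification, mechanical but long, to be the main obstacle: one must propagate each auxiliary term correctly through the four successive rewrites.

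Granting these identities, both conclusions drop out. Substituting the $\dot\theta$-equation $\dot\theta(t) = \Theta_{ct}(v(t),\theta(t))$ of \sref{eq:cds:sys:glb:4} into the $\dot v$-equation yields $\dot v(t) = F_{ct}(v(t),\theta(t)) - \dot\theta(t)\,T_{ct+\theta(t)}\phi$, which is exactly \sref{eq:cds:lde:glb:1}; adding $\dot\theta\,T_{ct+\theta}\phi$ to both sides and using $\dot u(t) = (c+\dot\theta(t))T_{ct+\theta(t)}\phi + \dot v(t)$ together with the wave-profile relation $c\,T_{ct+\theta}\phi = f(T_{ct+\theta}\tau\Phi)$ --- which is \sref{eq:mr:waveprofileMFDE} evaluated at the points $n+ct+\theta_l(t)$ --- shows that $u(t) = T_{ct+\theta(t)}\Phi + v(t)$ solves $\dot u(t) = f(\pi^\times u(t))$, i.e. \sref{eq:cds:ldeNewCoordsGlb}. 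Translating back through the coordinate change \sref{eq:cds:directions} (as at the start of this section) gives that the stated function $u_{ij}(t)$ solves \sref{eq:mr:mainLDE}.

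Finally, to propagate the orthogonality relation, set $E(t) = Q_{ct}v(t) \in \ell^\infty(\Wholes;\Real)$, so that $E(t_0) = 0$ by hypothesis; differentiating (and using the exponential decay of $\psi'$, exactly as in the derivation of \sref{eq:cds:eqThetaDot:glb:1}) gives $\dot E(t) = Q_{ct}\dot v(t) + cQ'_{ct}v(t)$. Applying $Q_{ct}$ to the identity $\dot v = F_{ct}(v,\theta) - \Theta_{ct}(v,\theta)T_{ct+\theta}\phi$ just established, and using $Q_{ct}T_{ct}\phi = 1$ (equivalently \sref{eq:mr:innProduct:alwaysEq}) to compute $Q_{ct}\big(\Theta_{ct}T_{ct+\theta}\phi\big) = \Theta_{ct}\,[Q_{ct}T_{ct+\theta}\phi]$, together with the formula for $\Theta_{ct}$, gives $Q_{ct}\dot v(t) = Q_{ct}F_{ct} - \big(Q_{ct}F_{ct} + cQ'_{ct}v\big) = -cQ'_{ct}v(t)$. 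Hence $\dot E(t) \equiv 0$, so $Q_{ct}v(t) = E(t) = E(t_0) = 0$ for all $t\ge t_0$, as claimed.
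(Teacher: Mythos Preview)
Your proof is correct and follows the same logical thread as the paper's, but the paper's execution is considerably shorter. Rather than unwinding the full chain \sref{eq:cds:lde:glb:1}$\to$\sref{eq:cds:sys:glb:4} to recover the identities for $\Theta_{ct}$ and $F_{ct}-\Theta_{ct}T_{ct+\theta}\phi$, the paper works directly with \sref{eq:cds:sys:glb:4}: applying $Q_{ct}$ to the $\dot v$-equation and using $Q_{ct}[I-P_{ct}]=0$ together with $Q_{ct}P'_{ct}=Q'_{ct}$ kills the linear part outright, leaving $\tfrac{d}{dt}[Q_{ct}v(t)] = Q_{ct}\mathcal{R}_{4,ct}$. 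Since the cut-off is inactive and $\mathcal{R}_{3,ct}-\mathcal{R}_{2,ct}$ carries an $[I-P_{ct}]$ prefix, this reduces to $Q_{ct}\mathcal{R}_{2,ct}$, which the paper verifies to be zero by direct inspection of \sref{eq:cds:defCalR2}. The statement that $u$ solves the LDE is then dispatched in one sentence by appeal to the section's derivation. Your route has the virtue of making the equivalence with \sref{eq:cds:lde:glb:1} fully explicit (which you need anyway for the LDE claim), and your computation $\dot E=0$ is of course the same identity the paper is checking; but the paper's shortcut via the projection identities $Q_{ct}[I-P_{ct}]=0$, $Q_{ct}P'_{ct}=Q'_{ct}$ avoids having to reassemble $\mathcal{S}_{1,ct},\mathcal{S}_{3,ct},\mathcal{S}_{4,ct}$ and the $H$-terms back into $\Theta_{ct}$.
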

\begin{proof}
Exploiting the restriction
that $\abs{\theta_l(t)} < \delta_\theta$ for all $l \in \Wholes$ and $t \ge t_0$,
we can compute
\begin{equation}
\begin{array}{lcl}
\frac{d}{dt} [Q_{ct} v(t)] & = & c Q'_{ct} v(t) + Q_{ct} \dot{v}(t) \\[0.2cm]
 &  = & cQ'_{ct}  v(t) -c Q_{ct} P'_{ct} v(t) +  Q_{ct} \mathcal{R}_{4, ct}\big(v(t), \theta(t) \big) \\[0.2cm]
& = & Q_{ct} \mathcal{R}_{4, ct}\big(v(t), \theta(t) \big) \\[0.2cm]
& = & Q_{ct} \mathcal{R}_{2, ct}\big(v(t), \theta(t) \big), \\[0.2cm]
\end{array}
\end{equation}
in which we have used $Q_{ct} P'_{ct} = Q'_{ct}$.
In view of \sref{eq:cds:defMathcalV}, we may reformulate
\sref{eq:cds:defnCalS1CT} as
\begin{equation}
\begin{array}{lcl}
\mathcal{S}_{1, ct}(v, \theta)
& = & [Q_{ct} T_{ct + \theta} \phi]^{-1} Q_{ct} \mathcal{R}_{1, ct}(v, \theta)
\\[0.2cm]
& & \qquad +c \mathcal{V}_{ct}(\theta) Q'_{ct} v
+ \mathcal{V}_{ct}(\theta) Q_{ct} L(ct + \theta)
[ \pi^\times v + M(ct + \theta) \theta^\diamond].
\end{array}
\end{equation}
Plugging this directly into the definition \sref{eq:cds:defCalR2}
and using the normalization $Q_{ct} T_{ct} \phi = 1$,
we obtain the factorization
\begin{equation}
\begin{array}{lcl}
Q_{ct} \mathcal{R}_{2, ct}\big(v, \theta \big)
& = &
\Big[
1 - Q_{ct} T_{ct + \theta} \phi + Q_{ct} T_{ct + \theta} \phi \mathcal{V}_{ct}(\theta)
+ \theta Q_{ct} T_{ct + \theta} \phi Q_{ct} T_{ct} \phi'
\Big]
\\[0.2cm]
& & \qquad \times
 \Big[
 Q_{ct}L(ct + \theta)\big[   \pi^\times v + M(ct + \theta) \theta^\diamond  \big]
+
c Q'_{ct} v
\Big].
\end{array}
\end{equation}
The first factor disappears on account of \sref{eq:cds:defMathcalV},
showing that $Q_{ct} \mathcal{R}_{2, ct}\big(v(t), \theta(t) \big) = 0$.
Finally, the fact that the function $u(t)$ solves \sref{eq:mr:mainLDE} follows directly
from the constructions in this section.
\end{proof}

%

%

%

%


%

%

%

%

\section{Green's functions}
\label{sec:green}

In this section we focus on the linear part
of the system \sref{eq:cds:sys:glb:4}, which is given by
\begin{equation}
\label{eq:green:linsys}
\begin{array}{lcl}
\dot{v}(t) & = &
[I - P_{ct}] L_{ ct} [ \pi^\times v(t) + M_{ct} \theta^\diamond(t) \big]
-c P'_{ct} v(t) \\[0.2cm]
\dot{\theta}(t) & = &
Q_{ct} L_{ct} [ \pi^\times v(t) + M_{ ct} \theta^\diamond(t) \big]
 +  c Q'_{ct} v(t).  \\[0.2cm]
\end{array}
\end{equation}
Viewing this system as an ODE posed on $\ell^\infty(\Wholes^2; \Real^d) \times \ell^\infty(\Wholes; \Real)$,
it is clear that solutions are well-defined in forward and backward time since the linear operator
on the right hand side is bounded uniformly for $t \in \Real$. In particular,
we can define a Green's function
\begin{equation}
  \mathcal{G}(t, t_0) = \left(\begin{array}{cc}
   \mathcal{G}_{v v}(t, t_0) & \mathcal{G}_{v\theta}(t, t_0) \\
  \mathcal{G}_{\theta v}(t, t_0) & \mathcal{G}_{\theta \theta}(t, t_0) \\
                \end{array}\right)  \in \mathcal{L}\big( \ell^\infty(\Wholes^2; \Real^d) \times \ell^\infty(\Wholes; \Real) \big)
\end{equation}
by writing
\begin{equation}
\left(\begin{array}{l}v(t) \\ \theta(t)\end{array}\right)
 = \mathcal{G}(t, t_0) \left(\begin{array}{l}v_{t_0} \\ \theta_{t_0}\end{array}\right)
\end{equation}
for the unique solution to \sref{eq:green:linsys} that has
the initial condition $v(t_0) = v_{t_0} \in \ell^\infty(\Wholes^2; \Real^d)$ and
$\theta(t_0) = \theta_{t_0} \in \ell^\infty(\Wholes; \Real)$.

Fixing $1 \le p \le \infty$, our goal is to
restrict attention to initial conditions in the smaller spaces $\mathcal{X}^\perp_{p, q}(t_0) \times \ell^q(\Wholes; \Real)$.
In particular, we obtain bounds for the Green's function
viewed as a mapping
\begin{equation}
\mathcal{G}(t, t_0)
  : \mathcal{X}^\perp_{p, q_1}(t_0) \times \ell^{q_1}(\Wholes; \Real)
                \to \mathcal{X}^\perp_{p, q_2}(t) \times \ell^{q_2}(\Wholes; \Real)
\end{equation}
with $q_1 \in \{1, 2\}$ and $q_2 \in \{2, \infty\}$.

%
%
Our first results show that any solution $(v, \theta)$ to \sref{eq:green:linsys}
can in fact be seen as the coordinates of a solution $\overline{w}$ to the system
\begin{equation}
\label{eq:green:linsys:normal}
\frac{d}{dt} \overline{w}(t)  =  L_{ct} \pi^\times \overline{w}(t)
\end{equation}
posed on $\ell^\infty(\Wholes^2; \Real^d)$. In particular,
there is a close connection between the
Green's function $\overline{\mathcal{G}}(t, t_0) \in \mathcal{L}\big( \ell^\infty(\Wholes^2; \Real^d) \big)$
associated to \sref{eq:green:linsys:normal} and the Green's function $\mathcal{G}(t, t_0)$
for \sref{eq:green:linsys}.
\begin{lem}
\label{lem:green:relBetweenProjAndNormal}
Suppose that (Hf), (H$\Phi$) and $\mathit{(HS1)}_p$-$\mathit{(HS3)}_p$ are satisfied.
Consider any $v_{t_0} \in \ell^\infty(\Wholes^2; \Real^d)$
and $\theta_{t_0} \in \ell^\infty(\Wholes; \Real)$
for which $Q_{ct_0} v_{t_0} = 0 \in \ell^\infty(\Wholes; \Real)$.
Then upon writing
$\overline{w}(t)$ for the solution to \sref{eq:green:linsys:normal} with
the initial condition
\begin{equation}
 \overline{w}(t_0) = v_{t_0} + \theta_{t_0} T_{ct_0} \phi,
\end{equation}
the pair
\begin{equation}
v(t) = [ I - P_{ct}] \overline{w}(t), \qquad \theta(t) = Q_{ct} \overline{w}(t) \label{eq:green:vthetadef}
\end{equation}
satisfies \sref{eq:green:linsys} with
\begin{equation}
v(t_0) = v_{t_0}, \qquad \theta(t_0) = \theta_{t_0}.
\end{equation}
\end{lem}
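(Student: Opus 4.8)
The plan is to verify by direct differentiation that the pair $(v,\theta)$ defined in \eqref{eq:green:vthetadef} solves \eqref{eq:green:linsys} with the stated initial data, using only the product rule together with two elementary structural identities. The first comes from differentiating the wave profile equation \eqref{eq:mr:waveprofileMFDE}, which gives $c\phi' = Df([\tau\Phi])[\tau\phi]$ and hence $L_{ct}(T_{ct}\tau\phi) = c\,T_{ct}\phi'$, where $T_{ct}\tau\phi$ denotes the planar $(\Real^d)^5$-valued sequence $[T_{ct}\tau\phi]_{nl}=[\tau\phi](n+ct)$. The second comes from differentiating \eqref{eq:mr:innProduct:alwaysEq} in $\vartheta$ (permissible since $\phi,\psi,\phi',\psi'$ all decay exponentially), giving $\sum_n\langle\psi'(n+\vartheta),\phi(n+\vartheta)\rangle + \sum_n\langle\psi(n+\vartheta),\phi'(n+\vartheta)\rangle = 0$ for all $\vartheta\in\Real$. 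I will also use $\tfrac{d}{dt}T_{ct}\phi = c\,T_{ct}\phi'$, $\tfrac{d}{dt}Q_{ct}=cQ'_{ct}$ and consequently $\tfrac{d}{dt}P_{ct} = c[T_{ct}\phi']Q_{ct}+cP'_{ct}$.

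First I would record the decomposition $\overline{w}(t) = v(t) + \theta(t)\,T_{ct}\phi$, which follows from $P_{ct}=[T_{ct}\phi]Q_{ct}$ and $\theta=Q_{ct}\overline{w}$; note in passing that $Q_{ct}v = Q_{ct}[I-P_{ct}]\overline{w} = 0$ automatically, since $Q_{ct}T_{ct}\phi=1$, so $v(t)$ indeed lies in $\mathcal{X}^\perp_{p,q}(t)$. A direct computation from the definitions of $M(n,\vartheta)$ and $\theta^\diamond$ --- entirely parallel to the derivation of \eqref{eq:cds:expOfCommut} --- then yields $\pi^\times\overline{w} = \pi^\times v + M_{ct}\theta^\diamond + \theta\,T_{ct}\tau\phi$. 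Applying $L_{ct}$ and using the first structural identity on the last summand gives the key formula $L_{ct}\pi^\times\overline{w} = L_{ct}(\pi^\times v + M_{ct}\theta^\diamond) + c\,\theta\,T_{ct}\phi'$.

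For the $\theta$-equation, differentiating $\theta = Q_{ct}\overline{w}$ and using \eqref{eq:green:linsys:normal} gives $\dot\theta = cQ'_{ct}\overline{w} + Q_{ct}L_{ct}\pi^\times\overline{w}$; substituting the key formula and then replacing $\overline{w}$ by $v+\theta\,T_{ct}\phi$ in the terms $cQ'_{ct}\overline{w}$ and $cQ_{ct}(\theta\,T_{ct}\phi')$ leaves, besides $cQ'_{ct}v + Q_{ct}L_{ct}(\pi^\times v + M_{ct}\theta^\diamond)$, only a residual proportional to $\sum_n\langle\psi'(n+ct),\phi(n+ct)\rangle + \sum_n\langle\psi(n+ct),\phi'(n+ct)\rangle$, which vanishes by the second structural identity --- this is the second line of \eqref{eq:green:linsys}. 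For the $v$-equation, differentiating $v = [I-P_{ct}]\overline{w}$ and using the formula for $\tfrac{d}{dt}P_{ct}$ together with the key formula, the term $-c\,\theta\,T_{ct}\phi'$ produced by $-\tfrac{d}{dt}P_{ct}\,\overline{w}$ is exactly cancelled by the $c\,\theta\,T_{ct}\phi'$ coming from the identity part of $[I-P_{ct}]$, leaving $\dot v = [I-P_{ct}]L_{ct}(\pi^\times v + M_{ct}\theta^\diamond) - cP'_{ct}\overline{w} - cP_{ct}(\theta\,T_{ct}\phi')$; replacing $\overline{w}$ by $v+\theta\,T_{ct}\phi$, the surviving pair $-cP'_{ct}(\theta\,T_{ct}\phi) - cP_{ct}(\theta\,T_{ct}\phi')$ again cancels by the second structural identity (after factoring out $\theta\,T_{ct}\phi$), so what remains is the first line of \eqref{eq:green:linsys}. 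Finally the initial data follows from $P_{ct_0}\overline{w}(t_0) = [T_{ct_0}\phi]Q_{ct_0}(v_{t_0}+\theta_{t_0}T_{ct_0}\phi) = \theta_{t_0}\,T_{ct_0}\phi$, using $Q_{ct_0}v_{t_0}=0$ and $Q_{ct_0}T_{ct_0}\phi=1$, so that $v(t_0)=v_{t_0}$ and $\theta(t_0)=\theta_{t_0}$.

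I expect the substance of the argument to lie entirely in bookkeeping rather than in any genuine obstruction. The step requiring the most care is the identification $\pi^\times\overline{w} = \pi^\times v + M_{ct}\theta^\diamond + \theta\,T_{ct}\tau\phi$: this is precisely where the lattice anisotropy enters, because $M_{ct}\theta^\diamond$ couples first differences of $\theta$ to values of $\phi$ at four distinct shifts and cannot be collapsed into a single difference operator unless $\sigma_1\sigma_2=0$. One must also track correctly the four ``boundary'' terms built from $\psi',\psi,\phi,\phi'$ that cancel only upon differentiating the identity $\sum_n\langle\psi(n+\vartheta),\phi(n+\vartheta)\rangle\equiv 1$, and make sure the product rule for $\tfrac{d}{dt}P_{ct}$ reproduces $P'_{ct}=[T_{ct}\phi]Q'_{ct}$ rather than a symmetrised variant, consistent with the warning in Section~\ref{sec:cds} that $P'_{ct}$ is not a projection.
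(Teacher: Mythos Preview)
Your proposal is correct and follows essentially the same approach as the paper: both arguments hinge on the decomposition $\overline{w}=v+\theta\,T_{ct}\phi$, the identity $\pi^\times(\theta\,T_{ct}\phi)=M_{ct}\theta^\diamond+\theta\,T_{ct}\tau\phi$, the relation $L_{ct}T_{ct}\tau\phi=c\,T_{ct}\phi'$ coming from $\mathcal{L}_0\phi=0$, and the cancellation obtained by differentiating $Q_{ct}T_{ct}\phi\equiv 1$. The only cosmetic difference is that for the $v$-equation the paper substitutes the already-computed expression for $\dot\theta$ into $\dot v=\dot{\overline{w}}-\dot\theta\,T_{ct}\phi-c\theta\,T_{ct}\phi'$, whereas you differentiate $[I-P_{ct}]$ via the product rule; the resulting cancellations are identical.
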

\begin{proof}
By construction, for any $t \ge t_0$ we have
\begin{equation}
v(t) = \overline{w}(t) -  \theta(t)T_{ct}\phi.
\end{equation}
Turning first to the equation for $\theta(t)$, we compute
\begin{equation}
\begin{array}{lcl}
\dot{\theta}(t) & = & cQ'_{ct} \overline{w}(t) + Q_{ct} \dot{\overline{w}}(t) \\[0.2cm]
& = & c Q'_{ct} \overline{w}(t) + Q_{ct} L_{ct} \pi^\times \overline{w}(t) \\[0.2cm]
& = & c Q'_{ct} v(t) + c \theta(t) Q'_{ct} T_{ct} \phi + Q_{ct} L_{ct} \pi^\times v(t)
+  Q_{ct} L_{ct} \pi^\times  \theta(t) T_{ct} \phi.
\end{array}
\end{equation}
It hence suffices to show that for every $\theta \in \ell^\infty(\Wholes; \Real)$ we have
\begin{equation}
\label{eq:lem:grel:idToShow}
c \theta Q'_{ct} T_{ct} \phi + Q_{ct} L_{ct} \pi^\times \theta T_{ct} \phi = Q_{ct} L_{ct} M_{ct}  \theta^\diamond.
\end{equation}
To see this, note that the first order terms in the Taylor expansion of
\sref{eq:cds:expOfCommut} yield
\begin{equation}
\pi^\times \theta T_{ct} \phi -     \theta T_{ct} \tau  \phi = M  \theta^\diamond.
\end{equation}
In particular, we have
\begin{equation}
\label{eq:lem:grel:usefulID}
\begin{array}{lcl}
 L_{ct} \pi^\times \theta T_{ct} \phi & = &  L_{ct} M \theta^\diamond + \theta  L_{ct} T_{ct} \tau \phi \\[0.2cm]
& = &    L_{ct} M \theta^\diamond + c \theta T_{ct} \phi', \\[0.2cm]
\end{array}
\end{equation}
where in the last step we used the fact that $\mathcal{L}_0 \phi = 0$.
Furthermore, differentiating $Q_{ct} T_{ct} \phi = 1$ with respect to $t$, we find
\begin{equation}
c Q'_{ct} T_{ct} \phi + c Q_{ct} T_{ct} \phi' = 0,
\end{equation}
which establishes \sref{eq:lem:grel:idToShow}.

It remains to consider the equation satisfied by $v(t)$.
A short computation yields
\begin{equation}
\begin{array}{lcl}
\dot{v}(t) & = & \dot{\overline{w}}(t) - \dot{\theta}(t)T_{ct} \phi - c \theta(t) T_{ct} \phi' \\[0.2cm]
& = & L_{ct} \pi^\times \overline{w}(t) - P_{ct} L_{ct}[ \pi^\times v(t) + M_{ct} \theta^\diamond(t)]
  - c P'_{ct} v(t) - c \theta(t) T_{ct} \phi' \\[0.2cm]
& = & L_{ct} \pi^\times v(t) + L_{ct} \pi^\times \theta(t) T_{ct} \phi
  - P_{ct} L_{ct}[ \pi^\times v(t) + M_{ct} \theta^\diamond(t)] \\[0.2cm]
& & \qquad    - c P'_{ct}v(t) - c \theta(t) T_{ct} \phi' ,
\end{array}
\end{equation}
which together with \sref{eq:lem:grel:usefulID} completes the proof.
\end{proof}

\begin{cor}
\label{cor:green:expForGreenFull}
Consider the setting of Lemma \ref{lem:green:relBetweenProjAndNormal}.
The Green's function associated to the linear system \sref{eq:green:linsys}
can be represented as
\begin{equation}
  \mathcal{G}(t, t_0) = \left(\begin{array}{cc}
   \mathcal{G}_{v v}(t, t_0) & \mathcal{G}_{v\theta}(t, t_0) \\
  \mathcal{G}_{\theta v}(t, t_0) & \mathcal{G}_{\theta \theta}(t, t_0) \\
                \end{array}\right)
  =
  \left(\begin{array}{cc}
   [I-P_{ct} ]\overline{\mathcal{G}}(t, t_0) [I-P_{ct_0}] & [I-P_{ct}] \overline{\mathcal{G}}(t, t_0) T_{ct_0} \phi \\
   Q_{ct} \overline{\mathcal{G}}(t, t_0) [I-P_{ct_0}] & Q_{ct} \overline{\mathcal{G}}(t, t_0) T_{ct_0} \phi \\
                \end{array}\right).
\end{equation}
\end{cor}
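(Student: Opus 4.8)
The plan is to obtain the block representation by a direct substitution into Lemma~\ref{lem:green:relBetweenProjAndNormal}, which already carries all of the content; the corollary is then pure bookkeeping. First I would record that, just as for \sref{eq:green:linsys}, the right-hand side of \sref{eq:green:linsys:normal} is a bounded linear operator on $\ell^\infty(\Wholes^2;\Real^d)$, uniformly in $t$, so that its solutions exist in forward and backward time and the Green's function $\overline{\mathcal{G}}(t,t_0) \in \mathcal{L}\big(\ell^\infty(\Wholes^2;\Real^d)\big)$ is well-defined.

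Next I would fix admissible initial data, namely $v_{t_0} \in \ell^\infty(\Wholes^2;\Real^d)$ with $Q_{ct_0} v_{t_0} = 0$ together with $\theta_{t_0} \in \ell^\infty(\Wholes;\Real)$, and observe that the constraint forces $P_{ct_0} v_{t_0} = T_{ct_0}\phi\, Q_{ct_0} v_{t_0} = 0$, so that $v_{t_0} = [I-P_{ct_0}] v_{t_0}$. Hence the initial condition appearing in Lemma~\ref{lem:green:relBetweenProjAndNormal} can be written as $\overline{w}(t_0) = [I-P_{ct_0}] v_{t_0} + \theta_{t_0} T_{ct_0}\phi$, and since $\overline{w}(t) = \overline{\mathcal{G}}(t,t_0)\overline{w}(t_0)$ by the definition of $\overline{\mathcal{G}}$, that lemma identifies the solution of \sref{eq:green:linsys} with data $(v_{t_0},\theta_{t_0})$ as
\begin{equation}
v(t) = [I-P_{ct}]\,\overline{\mathcal{G}}(t,t_0)\,\overline{w}(t_0), \qquad \theta(t) = Q_{ct}\,\overline{\mathcal{G}}(t,t_0)\,\overline{w}(t_0).
\end{equation}
Expanding $\overline{w}(t_0)$ by linearity of $\overline{\mathcal{G}}(t,t_0)$ and comparing with the defining relation
\begin{equation}
\left(\begin{array}{l} v(t) \\ \theta(t) \end{array}\right) = \mathcal{G}(t, t_0) \left(\begin{array}{l} v_{t_0} \\ \theta_{t_0} \end{array}\right),
\end{equation}
while letting $v_{t_0}$ (subject to $Q_{ct_0}v_{t_0}=0$) and $\theta_{t_0}$ vary independently, I would read off
\begin{equation}
\begin{array}{ll}
\mathcal{G}_{vv}(t,t_0) = [I-P_{ct}]\overline{\mathcal{G}}(t,t_0)[I-P_{ct_0}], &
\mathcal{G}_{v\theta}(t,t_0) = [I-P_{ct}]\overline{\mathcal{G}}(t,t_0)T_{ct_0}\phi, \\[0.2cm]
\mathcal{G}_{\theta v}(t,t_0) = Q_{ct}\overline{\mathcal{G}}(t,t_0)[I-P_{ct_0}], &
\mathcal{G}_{\theta\theta}(t,t_0) = Q_{ct}\overline{\mathcal{G}}(t,t_0)T_{ct_0}\phi,
\end{array}
\end{equation}
which is precisely the asserted block form.

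I do not expect any genuine obstacle. The only point worth flagging is that this identity is to be read on the admissible class $\{\, v_{t_0} : Q_{ct_0}v_{t_0} = 0 \,\}$ — equivalently on the spaces $\mathcal{X}^\perp_{p,q}(t_0) \times \ell^q(\Wholes;\Real)$ that are relevant in the remainder of the paper — where the factor $[I-P_{ct_0}]$ in the first column acts as the identity, so that no information is lost; the hypothesis $Q_{ct_0}v_{t_0}=0$ inherited from Lemma~\ref{lem:green:relBetweenProjAndNormal} is essential here, since without it the two sides would differ on the $P_{ct_0}$-component of $v_{t_0}$. The presence of $[I-P_{ct_0}]$ merely renders the formula manifestly consistent with the invariance $\frac{d}{dt}\big[ Q_{ct} v(t) \big] = 0$ under \sref{eq:green:linsys}, and everything beyond this is a direct substitution.
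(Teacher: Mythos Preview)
Your proposal is correct and matches the paper's approach exactly: the paper states this corollary immediately after Lemma~\ref{lem:green:relBetweenProjAndNormal} without proof, treating it as a direct read-off of the block structure from that lemma via linearity, which is precisely what you do. Your remark that the identity is to be read on the admissible class $\{v_{t_0}:Q_{ct_0}v_{t_0}=0\}$ is also on point, since the paper explicitly restricts $\mathcal{G}(t,t_0)$ to act on $\mathcal{X}^\perp_{p,q}(t_0)\times\ell^q(\Wholes;\Real)$ throughout.
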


In order to study the Green's function $\overline{\mathcal{G}}(t, t_0)$,
we exploit the fact that the LDE \sref{eq:green:linsys:normal} posed on $\Wholes^2$
decouples into a set of LDEs posed on $\Wholes$ after taking the Fourier transform
in the direction perpendicular to the propagation of the wave.
In particular,
let us introduce for any $1 \le p \le \infty$ and $q \in \{1, 2, \infty\}$ the frequency space
\begin{equation}
\widehat{\mathcal{X}}_{p, q} = \ell^p\big(\Wholes; L^q([ \pi, \pi], \Real^d) \big).
\end{equation}
For any $v \in \mathcal{X}_{p, 1} \cap \mathcal{X}_{p, 2}$ and $\theta \in \ell^1(\Wholes; \Real) \cap \ell^2(\Wholes; \Real)$
we can now introduce the sequence $\widehat{v} \in \widehat{\mathcal{X}}_{p, 2}$
and the function $\widehat{\theta} \in L^2( [ -\pi, \pi], \Real)$
defined by
\begin{equation}
\widehat{v}_n(\omega)  = \sum_{l \in \Wholes} v_{nl} e^{- i l \omega},
\qquad \widehat{\theta}(\omega) = \sum_{l \in \Wholes} \theta_l e^{- i l \omega}.
\end{equation}
In a standard fashion, these maps can be extended to maps $v \to \widehat{v}$ and $\theta \to \widehat{\theta}$
from $\mathcal{X}_{p, 2} \to \widehat{\mathcal{X}}_{p, 2}$ and $\ell^2(\Wholes; \Real) \to L^2([-\pi, \pi], \Real)$
respectively that are bounded and invertible via
\begin{equation}
v_{nl} =\frac{1}{2\pi} \int_{-\pi}^\pi  e^{i l \omega} \widehat{v}_n(\omega) d \omega,
\qquad \theta_l = \frac{1}{2 \pi} \int_{-\pi}^\pi e^{i l \omega} \widehat{\theta}(\omega).
\end{equation}
The following technical result gives useful bounds for frequency dependent convolution operators.

\begin{lem}
\label{lem:green:convKIsBounded}
Consider any set of maps $K_{\omega}: \ell^\infty(\Wholes; \Real^d) \to \ell^{\infty}(\Wholes; \Real^d)$
defined for $\omega \in [-\pi, \pi]$ that can be represented as
\begin{equation}
[K_{\omega} v]_n = \sum_{n_0 \in \Wholes} [K_{\omega}]_{n, n_0} v_{n_0}.
\end{equation}
Suppose that there exist constants $\eta > 0$ and $\vartheta \in \Real$ for which the bounds
\begin{equation}
[K_{\omega}]_{n, n_0} \le m(\omega) e^{ - \eta \abs{n  - n_0 + \vartheta} }, \qquad \omega \in [\pi, \pi]
\end{equation}
hold for some function $m(\cdot) \in L^\infty([-\pi, \pi], \Real)$.
Then there exists $C > 0$ that depends only on $\eta$
such that for any $1 \le p \le \infty$ and any pair $1 \le q_2 \le q_1 \le \infty$,
the map
\begin{equation}
K :   \widehat{\mathcal{X}}_{p, q_1} \to \widehat{\mathcal{X}}_{p, q_2}
\end{equation}
defined by
\begin{equation}
[K v](\omega) = K_{\omega} v(\omega)
\end{equation}
satisfies the estimate
\begin{equation}
\norm{K}_{\mathcal{L} \big(   \widehat{\mathcal{X}}_{p, q_1} , \widehat{\mathcal{X}}_{p, q_2} \big) }
\le C \norm{ m}_{\mathcal{L}\big(L^{q_1}([-\pi, \pi], \Real), L^{q_2}([-\pi, \pi], \Real)\big)}
\le C \norm{ m}_{L^s( [-\pi, \pi], \Real )  },
\end{equation}
in which
\begin{equation}
\frac{1}{s} = \frac{1}{q_2} - \frac{1}{q_1}.
\end{equation}
\end{lem}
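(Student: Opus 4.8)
The plan is to exploit the product structure of $K$: at each fixed transverse frequency $\omega$ it acts as a discrete convolution along the wave direction $n \in \Wholes$ against the exponentially localized kernel $j \mapsto e^{-\eta \abs{j + \vartheta}}$, composed with multiplication by the scalar $m(\omega)$. The convolution will be controlled on $\ell^p(\Wholes)$ by Young's inequality, and the multiplication will be controlled from $L^{q_1}([-\pi,\pi])$ into $L^{q_2}([-\pi,\pi])$ by Hölder's inequality; the only delicate point is the order in which these estimates are applied, for which Minkowski's inequality handles the bookkeeping.

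First I would record the elementary observation that
\begin{equation}
C_\eta := \sup_{\vartheta \in \Real} \sum_{j \in \Wholes} e^{-\eta \abs{j + \vartheta}} < \infty ,
\end{equation}
which follows by writing $\vartheta = \lfloor \vartheta \rfloor + \{\vartheta\}$, shifting the summation index by $\lfloor \vartheta \rfloor$, and summing the two resulting geometric tails; this $C_\eta$, which depends only on $\eta$, will serve as the constant $C$ in the statement. We may also assume $m \ge 0$, replacing it if necessary by its absolute value (equivalently, by an upper bound for the matrix norm of $[K_\omega]_{n,n_0} e^{\eta \abs{n - n_0 + \vartheta}}$).

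Next, fix $v \in \widehat{\mathcal{X}}_{p, q_1}$. Pointwise in $n$ and $\omega$ the hypothesis gives
\begin{equation}
\abs{[K v]_n(\omega)} \le m(\omega) \sum_{n_0 \in \Wholes} e^{-\eta \abs{n - n_0 + \vartheta}} \abs{v_{n_0}(\omega)} =: m(\omega)\, g_n(\omega) .
\end{equation}
Taking the $L^{q_2}([-\pi,\pi], \Real^d)$ norm in $\omega$, using that multiplication by $m$ maps $L^{q_1}$ into $L^{q_2}$ with operator norm at most $\norm{m}_{L^s}$ (Hölder with $\tfrac{1}{s} = \tfrac{1}{q_2} - \tfrac{1}{q_1}$, which is precisely where $q_2 \le q_1$ enters), and then applying Minkowski's inequality to pull the sum over $n_0$ outside the $L^{q_1}$ norm of $g_n$, yields
\begin{equation}
\norm{[K v]_n}_{L^{q_2}([-\pi,\pi], \Real^d)} \le \norm{m}_{L^s([-\pi,\pi],\Real)} \sum_{n_0 \in \Wholes} e^{-\eta \abs{n - n_0 + \vartheta}} \norm{v_{n_0}}_{L^{q_1}([-\pi,\pi], \Real^d)} .
\end{equation}
Taking the $\ell^p(\Wholes)$ norm in $n$ of both sides and invoking Young's convolution inequality on $\Wholes$ together with the bound $C_\eta$ from the previous step then gives $\norm{K v}_{\widehat{\mathcal{X}}_{p, q_2}} \le C_\eta \norm{m}_{L^s} \norm{v}_{\widehat{\mathcal{X}}_{p, q_1}}$, which is the assertion; the intermediate inequality $\norm{m}_{\mathcal{L}(L^{q_1}, L^{q_2})} \le \norm{m}_{L^s}$ is exactly the Hölder step used along the way.

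I do not anticipate a genuine obstacle: this is a routine interpolation-flavoured estimate. The only two points meriting attention are the uniformity in the shift $\vartheta$ of the $\ell^1$-mass of the kernel, handled by the index-shift reduction in the preliminary step, and the interchange of the $n_0$-summation with the $L^{q_i}$ norms in $\omega$, which is precisely Minkowski's inequality. The endpoint cases $p = \infty$, $q_1$ or $q_2$ equal to $\infty$, and $q_1 = q_2$ (so $s = \infty$ and $\norm{m}_{L^\infty}$) are all covered verbatim with the usual conventions.
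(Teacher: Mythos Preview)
Your proof is correct and follows essentially the same approach as the paper: a pointwise bound followed by Minkowski's inequality to pull the $n_0$-sum outside the $L^{q}$ norm, H\"older's inequality to control multiplication by $m$ as a map $L^{q_1} \to L^{q_2}$, and Young's convolution inequality for the $\ell^p$ norm in $n$. The only cosmetic difference is that you apply H\"older before Minkowski whereas the paper does the reverse, and you make the uniformity of $C_\eta$ in $\vartheta$ explicit where the paper leaves it implicit.
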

\begin{proof}
Writing $M_{q_1, q_2} = \norm{ m}_{\mathcal{L}\big(L^{q_1}([-\pi, \pi], \Real), L^{q_2}([-\pi, \pi], \Real)\big)}$,
the generalized H{\"o}lder's inequality shows that
\begin{equation}
M_{q_1, q_2} \le \norm{ m}_{L^s( [-\pi, \pi], \Real )  }.
\end{equation}
Using Minkowski's inequality, we obtain
\begin{equation}
\begin{array}{lcl}
\norm{[Kv]_n(\cdot)}_{L^{q_2}([-\pi, \pi], \Real^d)}
& \le & \big[ \int_{-\pi}^\pi \abs{ \sum_{n_0 \in \Wholes}  e^{- \eta \abs{n-n_0 + \vartheta} } \abs{ v_{n_0}(\omega) } }^{q_2} \abs{ m(\omega) }^{q_2}  \, d \omega \big]^{1/q_2}\\[0.2cm]
& \le & \sum_{n_0 \in \Wholes}  e^{- \eta \abs{n-n_0 + \vartheta} } [\int_{-\pi}^\pi \abs{ m(\omega)}^{q_2} \abs{ v_{n_0}(\omega)}^{q_2} \, d \omega]^{1/q_2} \\[0.2cm]
& \le & \sum_{n_0 \in \Wholes}  e^{- \eta \abs{n-n_0 + \vartheta} } M_{q_1, q_2}    \norm{v_{n_0}(\cdot)}_{L^{q_1}([-\pi, \pi], \Real^d)}. \\[0.2cm]
\end{array}
\end{equation}
If $1 \le p < \infty$, an application of Young's inequality now yields
\begin{equation}
\begin{array}{lcl}
\norm{Kv}_{\widehat{\mathcal{X}}_{p, q_2} }
& \le & \big[ \sum_{n \in \Wholes} [ \sum_{n_0 \in \Wholes} C e^{- \eta \abs{n-n_0} } M_{q_1, q_2} \norm{v_{n_0}(\cdot)}_{L^{q_1}([-\pi, \pi], \Real^d)}]^p \big]^{1/p} \\[0.2cm]
& \le & C_1 \norm{ e^{ - \eta \abs{ \cdot + \vartheta} } }_{\ell^1(\Wholes; \Real)}
  M_{q_1, q_2} \norm{ v  }_{\widehat{\mathcal{X}}_{p, q_1}} \le C_2 M_{q_1, q_2} \norm{ v  }_{\widehat{\mathcal{X}}_{p, q_1}}.
\end{array}
\end{equation}
A simpler argument can be used for the remaining case $p = \infty$.
\end{proof}

Formally taking the Fourier transform of the LDE \sref{eq:green:linsys:normal}
yields the system of uncoupled LDEs
\begin{equation}
\partial_t \widehat{\overline{w}}(t, \omega) = L_{ct} \widehat{\pi}^\times_{\omega} \widehat{ \overline{w}}(t, \omega),
\qquad \omega \in [-\pi, \pi],
\end{equation}
in which we have introduced the map $\widehat{\pi}^\times_{\omega} \in \mathcal{L} \big( \ell^p(\Wholes; \Real^d) , \ell^p(\Wholes; (\Real^d)^5) \big)$
that acts as
\begin{equation}
[\widehat{\pi}^\times_{\omega} v]_n =
 \big(e^{i \sigma_2 \omega} v_{n + \sigma_1}, e^{-i \sigma_1 \omega} v_{n + \sigma_2},
 e^{-i \sigma_2 \omega} v_{n - \sigma_1}, e^{i \sigma_1 \omega} v_{n - \sigma_2}, v_n \big).
\end{equation}
We write $\overline{\mathcal{G}}_{\omega}(t, t_0) \in \mathcal{L}\big( \ell^p(\Wholes; \Real^d) \big)$
for the Green's function associated to the 1d LDE
\begin{equation}
\label{eq:cds:lde:1d:after:transform}
\frac{d}{dt} \overline{w}(t) = L_{ct} \widehat{\pi}^\times_{\omega} \overline{w}(t).
\end{equation}
The following preliminary result provides some basic information for the Green's function $\overline{\mathcal{G}}_{\omega}(t, t_0)$
that allows us to show that \sref{eq:green:linsys:normal} is well-posed as an evolution on $\mathcal{X}_{p, 2}$.
\begin{lem}
\label{lem:cds:greenFncProperForm}
Suppose that (Hf), (H$\Phi$),  $\mathit{(HS)}_{\mathrm{ess}}$,
$\mathit{(HS1)}_p$-$\mathit{(HS3)}_p$ and $\mathit{(HS1)}_\omega$-$\mathit{(HS2)}_{\omega}$ are all satisfied
and pick any $\epsilon > 0$.
Then the Green's function $\overline{\mathcal{G}}_{\omega}(t, t_0)$ can be represented as the convolution
\begin{equation}
[\overline{\mathcal{G}}_{\omega}(t, t_0) \overline{w} ]_n = \sum_{n_0 \in \Wholes}
[\overline{\mathcal{G}}_{\omega}(t, t_0)]_{n, n_0} \overline{w}_{n_0}
\end{equation}
and there exist constants $C > 0$ and $\eta > 0$  such that
we have the bound
\begin{equation}
\label{eq:green:greenFncProperForm:Bnd}
[\overline{\mathcal{G}}_{\omega}(t, t_0)]_{n, n_0} \le C e^{\epsilon ( t - t_0) } e^{ - \eta \abs{ n + c t - n_0 - ct_0 } }
\end{equation}
for every $t \ge t_0$ and $\omega \in [-\pi, \pi]$.
\end{lem}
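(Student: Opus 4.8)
The plan is to obtain, separately, a temporal bound that is uniform in $\omega$ and an exponential spatial localization bound with a possibly large temporal prefactor, and then to interpolate between the two. First observe that for each fixed $t \in \Real$ and $\omega \in [-\pi,\pi]$ the right-hand side of \sref{eq:cds:lde:1d:after:transform} is the bounded linear operator $\overline{w} \mapsto L_{ct}\widehat{\pi}^\times_\omega \overline{w}$ on $\ell^\infty(\Wholes; \Real^d)$, whose norm is bounded by a constant $\Lambda_0$ independent of $t$ and $\omega$: indeed $\widehat{\pi}^\times_\omega$ has norm at most $5$, and $Df([\tau\Phi](\cdot))$ is uniformly bounded because $\Phi$ is bounded and $f$ is $C^2$. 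Hence the propagator $\overline{\mathcal{G}}_\omega(t,t_0)$ is well-defined and bounded on $\ell^\infty(\Wholes;\Real^d)$, and since the generator has finite range we may define the matrix $[\overline{\mathcal{G}}_\omega(t,t_0)]_{n, n_0} \in \Real^{d \times d}$ by propagating $\delta_{n_0}$ from time $t_0$ and reading off the value at site $n$ at time $t$; the convolution representation for general initial data then follows from linearity, first for finitely supported sequences and then, using the localization established below, on all of $\ell^\infty(\Wholes;\Real^d)$.

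The second and main step is a temporal bound of the form $\norm{\overline{\mathcal{G}}_\omega(t,t_0)}_{\mathcal{L}(\ell^\infty(\Wholes;\Real^d))} \le C_{\epsilon'} e^{\epsilon'(t-t_0)}$, valid for every $\epsilon' > 0$ with a constant $C_{\epsilon'}$ independent of $\omega \in [-\pi,\pi]$ and $t \ge t_0$. Here I would invoke the correspondence between the Green's function of the non-autonomous LDE \sref{eq:cds:lde:1d:after:transform} and the autonomous operator $\mathcal{L}_\omega = -c\partial + L_{\omega,0}$ worked out in \cite[\S 2]{HJHSTBFHN} (in the spirit of the ``filling in'' construction of \cite{BGV2003,HJHNLS}). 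The spectral hypotheses assembled so far --- $\mathrm{(HS)}_{\mathrm{ess}}$ for the essential spectrum, together with Propositions \ref{prp:mr:melnikov}, \ref{prp:mr:locRoots} and \ref{prp:mr:omegaLarge} (these already incorporate $\mathrm{(HS1)}_\omega$ and $\mathrm{(HS2)}_\omega$) for the point spectrum --- guarantee that the spectrum of $\mathcal{L}_\omega$ lies in the closed left half-plane for every $\omega$, and that for each fixed $\epsilon' > 0$ the family $\{ \mathcal{L}_\omega - \lambda : \omega \in [-\pi,\pi],\ \Re\lambda \ge \epsilon' \}$ is uniformly invertible. The only candidate obstruction, the eigenvalue curve $\omega \mapsto \lambda_\omega$, touches the imaginary axis solely at $\omega = 0$ with $\lambda_0 = 0$, and by \sref{eq:prp:mr:locRoots:Quadratic} it bends strictly into $\Re\lambda < 0$ for $0 < \abs{\omega} < \delta_\omega$, while for $\abs{\omega} \ge \delta_\omega$ Proposition \ref{prp:mr:omegaLarge} puts the whole spectrum to the left of $\Re\lambda = -\beta$. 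Shifting the inverse-Laplace contour to $\Re\lambda = \epsilon'$ and using the $2\pi i c$-periodicity \sref{eq:mr:spCompact} to reduce to a compact set of spectral parameters then yields the claimed $\omega$-uniform bound with temporal rate $\epsilon'$.

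For the spatial localization I would conjugate with the moving exponential weight: for $\mu \in \Real$ with $\abs{\mu}$ small, set $v_n(t) = e^{\mu(n+ct)}\overline{w}_n(t)$, so that $v$ solves a modified LDE in which each shifted term $\overline{w}_{n \pm \sigma_k}$ is replaced by $e^{\mp \mu \sigma_k} v_{n \pm \sigma_k}$ and an extra drift term $\mu c\, v_n$ appears; the corresponding generator is again bounded on $\ell^\infty(\Wholes;\Real^d)$, with norm bounded by a constant $\widetilde{\Lambda}$ independent of $t$, of $\omega$, and of $\mu \in [-\mu_0,\mu_0]$. Applying the resulting estimate $\norm{v(t)}_{\ell^\infty} \le e^{\widetilde{\Lambda}(t-t_0)}\norm{v(t_0)}_{\ell^\infty}$ to $\overline{w}(t_0) = \delta_{n_0}$ and combining $\mu = +\mu_0$ with $\mu = -\mu_0$ gives
\begin{equation}
\abs{[\overline{\mathcal{G}}_\omega(t,t_0)]_{n, n_0}} \le e^{\widetilde{\Lambda}(t-t_0)} e^{-\mu_0 \abs{n + ct - n_0 - ct_0}}.
\end{equation}
Interpolating between this and the entrywise consequence $\abs{[\overline{\mathcal{G}}_\omega(t,t_0)]_{n,n_0}} \le C_{\epsilon'}e^{\epsilon'(t-t_0)}$ of the second step --- raising the first to a power $\vartheta \in (0,1)$ and the second to $1-\vartheta$ --- yields
\begin{equation}
\abs{[\overline{\mathcal{G}}_\omega(t,t_0)]_{n, n_0}} \le C\, e^{(\vartheta \widetilde{\Lambda} + (1-\vartheta)\epsilon')(t-t_0)} e^{-\vartheta \mu_0 \abs{n + ct - n_0 - ct_0}},
\end{equation}
so choosing $\epsilon' = \epsilon/2$, then $\vartheta$ small enough that $\vartheta\widetilde{\Lambda} \le \epsilon/2$, and setting $\eta = \vartheta\mu_0$, produces exactly \sref{eq:green:greenFncProperForm:Bnd}. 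I expect the second step to be the real obstacle: it requires carefully importing the Green's-function/resolvent dictionary of \cite[\S 2]{HJHSTBFHN} and checking that the spectral margin $\Re\lambda_\omega \le 0 < \epsilon'$ is genuinely uniform in $\omega$, including the delicate regime $\omega \approx 0$ where the eigenvalue pinches the imaginary axis --- and the $\epsilon$-slack in the statement is precisely what makes this tractable.
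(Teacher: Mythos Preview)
Your argument is correct, but it takes a more roundabout route than the paper's. The paper goes directly through the contour-integral representation of Benzoni-Gavage, Huot and Rousset \cite[Thm.~4.2]{BGV2003},
\[
[\overline{\mathcal{G}}_{\omega}(t, t_0)]_{n, n_0} = - \frac{1}{2 \pi i } \int_{\gamma - i \pi c}^{\gamma + i \pi c}
  e^{\lambda (t - t_0) } G_{\omega}( n + ct, n_0 + c t_0 , \lambda) \, d \lambda,
\]
where $G_\omega$ is the resolvent kernel for $\mathcal{L}_\omega$. The spectral hypotheses allow the contour to be shifted to $\gamma = \epsilon$, and the pointwise bound $\abs{G_\omega(\xi,\xi_0,\lambda)} \le C e^{-\eta\abs{\xi-\xi_0}}$ from \cite[Eq.~(2.35)]{HJHSTBFHN} then delivers the temporal factor $e^{\epsilon(t-t_0)}$ and the spatial factor $e^{-\eta\abs{n+ct-n_0-ct_0}}$ \emph{simultaneously}. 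Uniformity of $\eta$ in $\omega$ comes from $\mathrm{(HS)}_{\mathrm{ess}}$ via the characteristic equation.

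Your approach separates these two ingredients: step~2 extracts only the temporal growth bound from the same contour/resolvent machinery, and step~3 recovers spatial localization independently by a Combes--Thomas weight conjugation, with interpolation stitching the two together. The weight argument in step~3 is attractive because it is completely elementary and makes no reference to the resolvent kernel's structure. However, since your step~2 already appeals to the \cite{HJHSTBFHN}/\cite{BGV2003} framework --- which is precisely where the pointwise spatial decay of $G_\omega$ lives --- the interpolation is redundant: once you have that formula with contour at $\Re\lambda = \epsilon$, the full bound drops out immediately. In short, both approaches rely on the same heavy input; the paper just uses all of it at once.
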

\begin{proof}
Without loss of generality, we assume that $c > 0$. An important observation due to Benzoni-Gavage,
Huout and Rousset \cite[Thm. 4.2]{BGV2003} shows that
\begin{equation}
\label{eq:green:formulaForGreensFunction}
[\overline{\mathcal{G}}_{\omega}(t, t_0)]_{n, n_0} = - \frac{1}{2 \pi i } \int_{\gamma - i \pi c}^{\gamma + i \pi c}
  e^{\lambda (t - t_0) } G_{\omega}( n + ct, n_0 + c t_0 , \lambda) \, d \lambda,
\end{equation}
for any sufficiently large $\gamma \gg 1$,
in which the functions $G_{\omega}(\xi, \xi_0, \lambda)$ solve the MFDE
\begin{equation}
(\mathcal{L}_{\omega} - \lambda) G_{\omega}(\cdot, \xi_0, \lambda) = \delta(\cdot - \xi_0)
\label{eq:green:greenMFDE}
\end{equation}
in the sense of distributions. In view of the symmetry
\sref{eq:mr:spCompact}, we see that
\begin{equation}
G_{\omega}(\xi, \xi_0, \lambda + 2 \pi i c ) = e^{2 \pi i (\xi_0 - \xi) } G(\xi, \xi_0, \lambda),
\end{equation}
which implies that the integration contour in \sref{eq:green:formulaForGreensFunction} can be shifted to the left as long
as $G_{\omega}(\xi, \xi_0, \cdot)$ is analytic. The construction of the functions $G_{\omega}$
in \cite[Lem. 2.6]{HJHSTBFHN} shows that
on regions where $\mathcal{L}_{\omega} - \lambda$ is invertible,
the map $(\lambda, \omega) \mapsto G_{\omega}(\xi, \xi_0, \lambda)$
is continuous and analytic in the first variable.
In particular, our spectral assumptions allow us to pick $\gamma = \epsilon > 0$
for all $\omega \in [-\pi, \pi]$.

The bound \sref{eq:green:greenFncProperForm:Bnd} follows
from \cite[Eq. (2.35)]{HJHSTBFHN}. The exponent $\eta > 0$
can be chosen to be independent of $\omega$
due to the fact that the characteristic equations
\begin{equation}
\label{eq:green:greenFncProperFrom:charEq}
\det [ \Delta^\pm_{\omega}(z) - \epsilon I ]= 0
\end{equation}
have no roots in sufficiently narrow vertical strips $\abs{\Re z} \le 2 \eta$.
This is a consequence of $\mathrm{(HS)}_{\mathrm{ess}}$ together
with the {\it a-priori} bound on $\Im z$ established
in \cite[Lem. 3.1]{HJHCM}.
\end{proof}

%


\begin{lem}
\label{lem:green:wellPosed}
Suppose that (Hf), (H$\Phi$),  $\mathit{(HS)}_{\mathrm{ess}}$,
$\mathit{(HS1)}_p$-$\mathit{(HS3)}_p$ and $\mathit{(HS1)}_\omega$-$\mathit{(HS2)}_{\omega}$ are all satisfied.
Pick any $t_0 \in \Real$ and $\overline{w}_{t_0} \in  \mathcal{X}_{p, 2}$.
Write $\overline{w}(t)$ for the solution to \sref{eq:green:linsys:normal} with $\overline{w}(t_0) = \overline{w}_{t_0}$.
Then for all $t \ge t_0$ we have $\overline{w}(t) \in \mathcal{X}_{p, 2}$,
together with
\begin{equation}
\label{eq:cds:lem:fourier:welldefined}
\overline{w}_{nl} (t) =  \frac{1}{2 \pi} \int_{-\pi}^\pi e^{i l \omega}
 \sum_{n_0 \in \Wholes} [\overline{\mathcal{G}}_{\omega} (t, t_0)]_{n, n_0} [\widehat{\overline{w}_{t_0}}(\omega)]_{n_0} \, d \omega.
\end{equation}
\end{lem}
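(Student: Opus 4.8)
The plan is to solve \sref{eq:green:linsys:normal} \emph{directly} in the Banach space $\mathcal{X}_{p,2}$, to identify that solution with the Fourier-synthesis formula on the right-hand side of \sref{eq:cds:lem:fourier:welldefined}, and then to invoke uniqueness of solutions in the larger space $\ell^\infty(\Wholes^2;\Real^d)$ to conclude that it agrees with the solution $\overline{w}(t)$ in the statement. To begin, note that $\pi^\times$ is a finite combination of shift operators in the two lattice directions, each of which is an isometry on $\ell^p$ and on $\ell^2$, while $L_{ct}$ is multiplication by the bounded matrix-valued function $\xi\mapsto Df([\tau\Phi](\xi))$ evaluated along $n+ct$; hence $L_{ct}\pi^\times$ is bounded on $\mathcal{X}_{p,2}$ uniformly for $t\in\Real$ and depends continuously on $t$. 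Consequently \sref{eq:green:linsys:normal} has a unique $C^1$ solution $t\mapsto\overline{w}(t)\in\mathcal{X}_{p,2}$ with $\overline{w}(t_0)=\overline{w}_{t_0}$. Since $\mathcal{X}_{p,2}\subset\ell^\infty(\Wholes^2;\Real^d)$ and solutions of \sref{eq:green:linsys:normal} in $\ell^\infty(\Wholes^2;\Real^d)$ are unique (the generator being bounded uniformly in $t$), this $\mathcal{X}_{p,2}$-valued solution coincides with the $\ell^\infty$-solution of the statement, which already gives $\overline{w}(t)\in\mathcal{X}_{p,2}$ for all $t\ge t_0$.

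It remains to verify the representation \sref{eq:cds:lem:fourier:welldefined}. Write $\widetilde{w}(t)$ for its right-hand side, i.e.\ the inverse transverse Fourier transform of the map $\omega\mapsto\overline{\mathcal{G}}_\omega(t,t_0)\,\widehat{\overline{w}_{t_0}}(\omega)$. For fixed $t\ge t_0$, Lemma \ref{lem:cds:greenFncProperForm} supplies the bound $[\overline{\mathcal{G}}_\omega(t,t_0)]_{n,n_0}\le C e^{\epsilon(t-t_0)}e^{-\eta\abs{n+ct-n_0-ct_0}}$ uniformly in $\omega\in[-\pi,\pi]$, so Lemma \ref{lem:green:convKIsBounded}, applied with $q_1=q_2=2$, with shift $\vartheta=c(t-t_0)$, and with the constant weight $m(\omega)\equiv C e^{\epsilon(t-t_0)}\in L^\infty([-\pi,\pi],\Real)$, shows that $\omega\mapsto\overline{\mathcal{G}}_\omega(t,t_0)\,\widehat{\overline{w}_{t_0}}(\omega)$ lies in $\widehat{\mathcal{X}}_{p,2}$; here $\widehat{\overline{w}_{t_0}}\in\widehat{\mathcal{X}}_{p,2}$ because the transverse Fourier transform is bounded from $\mathcal{X}_{p,2}$ onto $\widehat{\mathcal{X}}_{p,2}$. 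Applying the (bounded) inverse transform then yields $\widetilde{w}(t)\in\mathcal{X}_{p,2}$. Moreover, at $t=t_0$ we have $\overline{\mathcal{G}}_\omega(t_0,t_0)=I$, so $\widehat{\widetilde{w}}(t_0,\cdot)=\widehat{\overline{w}_{t_0}}$ and hence $\widetilde{w}(t_0)=\overline{w}_{t_0}$.

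Finally I would show that $\widetilde{w}$ also solves \sref{eq:green:linsys:normal}, whence $\widetilde{w}=\overline{w}$ by the uniqueness already used, proving \sref{eq:cds:lem:fourier:welldefined}. Since $\overline{\mathcal{G}}_\omega(t,t_0)$ is by definition the Green's function of the one-dimensional LDE \sref{eq:cds:lde:1d:after:transform}, for each $\omega$ we have $\partial_t\big[\overline{\mathcal{G}}_\omega(t,t_0)\,\widehat{\overline{w}_{t_0}}(\omega)\big]=L_{ct}\,\widehat{\pi}^\times_\omega\big[\overline{\mathcal{G}}_\omega(t,t_0)\,\widehat{\overline{w}_{t_0}}(\omega)\big]$. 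Because $\widehat{\pi}^\times_\omega$ is exactly the transverse-Fourier conjugate of $\pi^\times$ (a direct check on each stencil entry, where a shift $l\mapsto l+\sigma_k$ produces the factor $e^{\mp i\sigma_{3-k}\omega}$) and $L_{ct}$ acts only through the wave index $n$, applying the inverse transform converts the right-hand side into $L_{ct}\pi^\times\widetilde{w}(t)$, giving $\tfrac{d}{dt}\widetilde{w}(t)=L_{ct}\pi^\times\widetilde{w}(t)$ with $\widetilde{w}(t_0)=\overline{w}_{t_0}$. The main technical obstacle is precisely the interchange of $\partial_t$ with the $\omega$-integral and the $n_0$-sum defining $\widetilde{w}$; this is handled by dominated convergence, using the exponential bound of Lemma \ref{lem:cds:greenFncProperForm} and the analogous bound for $\partial_t[\overline{\mathcal{G}}_\omega(t,t_0)]_{n,n_0}$, which follows from \sref{eq:cds:lde:1d:after:transform} since that equation expresses the time derivative as a finite linear combination of entries $[\overline{\mathcal{G}}_\omega(t,t_0)]_{n',n_0}$ with $\abs{n-n'}$ uniformly bounded, all with constants locally uniform in $t$. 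Everything else in the argument is a routine application of the boundedness statements established above.
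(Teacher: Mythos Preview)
Your argument is correct and shares the paper's skeleton (define the Fourier-synthesis candidate $\widetilde{w}$, verify it solves \sref{eq:green:linsys:normal}, invoke uniqueness), but the paper handles the $\mathcal{X}_{p,2}$ setting differently. Rather than working directly in $\mathcal{X}_{p,2}$, the paper first restricts to initial data in the dense subspace $\mathcal{X}_{p,1}\cap\mathcal{X}_{p,2}$, where $\widehat{\overline{w}_{t_0}}\in\widehat{\mathcal{X}}_{p,\infty}$ is defined pointwise in $\omega$; the kernel bound of Lemma~\ref{lem:cds:greenFncProperForm} combined with Lemma~\ref{lem:green:convKIsBounded} then gives $\zeta(t)\in\widehat{\mathcal{X}}_{p,\infty}\subset\widehat{\mathcal{X}}_{p,2}$, and the verification that $\widetilde{w}$ solves the LDE is a purely pointwise-in-$\omega$ computation requiring no care with $L^2$ objects. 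The extension to general $\mathcal{X}_{p,2}$ data is then obtained ``in a standard fashion'' by density and the uniform-in-time boundedness of both $\overline{\mathcal{G}}(t,t_0)$ and the Fourier-side operator on $\widehat{\mathcal{X}}_{p,2}$. Your direct route avoids the density step at the cost of having to justify the $\partial_t$/$\omega$-integral interchange for functions that are merely $L^2$ in $\omega$; your dominated-convergence sketch is adequate for this, though strictly speaking it should be phrased as convergence of difference quotients in $\widehat{\mathcal{X}}_{p,2}$ rather than pointwise in $\omega$. Your opening observation that $L_{ct}\pi^\times$ is bounded on $\mathcal{X}_{p,2}$, giving $\overline{w}(t)\in\mathcal{X}_{p,2}$ immediately, is a clean shortcut the paper does not make explicit.
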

\begin{proof}
Let us first consider $\overline{w}_{t_0} \in \mathcal{X}_{p, 1} \cap \mathcal{X}_{p, 2}$.
The Fourier transform $\widehat{\overline{w}_{t_0}}(\omega)$
is now well-defined for all $\omega \in [-\pi, \pi]$ and
$\norm{ \widehat{\overline{w}_{t_0}} }_{\widehat{\mathcal{X}}_{p, \infty}} \le \norm{\overline{w}_{t_0} }_{\mathcal{X}_{p, 1}}$.
Upon writing $\zeta(t)(\omega) = \overline{\mathcal{G}}_{\omega} (t, t_0) \widehat{\overline{w}_{t_0}}(\omega)$,
we see by Lemma \ref{lem:green:convKIsBounded} and Lemma \ref{lem:cds:greenFncProperForm}
that  $\zeta(t) \in \widehat{\mathcal{X}}_{p, \infty} \subset \widehat{\mathcal{X}}_{p, 2}$ for all $t \ge t_0$.
Upon writing $\widetilde{w}(t) \in \mathcal{X}_{p, 2}$ for the inverse Fourier transform of $\zeta(t)$,
we find by construction that $\widetilde{w}$ solves \sref{eq:green:linsys:normal} with $\widetilde{w}(t_0) = \overline{w}_{t_0}$.
By uniqueness of solutions, we now must have $\widetilde{w}(t) = \overline{w}(t)$ for all $t \ge t_0$.
These conclusions can now be extended to $\overline{w}_{t_0} \in \mathcal{X}_{p, 2}$ in a standard fashion.
\end{proof}

In light of Proposition $\ref{prp:mr:melnikov}$,
we can define for any $t \in \Real$ and $\omega \in (-\omega_0, \omega_0)$
the operator
\begin{equation}
Q_{\omega,ct}(\omega)v  =  \sum_{n \in \Wholes} \psi_{\omega}(n + ct) v_n,
\end{equation}
together with the projection $P_{\omega, ct} = T_{ct} \phi_{\omega} Q_{\omega, ct}$.
These frequency dependent projections can be used to
isolate the leading order dynamics of \sref{eq:cds:lde:1d:after:transform}
associated to the eigenvalues $\lambda_{\omega}$.
This allows us to considerably refine
the crude estimates in Lemma \ref{lem:cds:greenFncProperForm}.

\begin{lem}
\label{lem:green:estForOverlineG:I}
Suppose that (Hf), (H$\Phi$),  $\mathit{(HS)}_{\mathrm{ess}}$,
$\mathit{(HS1)}_p$-$\mathit{(HS3)}_p$ and $\mathit{(HS1)}_\omega$-$\mathit{(HS2)}_{\omega}$ are all satisfied.
  Then there exist constants $\omega_0 > 0$, $C > 0$, $\beta > 0$ and $\eta > 0$ such that for any $\omega \in [-\pi, \pi]$
  with $\abs{\omega} \ge \omega_0$ we have
  \begin{equation}
    [\overline{\mathcal{G}}_{\omega}(t, t_0)]_{n, n_0} \le C e^{- \beta (t - t_0) } e^{ - \eta  \abs{n + ct - n_0 - ct_0}  },
  \end{equation}
  while for any $\omega \in (-\omega_0, \omega_0 )$ we have 
  \begin{equation}
     \overline{\mathcal{G}}_{\omega}(t , t_0) =  e^{\lambda_{\omega} (t - t_0)} T_{ct} \phi_{\omega}  Q_{\omega, ct_0}
 + \overline{\mathcal{H}}_{1, \omega}(t, t_0),
\end{equation}
together with the bound
\begin{equation}
\abs{ [ \overline{\mathcal{H}}_{1 ,\omega}( t, t_0)]_{n, n_0} } \le C e^{ - \beta(t - t_0) } e^{- \eta \abs{n + ct - n_0 - ct_0} }.
\end{equation}
\end{lem}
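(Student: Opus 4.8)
The plan is to work from the Dunford-type representation \sref{eq:green:formulaForGreensFunction} of $\overline{\mathcal{G}}_\omega(t,t_0)$ supplied by Lemma \ref{lem:cds:greenFncProperForm}. There the resolvent kernel $G_\omega(\xi,\xi_0,\lambda)$ is analytic in $\lambda$ on every region where $\mathcal{L}_\omega-\lambda$ is invertible, is jointly continuous in $(\omega,\lambda)$ by \cite[Lem. 2.6]{HJHSTBFHN}, and obeys the pointwise bound $|G_\omega(\xi,\xi_0,\lambda)|\le Ce^{-\eta|\xi-\xi_0|}$ of \cite[Eq. (2.35)]{HJHSTBFHN}, with a spatial rate $\eta>0$ that may be chosen independently of $\omega$ because of $\mathrm{(HS)}_{\mathrm{ess}}$ together with the {\it a priori} bound on $\Im z$ for roots of the characteristic equations from \cite[Lem. 3.1]{HJHCM}. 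The entire argument then amounts to pushing the vertical contour in \sref{eq:green:formulaForGreensFunction} from $\Re\lambda=\gamma$ leftward to $\Re\lambda=-\beta$; exactly as in the proof of Lemma \ref{lem:cds:greenFncProperForm}, the horizontal connecting segments always cancel because the $2\pi i c$-invariance \sref{eq:mr:spCompact} makes the integrand $2\pi i c$-periodic in $\lambda$.

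When $|\omega|\ge\delta_\omega$, Proposition \ref{prp:mr:omegaLarge} (which uses $\mathrm{(HS2)}_\omega$) furnishes a $\beta>0$ for which $\mathcal{L}_\omega-\lambda$ is invertible for all $\Re\lambda\ge-\beta$ and all such $\omega$. Hence $G_\omega(\xi,\xi_0,\cdot)$ is analytic up to $\Re\lambda=-\beta$, and by joint continuity together with compactness of $\{|\omega|\ge\delta_\omega\}$ and of the relevant $\Im\lambda$-interval it is uniformly bounded on the line $\Re\lambda=-\beta$. Shifting the contour there without crossing a singularity and estimating with the $G_\omega$-bound immediately gives $|[\overline{\mathcal{G}}_\omega(t,t_0)]_{n,n_0}|\le Ce^{-\beta(t-t_0)}e^{-\eta|n+ct-n_0-ct_0|}$, which is the first assertion. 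When $|\omega|<\delta_\omega$ the only change is that, by Proposition \ref{prp:mr:melnikov}(i), $\mathcal{L}_\omega-\lambda$ fails to be invertible on $\{\Re\lambda\ge-\beta\}$ precisely for $\lambda-\lambda_\omega\in 2\pi i c\Wholes$. After shrinking $\delta_\omega$ and $\beta$ if needed I may assume $\Re\lambda_\omega>-\beta$, so that no such exceptional value lies on $\Re\lambda=-\beta$ and the unique representative $\lambda_\omega$ lies strictly inside the rectangle swept out by the contour shift (this uses $\lambda_0=0$ and continuity of $\omega\mapsto\lambda_\omega$).

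Using the spectral decomposition from Proposition \ref{prp:mr:melnikov}(ii)--(iv), the resolvent near $\lambda_\omega$ splits as $(\lambda_\omega-\lambda)^{-1}\phi_\omega\langle\psi_\omega,\cdot\rangle$ plus an analytic, uniformly bounded remainder; correspondingly $G_\omega(\xi,\xi_0,\lambda)$ has a simple pole at $\lambda=\lambda_\omega$ whose residue is the rank-one kernel $\phi_\omega(\xi)\langle\psi_\omega(\xi_0),\cdot\rangle$, while the regular part still satisfies the $e^{-\eta|\xi-\xi_0|}$ bound. Shifting the contour from $\gamma$ to $-\beta$ now collects this residue; evaluated at $\xi=n+ct$ and $\xi_0=n_0+ct_0$ it reassembles precisely into the operator $e^{-\lambda_\omega(t-t_0)}T_{ct}\phi_\omega Q_{\omega,ct_0}$, while the remaining integral over $\Re\lambda=-\beta$ is what we call $\overline{\mathcal{H}}_{1,\omega}(t,t_0)$ and is bounded by $Ce^{-\beta(t-t_0)}e^{-\eta|n+ct-n_0-ct_0|}$ exactly as in the previous case.

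I expect the main obstacle to be the uniformity in $\omega$ of the constants $C$, $\beta$, $\eta$, rather than the contour manipulation itself. The uniform spatial rate $\eta$ follows from $\mathrm{(HS)}_{\mathrm{ess}}$ and \cite[Lem. 3.1]{HJHCM} as in Lemma \ref{lem:cds:greenFncProperForm}; the uniform spectral gap $\beta$ for $|\omega|\ge\delta_\omega$ is exactly Proposition \ref{prp:mr:omegaLarge}, which in turn rests on openness of resolvent sets, continuity of $\omega\mapsto\mathcal{L}_\omega$, and the fact that \sref{eq:mr:spCompact} confines $\Im\lambda$ to a compact set; and the uniform control, for $|\omega|<\delta_\omega$, of both the residue term and the analytic remainder is delivered by the $C^2$-smoothness of $\omega\mapsto(\lambda_\omega,\phi_\omega,\psi_\omega)$ in Proposition \ref{prp:mr:melnikov}(v) combined with joint continuity of $(\omega,\lambda)\mapsto G_\omega(\xi,\xi_0,\lambda)$ from \cite[Lem. 2.6]{HJHSTBFHN}. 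A minor bookkeeping point — keeping the relevant eigenvalue representative strictly between the two contours and off the line $\Re\lambda=-\beta$ — is dispatched by continuity of $\lambda_\omega$ with $\lambda_0=0$ after possibly shrinking $\delta_\omega$ and $\beta$.
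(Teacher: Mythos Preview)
Your proposal is correct and follows essentially the same route as the paper: start from the contour representation \sref{eq:green:formulaForGreensFunction}, shift to $\Re\lambda=-\beta$ using the $2\pi i c$-periodicity, and for $|\omega|<\delta_\omega$ collect the simple-pole residue at $\lambda_\omega$. The one place the paper is more precise is in justifying the uniform $e^{-\eta|\xi-\xi_0|}$ bound on the \emph{regular} part of the meromorphic expansion near the pole: joint continuity of $G_\omega$ from \cite[Lem.~2.6]{HJHSTBFHN} only applies on the resolvent set, so instead the paper invokes \cite[Lem.~2.7]{HJHSTBFHN} and argues that the quasi-inverse $L_*$ appearing there can be constructed continuously in $\omega$ via continuity of the exponential dichotomy decomposition \cite[Eq.~(4.7)]{HJHLIN}.
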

\begin{proof}
Recall the representation \sref{eq:green:formulaForGreensFunction}
and pick a sufficiently small $\delta_\lambda > 0$.
For convenience, write
\begin{equation}
\Omega = \{(\lambda, \omega) \in \Complex \times (-\omega_0, \omega_0 ) \mid 0 < \abs{\lambda -\lambda_\omega} < \delta_{\lambda} \}.
\end{equation}
In order to obtain the desired splittings for $\abs{\omega} < \omega_0$,
we adapt the approach in \cite[Lem. 2.7]{HJHSTBFHN} to the current $\omega$-dependent setting.
The crucial technical ingredient is the isolation
of the singular portion of the operators $(\mathcal{L}_{\omega} - \lambda)^{-1}$.

We first introduce the notation
$\sigma_{\max} = \max\{ \abs{\sigma_1}, \abs{\sigma_2} \}$
together with the integral operators
\begin{equation}
M_{\omega} (f) = \int_{-\infty}^{\infty}
 \langle \psi_{\omega}(\xi) ,  f(\xi) \rangle \, d\xi
\end{equation}
that map $L^{\infty}(\Real, \Complex^d)$ into $\Complex$.
Proceeding as in \cite[{\S}6]{HJHLIN},
we can show that for any $f \in L^{\infty}(\Real, \Complex^d)$,
a bounded pair $(v^-, v^+) =: L_0 f$
with $v^- \in BC_0( (-\infty, \sigma_{\max}], \Complex^d)$ and
$v^+ \in BC_0([-\sigma_{\max}, \infty), \Complex^d)$
can be constructed in such a way that $\mathcal{L}_0 v^\pm = f$ on $\Real_{\pm}$.
In addition, $v^+$ and $v^-$ agree on $[-\sigma_{\max},\sigma_{\max}]$
if and only $M_0(f) = 0$.
In particular, if $\mathcal{L}_0 p = f$ for some
$f \in L^{\infty}(\Real, \Complex^d)$ and $p \in BC_0(\Real, \Complex^d)$,
then in fact $p = L_0 f + \kappa \phi_0$ for some $\kappa \in \Complex$.

Possibly after decreasing $\omega_0 > 0$,
we note that for all $\omega \in (-\omega_0, \omega_0)$
the fixed point problem
\begin{equation}
(v^-, v^+) = L_0 f + L_0 [\mathcal{L}_0 - \mathcal{L}_{\omega} + \lambda_{\omega}](v^-, v^+),
\end{equation}
has a unique solution that we will write as $(v^-, v^+) = L_{\omega} f$.
Here the functions $f$ and $v^\pm$ belong to the same function spaces as above
and the map $\omega \to L_{\omega}$ is continuous.
We now set out to show that
if $[\mathcal{L}_\omega  - \lambda_{\omega} ]p = f$
for some $f \in L^{\infty}(\Real, \Complex^d)$ and $p \in BC_0(\Real, \Complex^d)$,
then in fact $p = L_{\omega} f + \kappa \phi_{\omega}$
for some $\kappa \in \Complex$.

To see this, it suffices to write $q_{\kappa} = p - \kappa \phi_{\omega}$
and choose $\kappa \in \Complex$ in such a way that $q_{\kappa} = L_{\omega} f$,
which can be formulated as
\begin{equation}
\begin{array}{lcl}
q_{\kappa} &  = &
  L_0 f + L_0 [\mathcal{L}_0 - \mathcal{L}_{\omega} + \lambda_{\omega}]q_{\kappa}
\\[0.2cm]
& = &
    L_0 f + L_0 [ \mathcal{L}_0 q_{\kappa} ] - L_0 f
\\[0.2cm]
& = & L_0 \mathcal{L}_0 q_{\kappa} \\[0.2cm]
& = & L_0 \mathcal{L}_0 p - \kappa L_0 \mathcal{L}_0 \phi_{\omega}.
\end{array}
\end{equation}
As stated above, we have
\begin{equation}
\begin{array}{lcl}
L_0 \mathcal{L}_0 p & = & p + \nu_p \phi_0,
\\[0.2cm]
L_0 \mathcal{L}_0 \phi_{\omega} &=& \phi_{\omega} + \nu_\omega \phi_{0},
\end{array}
\end{equation}
for some $\nu_p \in \Complex$ and $\nu_{\omega} \in \Complex$.
Since $\mathcal{L}_0 \phi_0 = 0$, we have $\nu_{0} = -1$
and by continuity we may hence assume $\nu_{\omega} \neq 0$.
We hence require the identity
\begin{equation}
\begin{array}{lcl}
q_{\kappa}
& = & p + \nu_p \phi_0 - \kappa \phi_{\omega} - \kappa \nu_{\omega} \phi_0
\\[0.2cm]
& = & q_{\kappa}  + [\nu_p - \kappa \nu_{\omega}] \phi_0,
\end{array}
\end{equation}
which can be satisfied by choosing $\kappa = \nu_{\omega}^{-1} \nu_p$.

Fix $(\lambda, \omega) \in \Omega$
and $f \in L^{\infty}(\Real, \Complex^d)$.
Upon writing $v = (\mathcal{L}_{\omega} - \lambda)^{-1}f$,
the discussion above implies that
\begin{equation}
v = L_{\omega} f + (\lambda - \lambda_{\omega}) L_{\omega} v + \kappa \phi_{\omega}
\end{equation}
for some $\kappa \in \Complex$.
In addition, since
\begin{equation}
(\mathcal{L}_{\omega} - \lambda_{\omega}) v = f +(\lambda -\lambda_{\omega}) v
\end{equation}
we must necessarily have
\begin{equation}
M_{\omega}( f + (\lambda -\lambda_{\omega}) v ) = 0,
\end{equation}
which in view of the normalization $M_{\omega}(\phi_{\omega}) = 1$
leads directly to the identity
\begin{equation}
\kappa = -(\lambda - \lambda_{\omega})^{-1} M_{\omega}(f)
- M_{\omega}(L_{\omega} f)
- (\lambda - \lambda_{\omega}) M_{\omega} (L_{\omega} v).
\end{equation}
The rescaled function
$\widetilde{v} = (\lambda - \lambda_{\omega}) v$
must therefore satisfy the fixed point problem
\begin{equation}
\widetilde{v} = -M_{\omega}(f) \phi_{\omega}
+ (\lambda - \lambda_{\omega}) [ L_{\omega} f - M_{\omega}(L_{\omega} f) \phi_{\omega}]
+ ( \lambda-\lambda_{\omega}) [L_{\omega} \widetilde{v} - M_{\omega}(L_{\omega} \widetilde{v}) \phi_{\omega}],
\end{equation}
which shows that we may write
\begin{equation}
(\mathcal{L}_{\omega} - \lambda)^{-1} f
= - \frac{M_{\omega}(f)}{ \lambda - \lambda_{\omega} }
+ \mathcal{B}( \lambda, \omega) f,
\end{equation}
for some linear operator $\mathcal{B}(\lambda, \omega)$
that maps $L^{\infty}(\Real, \Complex^d)$
into $BC_0(\Real, \Complex^d)$ and depends
continuously on the pair $(\lambda, \omega)$
and analytically on $\lambda$. In addition,
$\norm{\mathcal{B}(\lambda, \omega)}$
can be uniformly bounded for $(\lambda, \omega) \in \Omega$.

We can now proceed as in \cite[Lem. 2.7]{HJHSTBFHN} to write
\begin{equation}
\label{eq:green:rep:meromorphic}
G_{\omega}(\xi, \xi_0, \lambda) =
E_{\omega}(\xi, \xi_0, \lambda) + H_{\omega}(\xi, \xi_0, \lambda)
\end{equation}
for all pairs $( \lambda, \omega) \in \Omega$,
in which $E_{\omega}$ is explicitly given by
\begin{equation}
E_{\omega}(\xi, \xi_0, \lambda) =
 -(\lambda - \lambda_\omega)^{-1}
 \phi_\omega(\xi) \psi_{\omega}(\xi_0)^*,
\end{equation}
while the remainder $( \lambda, \omega) \mapsto H_{\omega}(\xi, \xi_0, \lambda)$
is continuous and analytic in the first variable. In addition,
the continuity properties of $\mathcal{B}(\lambda, \omega)$
can be used to show that there exist $C > 0$ and $\eta > 0$ for which the uniform bound
\begin{equation}
\abs{H_{\omega}(\xi, \xi_0, \lambda) }\le C e^{ - \eta \abs{\xi - \xi_0} }
\end{equation}
holds for all $(\lambda, \omega) \in \Omega$.

The meromorphic expansion \sref{eq:green:rep:meromorphic}
allows us to shift the integration contour in \sref{eq:green:formulaForGreensFunction}
to the left of the imaginary axis, picking up the appropriate residues.
Indeed, the constant $\beta > 0$ appearing in the statement comes from the fact that
Propositions \ref{prp:mr:melnikov}-\ref{prp:mr:omegaLarge} imply that
$\mathcal{L}_\omega - \lambda$ is Fredholm with index zero on the half-plane $\Re \lambda \ge - \beta$
and is invertible on this half plane except for the simple poles at $\lambda = \lambda_{\omega}$
which we have now accounted for.
\end{proof}


%

From now on, we assume without loss of generality that $\sigma_1 \neq 0$,
noting that this can always be realized by  flipping the horizontal and vertical directions if necessary.
The key is that we can write
\begin{equation}
\label{eq:green:taylorExpansionProjection}
\begin{array}{lcl}
T_{ct} \phi_{\omega}  Q_{\omega, ct_0} & = & T_{ct} \phi Q_{ct_0}
-i \sigma_1^{-1} [ e^{i \sigma_1\omega} - 1 ] T_{ct}  [\partial_{\omega} \phi]_{\omega = 0} Q_{ct_0}
-i \sigma_1^{-1} [ e^{i \sigma_1\omega} - 1 ] T_{ct}   \phi  [\partial_{\omega} Q_{\omega, ct_0}]_{\omega = 0} \\[0.2cm]
& & \qquad + \mathcal{Q}_{\omega} (t, t_0)
\end{array}
\end{equation}
for any $\omega \in (- \omega_0, \omega_0)$,
where the $C^2$-smoothness of the branch $\omega \mapsto (\phi_{\omega}, \psi_{\omega}) \in BC_{-\eta}(\Real, \Real^d)^2$
allows us to estimate
\begin{equation}
\abs{ [\mathcal{Q}_{\omega}(t, t_0)]_{n, n_0} }  \le C \omega^2 e^{ - \eta \abs{n + ct - n_0 - ct_0} }.
\end{equation}
Note that we are using $e^{i \sigma_1 \omega} -1$ to replace the regular
$O(\omega)$ term in the Taylor expansion of $T_{ct} \phi_{\omega} Q_{\omega, ct_0}$.
The interpretation of the formal symbol as a difference operator in the transverse spatial direction
is exploited heavily in the nonlinear argument in \S\ref{sec:nl}.

For any $t \ge 0$, we now introduce the function $m_t \in  L^{\infty}([-\pi, \pi], \Real)$
that is given by
\begin{equation}
m_t(\omega) = \left\{
  \begin{array}{lcl}
      e^{ \lambda_{\omega} t} & & \omega \in (-\omega_0, \omega_0), \\[0.2cm]
      0 & & \omega \notin (-\omega_0, \omega_0).
  \end{array}
\right.
\end{equation}
Exploiting the representation \sref{eq:green:taylorExpansionProjection}
allows us to obtain the following estimate.
\begin{cor}
\label{cor:green:expressionGOmega}
Consider the setting of Lemma \ref{lem:green:estForOverlineG:I}.
There exist constants $\omega_0 > 0$, $C > 0$, $\beta > 0$, $\kappa > 0$ and $\eta > 0$ such that for any
  $\omega \in [-\pi, \pi]$
  we may write
  \begin{equation}
    \label{eq:green:representation:II}
    \begin{array}{lcl}
     \overline{\mathcal{G}}_{\omega}(t , t_0) & = &  m_{t-t_0}(\omega) T_{ct} \phi  Q_{ ct_0}
  -i \sigma_1^{-1} [e^{i \sigma_1 \omega} - 1 ] m_{t-t_0}(\omega) T_{ct} [\partial_{\omega} \phi]_{\omega = 0} Q_{ct_0} \\[0.2cm]
& & \qquad  -i \sigma_1^{-1} [e^{i \sigma_1\omega} - 1 ]  m_{t-t_0}(\omega) T_{ct}   \phi  [\partial_{\omega} Q_{\omega, ct_0}]_{\omega = 0}
     \\[0.2cm]
& & \qquad
 + \overline{\mathcal{H}}_{2, \omega}( t, t_0) ,
 \end{array}
\end{equation}
in which we have the bound
\begin{equation}
\abs{ [\overline{\mathcal{H}}_{2, \omega}( t, t_0)]_{n, n_0} } \le C [\omega^2 e^{ - \kappa \omega^2 (t - t_0) } +  e^{ - \beta(t - t_0) } ] e^{- \eta \abs{n + ct - n_0 - ct_0} }.
\end{equation}
\end{cor}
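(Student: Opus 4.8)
The plan is to read the corollary as an algebraic recombination of Lemma~\ref{lem:green:estForOverlineG:I} with the Taylor expansion \sref{eq:green:taylorExpansionProjection}, organised around the frequency split at $\abs{\omega} = \delta_\omega$. For $\abs{\omega} \ge \delta_\omega$ there is nothing to expand: by definition $m_{t-t_0}(\omega) = 0$, so the first three terms on the right-hand side of \sref{eq:green:representation:II} vanish identically, and one simply sets $\overline{\mathcal{H}}_{2,\omega}(t,t_0) = \overline{\mathcal{G}}_\omega(t,t_0)$. The first estimate of Lemma~\ref{lem:green:estForOverlineG:I} then gives $\abs{[\overline{\mathcal{H}}_{2,\omega}(t,t_0)]_{n,n_0}} \le C e^{-\beta(t-t_0)} e^{-\eta\abs{n+ct-n_0-ct_0}}$, which is dominated by the asserted bound.

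For $\abs{\omega} < \delta_\omega$ I would start from the refined representation
\[
\overline{\mathcal{G}}_\omega(t,t_0) = e^{-\lambda_\omega(t-t_0)}\, T_{ct}\phi_\omega Q_{\omega,ct_0} + \overline{\mathcal{H}}_{1,\omega}(t,t_0)
\]
of Lemma~\ref{lem:green:estForOverlineG:I}, noting that $e^{-\lambda_\omega(t-t_0)} = m_{t-t_0}(\omega)$ on this range. Substituting \sref{eq:green:taylorExpansionProjection} for $T_{ct}\phi_\omega Q_{\omega,ct_0}$ and multiplying through by $m_{t-t_0}(\omega)$ reproduces, term by term, the first three lines of \sref{eq:green:representation:II}, and collects the remaining error as
\[
\overline{\mathcal{H}}_{2,\omega}(t,t_0) = \overline{\mathcal{H}}_{1,\omega}(t,t_0) + m_{t-t_0}(\omega)\,\mathcal{Q}_\omega(t,t_0).
\]
Here $\overline{\mathcal{H}}_{1,\omega}$ supplies the $e^{-\beta(t-t_0)}$ part of the bound by Lemma~\ref{lem:green:estForOverlineG:I}. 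For the second term I would combine the remainder estimate $\abs{[\mathcal{Q}_\omega(t,t_0)]_{n,n_0}} \le C\omega^2 e^{-\eta\abs{n+ct-n_0-ct_0}}$ from \sref{eq:green:taylorExpansionProjection} with the bound $\abs{m_{t-t_0}(\omega)} \le e^{-\kappa\omega^2(t-t_0)}$ --- which is the content of the quadratic-tangency estimate of Proposition~\ref{prp:mr:locRoots} --- to produce the $\omega^2 e^{-\kappa\omega^2(t-t_0)}$ contribution. Adding the two pieces yields exactly the stated estimate for $\overline{\mathcal{H}}_{2,\omega}$.

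The only point requiring attention is the bookkeeping of constants: the parameters $\delta_\omega,\beta,\kappa,\eta$ coming out of Lemma~\ref{lem:green:estForOverlineG:I}, Proposition~\ref{prp:mr:locRoots} and \sref{eq:green:taylorExpansionProjection} must be reconciled (shrink $\delta_\omega$, decrease $\beta,\kappa,\eta$ and enlarge $C$ as needed), and one should note that the $O(\omega^2)$ control of $\mathcal{Q}_\omega$ rests on the $C^2$-dependence of $\omega\mapsto(\lambda_\omega,\phi_\omega,\psi_\omega)$ in the exponentially weighted spaces established in Proposition~\ref{prp:mr:melnikov}(v), which is already available under the present hypotheses (and which also gives the exponential decay of $[\partial_\omega\psi_\omega]_{\omega=0}$ entering $[\partial_\omega Q_{\omega,ct_0}]_{\omega=0}$). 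I expect no genuine obstacle here: the substantive analysis was carried out in Lemma~\ref{lem:green:estForOverlineG:I} and in the derivation of \sref{eq:green:taylorExpansionProjection}, and this corollary is essentially their straightforward combination.
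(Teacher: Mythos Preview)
Your proposal is correct and matches the paper's approach exactly: the corollary is presented in the paper without a separate proof, as it is the straightforward combination of Lemma~\ref{lem:green:estForOverlineG:I} with the Taylor expansion \sref{eq:green:taylorExpansionProjection} and the definition of $m_t(\omega)$. Your case split at $\abs{\omega}=\delta_\omega$, the identification $\overline{\mathcal{H}}_{2,\omega}=\overline{\mathcal{H}}_{1,\omega}+m_{t-t_0}(\omega)\mathcal{Q}_\omega(t,t_0)$ for small $\omega$, and the use of Proposition~\ref{prp:mr:locRoots} to bound $\abs{m_{t-t_0}(\omega)}$ are precisely what the paper intends.
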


Throughout the remainder of this section, we exploit the representation \sref{eq:green:representation:II}
to obtain detailed bounds for the components of the Green's function $\mathcal{G}(t, t_0)$.
We start by providing some basic preliminary estimates.

\begin{lem}
\label{lem:green:estOnMultOperators}
Fix $\kappa > 0$ and consider the multiplication operators induced by the functions $\widetilde{m}_{k,t}(\omega) = \abs{\omega}^k e^{- \kappa \omega^2 t }$.
There exists $C > 0$ such that the following estimates hold for all integers $k \in \{0, 1, 2\}$.
\begin{equation}
\begin{array}{lcl}
 \norm{\widetilde{m}_{k,t} }_{L^2([-\pi, \pi], \Real) \to L^2([-\pi, \pi], \Real)} & \le & C(1+t)^{-k/2}, \\[0.2cm]
\norm{\widetilde{m}_{k,t} }_{L^\infty([-\pi, \pi], \Real) \to L^2([-\pi, \pi], \Real)}& \le & C(1+t)^{-(k/2+1/4)}, \\[0.2cm]
\norm{\widetilde{m}_{k,t} }_{L^2([-\pi, \pi], \Real) \to L^1([-\pi, \pi], \Real)} & \le & C(1+t)^{-(k/2+1/4)}, \\[0.2cm]
\norm{\widetilde{m}_{k,t}}_{L^\infty([-\pi, \pi], \Real) \to L^1([-\pi, \pi], \Real)} & \le & C(1+t)^{-(k/2+1/2)}. \\[0.2cm]
\end{array}
\end{equation}
\end{lem}
\begin{proof}
These bounds follow from Lemma \ref{lem:green:convKIsBounded}, upon estimating $\norm{\widetilde{m}_{k,t}}_{L^s([-\pi,\pi], \Real)}$
with $s = \infty$ for the first estimate, $s = 2$ for the second and third estimates and $s = 1$ for the last estimate.
\end{proof}

For convenience, we write $\mathcal{A}(t, t_0): \ell^2(\Wholes; \Real) \to \ell^2(\Wholes; \Real)$ for the
Fourier multiplication operator that acts in frequency space as
\begin{equation}
\mathcal{A}(t, t_0):  \widehat{\theta}  \mapsto m_{t - t_0}(\omega) \widehat{\theta}(\omega) , \qquad \omega \in [-\pi, \pi].
\end{equation}
%
Using the bounds
\begin{equation}
\norm{\widehat{\theta}}_{L^\infty([-\pi, \pi], \Real) } \le \norm{\theta}_{\ell^1(\Wholes; \Real)},
\qquad
 \norm{\theta}_{\ell^\infty(\Wholes; \Real) } \le \frac{1}{2 \pi}\norm{\widehat{\theta}}_{L^1([-\pi, \pi], \Real)},
\end{equation}
the estimates in Lemma \ref{lem:green:estOnMultOperators} with $k = 0$ now imply that
\begin{equation}
\begin{array}{lcl}
\norm{ \mathcal{A}(t, t_0)}_{\ell^2(\Wholes; \Real) \to \ell^2(\Wholes; \Real)} & \le & C, \\[0.2cm]
\norm{ \mathcal{A}(t, t_0)}_{\ell^1(\Wholes; \Real) \to \ell^2(\Wholes; \Real)} & \le & C (1 + t - t_0)^{-1/4}, \\[0.2cm]
\norm{ \mathcal{A}(t, t_0)}_{\ell^2(\Wholes; \Real) \to \ell^\infty(\Wholes; \Real)}  & \le & C (1 + t - t_0)^{-1/4},\\[0.2cm]
\norm{ \mathcal{A}(t, t_0)}_{\ell^1(\Wholes; \Real) \to \ell^\infty(\Wholes; \Real)} & \le & C (1 + t - t_0)^{-1/2}. \\[0.2cm]
\end{array}
\end{equation}
We also introduce the shorthand $\mathcal{A}^\diamond(t, t_0)$ for the operator
$\theta \mapsto [\mathcal{A}(t, t_0) \theta]^\diamond
= \mathcal{A}(t, t_0) \theta^\diamond
$,
which in view of Lemma \ref{lem:green:estOnMultOperators} with $k = 1$ has the bounds
\begin{equation}
\begin{array}{lcl}
\norm{ \mathcal{A}^\diamond(t, t_0)}_{\ell^2(\Wholes; \Real) \to \ell^2(\Wholes; \Real^5)} & \le & C (1 + t - t_0)^{-1/2}, \\[0.2cm]
\norm{ \mathcal{A}^\diamond(t, t_0)}_{\ell^1(\Wholes; \Real) \to \ell^2(\Wholes; \Real^5)} & \le & C (1 + t - t_0)^{-3/4}, \\[0.2cm]
\norm{ \mathcal{A}^\diamond(t, t_0)}_{\ell^2(\Wholes; \Real) \to \ell^\infty(\Wholes; \Real^5)}  & \le & C (1 + t - t_0)^{-3/4},\\[0.2cm]
\norm{ \mathcal{A}^\diamond(t, t_0)}_{\ell^1(\Wholes; \Real) \to \ell^\infty(\Wholes; \Real^5)} & \le & C (1 + t - t_0)^{-1}. \\[0.2cm]
\end{array}
\end{equation}
Finally, for any unit vector $e \in \Real^5$ we write
$\mathcal{A}^{\diamond \diamond}_e(t, t_0)$
for the operator $\theta \mapsto [e \cdot \mathcal{A}^\diamond(t, t_0) \theta]^\diamond$,
which in view of Lemma \ref{lem:green:estOnMultOperators} with $k = 2$ has the bounds
\begin{equation}
\begin{array}{lcl}
\norm{ \mathcal{A}^{\diamond\diamond}_e(t, t_0)}_{\ell^2(\Wholes; \Real) \to \ell^2(\Wholes; \Real^5)} & \le & C (1 + t - t_0)^{-1}, \\[0.2cm]
\norm{ \mathcal{A}^{\diamond\diamond}_e(t, t_0)}_{\ell^1(\Wholes; \Real) \to \ell^2(\Wholes; \Real^5)} & \le & C (1 + t - t_0)^{-5/4}, \\[0.2cm]
\norm{ \mathcal{A}^{\diamond\diamond}_e(t, t_0)}_{\ell^2(\Wholes; \Real) \to \ell^\infty(\Wholes; \Real^5)}  & \le & C (1 + t - t_0)^{-5/4},\\[0.2cm]
\norm{ \mathcal{A}^{\diamond\diamond}_e(t, t_0)}_{\ell^1(\Wholes; \Real) \to \ell^\infty(\Wholes; \Real^5)} & \le & C (1 + t - t_0)^{-3/2}. \\[0.2cm]
\end{array}
\end{equation}
%
We now obtain the following estimates on the Green's function $\mathcal{G}(t, t_0)$
and the associated operators $\mathcal{G}^\diamond(t, t_0)$ that are defined by
\begin{equation}
\mathcal{G}^\diamond(t, t_0) \left( \begin{array}{l} v \\ \theta \end{array} \right)
= \big[ \mathcal{G}(t, t_0) \left( \begin{array}{l} v \\ \theta \end{array} \right) \big]^\diamond.
\end{equation}
\begin{prop}
\label{prp:green:bndsOnGreen}
Suppose that (Hf), (H$\Phi$),  $\mathit{(HS)}_{\mathrm{ess}}$,
$\mathit{(HS1)}_p$-$\mathit{(HS3)}_p$ and $\mathit{(HS1)}_\omega$-$\mathit{(HS2)}_{\omega}$ are all satisfied.
Pick any $1 \le p \le \infty$
and introduce the shorthand
\begin{equation}
\chi_{ct} = - i [I - P_{ct}] T_{ct} [\partial_{\omega} \phi]_{\omega = 0}.
\end{equation}
Then there exists $C > 0$ such that for any $t \ge t_0$, the Green's function $\mathcal{G}(t, t_0)$
for \sref{eq:green:linsys} satifies the bounds
\begin{equation}
\begin{array}{lcl}
\norm{\mathcal{G}_{v v}(t, t_0)}_{\mathcal{X}_{p, 2} \to \mathcal{X}_{p, 2}}
+ \norm{\mathcal{G}_{v \theta}(t, t_0) - \chi_{ct} \sigma_1^{-1} e_4 \cdot \mathcal{A}^\diamond(t, t_0) }_{\ell^2(\Wholes; \Real) \to \mathcal{X}_{p, 2}}
 & \le & C (1 + t - t_0)^{-1},  \\[0.2cm]
\norm{\mathcal{G}_{v v}(t, t_0)}_{\mathcal{X}_{p, 1} \to \mathcal{X}_{p, 2}}
+ \norm{\mathcal{G}_{v \theta}(t, t_0) -\chi_{ct} \sigma_1^{-1} e_4 \cdot \mathcal{A}^\diamond(t, t_0) }_{\ell^1(\Wholes; \Real) \to \mathcal{X}_{p, 2}}
& \le & C  (1 + t - t_0)^{-5/4},  \\[0.2cm]
\norm{\mathcal{G}_{v v}(t, t_0)}_{\mathcal{X}_{p, 2} \to \mathcal{X}_{p, \infty}}
+\norm{\mathcal{G}_{v \theta}(t, t_0) -\chi_{ct} \sigma_1^{-1} e_4 \cdot \mathcal{A}^\diamond(t, t_0) }_{\ell^2(\Wholes; \Real) \to \mathcal{X}_{p, \infty}}
 & \le & C (1 + t - t_0)^{-5/4},  \\[0.2cm]
\norm{\mathcal{G}_{v v}(t, t_0)}_{\mathcal{X}_{p, 1} \to \mathcal{X}_{p, \infty}}
+\norm{\mathcal{G}_{v \theta}(t, t_0) - \chi_{ct} \sigma_1^{-1} e_4 \cdot \mathcal{A}^\diamond(t, t_0) }_{\ell^1(\Wholes; \Real) \to \mathcal{X}_{p,\infty}}
 & \le & C (1 + t - t_0)^{-3/2},   \\[0.2cm]
\end{array}
\end{equation}
in which $e_4 = (0, 0, 0, 1, 0)$, together with
\begin{equation}
\begin{array}{lcl}
\norm{\mathcal{G}_{\theta v}(t, t_0)}_{\mathcal{X}_{p, 2} \to \ell^2(\Wholes; \Real) }
+ \norm{\mathcal{G}_{\theta \theta}(t,t_0) - \mathcal{A}(t, t_0)}_{\ell^2(\Wholes; \Real) \to \ell^2(\Wholes; \Real)}
 & \le & C (1 + t - t_0)^{-1/2}, \\[0.2cm]
\norm{\mathcal{G}_{\theta v}(t, t_0)}_{\mathcal{X}_{p, 1} \to \ell^2(\Wholes; \Real) }
+\norm{\mathcal{G}_{\theta \theta}(t,t_0) - \mathcal{A}(t, t_0)}_{\ell^1(\Wholes; \Real) \to \ell^2(\Wholes; \Real)}
& \le & C (1 + t - t_0)^{-3/4}, \\[0.2cm]
\norm{\mathcal{G}_{\theta v}(t, t_0)}_{\mathcal{X}_{p, 2} \to \ell^\infty(\Wholes; \Real) }
+\norm{\mathcal{G}_{\theta \theta}(t,t_0) - \mathcal{A}(t, t_0)}_{\ell^2(\Wholes; \Real) \to \ell^\infty(\Wholes; \Real)}
& \le & C (1 + t - t_0)^{-3/4}, \\[0.2cm]
\norm{\mathcal{G}_{\theta v}(t, t_0)}_{\mathcal{X}_{p, 1} \to \ell^\infty(\Wholes; \Real) }
+\norm{\mathcal{G}_{\theta \theta}(t,t_0) - \mathcal{A}(t, t_0)}_{\ell^1(\Wholes; \Real) \to \ell^\infty(\Wholes; \Real)}
& \le & C (1 + t - t_0)^{-1}. \\[0.2cm]
\end{array}
\end{equation}
In addition, for any $t \ge t_0$ the components of the function $\mathcal{G}^\diamond(t, t_0)$ have the bounds
\begin{equation}
\begin{array}{lcl}
\norm{\mathcal{G}^\diamond_{v v}(t, t_0)}_{\mathcal{X}_{p, 2} \to \mathcal{X}_{p, 2}^5}
+ \norm{\mathcal{G}^\diamond_{v \theta}(t, t_0)  }_{\ell^2(\Wholes; \Real) \to \mathcal{X}_{p, 2}^5}
 & \le & C (1 + t - t_0)^{-1},  \\[0.2cm]
\norm{\mathcal{G}^\diamond_{v v}(t, t_0)}_{\mathcal{X}_{p, 1} \to \mathcal{X}_{p, 2}^5}
+ \norm{\mathcal{G}^\diamond_{v \theta}(t, t_0) }_{\ell^1(\Wholes; \Real) \to \mathcal{X}_{p, 2}^5}
& \le & C  (1 + t - t_0)^{-5/4},  \\[0.2cm]
\norm{\mathcal{G}^\diamond_{v v}(t, t_0)}_{\mathcal{X}_{p, 2} \to \mathcal{X}_{p, \infty}^5}
+\norm{\mathcal{G}^\diamond_{v \theta}(t, t_0)}_{\ell^2(\Wholes; \Real) \to \mathcal{X}_{p, \infty}^5}
 & \le & C (1 + t - t_0)^{-5/4},  \\[0.2cm]
\norm{\mathcal{G}^\diamond_{v v}(t, t_0)}_{\mathcal{X}_{p, 1} \to \mathcal{X}_{p, \infty}^5}
+\norm{\mathcal{G}^\diamond_{v \theta}(t, t_0)  }_{\ell^1(\Wholes; \Real) \to \mathcal{X}_{p,\infty}^5}
 & \le & C (1 + t - t_0)^{-3/2},   \\[0.2cm]
\end{array}
\end{equation}
together with
\begin{equation}
\begin{array}{lcl}
\norm{\mathcal{G}^\diamond_{\theta v}(t, t_0)}_{\mathcal{X}_{p, 2} \to \ell^2(\Wholes; \Real^5) }
+ \norm{\mathcal{G}^\diamond_{\theta \theta}(t,t_0) - \mathcal{A}^\diamond(t, t_0)}_{\ell^2(\Wholes; \Real) \to \ell^2(\Wholes; \Real^5)}
 & \le & C (1 + t - t_0)^{-1}, \\[0.2cm]
\norm{\mathcal{G}^\diamond_{\theta v}(t, t_0)}_{\mathcal{X}_{p, 1} \to \ell^2(\Wholes; \Real^5) }
+\norm{\mathcal{G}^\diamond_{\theta \theta}(t,t_0) - \mathcal{A}^\diamond(t, t_0)}_{\ell^1(\Wholes; \Real) \to \ell^2(\Wholes; \Real^5)}
& \le & C (1 + t - t_0)^{-5/4}, \\[0.2cm]
\norm{\mathcal{G}^\diamond_{\theta v}(t, t_0)}_{\mathcal{X}_{p, 2} \to \ell^\infty(\Wholes; \Real^5) }
+\norm{\mathcal{G}^\diamond_{\theta \theta}(t,t_0) - \mathcal{A}^\diamond(t, t_0)}_{\ell^2(\Wholes; \Real) \to \ell^\infty(\Wholes; \Real^5)}
& \le & C (1 + t - t_0)^{-5/4}, \\[0.2cm]
\norm{\mathcal{G}^\diamond_{\theta v}(t, t_0)}_{\mathcal{X}_{p, 1} \to \ell^\infty(\Wholes; \Real^5) }
+\norm{\mathcal{G}^\diamond_{\theta \theta}(t,t_0) - \mathcal{A}^\diamond(t, t_0)}_{\ell^1(\Wholes; \Real) \to \ell^\infty(\Wholes; \Real^5)}
& \le & C (1 + t - t_0)^{-3/2}. \\[0.2cm]
\end{array}
\end{equation}
\end{prop}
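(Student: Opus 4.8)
The plan is to reduce every one of these bounds to an estimate on a transverse Fourier multiplier and to feed the resulting symbol bounds through Lemma~\ref{lem:green:convKIsBounded} and Lemma~\ref{lem:green:estOnMultOperators}. First I would use Corollary~\ref{cor:green:expForGreenFull} to write each of the four blocks $\mathcal{G}_{vv}$, $\mathcal{G}_{v\theta}$, $\mathcal{G}_{\theta v}$, $\mathcal{G}_{\theta\theta}$ as $\overline{\mathcal{G}}(t,t_0)$ pre- and post-composed with the operators $I-P_{ct}$, $Q_{ct}$, $I-P_{ct_0}$ and multiplication by $T_{ct_0}\phi$. Since each of those acts only on the wave coordinate $n$, it commutes with the transverse Fourier transform; combined with Lemma~\ref{lem:green:wellPosed} this shows that every block is a transverse Fourier multiplier whose operator-valued symbol is $\overline{\mathcal{G}}_{\omega}(t,t_0)$ sandwiched between the corresponding $\omega$-independent operators --- for instance the symbol of $\mathcal{G}_{\theta\theta}(t,t_0)$ is $Q_{ct}\,\overline{\mathcal{G}}_{\omega}(t,t_0)\,T_{ct_0}\phi$ and that of $\mathcal{G}_{vv}(t,t_0)$ is $[I-P_{ct}]\overline{\mathcal{G}}_{\omega}(t,t_0)[I-P_{ct_0}]$. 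In the wave direction each of these composite kernels remains exponentially localized (the point-localized pieces $\phi(n+ct)\psi(n_0+ct)^*$ are dominated by diagonal-localized ones via $|n+ct|+|n_0+ct|\ge|n-n_0|$, and $Q_{ct}$, $I-P_{ct}$ are bounded uniformly in $t$), so Lemma~\ref{lem:green:convKIsBounded} applies with the shift $\vartheta=c(t-t_0)$ absorbed into its shift-independent constant.

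Next I would substitute the refined representation~\sref{eq:green:representation:II} from Corollary~\ref{cor:green:expressionGOmega} into each symbol and simplify, using $Q_{ct}T_{ct}\phi=1$ from~\sref{eq:mr:innProduct:alwaysEq}, $[I-P_{ct}]T_{ct}\phi=0$, and $Q_{ct_0}[I-P_{ct_0}]=Q_{ct_0}-Q_{ct_0}T_{ct_0}\phi\,Q_{ct_0}=0$. The bookkeeping then collapses: for $\mathcal{G}_{vv}$ the first three terms of~\sref{eq:green:representation:II} are annihilated on one side or the other, leaving only $[I-P_{ct}]\overline{\mathcal{H}}_{2,\omega}(t,t_0)[I-P_{ct_0}]$; for $\mathcal{G}_{\theta\theta}$ the leading term produces exactly $m_{t-t_0}(\omega)$, i.e.\ the symbol of $\mathcal{A}(t,t_0)$, the second and third terms contribute $(e^{i\sigma_1\omega}-1)\,m_{t-t_0}(\omega)$ times constants that are uniformly bounded in $t$ (by the exponential decay of $\psi$, $\phi$, $[\partial_{\omega}\phi_{\omega}]_{\omega=0}$ and $[\partial_{\omega}\psi_{\omega}]_{\omega=0}$), and the last term is $Q_{ct}\overline{\mathcal{H}}_{2,\omega}(t,t_0)T_{ct_0}\phi$; for $\mathcal{G}_{v\theta}$ the first term vanishes, the second term equals $\chi_{ct}\,\sigma_1^{-1}(e^{i\sigma_1\omega}-1)\,m_{t-t_0}(\omega)$, which is precisely the symbol of $\chi_{ct}\sigma_1^{-1}e_4\cdot\mathcal{A}^\diamond(t,t_0)$ because $e_4\cdot\theta^\diamond$ has transverse transform $(e^{i\sigma_1\omega}-1)\widehat{\theta}$, the third term is killed by $[I-P_{ct}]T_{ct}\phi=0$, and there remains only a $\overline{\mathcal{H}}_{2,\omega}$ piece; finally $\mathcal{G}_{\theta v}$ is handled symmetrically, with the surviving principal term now coming from the third term of~\sref{eq:green:representation:II} and carrying the single factor $e^{i\sigma_1\omega}-1$. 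For the difference operators I would simply multiply the corresponding symbol by the vector $(e^{i\sigma_2\omega}-1,\,e^{-i\sigma_1\omega}-1,\,e^{-i\sigma_2\omega}-1,\,e^{i\sigma_1\omega}-1,\,0)$, each component of which is $O(\omega)$ near $0$ and bounded by $2$ globally; in particular the principal part of $\mathcal{G}^\diamond_{\theta\theta}$ becomes the symbol of $\mathcal{A}^\diamond(t,t_0)$, which explains why that is the only difference operator requiring a subtraction.

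It then remains only to observe that every surviving ``error'' symbol is dominated on $(-\delta_\omega,\delta_\omega)$ by $C\bigl(|\omega|^k e^{-\kappa\omega^2(t-t_0)}+e^{-\beta(t-t_0)}\bigr)$, with $k=2$ whenever a factor $\overline{\mathcal{H}}_{2,\omega}$ or a product of two diamond-type factors is present and $k=1$ when only a single factor $e^{i\sigma_1\omega}-1$ survives; here I use $|m_{t-t_0}(\omega)|\le Ce^{-\kappa\omega^2(t-t_0)}$ on the support of $m_{t-t_0}$, which follows from the quadratic tangency $\Re\lambda_\omega<-\kappa\omega^2$ of Proposition~\ref{prp:mr:locRoots}, together with $|e^{i\sigma_j\omega}-1|\le C|\omega|$ near the origin. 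Applying Lemma~\ref{lem:green:convKIsBounded} with the $L^{q_1}\to L^{q_2}$ bounds on $\widetilde m_{k,t}$ of Lemma~\ref{lem:green:estOnMultOperators} (and the trivial $L^s$ bounds for the exponentially small $e^{-\beta(t-t_0)}$ remainder) then produces, for each of the four source/target pairs, exactly the decay rate claimed; the contributions of $\chi_{ct}$ land in $\mathcal{X}_{p,q}$ for every $1\le p\le\infty$ because $\chi_{ct}\in\ell^1(\Wholes;\Real^d)$ uniformly in $t$.

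I expect the main obstacle to be purely organizational: keeping all the cancellations visible across the eight blocks and four space pairs so that every surviving error term is correctly assigned one or two powers of $\omega$. The one genuinely delicate point --- and the reason the statement subtracts those particular principal parts --- is extracting the $\chi_{ct}\sigma_1^{-1}e_4\cdot\mathcal{A}^\diamond$ term for $\mathcal{G}_{v\theta}$ and the $\mathcal{A}$, $\mathcal{A}^\diamond$ terms for $\mathcal{G}_{\theta\theta}$, $\mathcal{G}^\diamond_{\theta\theta}$ exactly, since these are the slowly decaying pieces that will be used (through the identity~\sref{eq:uux}) to close the nonlinear argument in \S\ref{sec:nl}; the $O(\omega^2)$ remainders left after those subtractions are comfortably integrable and need only the crude bounds of Lemma~\ref{lem:green:estOnMultOperators}.
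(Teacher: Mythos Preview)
Your plan is correct and is essentially the same as the paper's own proof: it combines the block representation of Corollary~\ref{cor:green:expForGreenFull} with the Fourier-side expansion of Corollary~\ref{cor:green:expressionGOmega}, uses the identities $Q_{ct}T_{ct}\phi=1$, $[I-P_{ct}]T_{ct}\phi=0$ and $Q_{ct_0}v_{t_0}=0$ to effect the cancellations, and then feeds the resulting symbol bounds through Lemmas~\ref{lem:green:convKIsBounded} and~\ref{lem:green:estOnMultOperators}. The paper states this in a single sentence; your write-up simply carries out the bookkeeping explicitly and accurately.
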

\begin{proof}
In view of the representation of $\mathcal{G}(t, t_0)$
given in Corollary \ref{cor:green:expForGreenFull},
the bounds above can be derived from the expressions
for $\overline{\mathcal{G}}_{\omega}(t, t_0)$
given in Corollary \ref{cor:green:expressionGOmega}.
To this end, one can use the estimates in Lemma's \ref{lem:green:convKIsBounded} and
\ref{lem:green:estOnMultOperators}, together with the facts that $Q_{ct}T_{ct}\phi = 1$,
that $[I-P_{ct}]T_{ct}Q_{ct} = 0$ and that $Q_{ct_0}v_{t_0} = 0$.
\end{proof}

In the sequel, it is convenient to decompose the function $v(t)$ as
\begin{equation}
v(t) = w(t) + \chi_{ct} \sigma_1^{-1} e_4 \cdot \theta^\diamond(t).
\label{eq:green:wdef}
\end{equation}
As a direct consequence of Proposition \ref{prp:green:bndsOnGreen}
we can now obtain the following bounds for the linear evolution of $w(t)$.
\begin{cor}
\label{cor:green:bndsOnGreenForW}
Consider the setting of Proposition \ref{prp:green:bndsOnGreen}.
Upon writing
\begin{equation}
\mathcal{G}_{w}(t, t_0) = \big( \mathcal{G}_{wv}(t, t_0) \qquad \mathcal{G}_{w \theta}(t, t_0) \big)
\end{equation}
for the $1 \times 2$ operator valued matrix with components
\begin{equation}
\begin{array}{lcl}
\mathcal{G}_{w v}(t, t_0) & := & \mathcal{G}_{v v}(t, t_0) - \chi_{ct} \sigma_1^{-1} e_4 \cdot \mathcal{G}^\diamond_{\theta v}(t, t_0), \\[0.2cm]
\mathcal{G}_{w \theta}(t, t_0) & := & \mathcal{G}_{v \theta}(t, t_0) - \chi_{ct} \sigma_1^{-1} e_4 \cdot \mathcal{G}^\diamond_{\theta \theta}(t, t_0), \\[0.2cm]
\end{array}
\label{eq:green:Gwdefs}
\end{equation}
there exists $C > 0$ such that the following estimates hold for all $t \ge t_0$,
\begin{equation}
\begin{array}{lcl}
\norm{\mathcal{G}_{w v}(t, t_0)}_{\mathcal{X}_{p, 2} \to \mathcal{X}_{p, 2}}
+ \norm{\mathcal{G}_{w \theta}(t, t_0)}_{\ell^2(\Wholes; \Real) \to \mathcal{X}_{p, 2}}
 & \le & C (1 + t - t_0)^{-1},  \\[0.2cm]
\norm{\mathcal{G}_{w v}(t, t_0)}_{\mathcal{X}_{p, 1} \to \mathcal{X}_{p, 2}}
+ \norm{\mathcal{G}_{w \theta}(t, t_0) }_{\ell^1(\Wholes; \Real) \to \mathcal{X}_{p, 2}}
& \le & C  (1 + t - t_0)^{-5/4},  \\[0.2cm]
\norm{\mathcal{G}_{w v}(t, t_0)}_{\mathcal{X}_{p, 2} \to \mathcal{X}_{p, \infty}}
+\norm{\mathcal{G}_{w \theta}(t, t_0) }_{\ell^2(\Wholes; \Real) \to \mathcal{X}_{p, \infty}}
 & \le & C (1 + t - t_0)^{-5/4},  \\[0.2cm]
\norm{\mathcal{G}_{w v}(t, t_0)}_{\mathcal{X}_{p, 1} \to \mathcal{X}_{p, \infty}}
+\norm{\mathcal{G}_{w \theta}(t, t_0)  }_{\ell^1(\Wholes; \Real) \to \mathcal{X}_{p,\infty}}
 & \le & C (1 + t - t_0)^{-3/2}.   \\[0.2cm]
\end{array}
\end{equation}
\end{cor}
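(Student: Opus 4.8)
The plan is to obtain the bounds directly from Proposition~\ref{prp:green:bndsOnGreen} via the triangle inequality, once the one structural fact not already contained there has been isolated: the operator
\begin{equation}
\Pi_{ct}:\ \zeta\ \longmapsto\ \chi_{ct}\,\sigma_1^{-1}\,e_4\cdot\zeta,
\qquad e_4=(0,0,0,1,0),
\end{equation}
is bounded from $\ell^q(\Wholes;\Real^5)$ into $\mathcal{X}_{p,q}$ uniformly in $t\in\Real$, for $q\in\{2,\infty\}$ and any $1\le p\le\infty$. First I would note that $\chi_{ct}=-i[I-P_{ct}]T_{ct}[\partial_\omega\phi_\omega]_{\omega=0}$ lies in $\ell^r(\Wholes;\Real^d)$ for every $1\le r\le\infty$ with norm bounded independently of $t$, which is immediate from the exponential decay of $[\partial_\omega\phi_\omega]_{\omega=0}$, $\phi$ and $\psi$ supplied by Proposition~\ref{prp:mr:melnikov}(v) together with $P_{ct}=[T_{ct}\phi]Q_{ct}$. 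Since the $\mathcal{X}_{p,q}$-norm of a planar sequence of the form $(n,l)\mapsto[\chi_{ct}]_n a_l$ equals $\|\chi_{ct}\|_{\ell^p(\Wholes;\Real^d)}\,\|a\|_{\ell^q(\Wholes;\Real)}$, this yields $\|\Pi_{ct}\|_{\ell^q(\Wholes;\Real^5)\to\mathcal{X}_{p,q}}\le C$ with $C$ independent of $t$ and $t_0$. I would also observe that, with $\theta^\diamond(t)=\mathcal{G}^\diamond_{\theta v}(t,t_0)v_{t_0}+\mathcal{G}^\diamond_{\theta\theta}(t,t_0)\theta_{t_0}$, the decomposition~\sref{eq:green:wdef} shows $w(t)=\mathcal{G}_{wv}(t,t_0)v_{t_0}+\mathcal{G}_{w\theta}(t,t_0)\theta_{t_0}$ with $\mathcal{G}_{wv},\mathcal{G}_{w\theta}$ as defined in~\sref{eq:green:Gwdefs}, so it indeed suffices to bound these two operators.

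For the component $\mathcal{G}_{wv}(t,t_0)=\mathcal{G}_{vv}(t,t_0)-\Pi_{ct}\circ\mathcal{G}^\diamond_{\theta v}(t,t_0)$, the triangle inequality and the uniform boundedness of $\Pi_{ct}$ give, for each of the four source/target pairs $\mathcal{X}_{p,q_1}\to\mathcal{X}_{p,q_2}$ with $q_1\in\{1,2\}$ and $q_2\in\{2,\infty\}$,
\begin{equation}
\|\mathcal{G}_{wv}(t,t_0)\|_{\mathcal{X}_{p,q_1}\to\mathcal{X}_{p,q_2}}
\le\|\mathcal{G}_{vv}(t,t_0)\|_{\mathcal{X}_{p,q_1}\to\mathcal{X}_{p,q_2}}
+C\,\|\mathcal{G}^\diamond_{\theta v}(t,t_0)\|_{\mathcal{X}_{p,q_1}\to\ell^{q_2}(\Wholes;\Real^5)}.
\end{equation}
By Proposition~\ref{prp:green:bndsOnGreen} the two terms on the right carry the same power of $(1+t-t_0)$ --- namely $-1$, $-5/4$, $-5/4$ and $-3/2$ for the pairs $(q_1,q_2)=(2,2),(1,2),(2,\infty),(1,\infty)$ respectively --- so the four claimed bounds on $\mathcal{G}_{wv}$ follow term by term.

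The component $\mathcal{G}_{w\theta}$ needs a single elementary rearrangement, and this is the only place where any thought is required: the raw operator $\mathcal{G}_{v\theta}$ does \emph{not} decay at the required rate, only the combination $\mathcal{G}_{v\theta}-\chi_{ct}\sigma_1^{-1}e_4\cdot\mathcal{A}^\diamond$ does. Using the identity $\chi_{ct}\sigma_1^{-1}e_4\cdot\mathcal{A}^\diamond(t,t_0)=\Pi_{ct}\circ\mathcal{A}^\diamond(t,t_0)$ and adding and subtracting this term, I would write
\begin{equation}
\mathcal{G}_{w\theta}(t,t_0)
=\Big[\mathcal{G}_{v\theta}(t,t_0)-\chi_{ct}\sigma_1^{-1}e_4\cdot\mathcal{A}^\diamond(t,t_0)\Big]
-\Pi_{ct}\circ\Big[\mathcal{G}^\diamond_{\theta\theta}(t,t_0)-\mathcal{A}^\diamond(t,t_0)\Big].
\end{equation}
The first bracket is verbatim one of the quantities estimated in Proposition~\ref{prp:green:bndsOnGreen}, and the second is $\Pi_{ct}$ (uniformly bounded, with target $\ell^{q_2}(\Wholes;\Real^5)$, $q_2\in\{2,\infty\}$) composed with $\mathcal{G}^\diamond_{\theta\theta}-\mathcal{A}^\diamond$, which in that proposition obeys precisely the decay rates $-1$, $-5/4$, $-5/4$, $-3/2$ on the pairs $\ell^{q_1}(\Wholes;\Real)\to\ell^{q_2}(\Wholes;\Real^5)$. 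A final application of the triangle inequality then delivers the four claimed bounds for $\mathcal{G}_{w\theta}$. The only bookkeeping to watch throughout is the $\ell^p$-in-$n$ / $\ell^q$-in-$l$ structure of $\mathcal{X}_{p,q}$ and the convention that $e_4$ extracts the transverse first-difference component of an $\Real^5$-valued sequence; no analytic input beyond Propositions~\ref{prp:mr:melnikov} and~\ref{prp:green:bndsOnGreen} enters.
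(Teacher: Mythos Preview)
Your proposal is correct and is precisely the argument the paper has in mind: the corollary is stated without proof as ``a direct consequence of Proposition~\ref{prp:green:bndsOnGreen}'', and the add-and-subtract of $\chi_{ct}\sigma_1^{-1}e_4\cdot\mathcal{A}^\diamond(t,t_0)$ you use for $\mathcal{G}_{w\theta}$ is exactly the cancellation that makes this work. Your explicit verification that $\Pi_{ct}$ is uniformly bounded $\ell^q(\Wholes;\Real^5)\to\mathcal{X}_{p,q}$ via the exponential decay in Proposition~\ref{prp:mr:melnikov}(v) is the only ingredient not literally contained in Proposition~\ref{prp:green:bndsOnGreen}, and you have identified and justified it cleanly.
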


%

%

%

%

%


%

%

%

\section{Proof of Main Results}
\label{sec:nl}

In this section we set up the nonlinear iteration argument that is used to prove our main results.
In particular, we fix $1 \le p \le \infty$, pick an initial condition
\begin{equation}
v_0 \in \mathcal{X}^\perp_{p, 1}(0), \qquad \theta_0 \in \ell^1(\Wholes; \Real)
\end{equation}
and consider for $t \ge 0$ the fixed point problem
\begin{equation}
\label{eq:nl:fixpoint}
\begin{array}{lcl}
\left(\begin{array}{l}v(t) \\ \theta(t)\end{array}\right)
& = &     \mathcal{G}(t, 0) \left(\begin{array}{l}v_{0} \\ \theta_0\end{array}\right)
 + \int_{0}^t \mathcal{G}(t, t_0) \left(\begin{array}{l} 0 \\  \theta(t_0) H_4[ \tau v(t_0), \theta^\diamond(t_0) ] \end{array}\right) \, d t_0 \\[0.4cm]
& & \qquad +  \int_{0}^t \mathcal{G}(t, t_0) \left(\begin{array}{l} \mathcal{R}_{4, c t_0}\big( v(t_0), \theta(t_0) \big) \\  \mathcal{S}_{4, ct_0}\big( v(t_0), \theta(t_0) \big)\end{array}\right) \, d t_0.
\end{array}
\end{equation}
Upon writing
\begin{equation}
w(t) = v(t) - \chi_{ct} \sigma_1^{-1} e_4 \cdot \theta^\diamond(t),
\end{equation}
we observe by linearity that any solution to \sref{eq:nl:fixpoint} for some $t \ge 0$
necessarily satisfies the system
\begin{equation}
\label{eq:nl:fixpoint:derivatives}
\begin{array}{lcl}
\left(\begin{array}{l}v^\diamond(t) \\ \theta^\diamond(t) \\ w(t) \end{array}\right)
& = &     \left( \begin{array}{l}\mathcal{G}^\diamond(t, 0) \\ \mathcal{G}_w(t, 0) \end{array} \right)
   \left(\begin{array}{l}v_{0} \\ \theta_0\end{array}\right)
 + \int_{0}^t \left( \begin{array}{l}\mathcal{G}^\diamond(t, t_0) \\ \mathcal{G}_w(t, t_0) \end{array} \right)
  \left(\begin{array}{l} 0 \\  \theta(t_0) H_4[ \tau v(t_0), \theta^\diamond(t_0) ] \end{array}\right) \, d t_0 \\[0.4cm]
& & \qquad +  \int_{0}^t \left( \begin{array}{l}\mathcal{G}^\diamond(t, t_0) \\ \mathcal{G}_w(t, t_0) \end{array} \right)
  \left(\begin{array}{l} \mathcal{R}_{4, c t_0}\big( v(t_0), \theta(t_0) \big) \\  \mathcal{S}_{4, ct_0}\big( v(t_0), \theta(t_0) \big)\end{array}\right) \, d t_0.
\end{array}
\end{equation}

We write
\begin{equation}
E_0 = \norm{v_0}_{\mathcal{X}_{p, 1}} + \norm{\theta_0}_{\ell^1(\Wholes; \Real)}.
\end{equation}
We also introduce the non-decreasing functions
\begin{equation}
\begin{array}{lcl}
M_{v, 2}(t) & = & \sup_{0 \le t_0 \le t } (1 + t_0)^{3/4}  \norm{v(t_0)}_{ \mathcal{X}_{p, 2} }, \\[0.2cm]
M_{v, \infty}(t) & = & \sup_{0 \le t_0 \le t } (1 + t_0)^{1}  \norm{v(t_0)}_{ \mathcal{X}_{p, \infty}  }, \\[0.2cm]
M_{v^\diamond, 2}(t) & = & \sup_{0 \le t_0 \le t } (1 + t_0)^{9/8}  \norm{v^\diamond(t_0)}_{  \mathcal{X}^5_{p, 2} }, \\[0.2cm]
M_{v^\diamond, \infty}(t) & = & \sup_{0 \le t_0 \le t } (1 + t_0)^{5/4}  \norm{v^\diamond(t_0)}_{ \mathcal{X}^5_{p, \infty} }, \\[0.2cm]
\end{array}
\end{equation}
together with
\begin{equation}
\begin{array}{lcl}
M_{\theta, 2}(t) & = & \sup_{0 \le t_0 \le t } (1 + t_0)^{1/4}  \norm{\theta(t_0)}_{\ell^2(\Wholes; \Real)}, \\[0.2cm]
M_{\theta, \infty}(t) & = & \sup_{0 \le t_0 \le t } (1 + t_0)^{1/2}  \norm{\theta(t_0)}_{\ell^\infty(\Wholes; \Real)}, \\[0.2cm]
M_{\theta^\diamond, 2}(t) & = & \sup_{0 \le t_0 \le t } (1 + t_0)^{3/4}  \norm{\theta^\diamond(t_0)}_{\ell^2(\Wholes; \Real^5)}, \\[0.2cm]
M_{\theta^\diamond, \infty}(t) & = & \sup_{0 \le t_0 \le t } (1 + t_0)^{1}  \norm{\theta^\diamond(t_0)}_{\ell^\infty(\Wholes; \Real^5)} \\[0.2cm]
\end{array}
\end{equation}
and
\begin{equation}
\begin{array}{lcl}
M_{w, 2}(t) & = & \sup_{0 \le t_0 \le t }
(1 + t_0)^{9/8}
\norm{w(t_0)}_{  \mathcal{X}_{p, 2} }, \\[0.2cm]
M_{w, \infty}(t) & = & \sup_{0 \le t_0 \le t } (1 + t_0)^{5/4}  \norm{w(t_0)}_{ \mathcal{X}_{p, \infty} }. \\[0.2cm]
\end{array}
\end{equation}
Finally, we write
\begin{equation}
\begin{array}{lcl}
M(t) & = & M_{v, 2}(t) + M_{v, \infty}(t) + M_{v^\diamond, 2}(t) + M_{v^\diamond, \infty}(t) \\[0.2cm]
 & & \qquad + M_{\theta, 2}(t) + M_{\theta, \infty}(t) + M_{\theta^\diamond, 2}(t) + M_{\theta^\diamond, \infty}(t) \\[0.2cm]
& & \qquad + M_{w, 2}(t) + M_{w, \infty}(t).
\end{array}
\end{equation}

\begin{prop}
\label{prp:nl:bndOnFxP}
Suppose that (Hf), (H$\Phi$),  $\mathit{(HS)}_{\mathrm{ess}}$,
$\mathit{(HS1)}_p$-$\mathit{(HS3)}_p$ and $\mathit{(HS1)}_\omega$-$\mathit{(HS2)}_{\omega}$ are all satisfied
and pick any $1 \le p \le \infty$. Then there exists a constant $C > 0$ such that the following holds true.
Consider any initial condition $(v_0, \theta_0 ) \in \mathcal{X}^\perp_{p, 1}(0) \times \ell^1(\Wholes; \Real)$
and suppose that for some $T \ge 0$ the functions
\begin{equation}
v: [0, T] \to \mathcal{X}_{p, 2}, \qquad \theta: [0, T] \to \ell^2(\Wholes; \Real)
\end{equation}
satisfy the fixed point problem \sref{eq:nl:fixpoint} for all $0 \le t \le T$.
Then the functions $(v, \theta)$
satisfy the nonlinear LDE \sref{eq:cds:sys:glb:4} for all $0 \le t \le T$
and we have the bound
\begin{equation}
\label{eq:nl:prp:bndFxP:estOnM}
M(T) \le C E_0 + C M^2(T) + CM^3 (T).
\end{equation}
In addition, for all sufficiently small $\epsilon > 0$
there exist functions
\begin{equation}
\widetilde{v}: [0, T + \epsilon] \to \mathcal{X}_{p, 2}, \qquad \widetilde{\theta}: [0, T + \epsilon] \to \ell^2(\Wholes; \Real)
\end{equation}
that satisfy the fixed point problem \sref{eq:nl:fixpoint} for all $0 \le t \le T + \epsilon$
and have $\widetilde{v}(t) = v(t)$ and $\widetilde{\theta}(t) = \theta(t)$
for all $0 \le t \le T$.
\end{prop}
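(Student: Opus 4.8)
The plan is to split the argument into three parts: first, that any solution of the fixed point problem \eqref{eq:nl:fixpoint} genuinely solves the differential system \eqref{eq:cds:sys:glb:4}; second, the a~priori estimate \eqref{eq:nl:prp:bndFxP:estOnM}; third, the short-time extension. For the first part I would differentiate \eqref{eq:nl:fixpoint} in $t$ by the Leibniz rule, using that by construction in \S\ref{sec:green} the map $t\mapsto\mathcal{G}(t,t_0)h$ is $C^1$, equals $h$ at $t=t_0$, and solves the linear system \eqref{eq:green:linsys}; this recovers \eqref{eq:cds:sys:glb:4} with the forcing grouped exactly as in \eqref{eq:nl:fixpoint}. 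The constraint $Q_{ct}v(t)=0$ then needs no separate computation: by Corollary \ref{cor:green:expForGreenFull} every block in the first row of $\mathcal{G}(t,t_0)$ carries the prefactor $[I-P_{ct}]$, and $Q_{ct}[I-P_{ct}]=0$ since $Q_{ct}T_{ct}\phi=1$, so $Q_{ct}$ annihilates each term on the first line of \eqref{eq:nl:fixpoint}; together with $Q_0 v_0=0$ one reads off $v(0)=v_0$, $\theta(0)=\theta_0$.

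For the estimate I would bound each of the three contributions in \eqref{eq:nl:fixpoint} and in the induced system \eqref{eq:nl:fixpoint:derivatives} for $v^\diamond$, $\theta^\diamond$ and $w$ in the weighted norms defining the $M_{*,*}(T)$, recovering the $v$-bounds at the end from $v=w+\chi_{ct}\sigma_1^{-1}e_4\cdot\theta^\diamond$. The linear term $\mathcal{G}(t,0)(v_0,\theta_0)$ is controlled by $CE_0$ times the relevant power of $(1+t)$ straight from Proposition \ref{prp:green:bndsOnGreen} and Corollary \ref{cor:green:bndsOnGreenForW}, using $Q_0 v_0=0$. For the contributions built from $\mathcal{R}_{4,ct_0}$ and $\mathcal{S}_{4,ct_0}$, Proposition \ref{prp:cds:bnds:Nonlins} bounds these nonlinearities (in $\mathcal{X}_{p,1}$, resp. $\ell^1(\Wholes;\Real)$) by products of $\norm{v}$, $\norm{\theta^\diamond}$, $\norm{\theta}$; substituting the $M_{*,*}$ weights gives forcing of size $O\big(M^2(1+t_0)^{-\rho}+M^3(1+t_0)^{-\rho}\big)$, and since the $w$-blocks of $\mathcal{G}$ acting on $\ell^1$-forcing decay like $(1+t-t_0)^{-\mu}$ with $\mu\ge 5/4>1$ while the $\theta$-blocks ($\mathcal{G}_{\theta v}$, $\mathcal{G}_{\theta\theta}-\mathcal{A}$, $\mathcal{A}$) decay with $\mu$ no smaller than the relevant target rate, the convolution inequalities \eqref{eq:liuduhamelexp}--\eqref{eq:liuduhamelalg}, together with the elementary fact that convolution against a $(1+t-t_0)^{-\mu}$ kernel with $\mu>1$ costs at most a logarithm, close all of these contributions.

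The main obstacle is the term $\theta(t_0)\,H_{4,ct_0}[\tau v(t_0),\theta^\diamond(t_0)]$ in the $\theta$-slot, which is genuinely of order $O(\theta\theta^\diamond)+O(\theta v)$ and, once fed through the part $\mathcal{A}$ of $\mathcal{G}_{\theta\theta}$ that only gains $(1+t-t_0)^{-1/4}$ from $\ell^1$, would produce the logarithmically divergent estimate \eqref{eq:int:badEstimate}. To handle it I would use the discrete Leibniz identity \eqref{eq:uux} and its shifted variants: the $O(\theta\theta^\diamond)$ part splits into a square-of-difference piece $O\big((\theta^\diamond)^2\big)\sim(1+t_0)^{-3/2}$, which is harmless, plus a transverse difference of $\tfrac12\theta^2$; for the $O(\theta v)$ part I would insert $v=w+\chi_{ct_0}\sigma_1^{-1}e_4\cdot\theta^\diamond$, note that $\theta w\sim(1+t_0)^{-1/4-9/8}$ is harmless (exponent $>1$), and rewrite $\theta\cdot(\chi_{ct_0}\sigma_1^{-1}e_4\cdot\theta^\diamond)$ once more via \eqref{eq:uux} as $\chi_{ct_0}$ times a transverse difference of $\tfrac12\theta^2$. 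Feeding a transverse difference of $\theta^2$ into $\mathcal{G}_{\theta\theta}=\mathcal{A}+(\mathcal{G}_{\theta\theta}-\mathcal{A})$, and for the $w$-component into the blocks $\mathcal{G}_w$ of Corollary \ref{cor:green:bndsOnGreenForW}, the Fourier multiplier $\mathcal{A}$ commutes with the difference, so the difference transfers onto it and upgrades $\mathcal{A}$ to $\mathcal{A}^\diamond$, gaining an extra $(1+t-t_0)^{-1/2}$ of decay; since $\norm{\theta^2}_{\ell^1(\Wholes;\Real)}=\norm{\theta}_{\ell^2(\Wholes;\Real)}^2\lesssim M^2(1+t_0)^{-1/2}$, the resulting $\int_0^t(1+t-t_0)^{-3/4}(1+t_0)^{-1/2}\,dt_0\lesssim(1+t)^{-1/4}$ is of exactly the right size, while the genuinely smoothing remainder $\mathcal{G}_{\theta\theta}-\mathcal{A}$, decaying like $(1+t-t_0)^{-3/4}$ from $\ell^1$, absorbs the raw $(1+t_0)^{-1}$ forcing with only a logarithm that the gap to the target rate swallows. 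This step is the hard part, and it relies essentially on $H_4$ and the $\chi$-part of $v$ having been arranged in \S\ref{sec:cds} precisely so that every $O(\theta v)$ and $O(\theta\theta^\diamond)$ contribution carries a transverse difference that can be summed by parts; collecting all weighted estimates then yields \eqref{eq:nl:prp:bndFxP:estOnM}.

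Finally, for the short-time extension I would observe that, thanks to the cut-offs $\chi^c$, the maps $(v,\theta)\mapsto\mathcal{R}_{4,ct}(v,\theta)$, $(v,\theta)\mapsto\mathcal{S}_{4,ct}(v,\theta)$ and $(v,\theta)\mapsto\theta H_{4,ct}[\tau v,\theta^\diamond]$ are globally bounded and globally Lipschitz as maps $\mathcal{X}_{p,2}\times\ell^2(\Wholes;\Real)\to\mathcal{X}_{p,2}\times\ell^2(\Wholes;\Real)$, using Proposition \ref{prp:cds:bnds:Nonlins} together with the embeddings $\ell^1(\Wholes;\Real)\hookrightarrow\ell^2(\Wholes;\Real)\hookrightarrow\ell^\infty(\Wholes;\Real)$, the last one to control the cut-off arguments; moreover $\{\mathcal{G}(t,t_0)\}$ is strongly continuous and uniformly bounded on compact time intervals by Lemma \ref{lem:green:wellPosed} and Corollary \ref{cor:green:expForGreenFull}. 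Rewriting \eqref{eq:nl:fixpoint} on $[T,T+\epsilon]$ by means of the evolution property $\mathcal{G}(t,t_0)=\mathcal{G}(t,T)\mathcal{G}(T,t_0)$ and the values of $(v,\theta)$ already obtained on $[0,T]$, a standard contraction-mapping argument on $C\big([T,T+\epsilon];\mathcal{X}_{p,2}\times\ell^2(\Wholes;\Real)\big)$ for $\epsilon$ small enough produces $(\widetilde v,\widetilde\theta)$, which uniqueness forces to agree with $(v,\theta)$ on $[0,T]$. This last step is routine.
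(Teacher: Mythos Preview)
Your overall architecture matches the paper's: differentiate \eqref{eq:nl:fixpoint} to recover the LDE, bound each Duhamel block in the weighted norms defining $M$, isolate the dangerous $\theta\theta^\diamond$ contributions and cure them with the discrete Leibniz identity \eqref{eq:uux} by transferring the transverse difference onto the heat-like multiplier $\mathcal{A}$, and finish with a contraction argument for the short-time extension. Those pieces are all fine and are essentially what the paper does in Lemma~\ref{lem:nl:firsttermest} and its surrounding discussion.

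There is, however, a genuine gap in your treatment of the $\mathcal{R}_4$ contribution to the fast components $M_{w,q}$ and $M_{v^\diamond,q}$. You invoke only the $\mathcal{X}_{p,1}$ bound on $\mathcal{R}_4$, which by Lemma~\ref{lem:nl:bnds} decays exactly like $(1+t_0)^{-1}$, and pair it with the $\ell^1\to\ell^q$ decay of $\mathcal{G}_{wv}$ and $\mathcal{G}^\diamond_{vv}$. But for any $\alpha>1$ one has
\[
\int_0^t (1+t-t_0)^{-\alpha}(1+t_0)^{-1}\,dt_0 \;\sim\; (1+t)^{-1},
\]
since the integral over $[t/2,t]$ already costs $(1+t)^{-1}$. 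The targets for $M_{w,2}$, $M_{w,\infty}$, $M_{v^\diamond,2}$, $M_{v^\diamond,\infty}$ are $9/8$, $5/4$, $9/8$, $5/4$, all strictly larger than $1$, so this estimate does not close. Your phrase ``costs at most a logarithm'' is correct for the $[0,t/2]$ half but misses that the $[t/2,t]$ half is the bottleneck. The same issue arises for the full term $\theta H_4[\tau v,\theta^\diamond]$ fed into $\mathcal{G}^\diamond_{v\theta}$ and $\mathcal{G}_{w\theta}$, whose $\ell^1$ decay is again exactly $(1+t_0)^{-1}$.

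The paper's remedy is Lemma~\ref{lem:nl:basicIntID} and Corollary~\ref{cor:nl:dh}: split the Duhamel integral at $t/2$ and \emph{switch norms}, using the $\mathcal{X}_{p,1}$ bound on $[0,t/2]$ but the $\mathcal{X}_{p,2}$ bound $\norm{\mathcal{R}_4}_{\mathcal{X}_{p,2}}\le CM^2(1+t_0)^{-5/4}$ on $[t/2,t]$, paired with the $\ell^2\to\ell^q$ Green's decay. This gives $\min\{\alpha+\gamma^c,\beta+\delta^c,\alpha,\beta\}$ with $\beta=5/4$, which for every block exceeds the target rate (after absorbing the arbitrarily small $\nu$-loss at $\gamma=1$ or $\delta=1$ into the gap between $5/4$ and $9/8$). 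Once you add this device, the uux argument you sketched is needed only for the residual part~\eqref{eq:nl:termsWithThetaTheta} involving $\mathcal{A}$ and $\mathcal{A}^\diamond$ themselves, exactly as in Lemma~\ref{lem:nl:firsttermest}; you do not need to transfer a $\diamond$ onto $\mathcal{G}_{w\theta}$.
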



Notice that the exponents
$9/8$ and $5/4$
appearing in $M_{v^\diamond, q}(t)$ and $M_{w, q}(t)$
are slighly less than the exponents $5/4$ and $3/2$ that one would expect from the
linear evolution of the corresponding variables.
This is related to the fact that, even if we assume {\it a priori} that $M(t)$ is bounded,
the nonlinear term $\mathcal{R}_{4, ct_0}\big( v(t_0) , \theta(t_0) \big)$ decays at a rate of $O(t_0^{-1})$
in $\mathcal{X}_{p, 1}$
and $O(t_0^{-5/4})$ in $\mathcal{X}_{p, 2}$.
These rates are typically too slow to close a nonlinear estimate.
The same is true of the term $\theta(t_0) H_{4, ct}[ \tau v(t_0) , \theta^\diamond v(t_0)]$,
which also decays at a rate of $O(t_0^{-1})$ in $\ell^1(\Wholes; \Real)$.
The key is that we will exploit the special structure of the term $H_{4, ct}$
to show that the $O(t_0^{-1})$ decay is not a problem, as is the case when dealing with conservation laws.
The slow decay of $\mathcal{R}_{4, ct_0}$ is not a problem for the estimates on $M_{v, q}$, $M_{\theta, q}$
and $M_{\theta^\diamond, q}$ because the decay of the Green's functions $\mathcal{G}_{v v}$,
$\mathcal{G}_{\theta v}$ and $\mathcal{G}^\diamond_{\theta v}$
is already faster than that of $\mathcal{G}_{v \theta}$,
$\mathcal{G}_{\theta \theta}$ and $\mathcal{G}^\diamond_{\theta \theta}$,
respectively.

\begin{lem} \label{lem:nl:bnds}
Consider the setting of Proposition \ref{prp:nl:bndOnFxP}.
For any $0 \le t_0 \le T$ we have the estimates
\begin{equation}
\label{eq:nl:bnds:R}
\begin{array}{lcl}
\norm{ \mathcal{R}_{4, ct_0}\big( v(t_0), \theta(t_0) \big)  }_{\mathcal{X}_{p, 1} } & \le & C M^2(t_0) (1 + t_0)^{-1},
\\[0.2cm]
\norm{ \mathcal{R}_{4, ct_0}\big( v(t_0), \theta(t_0) \big)  }_{\mathcal{X}_{p, 2} } & \le & C M^2(t_0) (1 + t_0)^{-5/4},
\\[0.2cm]
\end{array}
\end{equation}
together with
\begin{equation}
\label{eq:nl:bnds:S}
\begin{array}{lcl}
\norm{ \mathcal{S}_{4, ct_0}\big( v(t_0), \theta(t_0) \big)  }_{\ell^1(\Wholes; \Real) } & \le & C[ M^2(t_0) + M^3(t_0)] (1 + t_0)^{-11/8},
\\[0.2cm]
\norm{ \mathcal{S}_{4, ct_0}\big( v(t_0), \theta(t_0) \big)  }_{\ell^2(\Wholes; \Real) } & \le & C [M^2(t_0) + M^3(t_0)]
(1 + t_0)^{-3/2}
\\[0.2cm]
\end{array}
\end{equation}
and
\begin{equation}
\label{eq:nl:bnds:H}
\begin{array}{lcl}
\norm{ \theta(t_0) H_{4, ct_0} [ \tau w(t_0), 0] }_{\ell^1(\Wholes; \Real) } & \le & C M^2(t_0)
(1 + t_0)^{-11/8}
, \\[0.2cm]
\norm{ \theta(t_0) H_{4, ct_0} [ \tau w(t_0), 0] }_{\ell^2(\Wholes; \Real) } & \le & C M^2(t_0) (1 + t_0)^{-3/2}. \\[0.2cm]
\end{array}
\end{equation}
\end{lem}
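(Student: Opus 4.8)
The plan is to obtain all six inequalities by feeding the a priori decay rates encoded in $M(t)$ into the global nonlinear bounds already recorded in Proposition \ref{prp:cds:bnds:Nonlins}, together with a short separate argument for the $\theta H_4$ term. The first step is pure bookkeeping: since $M(t)$ is a sum of the non-negative quantities $M_{v,2},\ldots,M_{w,\infty}$, their definitions give, for every $0 \le t_0 \le T$, the pointwise bounds $\norm{v(t_0)}_{\mathcal{X}_{p,2}} \le M(t_0)(1+t_0)^{-3/4}$, $\norm{v(t_0)}_{\mathcal{X}_{p,\infty}} \le M(t_0)(1+t_0)^{-1}$, $\norm{v^\diamond(t_0)}_{\mathcal{X}^5_{p,2}} \le M(t_0)(1+t_0)^{-9/8}$, $\norm{v^\diamond(t_0)}_{\mathcal{X}^5_{p,\infty}} \le M(t_0)(1+t_0)^{-5/4}$, $\norm{\theta(t_0)}_{\ell^2(\Wholes;\Real)} \le M(t_0)(1+t_0)^{-1/4}$, $\norm{\theta(t_0)}_{\ell^\infty(\Wholes;\Real)} \le M(t_0)(1+t_0)^{-1/2}$, $\norm{\theta^\diamond(t_0)}_{\ell^2(\Wholes;\Real^5)} \le M(t_0)(1+t_0)^{-3/4}$, $\norm{\theta^\diamond(t_0)}_{\ell^\infty(\Wholes;\Real^5)} \le M(t_0)(1+t_0)^{-1}$, $\norm{w(t_0)}_{\mathcal{X}_{p,2}} \le M(t_0)(1+t_0)^{-9/8}$ and $\norm{w(t_0)}_{\mathcal{X}_{p,\infty}} \le M(t_0)(1+t_0)^{-5/4}$. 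Since moreover $(v(t_0),\theta(t_0)) \in \mathcal{X}_{p,2} \times \ell^2(\Wholes;\Real)$, Proposition \ref{prp:cds:bnds:Nonlins} applies.

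For \sref{eq:nl:bnds:R} one substitutes these rates into the two bounds of Proposition \ref{prp:cds:bnds:Nonlins} for $\mathcal{R}_{4,ct}$. The common first factor $\norm{v}_{\mathcal{X}_{p,2}} + \norm{\theta^\diamond}_{\ell^2(\Wholes;\Real^5)} + \norm{\theta}_{\ell^2(\Wholes;\Real)}$ is dominated by the $\theta$-contribution and is therefore $O(M(t_0)(1+t_0)^{-1/4})$; the second factor is $O(M(t_0)(1+t_0)^{-3/4})$ in the $\mathcal{X}_{p,1}$-estimate and $O(M(t_0)(1+t_0)^{-1})$ in the $\mathcal{X}_{p,2}$-estimate. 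Multiplying produces the exponents $-1$ and $-5/4$. The estimate \sref{eq:nl:bnds:S} is handled the same way, comparing term by term against the three summands in each $\mathcal{S}_{4,ct}$ bound: in the $\ell^1$-line they produce exponents $-1/4-9/8 = -11/8$, $-1-1/2 = -3/2$ and $-3/4-3/4 = -3/2$ (the middle one being a triple product, hence carrying an $M^3$ rather than an $M^2$), so the slowest is $-11/8$; in the $\ell^2$-line they produce $-1/4-5/4 = -3/2$, $-1-1/4-1/2 = -7/4$ and $-3/4-1 = -7/4$, so the slowest is $-3/2$. This also explains the $M^2 + M^3$ on the right-hand side of \sref{eq:nl:bnds:S}.

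For \sref{eq:nl:bnds:H} one extra observation beyond Proposition \ref{prp:cds:bnds:Nonlins} is needed. Setting the second argument of $H_{4,ct}$ to zero collapses its defining formula to the purely linear expression $H_{4,ct_0}[\tau w(t_0),0] = Q_{ct_0}[\partial_\vartheta L(\vartheta)]_{\vartheta = ct_0}\tau w(t_0)$. Using that $[\partial_\vartheta L(\vartheta)]$ is a multiplication operator bounded uniformly in $\vartheta$, that $\tau$ is bounded from $\mathcal{X}_{p,q}$ into $\mathcal{X}^5_{p,q}$, and that $Q_{ct_0}$ maps $\mathcal{X}_{p,q}$ into $\ell^q(\Wholes;\Real)$ with norm bounded uniformly in $t_0$ (via the exponential decay of $\psi$), one obtains $\norm{H_{4,ct_0}[\tau w(t_0),0]}_{\ell^2(\Wholes;\Real)} \le C\norm{w(t_0)}_{\mathcal{X}_{p,2}} \le CM(t_0)(1+t_0)^{-9/8}$ and likewise $\norm{H_{4,ct_0}[\tau w(t_0),0]}_{\ell^\infty(\Wholes;\Real)} \le CM(t_0)(1+t_0)^{-5/4}$. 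Combining with $\norm{ab}_{\ell^1} \le \norm{a}_{\ell^2}\norm{b}_{\ell^2}$ and $\norm{ab}_{\ell^2} \le \norm{a}_{\ell^2}\norm{b}_{\ell^\infty}$, applied with $a = \theta(t_0)$, gives the exponents $-1/4-9/8 = -11/8$ and $-1/4-5/4 = -3/2$.

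I do not expect a genuine obstacle here: the lemma is essentially bookkeeping stacked on top of Proposition \ref{prp:cds:bnds:Nonlins}. The one point demanding attention is that one must use the slightly sub-optimal decay exponents $9/8$ for $v^\diamond$ and $w$ (rather than the $5/4$ and $3/2$ the linear theory suggests), since it is precisely this slack that makes the $\mathcal{S}_4$ and $\theta H_4$ estimates emerge with an $\ell^1$-exponent $-11/8 < -1$, safely away from the borderline $\beta=1$ case of \sref{eq:liuduhamelalg}, and also leaves room later to absorb the logarithmic factors coming from the $\mathcal{R}_4$ Duhamel integral. The secondary care needed is, in each sum produced by Proposition \ref{prp:cds:bnds:Nonlins}, to correctly identify the slowest-decaying summand, which in every case is the one carrying the slowly decaying factor $\norm{\theta}_{\ell^2(\Wholes;\Real)} = O((1+t_0)^{-1/4})$.
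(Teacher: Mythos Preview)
Your proposal is correct and follows essentially the same approach as the paper: the paper's own proof is the one-line remark that the $\mathcal{R}$ and $\mathcal{S}$ estimates follow directly from Proposition~\ref{prp:cds:bnds:Nonlins} while the $H_{4,ct_0}$ bounds follow from the elementary product inequalities \sref{eq:cds:prp:bnd:elm1}. Your write-up is a faithful and accurate expansion of this, with the exponent bookkeeping and the simplification of $H_{4,ct_0}[\tau w,0]$ made explicit.
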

\begin{proof}
The estimates for $\mathcal{R}_{c t_0}$ and $\mathcal{S}_{c t_0}$ follow directly from Proposition \ref{prp:cds:bnds:Nonlins},
while the bounds for $H_{4, ct_0}$ follow from \sref{eq:cds:prp:bnd:elm1}.
\end{proof}

We will make use of the following integral inequality, a refined version of \eqref{eq:liuduhamelalg}, in order to estimate terms on the right hand side of the variation of constants formula.  It is often the case that given a fixed DuHamel term, there is a tradeoff between how good an estimate one can use on the semigroup and how good an estimate one can use on the nonlinear term.  In one case we have a difference that we can put on either term.  In other cases changing the norms will make one term decay more rapidly and the other decay more slowly.  The point of this lemma is that you don't have to choose whether to make the semigroup decay rapidly or whether to make the nonlinear term decay rapidly.
By using a fast estimate on the semigroup for $t_0 \in [0,t/2]$ and a fast estimate on the nonlinear term for $t_0 \in [t/2,t]$ you can have it both ways.  The observation is far from new, but its utility persists.

\begin{lem}[Lemma 3.2 in \cite{ChernLiu1987} (restated)]
\label{lem:nl:basicIntID}
Let $\alpha, \beta, \gamma$ and $\delta$ be positive real numbers,
with $\gamma \neq 1$ and $\delta \neq 1$.
Then there is a constant $C > 0$ such that
\begin{equation}
\int_0^{t/2} (1 + t-t_0)^{-\alpha}(1 + t_0)^{-\gamma}dt_0 + \int_{t/2}^t (1 + t-t_0)^{-\delta}(1 + t_0)^{-\beta}dt_0 \le C(1 + t)^{-\min\{\alpha + \gamma - 1,\delta + \beta - 1,\alpha,\beta\}}.
\label{eq:liuduhamelt/2}
\end{equation}
\end{lem}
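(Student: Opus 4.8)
The plan is to handle the two integrals in \sref{eq:liuduhamelt/2} separately, exploiting in each case that on the relevant half of the interval one of the two factors in the integrand is essentially frozen at the scale $(1+t)$ and can be pulled out, leaving a one-factor integral that is evaluated by an elementary case split.

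First I would bound the integral over $[0,t/2]$. For $t_0 \le t/2$ we have $t - t_0 \ge t/2$, hence $(1 + t - t_0)^{-\alpha} \le (1 + t/2)^{-\alpha} \le 2^{\alpha}(1+t)^{-\alpha}$ using $1 + t/2 \ge \tfrac12(1+t)$. Pulling this factor out leaves $\int_0^{t/2}(1+t_0)^{-\gamma}\,dt_0$, which I estimate by splitting on whether $\gamma > 1$ or $\gamma < 1$: in the former case the integral is bounded by the convergent value $(\gamma-1)^{-1}$, while in the latter it equals $(1-\gamma)^{-1}[(1+t/2)^{1-\gamma}-1] \le C(1+t)^{1-\gamma}$. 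Since $\gamma > 1$ gives $\alpha + \gamma - 1 > \alpha$ and $\gamma < 1$ gives $\alpha + \gamma - 1 < \alpha$, the first integral is in all cases bounded by $C(1+t)^{-\min\{\alpha,\,\alpha+\gamma-1\}}$. Symmetrically, on $[t/2,t]$ we have $t_0 \ge t/2$, so $(1+t_0)^{-\beta} \le 2^{\beta}(1+t)^{-\beta}$; after the substitution $s = t - t_0$ the remaining factor is $\int_0^{t/2}(1+s)^{-\delta}\,ds$, handled by the identical case split on whether $\delta > 1$ or $\delta < 1$, yielding a bound $C(1+t)^{-\min\{\beta,\,\delta+\beta-1\}}$. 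Adding the two bounds and merging the minima produces exactly the right-hand side of \sref{eq:liuduhamelt/2}.

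There is no genuine obstacle here; the argument is entirely elementary, and the bulk of it is the routine case analysis just described together with a trivial check that all constants are uniform in $t \ge 0$. The only point deserving emphasis is precisely where the hypotheses $\gamma \ne 1$ and $\delta \ne 1$ are used: when $\gamma = 1$ the factor $\int_0^{t/2}(1+t_0)^{-1}\,dt_0 = \log(1 + t/2)$ contributes an extra logarithm, which is exactly the borderline phenomenon seen in \sref{eq:int:badEstimate}, and the analogous breakdown occurs for $\delta = 1$ in the second integral. By contrast, the exponents $\alpha$ and $\beta$ are never integrated against but only bounded pointwise, so the values $\alpha = 1$ or $\beta = 1$ are harmless, which is why they need not be excluded.
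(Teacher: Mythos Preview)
Your proof is correct and follows essentially the same approach as the paper: freeze the factor that is comparable to $(1+t)$ on each half-interval and integrate the remaining one-variable power, using $\gamma\ne 1$ and $\delta\ne 1$ to avoid logarithms. The paper writes the case split slightly more compactly via $\frac{1}{1-\gamma}[(1+\tfrac{t}{2})^{1-\gamma}-1]$ and an absolute value, but the content is identical.
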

\begin{proof}
In view of the fact that $\gamma \neq 1$, we may compute
\begin{equation}
\begin{array}{lcl}
\int_{0}^{\frac{1}{2}t} (1 + t - t_0)^{ - \alpha} (1 + t_0)^{-\gamma} d t_0 & \le &
 (1 + \frac{1}{2}t)^{-\alpha} \int_{0}^{\frac{1}{2}t} (1 +  t_0)^{ - \gamma} d t_0  \\[0.2cm]
& \le &  (1 + \frac{1}{2}t)^{-\alpha} \frac{1}{1 - \gamma} [ (1 + \frac{1}{2} t)^{1 - \gamma} - 1 ] \\[0.2cm]
& \le &  \frac{2}{\abs{1 - \gamma}}(1 + \frac{1}{2}t)^{- \min\{ \alpha , \alpha + \gamma - 1 \} }.
\end{array}
\end{equation}
Similarly, using $\delta \neq 1$, we compute
\begin{equation}
\begin{array}{lcl}
\int_{\frac{1}{2}t}^t (1 + t - t_0)^{ - \delta} (1 + t_0)^{-\beta} d t_0 & \le &
 (1 + \frac{1}{2}t)^{-\beta} \int_{\frac{1}{2}t}^t (1 + t - t_0)^{ - \delta} d t_0  \\[0.2cm]
& \le &  (1 + \frac{1}{2}t)^{-\beta} \frac{1}{1 - \delta} [ (1 + \frac{1}{2} t)^{1 - \delta} - 1] \\[0.2cm]
& \le &  \frac{2}{\abs{1 - \delta}}  (1 + \frac{1}{2} t)^{ - \min\{ \beta , \beta + \delta - 1 \} }.
\end{array}
\end{equation}
\end{proof}


\begin{cor} \label{cor:nl:dh}
Consider the setting of Proposition \ref{prp:nl:bndOnFxP}. Let $S(\cdot, \cdot)$ denote any of the semigroups
appearing in Proposition \ref{prp:green:bndsOnGreen}
and Corollary \ref{cor:green:bndsOnGreenForW}.
In addition, let $\mathcal{N}(t_0)$ denote any of the terms
$\mathcal{R}_{4,ct_0}(v(t_0),\theta(t_0))$, $\mathcal{S}_{4,ct_0}(v(t_0),\theta(t_0))$,
$\theta(t_0) H_{4, ct_0}[\tau v(t_0), 0]$ or $\theta(t_0) H_{4,ct_0}[\tau w(t_0),0]$.
Assume that the expression $S(t, t_0) \mathcal{N}(t_0)$ is well defined for all $0 \le t_0 \le t$.
Let $\| \mathcal{N}(t_0)\|_q$
denote
either the $\ell^q(\Wholes; \Real)$ or $\mathcal{X}_{p,q}$ norm of $\mathcal{N}(t_0)$
as appropriate and let $\|S(t, t_0)\|_{q_1 \to q_2}$
denote the norm of $S(t, t_0)$ viewed as a map from
one of $\ell^{q_1}(\Wholes; \Real)$ or $\mathcal{X}_{p, q_1}$
into one of $\ell^{q_2}(\Wholes; \Real)$ or $\mathcal{X}_{p, q_2}$, as appropriate.

Pick constants $\nu > 0$, $\alpha \ge 0$, $\beta \ge 0$, $\gamma \ge 0$ and $\delta \ge 0$.
Write $\gamma^c = \gamma - 1$ if $\gamma \neq 1$ and $\gamma^c = -\nu$ if $\gamma = 1$.
Similarly, write $\delta^c = \delta - 1$ if $\delta \neq 1$ and $\delta^c = -\nu$ if $\delta = 1$.

Then there exists a constant $C > 0$
such that for any $C_1 > 0$, $C_2 > 0$ and $t \ge 0$,
the estimates
\begin{equation}
\|S(t,t_0)\|_{1 \to q} \le C_1(1 + t-t_0)^{-\alpha},
\qquad
\|\mathcal{N}(t_0)\|_{1} \le C_2(1 + t_0)^{-\gamma}, \qquad 0 \le t_0 \le t,
\end{equation}
together with
\begin{equation}
\|S(t,t_0)\|_{2 \to q} \le C_1(1 + t-t_0)^{-\delta},
\qquad
\|\mathcal{N}(t_0)\|_{2} \le C_2(1 + t_0)^{-\beta}, \qquad 0 \le t_0 \le t,
\end{equation}
imply the bound
\begin{equation}
\left\|\int_0^t S(t,t_0)\mathcal{N}(t_0) dt_0 \right\|_{q} \le C C_1C_2(1 + t)^{-\min\{ \alpha + \gamma^c, \beta + \delta^c,  \alpha,\beta  \}}.
\label{eq:nl:switcheroo}
\end{equation}
\end{cor}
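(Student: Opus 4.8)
The plan is to split the integral at the midpoint $t/2$ and invoke the two hypotheses asymmetrically, following the strategy described just before Lemma \ref{lem:nl:basicIntID}: use the rapidly-decaying semigroup estimate on the first half, where $t - t_0$ is large, and the rapidly-decaying nonlinear estimate on the second half, where $t_0$ is large. Writing $\int_0^t S(t, t_0) \mathcal{N}(t_0) \, d t_0 = \int_0^{t/2} + \int_{t/2}^t$, I would bound the first integrand by $\norm{S(t, t_0)}_{1 \to q} \norm{\mathcal{N}(t_0)}_1 \le C_1 C_2 (1 + t - t_0)^{-\alpha}(1 + t_0)^{-\gamma}$ and the second by $\norm{S(t, t_0)}_{2 \to q} \norm{\mathcal{N}(t_0)}_2 \le C_1 C_2 (1 + t - t_0)^{-\delta}(1 + t_0)^{-\beta}$. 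After the triangle inequality, proving \sref{eq:nl:switcheroo} is reduced to estimating the sum of the two integrals on the left-hand side of the display in Lemma \ref{lem:nl:basicIntID}, with the factor $C_1 C_2$ pulled out front.

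In the generic case $\gamma \neq 1$ and $\delta \neq 1$, Lemma \ref{lem:nl:basicIntID} applies --- its proof extends without change to the degenerate cases $\alpha = 0$ or $\beta = 0$ permitted here --- and yields the bound $C (1 + t)^{-\min\{\alpha + \gamma - 1, \delta + \beta - 1, \alpha, \beta\}}$, which is exactly the claimed estimate since $\gamma^c = \gamma - 1$ and $\delta^c = \delta - 1$ in this case. The only remaining work concerns the borderline exponents. If $\gamma = 1$, then on $[0, t/2]$ we have $\int_0^{t/2} (1 + t - t_0)^{-\alpha}(1 + t_0)^{-1} \, d t_0 \le (1 + \frac{1}{2} t)^{-\alpha} \int_0^{t/2}(1 + t_0)^{-1} \, d t_0 \le C (1 + \frac{1}{2} t)^{-\alpha} \log(1 + t)$, and since $\log(1 + t) \le C_\nu (1 + t)^{\nu}$ for every $\nu > 0$, this is dominated by $C (1 + t)^{-\alpha + \nu} = C(1 + t)^{-\min\{\alpha, \alpha + \gamma^c\}}$ with $\gamma^c = -\nu$ as defined in the statement. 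The case $\delta = 1$ is treated in the same way on $[t/2, t]$, using $\int_{t/2}^t (1 + t - t_0)^{-1} \, d t_0 \le C \log(1 + t)$. Running through all four combinations of borderline/non-borderline values of $\gamma$ and $\delta$ gives \sref{eq:nl:switcheroo}.

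Because the corollary is purely abstract, with the rates $\alpha, \beta, \gamma, \delta$ handed over as inputs and $S, \mathcal{N}$ left generic, there is nothing further to check: at each point of application in the proof of Proposition \ref{prp:nl:bndOnFxP} one simply reads off the semigroup rates from Proposition \ref{prp:green:bndsOnGreen} and Corollary \ref{cor:green:bndsOnGreenForW} and the nonlinear rates from Lemma \ref{lem:nl:bnds} and Proposition \ref{prp:cds:bnds:Nonlins}. I do not expect a genuine obstacle here; the statement is essentially a repackaging of the $t/2$-splitting lemma, and the one point that demands a little care is the logarithmic endpoint, which is precisely what the modified exponents $\gamma^c$ and $\delta^c$ (together with the free parameter $\nu$) are introduced to absorb, at the price of an arbitrarily small deterioration of the decay rate.
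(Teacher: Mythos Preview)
Your proposal is correct and follows essentially the same approach as the paper: split at $t/2$, pair the $\|\cdot\|_{1\to q}$ semigroup bound with the $\|\cdot\|_1$ nonlinear bound on $[0,t/2]$ and the $\|\cdot\|_{2\to q}$ with the $\|\cdot\|_2$ bound on $[t/2,t]$, then invoke Lemma~\ref{lem:nl:basicIntID}. The only cosmetic difference is in the borderline case: the paper simply observes that $(1+t_0)^{-1}\le (1+t_0)^{-(1-\nu)}$ and applies the lemma with $\gamma$ replaced by $1-\nu$, whereas you integrate the logarithm explicitly and then absorb it via $\log(1+t)\le C_\nu(1+t)^\nu$; the two devices are equivalent.
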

\begin{proof}
Apply Lemma \ref{lem:nl:basicIntID}, replacing $\gamma$ with $\gamma = 1 - \nu$ in case $\gamma = 1$
and treating $\delta$ similarly.
\end{proof}

Our applications of Corollary \ref{cor:nl:dh} will always have $\alpha > \delta$
and $\beta > \gamma$, which is particulary useful
since $\gamma^c$ and $\delta^c$ do not appear by themselves in the arguments of the minimum
appearing in \sref{eq:nl:switcheroo}.
In addition, we always have $\gamma \ge 1$
and $\beta \ge \frac{5}{4}$. Notice that this allows us to show that the
part of $M(t)$ associated to the terms
\begin{equation}
\label{eq:nl:termsWithWAndNonlins}
\begin{array}{l}
 \int_{0}^t \left( \begin{array}{c} \mathcal{G}(t, t_0) \\ \mathcal{G}^\diamond(t, t_0) \\ \mathcal{G}_w(t, t_0) \end{array} \right)
   \left(\begin{array}{l} 0 \\  \theta(t_0) H_4[ \tau w(t_0), 0 ] \end{array}\right) \, d t_0
  +  \int_{0}^t \left( \begin{array}{c} \mathcal{G}(t, t_0) \\ \mathcal{G}^\diamond(t, t_0) \\ \mathcal{G}_w(t, t_0) \end{array} \right)
    \left(\begin{array}{l} \mathcal{R}_{4, c t_0}\big( v(t_0), \theta(t_0) \big)
   \\  \mathcal{S}_{4, ct_0}\big( v(t_0), \theta(t_0) \big)\end{array}\right) \, d t_0
\end{array}
\end{equation}
appearing in \sref{eq:nl:fixpoint} and \sref{eq:nl:fixpoint:derivatives} can be bounded as desired.
The same holds true for the part of $M(t)$ associated to the terms
\footnote{
  Note that there is some overlap between the terms
  appearing in \sref{eq:nl:termsWithWAndNonlins} and \sref{eq:nl:termsWithVThetaI}.
}
\begin{equation}
\label{eq:nl:termsWithVThetaI}
\begin{array}{l}
 \int_{0}^t \left( \begin{array}{cc} \mathcal{G}_{vv}^\diamond(t, t_0) &
                                     \mathcal{G}_{v \theta}^\diamond(t, t_0) \\
                                     \mathcal{G}_{wv}(t, t_0)  &
                                     \mathcal{G}_{w\theta}(t, t_0) \\
                   \end{array} \right)
   \left(\begin{array}{l} 0 \\  \theta(t_0) H_4[ \tau v(t_0), \theta^\diamond(t_0) ] \end{array}\right) \, d t_0 .
\end{array}
\end{equation}
Indeed, the critical case $\gamma = 1$
occurs in \sref{eq:nl:termsWithWAndNonlins} only when coupling $\mathcal{R}_4$ with $\mathcal{G}_{vv}$,
$\mathcal{G}_{\theta v}$, $\mathcal{G}^\diamond_{vv}$, $\mathcal{G}^\diamond_{\theta v}$
and $\mathcal{G}_{wv}$, which all have $1 \to q$ decay rates $\alpha$ that are faster than
those of $M_{v, q}$, $M_{\theta, q}$,
$M_{v^\diamond, q}$, $M_{\theta^\diamond, q}$ and $M_{w, q}$ respectively.
Similarly, the semigroups $\mathcal{G}^\diamond_{v \theta}$
and $\mathcal{G}_{w \theta}$ appearing in \sref{eq:nl:termsWithVThetaI}
have $1 \to q$ decay rates $\alpha$ that are faster than those of $M_{v^\diamond, q}$
and $M_{w , q}$ respectively. Finally, only $M_{v^\diamond, \infty}$
and $M_{w, \infty}$ feature the exponent $\frac{5}{4}$. Since $\beta \ge \frac{5}{4}$
and the $2 \to \infty$ norms
of $\mathcal{G}^\diamond_{v v}$, $\mathcal{G}^\diamond_{v \theta}$,
$\mathcal{G}_{w v}$ and $\mathcal{G}_{w \theta}$ all  have $\delta = \frac{5}{4}$,
the critical case $\delta = 1$ occurs only
when the decay rate $\beta$ of the $\norm{ \cdot}_2$ norm of the nonlinearity is
strictly larger
than the decay rate of the relevant component of $M(t)$.

The exact same arguments can also be applied for the components of $M(t)$ associated to
\begin{equation}
\label{eq:nl:termsWithVThetaII}
\begin{array}{l}
 \int_{0}^t \left( \begin{array}{cc} \mathcal{G}_{vv}(t, t_0) &
                                     \mathcal{G}_{v\theta}(t, t_0) - \chi_{ct} \sigma_1^{-1} e_4 \cdot \mathcal{A}^\diamond(t, t_0) \\
                                     \mathcal{G}_{\theta v}(t, t_0) &
                                     \mathcal{G}_{\theta \theta}(t, t_0) - \mathcal{A}(t, t_0) \\
                                     \mathcal{G}^\diamond_{\theta v}(t, t_0) &
                                     \mathcal{G}^\diamond_{\theta \theta}(t, t_0) - \mathcal{A}^\diamond(t, t_0) \\
                                     \end{array} \right)
   \left(\begin{array}{l} 0 \\  \theta(t_0) H_4[ \tau v(t_0), \theta^\diamond(t_0) ] \end{array}\right) \, d t_0.
\end{array}
\end{equation}
It hence remains to consider the terms
\begin{equation}
\label{eq:nl:termsWithThetaTheta}
\begin{array}{l}
 \int_{0}^t \left( \begin{array}{cc} 0 & \chi_{ct} \sigma_1^{-1} e_4 \cdot \mathcal{A}^\diamond(t, t_0) \\
                                     0 & \mathcal{A}(t, t_0) \\
                                     0 & \mathcal{A}^\diamond(t, t_0)
                   \end{array} \right)
   \left(\begin{array}{l} 0 \\  \theta(t_0) H_4[ \tau \chi_{c t_0} \sigma_1^{-1} e_4 \cdot \theta^\diamond(t_0), \theta^\diamond(t_0) ] \end{array}\right) \, d t_0.
\end{array}
\end{equation}
These components require a more refined analysis and are discussed in our next result.

\begin{lem}
\label{lem:nl:firsttermest}
Consider the setting of Proposition \ref{prp:nl:bndOnFxP}.
There exists a constant $C > 0$
such that for any unit vector $e \in \Real^5$ and $t \ge 0$,
the quantities
\begin{equation}
\begin{array}{lcl}
\mathcal{I}_{q}(t) := \int_{ 0}^t \norm{ \mathcal{A}(t, t_0)  \theta(t_0) \theta^\diamond(t_0) }_{\ell^q(\Wholes; \Real^5)}  \, d t_0, \\[0.2cm]
\mathcal{I}^\diamond_{q,e}(t) := \int_{ 0}^t \norm{ \mathcal{A}^\diamond(t, t_0) e \cdot \theta(t_0) \theta^\diamond(t_0) }_{\ell^q(\Wholes; \Real^5)}  \, d t_0
\end{array}
\end{equation}
satisfy the estimates
\begin{equation}
  \begin{array}{lcl}
    \mathcal{I}_{2}(t) & \le & C M^2(t) (1 + t)^{-1/4}, \\[0.2cm]
    \mathcal{I}_{\infty}(t) & \le & C M^2(t) (1 + t)^{-1/2}, \\[0.2cm]
    \mathcal{I}^\diamond_{2, e}(t) & \le & C M^2(t) (1 + t)^{-3/4}, \\[0.2cm]
    \mathcal{I}^\diamond_{\infty, e}(t) & \le & C M^2(t) (1 + t)^{-1}. \\[0.2cm]
  \end{array}
\end{equation}
\end{lem}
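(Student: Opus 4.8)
The plan is to reduce the estimate to the discrete analogue of $uu_x=\tfrac12(u^2)_x$. Concretely, the general-shift form of \eqref{eq:uux},
\begin{equation}
\theta_l\big(\theta_{l+s}-\theta_l\big)=\tfrac12\big(\theta_{l+s}^2-\theta_l^2\big)-\tfrac12\big(\theta_{l+s}-\theta_l\big)^2,\qquad s\in\Wholes,
\end{equation}
applied to each of the four active components of $\theta^\diamond$, gives the componentwise decomposition $\theta(t_0)\theta^\diamond(t_0)=\tfrac12[\theta(t_0)^2]^\diamond-\tfrac12R(t_0)$, where $R(t_0)\in\ell^\infty(\Wholes;\Real^5)$ has $l$-th entry the componentwise square of $\theta^\diamond_l(t_0)$. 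Since $\mathcal{A}(t,t_0)$ is a Fourier multiplier and hence commutes with the difference operators, the defining relations $\mathcal{A}^\diamond(t,t_0)a=\mathcal{A}(t,t_0)a^\diamond$ and $\mathcal{A}^{\diamond\diamond}_e(t,t_0)a=\mathcal{A}^\diamond(t,t_0)(e\cdot a^\diamond)$ hold for every scalar sequence $a$, and this rewrites the two integrands as
\begin{equation}
\mathcal{A}(t,t_0)[\theta\theta^\diamond]=\tfrac12\mathcal{A}^\diamond(t,t_0)\theta^2-\tfrac12\mathcal{A}(t,t_0)R,\qquad
\mathcal{A}^\diamond(t,t_0)[e\cdot\theta\theta^\diamond]=\tfrac12\mathcal{A}^{\diamond\diamond}_e(t,t_0)\theta^2-\tfrac12\mathcal{A}^\diamond(t,t_0)(e\cdot R).
\end{equation}

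The remainder pieces involving $R$ are handled directly: one has $\norm{R(t_0)}_{\ell^1}\le\norm{\theta^\diamond(t_0)}_{\ell^2}^2\le M^2(t_0)(1+t_0)^{-3/2}$ and $\norm{R(t_0)}_{\ell^2}\le\norm{\theta^\diamond(t_0)}_{\ell^\infty}\norm{\theta^\diamond(t_0)}_{\ell^2}\le M^2(t_0)(1+t_0)^{-7/4}$, and feeding these into the $\ell^1\to\ell^q$ and $\ell^2\to\ell^q$ bounds on $\mathcal{A}$ and $\mathcal{A}^\diamond$ recorded just above Proposition \ref{prp:green:bndsOnGreen}, then integrating with the elementary estimates from the proof of Lemma \ref{lem:nl:basicIntID} (splitting at $t/2$), already puts these contributions below the four target rates.

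The real work is the gradient term $[\theta^2]^\diamond$, which I would treat by a two-regime argument that distributes the difference operators differently on the two halves of $[0,t]$. On $[0,t/2]$, I absorb the outer difference into the Fourier multiplier and use the fast $\ell^1\to\ell^q$ decay already recorded for $\mathcal{A}^\diamond(t,t_0)$ and $\mathcal{A}^{\diamond\diamond}_e(t,t_0)$, paying only with $\norm{\theta(t_0)^2}_{\ell^1}=\norm{\theta(t_0)}_{\ell^2}^2\le M^2(t_0)(1+t_0)^{-1/2}$; since $t-t_0\ge t/2$ on this range, this piece is of the orders claimed in the lemma. On $[t/2,t]$ I instead leave the difference on the data, writing the gradient term as $\mathcal{A}(t,t_0)$ resp.\ $\mathcal{A}^\diamond(t,t_0)$ applied to $[\theta(t_0)^2]^\diamond$ resp.\ $e\cdot[\theta(t_0)^2]^\diamond$, and exploit the refined bound
\begin{equation}
\norm{[\theta(t_0)^2]^\diamond}_{\ell^2}\le C\,\norm{\theta^\diamond(t_0)}_{\ell^\infty}\norm{\theta(t_0)}_{\ell^2}\le C M^2(t_0)(1+t_0)^{-5/4},
\end{equation}
which follows from $\theta_{l+s}^2-\theta_l^2=(\theta_{l+s}-\theta_l)(\theta_{l+s}+\theta_l)$ and H\"older; against the milder $\ell^2\to\ell^q$ bounds on $\mathcal{A}$ and $\mathcal{A}^\diamond$ and the $[t/2,t]$-part of Lemma \ref{lem:nl:basicIntID}, this again delivers exactly the claimed rates. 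Adding the three contributions proves the lemma.

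The step I expect to be the main obstacle is precisely this one: obtaining the \emph{sharp} exponents $\tfrac14,\tfrac12,\tfrac34,1$ for the gradient term with no logarithmic loss. Any single fixed bookkeeping choice — both differences on the heat kernel with the data measured in $\ell^1$, or one difference on the kernel with the data measured in $\ell^2$ — runs into the borderline case ($\gamma=1$ or $\delta=1$ in Lemma \ref{lem:nl:basicIntID}, i.e.\ the $\beta=1$ case of \eqref{eq:liuduhamelalg}) and yields only $(1+t)^{-r}\log(2+t)$ or $(1+t)^{-r+\epsilon}$. Such a loss would be fatal, e.g.\ when closing the bound on $M_{\theta^\diamond,2}$ in Proposition \ref{prp:nl:bndOnFxP}, where the exponent $\tfrac34$ for $\theta^\diamond$ in $\ell^2$ is not slackened, in contrast to the exponents for $v^\diamond$ and $w$. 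It is the combination of the $uu_x$ rewriting, the switch of strategy at $t/2$, and the refined $\ell^\infty\!\times\!\ell^2$ estimate above that removes the borderline case on both halves at once.
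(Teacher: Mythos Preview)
Your proof is correct and follows essentially the same approach as the paper: the $uu_x$ decomposition \eqref{eq:uux}, the split at $t/2$, the transfer of the difference operator to the multiplier on $[0,t/2]$ via $\mathcal{A}(t,t_0)a^\diamond=\mathcal{A}^\diamond(t,t_0)a$, and the $\ell^\infty\times\ell^2$ H\"older bound on $[t/2,t]$. The only organizational difference is that the paper recombines the two pieces on $[t/2,t]$ and directly estimates $\norm{\theta\theta^\diamond}_{\ell^2}\le\norm{\theta^\diamond}_{\ell^\infty}\norm{\theta}_{\ell^2}$, whereas you keep the decomposition throughout and bound $\norm{[\theta^2]^\diamond}_{\ell^2}$ and $\norm{R}_{\ell^2}$ separately; the resulting exponents coincide.
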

\begin{proof}
Restricting ourselves for the moment to $\mathcal{I}_{q}(t)$, we make use of \eqref{eq:liuduhamelt/2} with exponents given by the following table.

\begin{tabular}{c|c|c|}
& $\mathcal{I}_2$ & $\mathcal{I}_\infty$ \\
--- & --- & --- \\
$\alpha_a$ & $\|\mathcal{A}(t,t_0)\|_{\ell^1 \to \ell^2} \sim (t-t_0+1)^{-1/4}$ & $\|\mathcal{A}(t,t_0)\|_{\ell^1 \to \ell^\infty} \sim (t-t_0+1)^{-1/2}$ \\
$\gamma_a$ & $\|\theta^\diamond(t_0)\|^2_{\ell^2} \sim M(t)^2(t_0+1)^{-3/2}$ & $\|\theta^\diamond(t_0)\|_{\ell^2}^2 \sim M(t)^2(t_0+1)^{-3/2}$ \\
$\alpha_b$ & $\|\mathcal{A}^\diamond(t,t_0)\|_{\ell^1 \to \ell^2} \sim (t-t_0+1)^{-3/4}$ & $\|\mathcal{A}^\diamond(t,t_0)\|_{\ell^1 \to \ell^\infty} \sim (t-t_0+1)^{-1}$ \\
$\gamma_b$ & $\|\theta(t_0)\|_{\ell^2}^2 \sim M(t)^2(t_0+1)^{-1/2}$ & $\|\theta(t_0)\|_{\ell^2}^2 \sim M(t)^2(t_0+1)^{-1/2}$ \\
$\delta$ & $\|\mathcal{A}(t,t_0)\|_{\ell^2 \to \ell^2} \sim C$ & $\|\mathcal{A}(t,t_0)\|_{\ell^2 \to \ell^\infty} \sim (t-t_0+1)^{-1/4}$ \\
$\beta$ & $\|\theta^\diamond\|_{\ell^\infty} \|\theta\|_{\ell^2} \sim M(t)^2(t-t_0+1)^{-5/4}$ & $\|\theta^\diamond\|_{\ell^\infty} \|\theta\|_{\ell^2} \sim M(t)^2(t-t_0+1)^{-5/4}$
\end{tabular}

\bigskip

The last two rows are straightforward to understand.  This corresponds to the time interval $[t/2,t]$ on which the semigroup is
weak regardless of its rate of decay and the accumulation of the nonlinear terms is what is dangerous.
Our nonlinear terms $\|\theta \theta^\diamond \|_{\ell^2} \le  \|\theta\|_{\ell^2} \|\theta^\diamond\|_{\ell^\infty} $
decay so rapidly that we can use even the incredibly weak $\ell^2 \to \ell^q$ estimates for $\mathcal{A}$.
This is because the exponent $\delta$ describing the decay of $\mathcal{A}$ appears on the right hand side of \eqref{eq:liuduhamelt/2}
only in the form $\beta + \delta - 1$ and never by itself, thus a large $\beta$ can compensate for a small $\delta$.
In our case we have $\beta + \delta - 1 = 1/4$ for $q = 2$ and $1/2$ for $q = \infty$,
enough to establish the desired rates of decay for $\mathcal{I}_q$, as long as the $[0,t/2]$ intervals cooperate.

The interval $[0,t/2]$ is more subtle.  As per \eqref{eq:uux} we decompose the nonlinear term $\theta \theta^\diamond$
into the sum of an (a) term, $(\theta^\diamond)^2$ (where we are interpreting the square of a vector to be the vector of the squares of the components) and a (b) term $(\theta^2)^\diamond$.  The first of these we estimate in the usual way using the $\ell^1 \to \ell^2$ norm on the semigroup.  For the second of these, we make use of the definition of $\mathcal{A}^\diamond$ to transfer the $\diamond$ from $\theta^2$ to $\mathcal{A}$, effectively summing by parts.

To establish the estimates for $\mathcal{I}^\diamond_{q,e}$ simply replace
$\mathcal{A}$ everywhere by $\mathcal{A}^\diamond$ and $\mathcal{A}^\diamond$ by $\mathcal{A}^{\diamond\diamond}_e$,
adding 1/2 to all values of $\alpha$ and $\delta$.
Notice that the critical situations $\gamma = 1$ and $\delta = 1$ are never encountered.
\end{proof}

\begin{proof}[Proof of Proposition \ref{prp:nl:bndOnFxP}]
The fact that solutions to the fixed point problem \sref{eq:nl:fixpoint} are solutions to the LDE
follows from direct differentiation and the linear well-posedness result
Lemma \ref{lem:green:wellPosed}.
The short term existence claims involving the constant $\epsilon > 0$
can be established in a standard fashion by applying the contraction mapping principle.
Finally, the bound \sref{eq:nl:prp:bndFxP:estOnM}
has been verified by the discussion above.

\end{proof}

\begin{proof}[Proof of Theorem \ref{thm:mr:stbAnyDirection}]
In view of the bound \sref{eq:nl:prp:bndFxP:estOnM},
one sees that whenever $E_0$ is sufficiently small
the inequality $M(T) \le 3 C E_0$ must hold.
In particular, if $E_0$ is sufficiently small
it is possible to
construct a solution to the fixed point problem
\sref{eq:nl:fixpoint} that is defined for all $t \ge 0$
by repeatedly applying Proposition \ref{prp:nl:bndOnFxP}.
The bounds for $\theta_{\mathrm{diff}}$
stated in item (iii) can be established in the same way
as the bounds for $\theta^\diamond(t)$.
The remainder of the decay rates given in the statement of the theorem
follow directly from the definition of $M(t)$.
\end{proof}

\begin{proof}[Proof of Theorem \ref{thm:mr:stbDiagonal}]
Upon writing
\begin{equation}
\theta^{\diamond \diamond}_l = \big(\theta_{l + \sigma_1} + \theta_{l - \sigma_1} - 2 \theta_l, \theta_{l + \sigma_2} + \theta_{l - \sigma_2} - 2 \theta_l, 0 \big),
\end{equation}
the estimates for the nonlinearities $\mathcal{R}_{4, ct}$ and $\mathcal{S}_{4, ct}$
that appear in Proposition \ref{prp:cds:bnds:Nonlins} remain valid
upon replacing all occurences of $\norm{\theta} \norm{\theta^{\diamond}}$ by $\norm{\theta} \norm{\theta^{\diamond \diamond}}$.
In addition, there exists an operator $H_{5, ct}$ such that $\theta H_{4, ct}[0, \theta^{\diamond}] = \theta H_{5, ct}\theta^{\diamond \diamond}$.
Finally, we have $[\partial_{\omega} \phi_{\omega}]_{\omega = 0} = 0$
and $[\partial_{\omega} \psi_{\omega}]_{\omega = 0} = 0$,
which means that $v(t) = w(t)$. In particular,
upon writing
\begin{equation}
\begin{array}{lcl}
M_{\theta^{\diamond\diamond}, 2}(t) & = & \sup_{0 \le t_0 \le t }
  (1 + t_0)^{5/4}  \norm{\theta^{\diamond\diamond}(t_0)}_{\ell^2(\Wholes; \Real^3)}, \\[0.2cm]
M_{\theta^{\diamond\diamond}, \infty}(t) & = & \sup_{0 \le t_0 \le t }
  (1 + t_0)^{3/2}  \norm{\theta^{\diamond\diamond}(t_0)}_{\ell^\infty(\Wholes; \Real^3)} \\[0.2cm]
\end{array}
\end{equation}
and replacing all references to $M_{v, q}$ by $M_{\theta^{\diamond\diamond}, q}$,
all the nonlinear terms that decay at the rate $O(t_0^{-1})$ have disappeared.
This allows us to 
replace the exponent $9/8$ appearing in the definition of $M_{w, 2}$ by $5/4$ and
the exponent $5/4$ appearing in the definition of $M_{w, \infty}$ by $3/2$.

With these choices, all the nonlinearities decay at a rate of $O( t_0^{-3/2})$
in $\ell^1(\Wholes; \Real)$ or $\mathcal{X}_{p, 1}$ and $O(t_0^{-7/4})$
in $\ell^2(\Wholes; \Real)$ or $\mathcal{X}_{p, 2}$. This is sufficiently
fast to close the nonlinear argument with direct crude estimates.
\end{proof}

\begin{proof}[Proof of Theorem \ref{thm:mr:stbParallel}]
The claims concerning the triplets $(\lambda_{\omega}, \phi_{\omega}, \psi_{\omega})$
follow directly upon inspection of \sref{eq:mr:defTauOmega}, which shows that
\begin{equation}
  \mathcal{L}_{\omega} p = \mathcal{L}_0 p + 2 \kappa ( \cos(\omega) - 1) p.
\end{equation}
This in turn implies $\lambda_{\omega} = 2 \kappa (\cos(\omega) - 1)$, $\phi_{\omega} = \phi$
and $\psi_{\omega} = \psi$,
which immediately establishes the two conditions $\mathrm{(HS1)}_{\omega}$ and $\mathrm{(HS2)}_{\omega}$.
Corollary \ref{cor:green:expForGreenFull} now implies that both $\mathcal{G}_{vv}(t, t_0)$
and $\mathcal{G}_{v \theta}(t, t_0)$ decay exponentially instead of algebraically.
As above in the proof of Theorem \ref{thm:mr:stbDiagonal},
the nonlinearities decay
at a rate of $O(t_0^{-7/4})$
in $\ell^2(\Wholes; \Real)$ or $\mathcal{X}_{p, 2}$.
Since these nonlinear decay rates are now the only restriction
on the decay of $w$, we can replace
both of the exponents  $9/8$ and $5/4$ appearing in the definitions of $M_{w, 2}$ and $M_{w, \infty}$
respectively by $7/4$.
\end{proof}

\section{The Nagumo Equation}
\label{sec:nag}

We return now to the scalar equation \eqref{eq:int:nagumo:lde} which was discussed in the introduction.
The goal of this section is twofold: firstly we show that the main theorems are not vacuous by exhibiting an
example to which they apply; and secondly we present numerical studies which highlight
the sensitive nature of anisotropy in the problem.

We again write
\begin{equation}
\label{eq:nag:LDE}
\dot{u}_{ij}(t) = [\Delta^+ u(t)]_{ij} + g(u_{ij}(t) ; \rho),
\end{equation}
in which the nonlinearity $g$ is given by
\begin{equation}
\label{eq:nagumo:g}
g(u; \rho) =   - \frac{5}{2}(u^2 - 1)(u - \rho),
\end{equation}
with $\rho \in (-1, 1)$.  The parameter $\rho$ determines the location of the
middle zero of the cubic and can be regarded as a measure of the difference in
the depth of the wells of the potential $G(u) = \int g(u)du$.   When $\rho$ is far
from zero, the ``local free energy'' of the equilibria at $\pm 1$ are significantly different
and hence the more energetically favorable equilibrium invades the spatial domain rapidly.
In such a parameter regime, the lattice discreteness is hardly felt; phenomenologically
\eqref{eq:int:nagumo:lde} behaves very much like its PDE limit.  However, when $\rho$ is close to zero,
the two equilibria are comparably stable, the more stable of the two invades the spatial domain slowly,
and the lattice discreteness is felt keenly.

Stated more precisely, it is known \cite{MPB}
that for each $\rho \in (-1, 1)$ and for each $\zeta \in \Real$,
there is a unique $c = c_{\rho}(\zeta)$ for which
the travelling wave MFDE
\begin{equation}
\label{eq:nag:trvWaveMFDENoGamma}
\begin{array}{lcl}
c\Phi'(\xi) & = &
  \Phi(\xi + \cos \zeta) + \Phi(\xi - \cos \zeta) + \Phi(\xi + \sin \zeta) + \Phi(\xi - \sin \zeta) - 4 \Phi(\xi)
   + g\big(\Phi(\xi) ; \rho \big)
\end{array}
\end{equation}
augmented with the limits
\begin{equation}
\lim_{\xi \to - \infty} \Phi(\xi) = -1, \qquad \lim_{\xi \to + \infty} \Phi(\xi) = + 1
\end{equation}
admits a monotonic solution $\Phi$. If $c_{\rho}(\zeta) > 0$, then the profile $\Phi$
can be uniquely fixed by demanding $\Phi(0) = 0$ and we will refer
to this profile as $\Phi_{\rho;\zeta}$.

Symmetry conditions imply that $c_{\rho}(\zeta) = - c_{-\rho}(\zeta)$
and hence $c_{0}(\zeta) =0$ for all $\zeta \in \Real$.
In addition, upon writing $\rho_*(\zeta) = \sup \{\rho: c_{\rho}(\zeta) = 0\}$,
it is known \cite{MPB} that $0 \le \rho_*(\zeta) < 1$ for all $\zeta \in \Real$.
In particular, if $\rho_*( \zeta) <  \abs{\rho} < 1$, then
\sref{eq:nag:LDE}
has a travelling wave solution
\begin{equation}
\label{eq:nag:trvWaveAnsatz}
u_{ij}(t) = \Phi_{\rho;\zeta}\big( i \cos \zeta + j \sin \zeta + c_{\rho}(\zeta) t \big).
\end{equation}

\begin{figure}[t]
\centering\includegraphics[width=\textwidth]{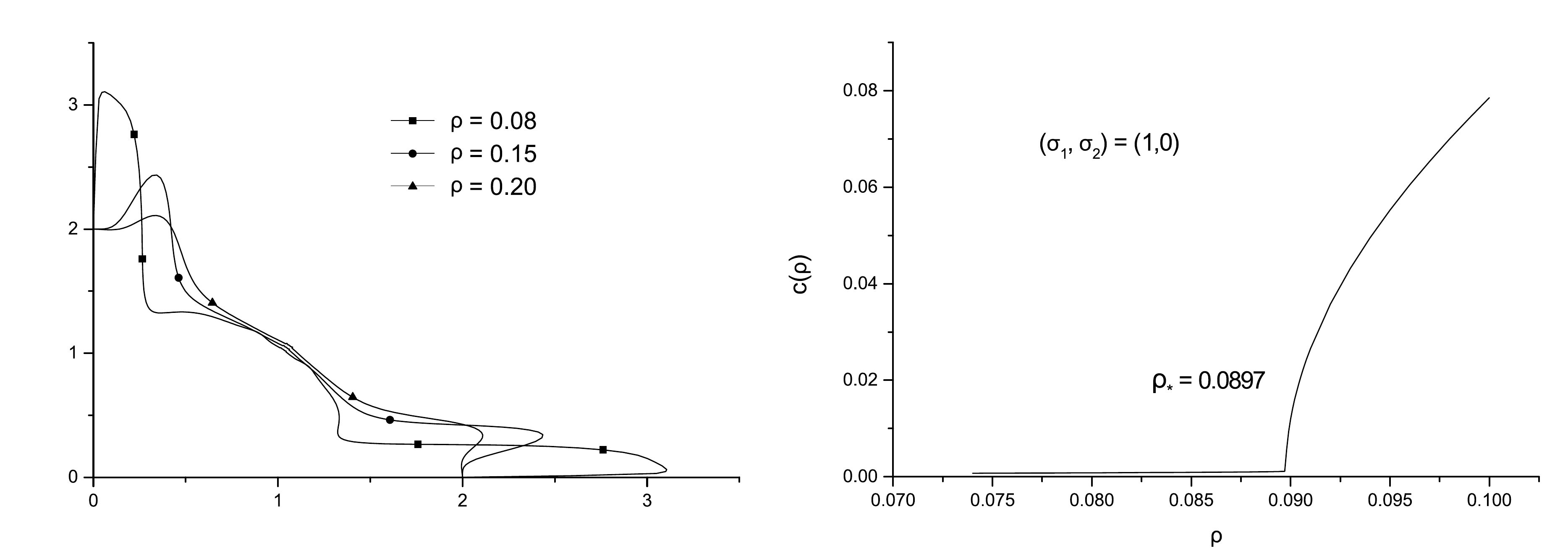}
\caption[]{Left: plot depicting
$M_\rho(\zeta) = - [\delta^2_\omega \lambda_{\omega; \rho, \zeta}]_{\omega = 0} $
for \sref{eq:nag:LDE} with detuning parameter $\rho$
and direction of propagation $(\sigma_1, \sigma_2) = (\cos\zeta , \sin \zeta)$.
It is a polar plot, so actually plotted are $(\cos \zeta M_\rho(\zeta) , \sin \zeta M_\rho(\zeta) )$.
The computations were performed
using the approximating systems \sref{eq:ex:mfdeNumSolve},
\sref{eq:num:approxForPsi} and \sref{eq:num:approxForPhi1} with $\gamma = 10^{-5}$.
Right: $\rho \mapsto c_{\rho}(0)$ plot for the fixed direction $\zeta = 0$,
which corresponds to $(\sigma_1, \sigma_2) = (1, 0)$.
The critical value for $\rho$ at which pinning sets in is given. The computations were
performed using \sref{eq:ex:mfdeNumSolve} with $\gamma = 10^{-6}$.
}
\label{fig:nag:rho:large}
\end{figure}

Whenever $\tan \zeta \in \mathbb{Q}$,
the theory developed in this paper can in principle be used
to establish the nonlinear stability of the wave \sref{eq:nag:trvWaveAnsatz}
under small perturbations. Of course a number of conditions must first be checked
and we now turn to this task.

As a preliminary step, we state a comparison principle for the linearization of
\eqref{eq:int:nagumo:lde} about its traveling wave, represented in $(n,l)$ coordinates as
\begin{equation}
\dot{v}_{nl} -\left\{v_{n+\sigma_1,l-\sigma_2} + v_{n-\sigma_1,l+\sigma_2} + v_{n+\sigma_2,l+\sigma_1}
 + v_{n-\sigma_2,l-\sigma_1} - 4v_{nl} + g'(\Phi(n+ct))v_{nl}\right\} = 0.
\label{eq:nagumo:lin}
\end{equation}
In case the equality in \eqref{eq:nagumo:lin} is replaced with the inequality $\ge$ ($\le$) we
say that $v$ is a super- (sub-) solution.
The following standard comparison principle now applies.

\begin{lem}
Let $\Lambda \subset \Z^2$ denote the lattice spanned by $(\sigma_1,\sigma_2)$ and $(\sigma_1,-\sigma_2)$ on which \eqref{eq:nagumo:lin} is naturally posed.  Let $q^0,r^0 \in \ell^\infty(\Lambda,\R)$ with $q^0_{nl} \ge r^0_{nl}$ for all $(n,l) \in \Lambda$ and $q^0 \not \equiv r^0$.  Let $q(t)$ and $r(t)$ denote super- and sub-solutions to \eqref{eq:nagumo:lin} with initial conditions $q^0$ and $r^0$ respectively.  Then $q_{nl}(t) \ge r_{nl}(t)$ for all $t > 0$.  Moreover, if $q$ and $r$ are both true solutions, then the inequality is strict, i.e. $q_{nl}(t) > r_{nl}(t)$ for all $t > 0$.
\label{lem:nagumo:comp}
\end{lem}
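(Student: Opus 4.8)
The plan is to read \eqref{eq:nagumo:lin} as a linear nonautonomous ODE $\dot v=B(t)v$ on the Banach space $\ell^\infty(\Lambda,\R)$, where $[B(t)v]_{nl}$ is the bracketed expression in \eqref{eq:nagumo:lin}, and to exploit the fact that this system is cooperative: every off-diagonal coupling ``entry'' of $B(t)$ equals $+1\ge 0$, while the diagonal entry $-4+g'(\Phi(n+ct))$ is bounded uniformly in $(n,l)$ and $t$ (since $\Phi$ is bounded and $g\in C^2$, with $g'\circ\Phi$ even uniformly continuous in $t$ because $\Phi$ has a bounded derivative). First I would reduce to the homogeneous problem: setting $z=q-r$ and subtracting the supersolution inequality for $q$ from the subsolution inequality for $r$, one finds that $z$ is a supersolution of $\dot z=B(t)z$ with $z(0)=q^0-r^0\ge 0$ and $z(0)\not\equiv 0$, and $z$ is a genuine solution if $q,r$ are. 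So it suffices to prove (i) $z(t)\ge 0$ for a supersolution and (ii) $z(t)>0$ componentwise for $t>0$ for a genuine solution, under these initial conditions.

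Next I would perform an exponential shift to a Metzler generator. Choose $c_0>0$ so large that $c_0-4+g'(\Phi(\xi))>0$ for all $\xi\in\R$ (possible by boundedness of $g'\circ\Phi$) and set $u(t)=e^{c_0 t}z(t)$. Then $\widetilde B(t):=c_0 I+B(t)$ has strictly positive diagonal and nonnegative off-diagonal entries, $u$ solves (resp.\ super-solves) $\dot u=\widetilde B(t)u$, and $f(t):=\dot u(t)-\widetilde B(t)u(t)$ is pointwise nonnegative (identically zero when $z$ is a genuine solution). Since $t\mapsto\widetilde B(t)$ is norm-continuous into $\mathcal L\big(\ell^\infty(\Lambda,\R)\big)$ with a uniform bound, the evolution family $U(t,s)$ of $\dot u=\widetilde B(t)u$ exists and is given by the Dyson--Peano series
\[
U(t,s)=\sum_{k\ge 0}\ \int_{s<t_1<\dots<t_k<t}\widetilde B(t_k)\cdots\widetilde B(t_1)\,dt_1\cdots dt_k .
\]
Every factor has nonnegative matrix entries, hence every term, hence $U(t,s)$ itself, is entrywise nonnegative. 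The Duhamel formula $u(t)=U(t,0)u(0)+\int_0^t U(t,s)f(s)\,ds$ then forces $u(t)\ge 0$, so $z(t)=e^{-c_0 t}u(t)\ge 0$, proving (i).

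For the strict inequality (ii), one has $f\equiv 0$, so $z(t)=e^{-c_0 t}U(t,0)z(0)$. Because the diagonal entries of $\widetilde B(\tau)$ are strictly positive and the coupling graph on $\Lambda$ generated by the four stencil shifts $\pm(\sigma_1,-\sigma_2)$, $\pm(\sigma_2,\sigma_1)$ is connected, the $\big((n,l),(n',l')\big)$ entry of $\widetilde B(t_k)\cdots\widetilde B(t_1)$ is strictly positive as soon as $k$ exceeds the graph distance between $(n,l)$ and $(n',l')$ (extend a connecting walk to length $k$ by self-loops, using positivity of the diagonal). Hence $U(t,0)$ has all entries strictly positive for $t>0$, and since $z(0)\ge 0$ with $z(0)_{(n_0,l_0)}>0$ for some site, $z(t)_{(n,l)}\ge e^{-c_0 t}\,U(t,0)_{(n,l),(n_0,l_0)}\,z(0)_{(n_0,l_0)}>0$ for every $(n,l)$ and every $t>0$.

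The main obstacle is that $\Lambda$ is infinite, so the classical maximum-principle argument (look at the first time the solution touches zero, localize at a minimizing site) is unavailable because the infimum over the lattice need not be attained; the operator-theoretic route above is designed precisely to sidestep this, at the cost of justifying the Dyson-series representation and the compatibility of the entrywise order with the series — both routine given that $B(t)$ is a uniformly bounded, norm-continuous family on $\ell^\infty(\Lambda,\R)$. A minor point still to be checked is that the stencil in \eqref{eq:nagumo:lin} really does generate a connected graph on $\Lambda$; this holds because those shifts are exactly the images of the nearest-neighbour edges of $\Z^2$ under the coordinate change $(i,j)\mapsto(n,l)$, and $\Z^2$ is connected.
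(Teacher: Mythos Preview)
The paper does not actually prove this lemma: it introduces it as ``a standard comparison principle'' and states it without argument. Your proposal therefore supplies what the paper omits, and it is correct. The operator-theoretic route via the Dyson--Peano series is a clean way to handle the infinite lattice: the key points---that the shifted generator $\widetilde B(t)=c_0I+B(t)$ preserves the positive cone of $\ell^\infty(\Lambda,\R)$ (four nonnegative off-diagonal couplings per row, bounded positive diagonal), that this passes to the norm-limit $U(t,s)$, and that connectivity of the stencil graph together with strictly positive diagonal forces every entry of $U(t,0)$ to be strictly positive for $t>0$---are all sound. Your closing remark about connectivity is exactly right: the four shifts in \eqref{eq:nagumo:lin} are the images of the nearest-neighbour edges of $\Z^2$ under $(i,j)\mapsto(n,l)$, so the graph on $\Lambda$ is connected. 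One cosmetic point: when you speak of ``entrywise nonnegativity'' of $U(t,s)$ on an infinite index set, it is cleaner to phrase this as ``$U(t,s)$ preserves the closed positive cone of $\ell^\infty$'', which follows immediately from the fact that each $\widetilde B(\tau)$ does and the partial sums converge in operator norm.
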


%
Exploiting this comparison principle,
we can verify all but one of our main hypotheses.
\begin{lem}
Consider the model equation \eqref{eq:int:nagumo:lde} with nonlinearity $g = g(\cdot;\rho)$ given
by \eqref{eq:nagumo:g}. Pick
$(\sigma_1, \sigma_2) \in \Wholes^2$ with $\mathrm{gcd}(\sigma_1, \sigma_2) = 1$
and choose $\zeta$ in such a way that
\begin{equation}
(\sigma_1, \sigma_2) =\sqrt{\sigma_1^2 + \sigma_2^2} ( \cos\zeta, \sin \zeta ).
\end{equation}
Then for any $\rho$ with $\rho_*(\zeta) < \abs{\rho} < 1$,
the hypotheses (Hf), (H$\Phi$), $\mathit{(HS1)}_p$,
$\mathit{(HS2)}_p$, $\mathit{(HS3)}_p$, $\mathit{(HS)}_{\mathrm{ess}}$ and $\mathit{(HS2)}_\omega$ all hold.
\end{lem}

\begin{proof}
In the context of our abstract framework \eqref{eq:mr:mainLDE} we have $f(u) = f(u;\rho)$ with
\begin{equation}
 f(u_1,u_2,u_3,u_4,u_5;\rho) = u_1 + u_2 + u_3 + u_4 - 4 u_5 + g(u_5;\rho),
\end{equation}
which is clearly $C^2$ and vanishes when all of the $u_m$ are equal to $u_\pm \in \{-1,\rho,1\}$.
Thus (Hf) is satisfied. The discussion above concerning the existence of
$c = c_{\rho}(\zeta) \neq 0$ and $\Phi = \Phi_{\rho; \zeta}$
shows that (H$\Phi$) is satisfied as well.  To verify $\mathrm{(HS)}_{\mathrm{ess}}$, we compute
\begin{equation}
\label{eq:nag:ess}
\Delta_\omega^\pm(i\nu) -\lambda = -ci\nu + 2\cos(\nu\sigma_1 + \omega\sigma_2) + 2\cos(\nu\sigma_2 - \omega \sigma_1)
- 4 + g'(u_\pm) -\lambda.
\end{equation}
Since $g'(u_\pm) < 0$ and since the negative constant $-4$ dominates the cosine terms, it follows that $\Re \lambda < 0$
whenever \sref{eq:nag:ess} vanishes, establishing $\mathrm{(HS)}_{\mathrm{ess}}$.
The hypotheses $\mathrm{(HS2)}_p$ and $\mathrm{(HS3)}_p$ follow from
\cite[Thm 4.1]{MPB}.

We now consider the remaining hypotheses
$\mathit{(HS1)}_p$ and  $\mathit{(HS2)}_\omega$.
In view of the identity \sref{eq:nag:ess},
the arguments in \cite[Lem. 3.2]{HJHSTBFHN}
involving the spectral flow formula stated in \cite[Thm. C]{MPA}
can be copied verbatim. This yields the existence of a constant $\beta > 0$ for which
the operators $\mathcal{L}_{\omega} - \lambda$
are Fredholm with index zero for all $\omega \in \Real$
and $\lambda \ge - \beta$. It hence
remains to look for eigenvalues for $\mathcal{L}_{\omega}$,
which, as argued in the proof of \cite[Lem 3.3]{HJHSTBFHN},
occur at isolated points.

The key ingredient towards ruling out eigenvalues other than $\lambda_{\omega}$,
is the existence of strictly positive constants $\beta$, $K_1$, $K_2$ and $K_3$ with $K_3 > K_2$
for which the function
\begin{equation}
 v^+_{nl}(t) = e^{-\beta t}(K_1 - K_2 \phi(n+ct )) + K_3\phi(n+ct )
\label{eq:nagumo:super}
\end{equation}
is a super-solution for \eqref{eq:nagumo:lin}, where we recall the notation $\phi = \Phi'$.
This was proved in \cite[Lem. 8.3]{VL9} for the $\sigma = (1, 0)$
case, but can also easily be verified for our general $\sigma$ case;
see also \cite[Lem. 7.6]{SHEN1999a}) for
a related approach.

Let us now assume that for some $\omega$ and
non-zero $w \in BC_0(\Real, \Complex)$
we have $\mathcal{L}_\omega w = (\alpha + i\nu) w$ for some $\alpha \ge 0$ and $\nu \in \Real$.
First note that $w$ is exponentially localized
as a consequence of $\mathrm{(HS)}_{\mathrm{ess}}$ and \cite[Thm. A]{MPA}.
In addition, one may readily verify that
\sref{eq:nagumo:lin} admits real-valued solutions
\begin{equation}
v_{nl; \vartheta}(t) = e^{\alpha t}\cos(\nu t+\omega l + \vartheta)\Re w(n+ct )
 - e^{\alpha t} \sin(\nu t+\omega l + \vartheta)\Im w(n+ct)
\end{equation}
for every $\vartheta \in \Real$.
Without loss, we assume that $2\norm{w}_{\infty} \le K_1$, in which case we have
\begin{equation}
\abs{v_{nl; \vartheta}(0)} \le K_1 \le  v^+_{nl}(0), \qquad (n,l) \in \Lambda,
\end{equation}
which by the comparison principle stated in Lemma \ref{lem:nagumo:comp} now implies
\begin{equation}
\abs{v_{nl; \vartheta}(t)} \le v^+_{nl}(t), \qquad t >0, \qquad (n,l) \in \Lambda.
\end{equation}
This immediately implies $\alpha = 0$.

In addition, pick any $\Omega \in \Real$ and $\xi \in \Real$.
We can now pick a sequence $\{\vartheta_k, t_k, n_k \} \in \Real \times [0, \infty) \times \Wholes$
for which we have the limit $t_k \to \infty$ as $k \to \infty$
together with the identities $n_k + c t_k = \xi$
and $\nu t_k +  \vartheta_k = \Omega$.
Since
\begin{equation}
\abs{v_{n_k, 0 ; \zeta_k}(t_k) } \le v^+_{n_k, 0}(t_k) =
e^{-\beta t_k}(K_1 - K_2 \phi(\xi )) + K_3\phi(\xi ),
\end{equation}
we can take the limit $k \to \infty$ to obtain
\begin{equation}
\abs{ \cos ( \Omega ) \Re w(\xi)  - \sin ( \Omega ) \Im w(\xi) } \le K_3 \phi(\xi).
\end{equation}
Since both $w(\xi)$ and $\phi(\xi)$
decay exponentially as $\xi \to \pm \infty$,
there exists $\kappa > 0$ so that
for every $(\Omega, \xi) \in \Real^2$
we have
\begin{equation}
  \abs{ \cos (\Omega) \Re w(\xi)  - \sin (\Omega) \Im w(\xi) } \le \kappa \phi(\xi),
\end{equation}
with equality at some pair $(\Omega_*, \xi_*) \in \Real^2$.

Now, pick a pair $t_* > 0$ and $n_* \in \Wholes$ with $n_* + c t_* = \xi_*$.
In addition, pick $\vartheta_*$ so that $\nu t_* + \vartheta_* = \Omega_*$.
Then we have
\begin{equation}
\label{eq:nag:lem:cond:eqAtT_*}
\abs{ v_{n_*, 0; \vartheta_*}(t_*) } = \kappa\phi(\xi_*) = \kappa \phi(n_* + c t_*),
\end{equation}
together with
\begin{equation}
\abs{v_{nl ; \vartheta_*}(0) } \le \kappa \phi(n),    \qquad (n,l) \in \Lambda.
\end{equation}
The comparison principle hence implies
\begin{equation}
\label{eq:nag:lem:cond:CmpVWithPhi}
\abs{v_{nl; \vartheta_*}(t)} \le \kappa \phi(n + ct), \qquad t > 0, \qquad (n,l) \in \Lambda.
\end{equation}
In view of \sref{eq:nag:lem:cond:eqAtT_*} the inequality
in \sref{eq:nag:lem:cond:CmpVWithPhi} is not strict, which is only possible if
\begin{equation}
v_{n,l; \vartheta_*}(t) = \cos( \nu t+ \omega l + \vartheta_*) \Re w(n + ct)
   - \sin(\nu t+ \omega l + \vartheta_*) \Im w(n + ct)= \phi(n + ct)
\end{equation}
for all $(n,l) \in \Lambda$ and all $t \ge 0$. This implies $\omega \in 2 \pi \Wholes$
together with $\nu \in 2 \pi c \Wholes$, as desired.
\end{proof}

The remaining hypothesis $\mathrm{(HS1)}_\omega$ is more difficult to verify.
We note that the construction of the curve $\omega \mapsto \lambda_{\omega}$
in Proposition \ref{prp:mr:melnikov} can be performed even if
$(\sigma_1, \sigma_2) = (\cos \zeta , \sin \zeta) \notin \Wholes^2$,
provided of course that $c_{\rho}(\zeta) \neq 0$.
We use the notation $\lambda_{\omega} = \lambda_{\omega; \rho, \zeta}$
to make this dependence explicit.
For any pair $(\rho, \zeta) \in (-1,1) \times \Real$ with $c_{\rho}(\zeta) \neq 0$,
we may then define the quantity
\begin{equation}
M_{\rho}(\zeta) = - [\partial^2_{\omega} \lambda_{\omega; \rho,\zeta}]_{\omega = 0}.
\end{equation}

Now consider any pair $(\sigma_1, \sigma_2) \in \Wholes^2$ with $\mathrm{gcd}(\sigma_1, \sigma_2) = 1$
and choose $\zeta$ in such a way that
\begin{equation}
(\sigma_1, \sigma_2) =\sqrt{\sigma_1^2 + \sigma_2^2} ( \cos\zeta, \sin \zeta )
\end{equation}
We note that condition $\mathrm{(HS1)}_\omega$ is satisfied for
the model system \sref{eq:nag:LDE} with this pair $(\sigma_1, \sigma_2)$
whenever $c_{\rho}(\zeta) \neq 0$ and $M_{\rho}(\zeta) > 0$.
The following result shows that this can be guaranteed
for an open set of angles $\zeta$.

\begin{lem}
Fix any $\rho \in (-1, 1)$.
If $\rho_*(0) < \abs{\rho} < 1$,
then there exists
$\delta_{\mathrm{hor}} = \delta_{\mathrm{hor}}(\rho) > 0$ so that
for any $\zeta \in \Real$ that has $\abs{\theta - k \frac{\pi}{2} } < \delta_{\mathrm{hor}}$
for some $k \in \Wholes$, we have
\begin{equation}
c_{\rho}(\zeta) \neq 0,  \qquad M_{\rho}(\zeta) > 0.
\end{equation}

In addition, if $\rho_*(\frac{\pi}{4} < \abs{\rho} < 1$,
then there exists
$\delta_{\mathrm{diag}} = \delta_{\mathrm{diag}}(\rho) > 0$ so that
for any $\zeta \in \Real$ that has $\abs{\theta - \frac{\pi}{4} - k \frac{\pi}{2} } < \delta_{\mathrm{diag}}$
for some $k \in \Wholes$, we have
\begin{equation}
c_{\rho}(\zeta) \neq 0,  \qquad M_{\rho}(\zeta) > 0.
\end{equation}
\end{lem}
\begin{proof}
For fixed $(\sigma_1, \sigma_2) \in \Real^2 \setminus \{(0, 0)\}$,
we can compute
\begin{equation}
 \big[  [ \partial^2_\omega \mathcal{L}_\omega]_{\omega = 0} \phi\big](\xi) =
 -\sigma_2^2(\phi(\xi+\sigma_1)+\phi(\xi-\sigma_1)) - \sigma_1^2(\phi(\xi+\sigma_2)+\phi(\xi-\sigma_2)) < 0,
\end{equation}
giving us the desired sign
\begin{equation}
\int_\R \langle \psi(\xi) , \big[ [ \partial^2_\omega \mathcal{L}_\omega]_{\omega = 0} \phi \big](\xi) \rangle d\xi < 0
\end{equation}
for the first term in \eqref{eq:mr:melnikov}, because $\psi > 0$.

Unfortunately, the second term in \sref{eq:mr:melnikov} is harder to analyze in general.
However, in the special cases $\sigma = (1,0)$ and $\sigma = (1,1)$, the
map $\omega \mapsto \mathcal{L}_\omega$ is
symmetric with respect to $\omega$, which implies that
$[\partial_\omega \lambda_\omega]_{\omega =0}$ and $[\partial_\omega \phi_\omega]_{\omega =0}$
both vanish and hence the second term in \sref{eq:mr:melnikov} disappears.
The statements now follow from the fact that $c_{\rho}(\zeta)$ and $M_{\rho}(\zeta)$
depend continuously on $\zeta$ whenever $c_{\rho}(\zeta) \neq 0$,
which can be established as in \cite[Prop. 3.7]{HJHOBST2D}.
\end{proof}

\begin{figure}[t]
\centering\includegraphics[width=\textwidth]{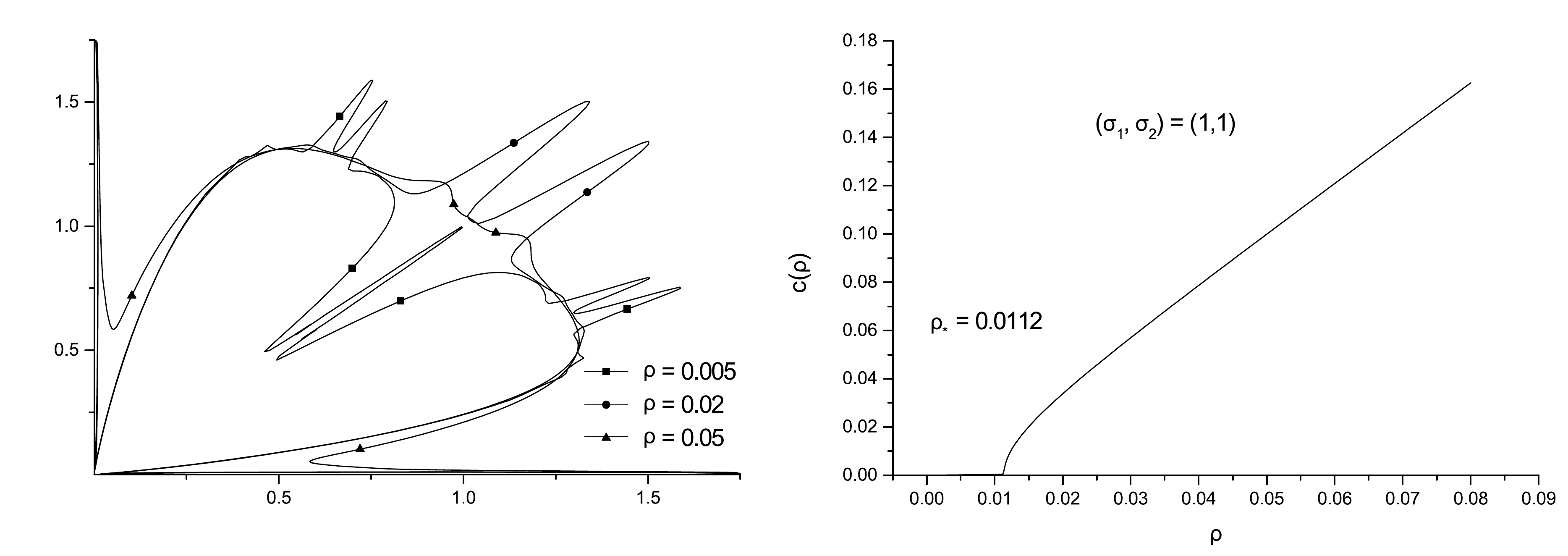}
\caption[]{Left: plot depicting $M_\rho(\zeta) = -[\delta^2_\omega \lambda_{\omega;\rho, \zeta}]_{\omega = 0  }$,
as in Figure \ref{fig:nag:rho:large}.
The computations were performed using
\sref{eq:ex:mfdeNumSolve},
\sref{eq:num:approxForPsi} and \sref{eq:num:approxForPhi1}
with $\gamma = 10^{-4}$.
Right: $\rho \mapsto c_{\rho}(\frac{\pi}{4})$ plot for the fixed direction
$\zeta = \frac{\pi}{4}$, which corresponds to $(\sigma_1, \sigma_2) = (1, 1)$.
The critical value for $\rho$ at which pinning sets in is given.
The computations were performed using \sref{eq:ex:mfdeNumSolve} with $\gamma = 10^{-6}$.
}
\label{fig:nag:rho:small}
\end{figure}

At present we can only rigorously establish $\mathrm{(HS1)}_\omega$
for the discrete Nagumo system with cubic nonlinearity
for a subset of rational angles, namely those close to the horizontal, diagonal and vertical
directions as stated in the result above.
Let us remark that there do exist other nonlinearities, e.g., nonlinearities similar to the cubic-like nonlinearity
in \cite{EVVRCW}, for which explicit solutions are known for general rational directions
of propagation. This should considerably aid the verification of $\mathrm{(HS1)}_\omega$.

To make further statements for the Nagumo equation with cubic nonlinearity and
more general $\sigma$, we now turn to numerics.  Figures 1 and 2 show the dependence of the
Melnikov integral on $\zeta$ and $\rho$.  For the parameters we have investigated we see
non-vanishing Melnikov integrals so long as $\rho > \rho_*(\zeta)$.
However, there does not appear to be any obvious structure such as monotonicity that we can exploit
to prove analytically that the Melnikov integral is non-vanishing.

The computations were performed using the numerical method developed in \cite{EVVRCW,EVV,HJHVL2005}.
In particular, we search for wave solutions
by numerically solving the functional differential equation\footnote{
Here the $\pm$ signs mean both terms are evaluated and added together.
}
\begin{equation}
  \label{eq:ex:mfdeNumSolve}
 \begin{array}{rcl}
c(\xi) \Phi'(\xi) & = & \gamma \Phi''(\xi) + [\Phi(\xi \pm \cos \zeta) + \Phi(\xi \pm \sin \zeta) - 4 \Phi(\xi) ]
+      g\big( \Phi(\xi) ; \rho)
\\[0.2cm]
c'(\xi) & = & 0,
  \end{array}
\end{equation}
with boundary conditions $\Phi(-L) = -1$, $\Phi(L) = 1$ and $\Phi(0) = 0$
for some large $L > 0$.
Notice the extra $\gamma \Phi''(\xi)$ term appearing
in \sref{eq:ex:mfdeNumSolve} as compared to \sref{eq:nag:trvWaveMFDENoGamma}.
To make use of robust codes to solve two-point boundary value
differential equations
and to avoid numerical issues that arise in the singular limit $c \to 0$,
  we follow the approach used in \cite{EVVRCW,EVV,HJHVL2005,HJH,HJHDNGPF}
and add this extra term in  \sref{eq:ex:mfdeNumSolve},
  with $\gamma = 10^{-6}$, $\gamma = 10^{-5}$ or $\gamma = 10^{-4}$ (depending on context; see captions).
  We refer the reader to \cite{HJHVL2005, HJH} for numerical
  and theoretical results concerning the limit $\gamma \to 0$. These results strongly
  suggest that the region of propagation failure can be accurately determined by
  using small but practical values of $\gamma >0$. In particular, the
  numerical results presented in this section were verified by repeating
  a subset of the calculations with different values of $\gamma$ where possible.

To compute the Melnikov constants,
one needs to have detailed information on the adjoint eigenfunction $\psi$,
and the rotated first derivatives $\lambda_1 := - i [\partial_{\omega} \lambda_{\omega}]_{\omega = 0}$
and $\phi_1 := -i [\partial_{\omega} \phi_{\omega}]_{\omega = 0}$.
These were also computed numerically.
  In particular, the actual equations used to determine $\psi$ are given by
  \begin{equation}
  \label{eq:num:approxForPsi}
    \begin{array}{rcl}
       - \gamma \psi''(\xi) - c \psi'(\xi)  &=&
         [\psi(\xi \pm \cos \theta) + \psi(\xi \pm \sin \theta) - 4 \psi(\xi) ]
     + g'(\Phi(\xi) ; \rho) \psi(\xi) \\[0.2cm]
         & & \qquad   - \lambda_0(\xi) \psi(\xi), \\[0.2cm]
       \lambda_0'(\xi)            &=&  0, \\[0.2cm]
    \end{array}
  \end{equation}
  with boundary conditions $\psi(-L) = 0$, $\psi(L) = 0$ and $\psi(0) = 1$
  for some large $L > 0$. The actual equations used to compute $\phi_1$ and $\lambda_1$
  are given by
  \begin{equation}
    \label{eq:num:approxForPhi1}
    \begin{array}{rcl}
       - \gamma \phi_1''(\xi) + c \phi_1'(\xi)  &=&
         [\phi_1(\xi \pm \cos \theta) + \phi_1(\xi \pm \sin \theta) - 4 \phi_1(\xi) ]
      +g'(\Phi(\xi); \rho) \phi_1(\xi) \\[0.2cm]
         & & \qquad \pm \sin( \theta) \Phi'(\xi \pm \cos \theta) \pm \cos(\theta) \Phi'(\xi \mp \sin \theta)
  - \lambda_1(\xi) \Phi'(\xi), \\[0.2cm]
       \lambda_1'(\xi)            &=&  0, \\
    \end{array}
  \end{equation}
  again with boundary conditions $\phi_1(-L) = 0$, $\phi_1(L) = 0$ and $\phi_1(0) = 1$.


\section{Discussion}
\label{sec:dis}

In this paper we have established the stability of planar fronts
travelling in rational directions through two-dimensional lattices
with nearest-neighbour interactions, under reasonable spectral hypotheses.
Lattices in three or more spatial dimensions can be treated
using the exact same techniques. Based on
the PDE results \cite{KAP1997}, we even expect
faster rates of decay to occur. This could
even considerably simplify the nonlinear analysis by
eliminating the problematic slowly decaying nonlinearities.

The restriction to nearest-neighorbour interactions is
of course purely artificial and many other variants could be considered.
For example, in the case of the Nagumo LDE \sref{eq:int:nagumo:lde},
it would be interesting to see what the effects of a cross-shaped Laplacian
would be on the decay rate of perturbations to waves
travelling in the direction $(1,1)$.

Our restriction to $(\sigma_1, \sigma_2) \in \Wholes^2$ is
of a more serious nature. However,
we do believe that the spirit of our approach can be extended to irrational directions.
Indeed, the difficulties that need to be overcome seem to be primarily technical in nature.
For example, the Fourier transform in the transverse direction would not
map the sequence space $\ell^2(\Wholes; \Real^d)$ into $L^2([-\pi, \pi], \Real^d)$,
but would be a linear map from $L^2(\Real; \Real^d)$ onto itself. As a first consequence,
one can no longer use the embeddings $\ell^1(\Wholes; \Real^d) \subset \ell^2(\Wholes; \Real^d) \subset \ell^\infty(\Wholes; \Real^d)$,
which implies that any bounds on the nonlinear terms would rely on the Sobolev embedding $H^1(\Real; \Real^d) \subset L^\infty(\Real; \Real^d)$.
A second consequence is that one can no longer restrict the Fourier frequency $\omega$ to the compact interval $[-\pi, \pi]$.
This forces a sharpening of the condition $\mathrm{(HS2)}_{\omega}$. However, similar complications were succesfully
handled in \cite{KAP1997}, emboldening us in our view that they will not present a fundamental obstacle
in the current setting.

In establishing our spectral hypotheses for \eqref{eq:int:nagumo:lde} in {\S}\ref{sec:nag},
we made extensive use of the fact that the Nagumo equation is
scalar and enjoys a comparison principle.
It is natural to consider what can be achieved
by using the comparison principles directly and avoiding the intermediate
"spectral stability implies nonlinear stability" theorem.
The work \cite{MNT} uses comparison principles to establish multidimensional
stability for the PDE $u_t = \Delta u + g(u)$ and many aspects of the argument can be
readily carried over the lattice,
especially if one restricts attention to the special cases
$\sigma = (1,0)$ and $\sigma = (1,1)$, in which
the troublesome terms generated by the anisotropy of the lattice are absent or benign.
We note that \cite{BatesChen2002}, which studies multidimensional stability for traveling waves in higher
dimensional anisotropic nonlocal equations, also restricts attention to the $\sigma = (1,0)$ case
and its higher dimensional analogs.
Very recently, in the revision phase of this paper,
we were able to generalize \cite{MNT} to the lattice setting even for oblique
angles \cite{HJHOBST2D}.
The advantage of having an approach based on comparison principles
is that follow-up questions concerning  the basin of attraction of the travelling wave
and the presence of obstacles are much easier to address.



%

%
%
%
%
%

%

%

%

%

%

%

%

%

%

\bibliographystyle{klunumHJ}
\bibliography{ref}

\end{document}